\documentclass[a4paper,11pt]{article}

\usepackage[
    a4paper,
    top=1.05in,
    bottom=1.05in,
    left=1.00in,
    right=1.00in
]{geometry}
\usepackage[T1]{fontenc}
\usepackage{amsmath,amsthm,mathtools}
\usepackage{amssymb}
\usepackage{newtxtext,newtxmath}
\usepackage{microtype}
\microtypesetup{protrusion=true,expansion=true}

\usepackage{mathrsfs}
\usepackage{slashed}
\usepackage{upgreek}
\allowdisplaybreaks[4]
\usepackage{enumitem}
\setlist{itemsep=0.25em, topsep=0.45em, parsep=0pt}
\usepackage{scalerel}
\usepackage[usestackEOL]{stackengine}
\usepackage[thinc]{esdiff}

\usepackage[all]{xy}
\usepackage{graphicx,psfrag}
\usepackage{pstool}
\usepackage{tikz-cd}
\usepackage[font=small,labelfont=bf,labelsep=period]{caption}

\usepackage{titlesec}
\titleformat{\section}{\normalfont\Large\bfseries}{\thesection}{0.75em}{}
\titleformat{\subsection}{\normalfont\large\bfseries}{\thesubsection}{0.75em}{}
\titlespacing*{\section}{0pt}{2.6ex plus 0.8ex minus 0.2ex}{1.2ex plus 0.2ex}
\titlespacing*{\subsection}{0pt}{2.0ex plus 0.6ex minus 0.2ex}{0.9ex plus 0.2ex}

\usepackage{fancyhdr}
\fancypagestyle{plain}{%
  \fancyhf{}%
  \fancyfoot[C]{\thepage}%
}

\usepackage[dvipsnames]{xcolor}
\definecolor{linkblue}{RGB}{20,67,128}
\definecolor{citegreen}{RGB}{25,105,80}
\definecolor{urlpurple}{RGB}{100,55,125}

\usepackage[colorlinks=true,
            linkcolor=linkblue,
            citecolor=citegreen,
            urlcolor=urlpurple,
            plainpages=false]{hyperref}
\usepackage{bookmark}

\usepackage{array,tabularx,booktabs}

 \usepackage{setspace}

\usepackage[titletoc,title]{appendix}
\usepackage{titletoc}

\usepackage[capitalize,nameinlink,noabbrev]{cleveref}

\newcommand{\GHconvtext}{\mathrm{Gromov\text{--}Hausdorff}}
\newcommand{\ucite}[2][]{\textup{\cite[#1]{#2}}}

 \newcommand{\N}{\ensuremath{\mathbb{N}}}
 
 \newcommand{\R}{\ensuremath{\mathbb{R}}}
 
 \def\d{\mathrm{d}}
  \def\III{\mathbb{I}}
 
 \def\RR{\mathcal{R}}
 \def\MM{\mathcal{M}}

  \def\cc{\mathfrak{c}}

 \def\Var{\mathrm{Var}}

 \def\scal{\mathrm{R}}

  \def\diam{\mathrm{diam}}

 \newcommand{\RP}{\ensuremath{\mathbb{RP}}}

 \newcommand{\ba}{\begin{align*}}
 \newcommand{\ea}{\end{align*}}

\newcommand{\lc}{\left(}
\newcommand{\rc}{\right)}
 
\newcommand{\ep}{\epsilon}

\def\HHH{\mathscr{H}}%use \AAA to denote \mathscr{A}

\def\CCC{\mathscr{C}}

\def\NN{\mathcal{N}}
\def\NNN{\mathscr{N}}

\newcommand{\Rm}{\ensuremath{\mathrm{Rm}}}
\newcommand{\Ric}{\ensuremath{\mathrm{Ric}}}

\renewcommand{\t}{\mathfrak{t}}

\def\MS{\mathcal{S}}

\DeclareMathOperator{\GH}{GH}
 \makeatletter
 \def\ExtendSymbol#1#2#3#4#5{\ext@arrow 0099{\arrowfill@#1#2#3}{#4}{#5}}
 
 \makeatother

 \makeatletter
 \def\ExtendSymbol#1#2#3#4#5{\ext@arrow 0099{\arrowfill@#1#2#3}{#4}{#5}}
 \newcommand\longright[2][]{\ExtendSymbol{-}{-}{\rightarrow}{#1}{#2}}
 \makeatother
 
\def\aint{\,\ThisStyle{\ensurestackMath{%
  \stackinset{c}{.2\LMpt}{c}{.5\LMpt}{\SavedStyle-}{\SavedStyle\phantom{\int}}}%
  \setbox0=\hbox{$\SavedStyle\int\,$}\kern-\wd0}\int}

\DeclarePairedDelimiter\abs{\lvert}{\rvert}%
\makeatletter
\let\oldabs\abs
\def\abs{\@ifstar{\oldabs}{\oldabs*}}
\makeatother

\numberwithin{equation}{section}
\theoremstyle{plain}
\newtheorem{thm}{Theorem}[section]
\newtheorem{cor}[thm]{Corollary}
\newtheorem{prop}[thm]{Proposition}
\newtheorem{lem}[thm]{Lemma}

\theoremstyle{definition}
\newtheorem{defn}[thm]{Definition}

\theoremstyle{remark}
\newtheorem{rem}[thm]{Remark}

\titlecontents{part}[0pt]  % 作用于part条目
{\addvspace{10pt}\bfseries} % 条目前格式（粗体+垂直间距
{\makebox[3em][l]{PART-\thecontentslabel}}      % 标签（如“Part I”）
{}                          % 无编号时的占位符
{}                          % 页面数字格式
\title{Gromov--Hausdorff convergence of time-slices of singular Ricci flows in dimension three}
\author{Yu Li} 
\date{\today}

\begin{document}
	%\begin{CJK}{UTF8}{gbsn}

\maketitle

\begin{abstract}
Starting from a closed, orientable three-dimensional Riemannian manifold, we consider the completion of the associated singular Ricci flow with respect to a natural spacetime distance. We show that this completion admits canonical intrinsic metrics on its time-slices, defined by conjugate heat kernel measures, and that these time-slices arise as metric limits of the regular part of the flow.

More precisely, for any $t_0>0$ and any connected component $Z_{t_0}'$ of the time-slice $Z_{t_0}$ of the completion, we prove that
\[
(\overline{\mathcal R_t'},d_{g_t}) \xrightarrow[t\nearrow t_0]{\GHconvtext} (Z_{t_0}',d_{t_0}^Z),
\]
where $\mathcal R_t'$ is the corresponding connected component of the regular part and $\overline{\mathcal R_t'}$ denotes its metric completion. In particular, this yields the Gromov--Hausdorff convergence at the first singular time for closed three-dimensional Ricci flows.

We also establish a refined structure theory for the singular set of the completion. In particular, the singular set is horizontally parabolic $1$-rectifiable, and its time image has vanishing \(1/2\)-dimensional Hausdorff measure. Moreover, on each time-slice, the singular set has Minkowski dimension at most~$1$.

The proof relies on heat kernel estimates for singular Ricci flows and the generalization of the structure theory of noncollapsed Ricci flow limit spaces.
\end{abstract}

\tableofcontents

\section{Introduction}
Ricci flow, introduced by Hamilton \cite{hamilton1982three}, is one of the central tools in geometric analysis and three-dimensional topology. Starting from a Riemannian manifold \((M,g_0)\), the Ricci flow evolves the metric by
\[
    \partial_t g_t=-2\Ric(g_t).
\]
In dimension three, finite-time singularities are unavoidable in general. Hamilton--Perelman's theory of Ricci flow with surgery \cite{hamilton1997,perelman2003surgery,perelman2003finite} and the later theory of singular Ricci flows developed by Kleiner--Lott and Bamler--Kleiner \cite{kleinerlott2017singular,bamlerkleiner2022} provide canonical ways to continue the flow through singularities.

The purpose of this paper is to study the metric behavior of the time-slices of a three-dimensional singular Ricci flow as one approaches a fixed time. More precisely, we are interested in the following question.

\medskip

\noindent
\textit{Does a singular Ricci flow time-slice admit a canonical Gromov--Hausdorff limit as it approaches a fixed time, and if so, how is this limit related to the associated singular Ricci flow?}

\medskip

For a smooth closed Ricci flow approaching its first singular time $T$, one might hope that the metric spaces \((M,d_{g_t})\) converge as~\(t\nearrow T\). A priori, this is not clear. Indeed, in the absence of a uniform lower Ricci curvature bound, one cannot directly apply Gromov's compactness theorem. Even if sequential Gromov--Hausdorff limits exist, different sequences \(t_j\nearrow T\) could, in principle, lead to different limits. The first goal of this paper is to show that, in dimension three, this ambiguity does not occur.

Let
\[
    (M^3,(g_t)_{t\in[0,T)})
\]
be a closed Ricci flow with normalized initial condition; see Definition~\ref{def:normal}. We assume that \(T<\infty\) is the first singular time. In our previous work \cite{fangli2025recti}, we proved an \(L^1\)-curvature bound in dimension four and used it to resolve Perelman's bounded diameter conjecture in dimension three. In particular, for the present three-dimensional flow one has
\[
    \sup_{t\in[0,T)}\left(
        \int_M |\Rm|\,\d V_{g_t}+\diam_{g_t}(M)
    \right)<\infty.
\]
Using this crucial estimate, we consider the measure
\[
    \mu_t(A)=\int_A (|\scal|+1)\,\d V_{g_t}, \qquad t\in[0,T),
\]
and show in Proposition~\ref{prop:ballpacking} that
\[
    \mu_t(B_{g_t}(x,\ep))\ge c_\ep
\]
for every ball \(B_{g_t}(x,\ep)\neq M\), where \(c_\ep>0\) depends only on \(\ep\) and the Ricci flow, but is independent of~\(t\).

The proof is based on the canonical neighborhood theorem for the high-curvature region of a three-dimensional Ricci flow. A key observation is that, on a \(\delta\)-tube, namely a region in which every point is the center of a \(\delta\)-neck, the contribution of the scalar curvature integral is comparable to the diameter of the region. On the other hand, \(\mu_t(M)\) is uniformly bounded by the estimates above. Thus, a standard ball-packing argument implies that any sequence \(t_j\nearrow T\) admits a subsequence such that \((M,d_{g_{t_j}})\) converges in the Gromov--Hausdorff sense to a compact metric space. The main remaining issue is to identify this limit and prove that it is independent of the sequence.

To do so, we use the spacetime distance introduced in \cite{fangli2025structure}. More precisely, we equip \(M\times[1,T)\) with a distance \(d^*\), defined in terms of conjugate heat kernel measures and the \(W_1\)-Wasserstein distance; see \eqref{defndstar} for the precise definition. Let
\[
    (Z,d_Z,\mathfrak t)
\]
be the metric completion of \(M\times[1,T)\) with respect to \(d^*\), where \(\mathfrak t\) denotes the time function. As in \cite[Section 9]{fangli2025structure}, the completion adds only points at time \(T\). We write
\[
    Z_T:=\mathfrak t^{-1}(T).
\]
Following \cite{fangli2025structure}, the slice \(Z_T\) carries a canonical intrinsic distance \(d_T^Z\), defined by
\[
    d_T^Z(x,y)
    :=
    \lim_{s\nearrow T}
    d^{g_s}_{W_1}(\nu_{x;s},\nu_{y;s}),
\]
where \(\nu_{x;s}\) and \(\nu_{y;s}\) are the conjugate heat kernel measures based at \(x\) and \(y\), respectively.

As discussed in \cite{fangli2025structure}, there is some flexibility in the construction of the completion \((Z,d_Z,\mathfrak t)\). One may choose a different lower cutoff time \(\sigma\in(0,T)\), rather than \(1\), and define the spacetime distance \(d^*\) on \(M\times[\sigma,T)\). This may lead to a different completion \((Z,d_Z)\). Nevertheless, all such choices lead to the same terminal time-slice \(Z_T\), equipped with the same canonical intrinsic distance \(d_T^Z\); this distance is defined solely from the conjugate heat kernel measures and the spatial metrics on the regular time-slices (see \cite[Remark 6.2]{fangli2025structure}).

Our first main theorem identifies this intrinsic terminal slice with the Gromov--Hausdorff limit of the original Ricci flow.

\begin{thm}[Gromov--Hausdorff convergence at the first singular time]
Let \((M^3,(g_t)_{t\in[0,T)})\) be a closed Ricci flow with normalized initial condition, and suppose that \(T<\infty\) is the first singular time. Then
\[
    (M,d_{g_t})
    \xrightarrow[t\nearrow T]{\GHconvtext}
    (Z_T,d_T^Z).
\]
In particular, the Gromov--Hausdorff limit at the first singular time is unique.
\end{thm}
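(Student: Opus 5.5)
The plan is to build, for every $t$ close to $T$, an explicit $\epsilon(t)$-Gromov--Hausdorff approximation between $(M,d_{g_t})$ and $(Z_T,d_T^Z)$ with $\epsilon(t)\to0$. Because the target does not depend on any sequence, uniqueness of the limit follows at once. Precompactness is already in hand from Proposition~\ref{prop:ballpacking}, so the real work is to identify the limit. The natural map is the projection along conjugate heat kernel measures:
\[
\Phi_t:(M,d_{g_t})\to (Z_T,d_T^Z),
\]
defined by sending $x$ to a point $z\in Z_T$ such that $x$ is a $W_1$-center of $\nu_{z;t}$. More precisely, $d_{W_1}^{g_t}(\delta_x,\nu_{z;t})$ should be small in a sense that improves as $t\nearrow T$.

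The first step is to show that $\Phi_t$ is almost surjective and almost distance-preserving in one direction. For $z,w\in Z_T$, the quantity $s\mapsto d^{g_s}_{W_1}(\nu_{z;s},\nu_{w;s})$ is monotone nondecreasing in $s$ along Ricci flow, since $W_1$ between conjugate heat flows is monotone. Its limit is $d_T^Z(z,w)$. So at time $t$ the value is at most $d_T^Z(z,w)$, and it converges to it uniformly on $Z_T$ because $Z_T$ is compact and the convergence is monotone (Dini). Next we need each $\nu_{z;t}$ to concentrate, up to a small $W_1$ error, near a point $x_z\in M$. For this I would use the variance bound $\Var_t(\nu_{z;t})\le H(T-t)$ coming from the heat kernel estimates for the completion, so that $d_{W_1}^{g_t}(\delta_{x_z},\nu_{z;t})\le C\sqrt{T-t}$. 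The triangle inequality then gives
\[
\bigl|d_{g_t}(x_z,x_w)-d_T^Z(z,w)\bigr|\le C\sqrt{T-t}+o(1).
\]
Thus $z\mapsto x_z$ is an almost isometric embedding $Z_T\to M$.

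The second step, which I expect to be the main obstacle, is to show that the image of $z\mapsto x_z$ is $\epsilon(t)$-dense in $(M,d_{g_t})$. Equivalently, no region of $M$ that carries definite $g_t$-distance can disappear into $Z_T$ without being seen. Given $x\in M$, take a limit point $z$ of $(x,t')$ in $Z$ as $t'\nearrow T$; this uses compactness of $Z$ and the uniform diameter bound. The difficulty is to compare $d_{g_t}(x,x_z)$ with quantities controlled at time $T$. In high-curvature regions, points may lie at positive $g_t$-distance from $x_z$ while sitting in thin $\delta$-necks. Here I would use the measure $\mu_t$ and Proposition~\ref{prop:ballpacking}. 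If the image missed some ball $B_{g_t}(x,\epsilon)$, that ball would carry $\mu_t$-mass at least $c_\epsilon$. On the other hand, by the canonical neighborhood theorem, the part of $M$ that is not yet close to its time-$T$ counterpart is contained in tubes and caps, whose lengths are controlled by $\int|\scal|$ over shrinking neck regions. Via the uniform $L^1$ curvature bound and the neck estimate ``scalar integral $\gtrsim$ diameter'', this $\int|\scal|$ tends to zero. Concretely, I would first prove that the low-curvature part $\{|\Rm|\le r^{-2}\}$ converges smoothly and is captured by $\Phi_t$ up to errors $o(1)$. I would then bound the $g_t$-length of every neck component by $C\int_{\text{neck}}|\scal|\,\d V_{g_t}$ plus $C r$. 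Finally, letting $t\nearrow T$ and then $r\to0$, these lengths are seen to be uniformly small, once we know that mass escaping into the singular set at time $T$ becomes negligible, for instance from the Minkowski dimension bound for singular slices.

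Combining the two steps, $z\mapsto x_z$ is an $\epsilon(t)$-GH approximation with $\epsilon(t)\to0$. This gives $(M,d_{g_t})\to(Z_T,d_T^Z)$ along the full limit $t\nearrow T$, and hence uniqueness of the limit.
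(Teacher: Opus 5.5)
Your first step is sound and is essentially the paper's Proposition~\ref{prop:embh}: $H$-centers plus monotonicity of $d_{W_1}$. One point is missing there: compactness of $(Z_T,d^Z_T)$ is not known a priori. It does follow from completeness together with Proposition~\ref{prop:ballpacking}, since finitely many $\ep$-separated points of $Z_T$ have $\ep/2$-separated $H_3$-centers at times close to $T$. With compactness in hand your Dini argument works, because the functions are $1$-Lipschitz in each variable.

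The genuine gap is in your second step, which is the heart of the theorem, and the mechanism you propose there fails. The uniform $L^1$ bound gives no decay. It is also false that the $g_t$-lengths of neck components, or $\int|\scal|$ over the high-curvature part, become small as $t\nearrow T$. Consider the shrinking product $S^2(\rho(t))\times S^1(L)$ with $\rho(t)^2=\rho_0^2-2t$, normalized for $\rho_0,L$ large. Then $\int_M\scal\,\d V_{g_t}=8\pi L$ for all $t$, the whole manifold is a neck of length $L$ at scale $\sim\sqrt{T-t}$, and $Z_T=\mathcal S_T$ is a circle of length $L$. Density holds there only because every neck point is an $H$-center of a point of $Z_T$. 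Nothing in your argument detects this: you only capture the low-curvature part, through worldlines.

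Appealing to the Minkowski bound for $\mathcal S_T$ does not help. In the paper that bound (Theorem~\ref{thm:singular}(2)) is deduced \emph{from} the convergence theorem, so using it here is circular. In any case, a size bound on $\mathcal S_T\subset Z_T$ cannot control regions of $(M,g_t)$ that are, by hypothesis, invisible from $Z_T$.

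What is missing is a spacetime structural input on $Z$ near time $T$. It must separate high-curvature regions anchored at points of $Z_T$ from those that are not. The paper obtains this in Lemma~\ref{lem:key} from the neck decomposition (Theorem~\ref{neckdecomgeneral3d}) with content bound $\sum_a r_a+\sum_b r_b\le C$. Write $B_j=B_{g_{t_j}}(x_j,\ep)\times\{t_j\}$.
\begin{itemize}
\item If $B_j$ meets some $B^*(x_b,r_b)$ with $r_b>\delta'$, it contains a point with $r_{\Rm}\ge c\delta'$. That point flows to a regular point of $Z_T$ and is an $H$-center of it.
\item The pieces of scale $\le\delta'$ contribute at most $C\delta'$ to $\int_{B_j}|\scal|$.
\item In a large cylindrical neck region $\NNN_a$, consider a point whose nearest center point lies in, or within $d_x$ of, $\CCC_a\cap Z_T$. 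Such a point is an $H$-center of a point of $\CCC_a\cap Z_T$.
\item Otherwise, the scalar mass of those points is bounded by $C\mu_a(\CCC_a\cap Z_{[2t_j-T,T)})$. This tends to $0$ by countable additivity of the packing measure.
\end{itemize}
Only after this does small $\int_{B_j}|\scal|$ force $|\Rm|\le C$ on $B_{g_{t_j}}(x_j,\ep/2)$, via the neck-length estimate of Proposition~\ref{prop:ballpacking}. That produces a regular point of $Z_T$ with an $H$-center in the ball. Without the dichotomy between necks anchored at $\CCC_a\cap Z_T$ and necks of vanishing packing measure, your density step is unproved.
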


The proof has two main steps. First, using the \(L^1\)-curvature bound and the bounded diameter theorem, we obtain sequential Gromov--Hausdorff compactness of the time-slices. Given any sequential limit \((X_{\mathrm{GH}},d_{\mathrm{GH}})\) obtained from \((M,d_{g_{t_j}})\), we then construct an isometric embedding; see Proposition~\ref{prop:embh},
\[
    \iota:(Z_T,d_T^Z)\longrightarrow (X_{\mathrm{GH}},d_{\mathrm{GH}}),
\]
by approximating points of \(Z_T\) using \(H\)-centers at times \(t_j\nearrow T\). The key point is to prove that this embedding is surjective; see Proposition~\ref{prop:surj}. This is achieved through a quantitative covering argument based on the neck decomposition theorem from \cite{fangli2025recti}. Roughly speaking, every ball of definite radius in \((M,g_{t_j})\) must contain an \(H\)-center of some point in \(Z_T\); see Lemma~\ref{lem:key}. This rules out the possibility that a point in the sequential Gromov--Hausdorff limit lies outside the image of \(Z_T\).

We next pass from smooth Ricci flows to general singular Ricci flows. Let
\[
    (\mathcal M,\mathfrak t,\partial_{\mathfrak t},g)
\]
be the singular Ricci flow starting from a normalized, closed, orientable three-dimensional Riemannian manifold. This is a Ricci flow spacetime in the sense of Definition~\ref{def_RF_spacetime}. Then we consider the spacetime region \(\mathcal M_{[1,+\infty)}\) and define the spacetime distance \(d^*\) using the heat kernel on the singular Ricci flow; see Definition~\ref{defndstardistance}.

Recall that the singular Ricci flow \(\mathcal M\) is obtained as the limit of a sequence of Ricci flows with surgery as the surgery parameter tends to zero. Therefore, \(\mathcal M\) itself is not automatically a noncollapsed Ricci flow limit space in the sense of \cite[Definition 3.21]{fangli2025structure}, since the latter is defined as a Gromov--Hausdorff limit of closed Ricci flows. Consequently, one must first establish the relevant analytic estimates, such as heat kernel estimates, directly on the singular Ricci flow. These estimates are needed in order to extend the framework of \cite{fangli2025structure,fangli2025loja,fangli2025recti} to the present setting.

A substantial part of the paper, namely Section~\ref{sec:4}, is devoted to establishing these analytic estimates. The heat kernel on a singular Ricci flow was constructed by Lai \cite{lai2021}. We prove several important estimates for this heat kernel that parallel the corresponding estimates for smooth closed Ricci flows, including maximum principles, gradient estimates, Poincar\'e and logarithmic Sobolev inequalities, Gaussian concentration estimates, and monotonicity formulas for the pointed Nash and \(\mathcal W\)-entropies.

Most proofs are similar to those in the closed case. The main new issue is that, for a singular Ricci flow, each time-slice may be noncompact and incomplete. Thus, in many places, one must justify integrations by parts carefully. Thanks to the canonical neighborhood assumption, one can approximate a time-slice by regions whose boundary components are central two-spheres of \(\ep\)-necks. Using the rapid decay of the conjugate heat kernel near these boundary components, see Lemma~\ref{lem:potent1}, the corresponding boundary integrals vanish in the limit.

These estimates allow us to define the spacetime distance \(d^*\), extend conjugate heat kernel measures to the completion \(Z\), and analyze the intrinsic metrics \(d_t^Z\) on time-slices. Let
\[
    (Z,d_Z,\mathfrak t)
\]
be the metric completion of \(\MM_{[1,\infty)}\) with respect to \(d^*\). We denote the regular part by
\[
    \mathcal R:=\MM_{[1,\infty)},
\]
and the singular set by
\[
    \mathcal S:=Z\setminus \mathcal R.
\]
For each time \(t \ge 1\), we write
\[
    Z_t:=\mathfrak t^{-1}(t),\qquad
    \mathcal R_t:=\mathcal R\cap Z_t,\qquad
    \mathcal S_t:=\mathcal S\cap Z_t.
\]
As in the closed case, each slice \(Z_t\) carries a canonical intrinsic extended distance \(d_t^Z\), again defined by conjugate heat kernel measures; see Definition~\ref{defntimeslicedist}.

Let us briefly describe the geometric picture behind the proof. The spacetime completion \((Z,d_Z,\mathfrak t)\) is a parabolic metric space: the time function satisfies
\[
    |\mathfrak t(x)-\mathfrak t(y)|\le d_Z(x,y)^2.
\]
Near regular points, \(Z\) is modeled on the smooth Ricci flow spacetime \(\mathcal R\). Near singular points, blow-ups are modeled on quotient cylindrical Ricci shrinkers; see Subsection~\ref{subsec:tan}. Once the necessary analytic estimates are established, one can follow the same argument as in \cite{fangli2025recti} to obtain the neck decomposition theorem; see Theorem~\ref{neckdecomgeneral3dsing}. This theorem organizes high-curvature regions into cylindrical neck regions and controlled error regions.

Using the same method as in the proof of the Gromov--Hausdorff convergence at the first singular time, we obtain our second main result. It shows that the intrinsic metric slice \((Z_{t_0},d_{t_0}^Z)\) is the Gromov--Hausdorff limit of the corresponding regular slices as \(t\nearrow t_0\). Since a singular Ricci flow may have several connected components at a fixed time, the statement is naturally formulated componentwise.

\begin{thm}[Gromov--Hausdorff convergence of singular Ricci flow time-slices]
Let \((\mathcal M,\mathfrak t,\partial_{\mathfrak t},g)\) be the singular Ricci flow with a normalized, closed, orientable three-dimensional initial condition. For each $t_0 >1$, let \(Z_{t_0}'\) be a connected component of \(Z_{t_0}\), and let \(\mathcal R'\subset \mathcal R_{[1,t_0)}\) be the branch determined by $Z_{t_0}'$; see Definition~\ref{defn:componentbranch}. Then
\[
    (\overline{\mathcal R_t'},d_{g_t})
    \xrightarrow[t\nearrow t_0]{\GHconvtext}
    (Z_{t_0}',d_{t_0}^Z),
\]
where \(\overline{\mathcal R_t'}\) denotes the metric completion of \((\mathcal R_t',d_{g_t})\). In particular, each connected component of \((Z_{t_0},d_{t_0}^Z)\) is a compact geodesic metric space.
\end{thm}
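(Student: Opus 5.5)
The plan is to repeat the two-step scheme used for the first-singular-time theorem, now carried out on the branch $\mathcal R'$: first obtain sequential Gromov--Hausdorff compactness, then identify every sequential limit with $(Z_{t_0}',d_{t_0}^Z)$ through an isometric embedding that we then show is surjective.

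\textbf{Step 1: compactness.} For $t<t_0$ close to $t_0$, the slices $\mathcal R_t'$ should have uniformly bounded diameter and satisfy a uniform ball-packing bound.
\begin{itemize}
\item We apply the $L^1$-curvature bound and the bounded diameter estimate in the singular setting, obtained from the analytic estimates of Section~\ref{sec:4}.
\item We introduce the measure $\mu_t=(|\scal|+1)\,\d V_{g_t}$ on $\mathcal R_t'$ and prove the analogue of Proposition~\ref{prop:ballpacking}: $\mu_t(B_{g_t}(x,\ep))\ge c_\ep$.
\item The argument is the same as before. By the canonical neighborhood theorem, high-curvature regions are $\delta$-tubes or caps. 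On a tube the scalar curvature integral is comparable to the length. Near the incomplete ends of $\mathcal R_t'$ the curvature blows up, so any ball reaching such an end already carries definite $\mu_t$-mass.
\item Since $\mu_t(\mathcal R_t')$ is uniformly bounded, the metric completions $\overline{\mathcal R_t'}$ are uniformly totally bounded. Hence every sequence $t_j\nearrow t_0$ has a subsequence converging to a compact limit $(X_{\GH},d_{\GH})$.
\end{itemize}

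\textbf{Step 2: embedding.} We construct $\iota:(Z_{t_0}',d_{t_0}^Z)\to(X_{\GH},d_{\GH})$ as in Proposition~\ref{prop:embh}.
\begin{itemize}
\item For $z\in Z_{t_0}'$, choose $H$-centers $x_j\in\mathcal R_{t_j}$ of $\nu_{z;t_j}$. By the definition of the branch (Definition~\ref{defn:componentbranch}), these lie in $\mathcal R_{t_j}'$ for $j$ large.
\item Set $\iota(z)=\lim x_j$. This is well defined after a diagonal subsequence over a countable dense subset of $Z_{t_0}'$, followed by extension by continuity.
\item Isometry follows from
\[
d_{g_{t_j}}(x_j,y_j)=d_{W_1}^{g_{t_j}}(\nu_{z;t_j},\nu_{y;t_j})+O(\sqrt{H(t_0-t_j)}),
\]
which is the Gaussian concentration estimate for the singular heat kernel. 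The $W_1$ distance converges to $d_{t_0}^Z(z,y)$ by Definition~\ref{defntimeslicedist}.
\item One point needs care. $d_{t_0}^Z$ is an extended distance on $Z_{t_0}$ but is finite on components, so $Z_{t_0}'$ maps into a single limit. We also need $d_{g_t}$ restricted to $\mathcal R_t'$ to agree with the intrinsic length of the completion, which holds because $\mathcal R_t'$ is connected.
\end{itemize}

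\textbf{Step 3: surjectivity (main obstacle).} We prove the analogue of Lemma~\ref{lem:key}: for each $r>0$ and $j$ large, every ball $B_{g_{t_j}}(x,r)\subset\mathcal R_{t_j}'$ contains an $H$-center of some point of $Z_{t_0}'$.
\begin{itemize}
\item Take $x$ and flow its conjugate heat kernel forward in the heuristic sense. Since $Z$ is the completion of $\mathcal R$, the points $(x,t_j)$ have limits in $Z_{t_0}$ along $\partial_{\mathfrak t}$-worldlines only when the worldlines survive. In general they do not survive, because points can enter caps or vanishing components.
\item So instead we use the neck decomposition, Theorem~\ref{neckdecomgeneral3dsing}, on $Z$ near time $t_0$. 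This gives a covering of the parabolic ball around $Z_{t_0}'$ by neck regions, whose centers form horizontally $1$-rectifiable sets with controlled content, together with regular balls at scale $r_b$, of controlled number.
\item For a point $y\in B_{g_{t_j}}(x,r)$, consider the unique point $z'$ at time $t_0$ reached by following the regular worldline, if it exists.
  \begin{itemize}
  \item In regular balls, the worldline survives and the $H$-center condition is immediate.
  \item In neck regions, the slices look like $\ep$-necks, and a point of $\mathcal S_{t_0}$ on the neck axis has $H$-centers at time $t_j$ within distance $\sim\sqrt{t_0-t_j}$ along the neck. Content bounds on the neck centers then yield points within $r$.
  \end{itemize}
\item The hard part is excluding portions of $\mathcal R_{t_j}'$ that disappear entirely, such as long thin horns pinching off or components shrinking to points, while staying at definite distance $r$ from every $H$-center.
  \begin{itemize}
  \item I would first try a mass argument: such a region has definite $\mu_{t_j}$-mass by Step 1, yet its conjugate heat kernel mass from points of $Z_{t_0}'$ would vanish.
  \item I would contradict this using the Nash entropy lower bound and the packing bounds from the neck decomposition, which force total volume near $t_0$ to be accounted for by $H$-centers.
  \item Alternatively, one could use that extinct caps have diameter $\lesssim$ the curvature scale, which tends to $0$.
  \end{itemize}
\end{itemize}

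Once Steps 2 and 3 are in place, $\iota$ is surjective, so every sequential limit equals $(Z_{t_0}',d_{t_0}^Z)$, and therefore the full convergence holds. Compactness of $Z_{t_0}'$ then follows from Step 1. Being geodesic follows because $Z_{t_0}'$ is a Gromov--Hausdorff limit of length spaces, hence compact and geodesic.
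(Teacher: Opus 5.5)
Your Steps 1 and 2 are essentially the paper's argument: ball packing with $\mu_t=(|\scal|+1)\,\d V_{g_t}$ using Theorems~\ref{thm:RmL1y} and~\ref{thm:bdddiam}, followed by the $H_3$-center embedding of Proposition~\ref{prop:embh} on the branch $\mathcal R'$. Step 3, however, carries the real content, and there it has a genuine gap.

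The configurations you must exclude are long thin $\delta$-tubes in $\mathcal R'_{t_j}$, for instance ones that pinch off at some time in $(t_j,t_0)$. A tube of length $\ell\ge\ep$ and radius $\rho\to 0$ has $\int|\scal|\,\d V\sim\ell$, but its volume is $\sim\rho^2\ell\to 0$.
\begin{itemize}
\item A contradiction routed through volume or the Nash entropy (``total volume near $t_0$ accounted for by $H$-centers'') cannot detect such a tube.
\item The remark that extinct caps have small diameter does not address tubes at all.
\item Your neck-region bullet presupposes that the relevant neck centers lie in $Z_{t_0}$. The actual difficulty is the part of a neck region whose nearest centers lie in $\CCC_a\setminus Z_{t_0}$, i.e.\ strictly before $t_0$. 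There, content bounds alone produce no point of $Z_{t_0}$ within distance $r$.
\end{itemize}

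The missing mechanism, which is the paper's Lemma~\ref{lem:key}, is a dichotomy for $\int_{B_j}|\scal|\,\d V_{g_{t_j}}$, where $B_j=B_{g_{t_j}}(x_j,\ep)$.
\begin{enumerate}
\item Apply Theorem~\ref{neckdecomgeneral3dsing} to finitely many $d_Z$-balls covering the relevant part of $Z$ at times close to $t_0$. Choose $\delta'$ with $\sum_{r_a\le\delta'}r_a+\sum_{r_b\le\delta'}r_b\le\delta'$.
\item If $B_j$ meets some $B^*(x_b,r_b)$ with $r_b>\delta'$, then $r_{\Rm}\ge c\,\delta'$ at some point of $B_j$. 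Its worldline reaches $\mathcal R\cap Z_{t_0}$, and the point is an $H$-center by Proposition~\ref{HncenterScal}(i).
\item Otherwise, the small-scale pieces contribute at most $C\delta'$ to $\int_{B_j}|\scal|$, as in \cite[Theorem 6.31]{fangli2025recti}.
\item Consider each of the finitely many necks with $r_a>\delta'$. Either the nearest center $x'\in\CCC_a$ of some $x\in B_j$ satisfies $B^*(x',d_Z(x,\CCC_a))\cap\CCC_a\cap Z_{t_0}\neq\emptyset$. Then $x$ is an $H$-center of a point of $Z_{t_0}$, by Lemma~\ref{lem:mono1} and the non-expanding estimate \cite[Theorem 9.1]{bamler2020structure}.
\item Or else a Vitali covering bounds that neck's contribution by $C\,\mu_a(\CCC_a\cap Z_{[2t_j-t_0,t_0)})$. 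This tends to $0$ because the packing measure $\mu_a$ is finite and these sets decrease to the empty set.
\item Thus $\int_{B_j}|\scal|\le C\delta'+\Psi(j^{-1})$. On the other hand, the tube argument of Case~2 of Proposition~\ref{prop:ballpacking} shows that unbounded curvature on $B_{g_{t_j}}(x_j,\ep/2)$ forces $\int_{B_j}|\scal|\ge c_0\ep$.
\item Hence $|\Rm|$ is uniformly bounded on the half-ball. The worldlines through it survive to $t_0$, which yields the required $H$-center.
\end{enumerate}

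What your proposal lacks is exactly this two-sided control of the same quantity $\int|\scal|$. It is bounded above by the content and packing estimates, using finiteness of $\mu_a$ near $Z_{t_0}$. It is bounded below by tube length. Without both, Step 3 is not proved.
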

Note that if \(Z_{t_0}\) contains no singular points, as is the case for $t_0$ outside a subset of $\mathscr{H}^{1/2}$-measure zero by Theorem~\ref{thm:intro_singular_structure}(2), then
\[
    Z_{t_0}=\mathcal R_{t_0}
    \qquad\text{and}\qquad
    d^Z_{t_0}=d_{g_{t_0}};
\]
see Proposition~\ref{prop:distance}. Thus, the theorem above can be interpreted as the past-continuity, in the Gromov--Hausdorff sense, of the regular time-slices $(\mathcal R_t,d_{g_t})$ as \(t\nearrow t_0\). More generally, if one considers \(Z_t\) with its intrinsic distance \(d^Z_t\), then the same theorem implies the past-continuity of $(Z_t,d^Z_t)$ as \(t\nearrow t_0\); see Proposition~\ref{prop:pastcont}.

The next main theme is the structure of the singular set. In the general theory of noncollapsed Ricci flow limit spaces developed in \cite{fangli2025structure}, the singular set admits a stratification according to the symmetry of tangent flows. In dimension three, the situation is especially rigid: every singular tangent flow is quotient cylindrical. Thus, the singular set behaves like a one-dimensional parabolic object. By generalizing the cylindrical strong uniqueness and parabolic rectifiability results of \cite{fangli2025loja,fangli2025recti}, and using the heat kernel estimates developed here, we prove the following structure theorem.

\begin{thm}[Structure of the singular set]
\label{thm:intro_singular_structure}
The singular set \(\mathcal S\subset Z\) satisfies the following properties.
\begin{enumerate}[label=\textnormal{(\arabic*)}]
\item \(\mathcal S\) is horizontally parabolic \(1\)-rectifiable.

\item The \(1/2\)-dimensional Hausdorff measure of the set of singular times vanishes:
\[
    \mathscr H^{1/2}(\mathfrak t(\mathcal S))=0.
\]

\item For any $T>1$ and \(z_0\in Z_{[1, T]}\) with \(\mathfrak t(z_0)-2r_0^2>1\), we have
\[
    \mathscr H^1\bigl(\mathcal S\cap B^*(z_0,r_0)\bigr)\le C(T)r^5_0.
\]

\item For any $t \ge 1$, the Minkowski dimension of \(\mathcal S_t\subset Z_t\) satisfies
\[
    \dim_{\mathscr M}(\mathcal S_t)\le 1
\]
with respect to the intrinsic metric \(d_t^Z\).
\end{enumerate}
\end{thm}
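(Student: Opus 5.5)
The plan is to transplant the machinery of \cite{fangli2025structure,fangli2025loja,fangli2025recti} to $Z$. Its analytic inputs are heat kernel bounds, Nash entropy monotonicity, Gaussian concentration and the $\mathcal W$-entropy. Once the estimates of Section~\ref{sec:4} are available on the singular Ricci flow, these inputs hold for $\MM_{[1,\infty)}$, and so $Z$ satisfies the axioms of a noncollapsed Ricci flow limit space at all scales below $\sqrt{\t(z)-1}$.

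\textbf{Step 1: tangent flows and $\epsilon$-regularity.} In dimension three, every tangent flow at a point of $\mathcal S$ is a quotient cylindrical shrinker $\R\times S^2/\Gamma$. This follows from the canonical neighborhood theorem together with Subsection~\ref{subsec:tan}. Hence the top stratum is $\mathcal S=\mathcal S^1$, and every lower stratum is empty, since $\R^3$ tangent flows are regular by $\epsilon$-regularity.

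\textbf{Step 2: rectifiability and the Minkowski bound.}
\begin{enumerate}[label=\textnormal{(\roman*)}]
\item I would generalize the cylindrical strong uniqueness of \cite{fangli2025loja}. This is the Łojasiewicz-type argument showing that the cylindrical tangent flow at $z\in\mathcal S$ is unique, with a polynomial rate. Together with the quantitative differentiation of the Nash entropy, uniqueness makes $\mathcal S$ locally contained in countably many bi-Lipschitz images of subsets of $\R$ oriented along the $\R$-factor of the cylinder. Because this direction is spatial, the rectifiability is \emph{horizontal}, which gives (1).
\item For (3), I would run the neck decomposition theorem, Theorem~\ref{neckdecomgeneral3dsing}, in $B^*(z_0,r_0)$. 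Each neck region has singular part of $\mathscr H^1$-measure bounded by its radius, via a packing estimate on the neck centers. The balls in the decomposition satisfy $\sum r_i\le C r_0$. In parabolic units this yields the bound $C(T)r_0^{5}$; the exponent comes from how the parabolic metric weights time, with $\mathscr H^1$ taken with respect to $d_Z$.
\item For (4), I would use the neck decomposition restricted to time-slices. The content bound $\sum r_i\le C$ at every scale $r$ gives that $\mathcal S_t$ is covered by $Cr^{-1-\epsilon}$ balls of $d^Z_t$-radius $r$. The comparison between $d^*$-balls and $d^Z_t$-balls (Definition~\ref{defntimeslicedist}) transfers the covering.
\end{enumerate}

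\textbf{Step 3: the time image.} For (2), horizontal rectifiability means that along each Lipschitz curve in $\mathcal S$ the time function has vanishing parabolic derivative. Parabolicity gives $|\t(x)-\t(y)|\le d_Z(x,y)^2$, and horizontality upgrades this to $o(d_Z^2)$ at density points. A Vitali covering of $\mathcal S$ then shows $\mathscr H^{1/2}(\t(\mathcal S))\le \sum o(r_i^2)^{1/2}=o(\sum r_i)$, which tends to zero given the local finiteness from (3).

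\textbf{Main obstacle.} I expect the hardest step to be the cylindrical strong uniqueness on $Z$. It requires Łojasiewicz inequalities for the pointed $\mathcal W$-entropy on incomplete time-slices. My first approach would be to justify the integrations by parts through the exhaustion by regions bounded by $\epsilon$-neck spheres, using the decay in Lemma~\ref{lem:potent1}.
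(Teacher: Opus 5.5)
Your treatment of (4) in Step 2(iii) has a genuine gap. The only general comparison between the two distances on a slice is $d_Z\le d^Z_t$ (Proposition~\ref{prop:basic2}(1)), and it points the wrong way. A $d^*$-ball of radius $r$ can meet $Z_t$ in a set of large, even infinite, $d^Z_t$-diameter: $d^Z_t=+\infty$ between points on different branches, and such points can be $d_Z$-close just after a neckpinch. So covering $\mathcal S_t$ by $d^*$-balls from the spacetime neck decomposition gives no covering by $d^Z_t$-balls. The decomposition also only controls $\mathscr H^1$-content. The bound $\sum_a r_a\le C$ does not bound the number of pieces of radius below $r$, and the leftover set $\tilde S^{1,\delta,\eta}$ with $\mathscr H^1(\tilde S^{1,\delta,\eta})=0$ carries no Minkowski information. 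The paper's proof works intrinsically on slices and uses two ingredients missing from your plan:
\begin{enumerate}
\item For $s<t$, every point of $\{r_{\Rm}<\delta r\}\cap\mathcal R_s$ satisfies $\int_{B_{g_s}(x,r)}|\scal|\,\d V_{g_s}\ge c_0r$, by the $\delta$-tube length argument of Proposition~\ref{prop:ballpacking}. The slicewise $L^1$ bound of Theorem~\ref{thm:RmL1y} and a Vitali argument then cover this set by $Cr^{-1}$ balls of $g_s$-radius $r$.
\item The Gromov--Hausdorff convergence $(\overline{\mathcal R'_s},d_{g_s})\to(Z'_t,d^Z_t)$ of Proposition~\ref{prop:ztstructure} transfers this covering to $\mathcal S_t$.
\end{enumerate}

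Two further points.

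First, Step~1 is false. The list in Subsection~\ref{subsec:tan} contains $S^3/\Gamma$ (a spherical space-form component going extinct) and $S^2\times_{\mathbb Z_2}\R$, which has no translational symmetry. Hence $\mathcal S^0$ is in general nonempty, and cylindrical uniqueness does not reach it. You need $\mathscr H^1(\mathcal S^0)=0$ (the $k=0$ case of Theorem~\ref{thm:rec}), which is how Corollary~\ref{cor:singulartime} disposes of these points. That proof also makes your pointwise $o(d_Z^2)$ uniform. It does so through compactness of $A_i=\CCC_{i,0}\cap\overline{B^*(x_i,r_i)}$ and uniform cylindricality, with $\mathscr H^1(A_i)\le C(T)r_i$ from Ahlfors regularity. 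Apart from this, your routes to (1) and (2) match the paper's.

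Second, the $r_0^5$ in Step~2(ii) is not actually derived. The content estimate of Theorem~\ref{neckdecomgeneral3dsing}(c) is linear in $r_0$, and $\mathscr H^1$ with respect to $d_Z$ scales linearly. So the neck decomposition gives $C(T)r_0$, and no ``parabolic units'' argument improves the exponent. In fact $r_0^5$ fails as $r_0\to0$. For the round flow on $S^2\times S^1$, the extinction slice is a circle of singular points on which $d_Z$ is comparable to arc length at small scales, so $\mathscr H^1(\mathcal S\cap B^*(z_0,r_0))\ge cr_0$. The exponent in (3), like $r_0^{n+2}$ in the higher-dimensional section, appears to come from the volume bound in Proposition~\ref{prop:volume}(ii). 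What your argument and the paper's actually prove is $C(T)r_0$.
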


Note that Theorem~\ref{thm:intro_singular_structure}(2) improves \cite[Theorem 1.4]{kleinerlott2020singular2}. We further conjecture that \(\mathfrak t(\mathcal S)\) should be a finite set.

We also prove global curvature and diameter estimates for singular Ricci flows. Let \(D_{g_t}\) denote the sum of the diameters of all connected components of \(\mathcal R_t\) with respect to \(d_{g_t}\). Then we have the following theorem.

\begin{thm}[\(L^1\)-curvature and diameter bounds]
In the above setting, for every $T>2$ and \(t\in[2,T]\),
\[
    \int_{\mathcal R_t} |\Rm|\,\d V_{g_t}
    \le
    \int_{\mathcal R_t} r_{\Rm}^{-2}\,\d V_{g_t}
    \le C,
\]
and
\[
    D_{g_t}\le C,
\]
where \(C\) depends only on \(T\) and the diameter of the initial metric.
\end{thm}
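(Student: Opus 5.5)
The plan is to establish the $L^1$-curvature bound $\int_{\mathcal R_t} r_{\Rm}^{-2}\,\d V_{g_t}\le C$ first, and then deduce the diameter bound from it by a neck-counting argument. The first inequality is immediate, since $|\Rm|\le C r_{\Rm}^{-2}$ pointwise by the definition of the curvature scale. For the main bound, I would follow the scheme of \cite{fangli2025recti}. There, the $L^1$ bound for closed flows came from the neck decomposition theorem together with an estimate of the volume of the $r$-scale curvature set, roughly
\[
\mathrm{Vol}_{g_t}\bigl(\{r_{\Rm}<r\}\cap \mathcal R_t\bigr)\le C r^{2}.
\]
The singular set at a fixed time is effectively $1$-dimensional, sitting inside a $3$-dimensional slice. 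Integrating in dyadic scales then gives
\[
\int r_{\Rm}^{-2} \le C+\sum_k 2^{2k}\,\mathrm{Vol}\bigl(\{r_{\Rm}\in[2^{-k-1},2^{-k})\}\bigr)\le C\sum_k 2^{2k}2^{-2k}\cdot(\text{gain}).
\]
This sum diverges by a logarithm unless one uses the finer content bound from the neck decomposition. So the key input is Theorem~\ref{neckdecomgeneral3dsing}, applied on the completion $Z$ at scale comparable to $1$ near each $z\in Z_{t}$ with basepoint time $t+r^2$ (possible since $t\le T$ and the flow exists on $\MM_{[1,\infty)}$). It yields a covering of $B^*(z,1)\cap Z_t$ by neck regions $\mathcal N_a$ with centers $\mathcal C_a$, and by balls $B_b$ with $r_{\Rm}\ge r_b$ on them, with content bound $\sum_a r_a+\sum_b r_b\le C$ (the $1$-dimensional packing).

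Next I would estimate the contribution of each region. On a ball $B_b$ with $r_{\Rm}\gtrsim r_b$ we have $\int_{B_b} r_{\Rm}^{-2}\le C r_b^{-2}\cdot r_b^3=Cr_b$. On a neck region, the slice is a union of $\delta$-necks whose cross-sectional radius is comparable to $r_{\Rm}$, parametrized along the $1$-dimensional center set. The integral over a neck of radius $\rho$ contributes $\rho^{-2}\cdot\rho^3=\rho$, so summing along the neck gives a quantity comparable to the length of the neck region, which is $\lesssim r_a$ times a uniform constant by the $d^*$-Lipschitz control of the center set. Summing over all regions gives a bound $C$ per unit ball. The number of unit $d_t^Z$-balls needed to cover $Z_t$ is bounded via the bounded volume $\mathrm{Vol}(\mathcal R_t)\le C(T)$ (the scalar curvature lower bound and normalization give volume control, and surgery only decreases volume) together with the noncollapsing at scale $1$ from the Nash entropy lower bound. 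This gives the global bound.

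For the diameter, I would adapt the argument of Proposition~\ref{prop:ballpacking}. On each component of $\mathcal R_t$, a minimizing path of length $L$ either stays in the region $\{r_{\Rm}\ge \epsilon\}$ or passes through $\delta$-tubes. The first region has volume $\le C$ and is $\kappa$-noncollapsed, so it contains at most $C\epsilon^{-3}$ disjoint $\epsilon$-balls, and the portions of the path there have length $\le C(\epsilon)$. In a $\delta$-tube, the integral $\int r_{\Rm}^{-2}$ is comparable to the length, as noted in the introduction. Hence
\[
L\le C(\epsilon)+C\int_{\mathcal R'_t} r_{\Rm}^{-2},
\]
and summing over components, each of which contributes at least a definite amount of volume or curvature integral (components of small volume and high curvature are caps or $\epsilon$-tubes whose diameter is again bounded by the curvature integral), yields $D_{g_t}\le C$.

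I expect the main obstacle to be establishing the neck decomposition with a uniform content bound on the singular flow, uniformly for $t\in[2,T]$, including times at which $Z_t$ contains singular points. There, $\mathcal R_t$ is incomplete, so one must make sure the integral over $\mathcal R_t$ is fully captured by the decomposition on $Z_t$, with no mass escaping into the ends near $\mathcal S_t$. I would first try to handle this by approximating with the exhaustions bounded by central spheres of $\epsilon$-necks, as in Section~\ref{sec:4}, and then use monotone convergence.
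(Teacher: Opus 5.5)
Your strategy matches the paper's. The $L^1$ bound comes from Theorem~\ref{neckdecomgeneral3dsing} as in \cite[Theorem 6.31]{fangli2025recti}. Balls containing a $(2,\eta,r_b)$-symmetric point contribute $\lesssim r_b$. On a neck region one takes a Vitali cover by balls of radius comparable to the distance $d_x$ to the center, then uses Ahlfors regularity of the packing measure on $\mathcal C_a$; this is the precise form of your ``length of the neck'' heuristic. The bound $D_{g_t}\le C$ then follows by running the argument of Proposition~\ref{prop:ballpacking} along a minimizing geodesic, as in Theorem~\ref{thm:bdddiam}. (The first inequality holds with constant $1$, directly from Definition~\ref{def:curvatureradius}.)

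\textbf{The gap.} The globalization step fails as written. You bound the number of unit $d_t^Z$-balls needed to cover $Z_t$ using $|\mathcal R_t|_t\le C$ and noncollapsing at scale $1$, but neither ingredient gives a lower volume bound for such balls.
\begin{itemize}
\item Since $d_t^Z=+\infty$ between different components, you need at least one ball per component. Components on which $\scal$ is everywhere large can have arbitrarily small volume.
\item A unit $d_t^Z$-ball centered on a long $\delta$-neck of scale $\rho\ll 1$ has volume $\sim\rho^2$. Noncollapsing at time $t$ (Proposition~\ref{prop:nolocal}(a)) requires $\scal\le r^{-2}$ on the ball, which fails there.
\end{itemize}
Controlling this count is essentially the diameter bound, together with a bound on the number of components. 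You only derive that bound afterwards from the $L^1$ estimate, so the argument is circular.

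\textbf{The fix.} Count spacetime balls instead; the neck decomposition is formulated for them anyway. Since $d_Z\le d_t^Z$ (Proposition~\ref{prop:basic2}(1)), it suffices to cover $Z_t$ by balls $B^*(z_i,r_0)$ with $z_i\in Z_t$ and $r_0=c\,\zeta(T)$. A maximal $d_Z$-separated family at scale $r_0$ has cardinality at most $C(T,\mathrm{diam}_{g_0}(M))$, for two reasons:
\begin{itemize}
\item $d^*\le\max\{\sqrt{T-1},2\,\mathrm{diam}_{g_0}(M)\}$ on $\MM_{[1,T]}$;
\item Proposition~\ref{prop:volume}(ii),(iii) give $|B^*(x,r)|\ge c(T)r^5$ and an upper bound on the spacetime volume of a large ball.
\end{itemize}
This is how the paper covers $Z_{[T/2,T]}$ in Lemma~\ref{lem:key} and Theorem~\ref{thm:singular}.

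Alternatively, apply Proposition~\ref{prop:nolocal}(b) at time $t-r_0^2$ to $H_3$-centers of the $z_i$. The $d_Z$-separation of the $z_i$, combined with $d^*\le d_{g_{t-r_0^2}}$ (Proposition~\ref{prop:basicproperty}(2)), makes those past balls disjoint, and $|\mathcal R_{t-r_0^2}|\le C$ bounds their number.

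With this change the rest of your argument goes through. The incompleteness issue you flag at the end is not an obstacle: the decomposition covers all of $B^*(z_0,r_0)$, including regular points arbitrarily close to $\mathcal S_t$.
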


These estimates may be viewed as global versions, for singular Ricci flows, of the \(L^1\)-curvature bound and of a generalized form of Perelman's bounded diameter theorem.

Finally, we explain how the same ideas extend to certain higher-dimensional Ricci flows. Under the positive isotropic curvature condition, in dimensions for which the surgery theory and the classification of \(\kappa\)-solutions are available, the only noncompact singular model is the standard cylinder \(S^{n-1}\times\mathbb R\). In this setting, the same arguments yield analogous convergence and structure results for the spacetime completion of the corresponding singular Ricci flow.

The paper is organized as follows. In Section 2, we recall the relevant background on Ricci flows with surgery, singular Ricci flows, and heat kernels. In Section 3, we prove the Gromov--Hausdorff convergence theorem at the first singular time of a closed three-dimensional Ricci flow. In Section 4, we establish the heat kernel estimates on singular Ricci flows that are needed later. In Section 5, we construct the spacetime completion of a singular Ricci flow, define the intrinsic metrics on its time-slices, prove the curvature and diameter estimates, establish the Gromov--Hausdorff convergence of time-slices, and derive the structure theorems for the singular set. In Section 6, we discuss extensions to higher-dimensional Ricci flows with positive isotropic curvature.

\bigskip

\noindent\textbf{Acknowledgments}: The author thanks Hanbing Fang for proofreading the first draft. The author is supported by the National Key Research and Development Program of China 2025YFA1018200, NSFC-12522105, YSBR-001, and research funds from the University of Science and Technology of China and the Chinese Academy of Sciences.

\section{Preliminaries}

In this section, we recall some basic definitions and results concerning three-dimensional Ricci flows with surgery and singular Ricci flows from \cite{perelman2002entropy,perelman2003surgery,kleinerlott2017singular,bamlerkleiner2022,morgantian2007,caozhu2006}. For simplicity, except where explicitly stated otherwise, throughout this paper we assume that all Riemannian manifolds under consideration are \textbf{orientable}.

\subsection{Ricci flows with surgery}

\begin{defn}[Normalized Riemannian manifold] \label{def:normal}
We say that a compact $n$-dimensional Riemannian manifold $(M^n,g)$ is \textbf{normalized} if
\begin{align*}
    |\Rm|+\mathrm{inj}^{-1}\le 10^{-n},
\end{align*}
where $\mathrm{inj}$ denotes the injectivity radius of $(M,g)$.
\end{defn}

In this paper, we mostly consider Ricci flows whose initial data are normalized Riemannian manifolds.

\begin{defn}[$\kappa$-noncollapsed]
A Ricci flow $(M^3,(g_t)_{t\in I})$ is called \textbf{$\kappa$-noncollapsed} at scales less than $r_0$, where $\kappa$ and $r_0$ are positive constants, if for any $(x,t)\in M\times I$ and any $r\in(0,r_0]$, the condition
\begin{align*}
    |\Rm|\le r^{-2}\quad \text{on } B_{g_t}(x,r)
\end{align*}
implies that
\begin{equation*}
    |B_{g_t}(x,r)|_t \ge \kappa r^3.
\end{equation*}
\end{defn}

\begin{defn}[$\kappa$-solution]
An ancient complete Ricci flow $(M^3,g_t)_{t\in(-\infty,0]}$ is called a \textbf{$\kappa$-solution} if $\Rm\ge 0$ and is uniformly bounded on $M\times(-\infty,0]$, and if $(M^3,g_t)$ is $\kappa$-noncollapsed at all scales for every $t\in(-\infty,0]$.
\end{defn}

The classification of all three-dimensional $\kappa$-solutions is known; see \cite{hamilton1993formations,perelman2002entropy,naber2010noncompact,niwallach2008classification,caochenzhu2008,brendle2020dim3,brendledaskalopoulossesum2021}. The complete list consists of the following (up to quotients where allowed):
\begin{enumerate}[label=\textnormal{(\roman*)}]
    \item Ricci shrinkers. More precisely, these are the standard Ricci flows on $\R^3$, $S^3$, or $S^2\times \R$;
    \item the Bryant steady soliton;
    \item Perelman's ancient oval on $S^3$.
\end{enumerate}

\begin{defn}[$\ep$-closeness]
Let $(M_i,g_i,x_i)$, for $i=1,2$, be two pointed Riemannian manifolds. We say that $(M_1,g_1,x_1)$ is \textbf{$\ep$-close} to $(M_2,g_2,x_2)$ at scale $r$ if there exists a map
\[
\varphi:B_{g_2}(x_2,\ep^{-1})\to M_1
\]
such that $\varphi(x_2)=x_1$ and
\begin{equation*}
\bigl\| r^{-2}\varphi^*g_1-g_2 \bigr\|_{C^{[\ep^{-1}]}\left(B_{g_2}(x_2,\ep^{-1})\right)}<\ep,
\end{equation*}
where the $C^{[\ep^{-1}]}$-norm is taken with respect to $g_2$. 
\end{defn}

Next, we recall the following definition from \cite{kleinerlott2017singular}.

\begin{defn}[Ricci flow spacetime]\label{def_RF_spacetime}
A Ricci flow spacetime over an interval $I\subset \R$ is a tuple $(\mathcal M,\mathfrak t,\partial_{\mathfrak t},g)$ with the following properties:
\begin{enumerate}[label=\textnormal{(\arabic*)}]
\item $\mathcal M$ is a four-dimensional smooth manifold with smooth boundary $\partial\mathcal M$, and $\partial\mathcal M$ is a disjoint union of smooth three-dimensional manifolds.

\item $\mathfrak t:\mathcal M\to I$ is a smooth function without critical points. For any $t\in I$, we denote by
\[
\mathcal M_t:=\mathfrak t^{-1}(t)\subset\mathcal M
\]
the time-$t$ slice of $\mathcal M$.

\item $\mathfrak t(\partial\mathcal M)\subset \partial I$.

\item $\partial_{\mathfrak t}$ is a smooth vector field on $\mathcal M$ satisfying
\[
\partial_{\mathfrak t}\mathfrak t \equiv 1.
\]

\item $g$ is a smooth inner product on the spatial subbundle $\ker(d\mathfrak t)\subset T\mathcal M$. For any $t\in I$, we denote by $g_t$ the restriction of $g$ to the time-$t$ slice $\mathcal M_t$.

\item $g$ satisfies the Ricci flow equation
\[
\mathcal L_{\partial_{\mathfrak t}}g=-2\Ric(g).
\]
Here $\Ric(g)$ denotes the symmetric $(0,2)$-tensor on $\ker(d\mathfrak t)$ that restricts to the Ricci tensor of $(\mathcal M_t,g_t)$ for all $t\in I$.
\end{enumerate}
\end{defn}

Next, we recall the definition of Ricci flow with surgery from \cite{perelman2003surgery}, following the variant in \cite{Bessieres2011}, in which the surgeries are done before singular times.

\begin{defn}[Ricci flow with surgery]\label{def_RF_surgery}
A Ricci flow with surgery $\mathcal M$ consists of the following data:
\begin{enumerate}[label=\textnormal{(\arabic*)}]
\item A collection of Ricci flows
\[
\{(M_k,g_k(t))_{t\in [t_k^-,t_k^+]}\}_{1\le k\le N},
\]
where $N\le \infty$, each $M_k$ is a compact (possibly empty) manifold, $t_k^+=t_{k+1}^-$ for all $1\le k<N$.

\item A collection of isometric embeddings
\[
\{\psi_k:X_k^+\to X_{k+1}^-\}_{1\le k<N},
\]
where $X_k^+\subset M_k$ and $X_{k+1}^-\subset M_{k+1}$ are compact three-dimensional submanifolds with boundary. The sets $X_k^\pm$ are the regions that survive the transition from one flow to the next, and the maps $\psi_k$ identify these surviving regions.
\end{enumerate}

We say that $t$ is a \textbf{surgery time} if $t=t_k^+=t_{k+1}^-$ for some $1\le k<N$. If $t=t_k^+$ for some $1\le k\le N$, we set
\[
(\mathcal M_t^-,g_t)=(M_k,g_k(t_k^+)).
\]
If, in addition, $t\neq t_N^+$, then we set
\[
(\mathcal M_t^+,g_t)=\bigl(M_{k+1},g_{k+1}(t_{k+1}^-)\bigr).
\]
If $t\in (t_k^-,t_k^+)$, then we set
\[
(\mathcal M_t,g_t)=(\mathcal M_t^\pm,g_t)=(M_k,g_k(t)).
\]
\end{defn}

Given a Ricci flow with surgery $\mathcal M$ as above, one can define an associated Ricci flow spacetime $\mathcal M'$ as follows. For each $1\le k<N$, let $\mathrm{Int}(X_k^+)$ and $\mathrm{Int}(X_{k+1}^-)$ denote the interiors of $X_k^+$ and $X_{k+1}^-$, respectively. We then construct $\mathcal M'$ by gluing
\begin{equation*}
\bigl(M_k\times [t_k^-,t_k^+)\bigr)\cup \bigl(\mathrm{Int}(X_k^+)\times \{t_k^+\}\bigr), \qquad 1\le k\le N,
\end{equation*}
using the maps $\psi_k$, and removing the subsets
\begin{equation*}
\bigl(M_{k+1}\setminus \mathrm{Int}(X_{k+1}^-)\bigr)\times \{t_{k+1}^-\}, \qquad 1\le k<N.
\end{equation*}
The time functions, time vector fields, and metrics descend to $\mathcal M'$, yielding a tuple $(\mathcal M',\mathfrak t,\partial_{\mathfrak t},g)$ defined over $[t_1^-,t_N^+]$.

For any parameter $\ep>0$, there exist nonincreasing functions
\[
\delta,r,\kappa:[0,\infty)\to [0,\infty)
\]
such that, for any normalized Riemannian manifold, a Ricci flow with surgery starting from this initial condition is defined for all time with respect to $\ep$, $\delta$, $r$, and $\kappa$; see \cite[Proposition 5.1]{perelman2003finite} and \cite[Proposition 77.2]{kleinerlott2008notes}. More precisely, we have the following theorem.

\begin{thm}\label{thm:ricciflowsurgery}
For any normalized Riemannian manifold $(M^3,g_0)$, there exists a Ricci flow with surgery $\mathcal M$ defined on $[0,\infty)$ starting from $(M^3,g_0)$ such that the following properties hold.
\begin{enumerate}[label=\textnormal{(\alph*)}]
\item $\mathcal M$ is a Ricci flow with $(r(t),\delta(t))$-cutoff at every surgery time $t$, in the sense of \ucite[Definition 73.1]{kleinerlott2008notes}.

\item $\mathcal M$ satisfies the $(r,\ep)$-canonical neighborhood assumption. That is, for any $t\ge 0$ and any $x\in \mathcal M_t^\pm$, if $\scal(x,t)\ge r(t)^{-2}$, then $(\mathcal M_t^\pm,g_t,x)$ is $\ep$-close to a $\kappa$-solution; cf. \ucite[Definition 69.1]{kleinerlott2008notes}.

\item For any $t\ge 0$, $(\mathcal M_t^\pm,g_t)$ is $\kappa(t)$-noncollapsed on scales smaller than $\ep$.

\item $\mathcal M$ satisfies the Hamilton--Ivey pinching condition.
\end{enumerate}
\end{thm}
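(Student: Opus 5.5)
This theorem is the existence theorem for Ricci flow with surgery, quoted from \cite[Proposition 5.1]{perelman2003finite} and \cite[Proposition 77.2]{kleinerlott2008notes}. We would not reprove it from scratch; instead we would recall how the standard argument goes and check that its conclusions match items (a)--(d). The proof is an induction on surgery times, carried out on successive time intervals $[2^{j}\ep^2, 2^{j+1}\ep^2]$ (with an initial interval near $0$). On each interval one chooses the parameters $r_j,\kappa_j,\delta_j$ inductively and decreasingly. This gives step functions $r(t),\kappa(t),\delta(t)$, which can be replaced by nonincreasing functions.

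\textbf{Step 1: starting the flow.} Because $(M^3,g_0)$ is normalized, Hamilton's short-time existence and Shi's estimates give a smooth flow on a definite interval. On this interval $|\Rm|$ is bounded by a universal constant, so the canonical neighborhood assumption (b) holds vacuously for small $r$. Noncollapsing (c) follows from the injectivity radius bound, and Hamilton--Ivey pinching (d) holds initially by the normalization.

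\textbf{Step 2: surgery and preservation of the conditions.} Suppose the flow with surgery exists up to a time $T_0$ and satisfies (a)--(d) with the current parameters. The flow runs smoothly until the first singular time. The canonical neighborhood assumption then describes the high-curvature region as $\ep$-necks, caps and components of positive curvature. For the variant in \cite{Bessieres2011}, one performs $(r(t),\delta(t))$-cutoff slightly before the singular time. This means cutting along central spheres of $\delta$-necks in $\epsilon$-horns at curvature scale $h\ll r$, gluing in standard caps, and discarding components of controlled geometry. This yields (a).

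Pinching (d) survives surgery by Hamilton's estimates for the standard solution. Noncollapsing (c) is propagated using Perelman's reduced length: paths that pass through surgery regions have large $\mathcal L$-length once $\delta$ is small, which is Perelman's key lemma.

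\textbf{Step 3: canonical neighborhoods after surgery.} This is the main obstacle. We would argue by contradiction. Take a sequence of flows with surgery with $r_j\to0$ and $\delta_j\to0$ that violate (b). Rescale at a violating point and use the bounded curvature at bounded distance theorem to extract a limit. If the limit avoids surgery regions, it is a $\kappa$-solution. If surgeries intervene, $\delta\to0$ forces closeness to the standard solution, which also has canonical neighborhoods. Either way we reach a contradiction, which fixes $r$ and then $\delta$.

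Finally, since each surgery removes a definite amount of volume, surgeries cannot accumulate in finite time. Hence the flow extends to all of $[0,\infty)$. For the full details we would cite \cite{perelman2003surgery,kleinerlott2008notes,morgantian2007}.
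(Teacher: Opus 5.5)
This matches the paper, which gives no proof of its own and simply cites Perelman's Proposition 5.1 and Kleiner--Lott's Proposition 77.2; your outline of the standard induction (noncollapsing via reduced length, canonical neighborhoods by contradiction with $r$ fixed before $\delta$, and finiteness of surgeries via volume loss) is a faithful summary of those proofs. One imprecision: Hamilton--Ivey pinching survives a surgery because of the explicit surgery construction, which makes the glued-in cap positively curved and keeps the pinching condition, not because of estimates on the standard solution; those estimates enter only later, in the noncollapsing and canonical-neighborhood steps.
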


\subsection{Singular Ricci flows and heat kernels} 

First, we recall the construction of singular Ricci flows from \cite{kleinerlott2017singular}.

Given a normalized Riemannian manifold $(M^3,g_0)$, let $\delta_i=\delta_i(t)\to 0$ be a sequence of surgery parameter functions. By Theorem~\ref{thm:ricciflowsurgery}, for each $i$ there exists a Ricci flow with surgery $\mathcal M_i$ with initial condition $(M^3,g_0)$. Let $\mathcal M_i'$ denote the associated Ricci flow spacetime. Using a compactness argument, Kleiner and Lott proved in
\cite{kleinerlott2017singular} that, after passing to a subsequence, one has
\[
\mathcal M_i' \to \mathcal M.
\]
The limiting Ricci flow spacetime $(\mathcal M,\mathfrak t,\partial_{\mathfrak t},g)$ is called the \textbf{singular Ricci flow}, and it satisfies the following properties:
\begin{enumerate}[label=\textnormal{(\alph*)}]
\item for any $T\ge 0$, the scalar curvature function $\scal:\mathcal M_{\le T}\to \R$ is bounded below and proper;

\item $\mathcal M$ satisfies the Hamilton--Ivey pinching condition;

\item for some parameter $\ep>0$ and some functions $\kappa,r:[0,\infty)\to [0,\infty)$, $\mathcal M$ is $\kappa$-noncollapsed below scale $\ep$ and satisfies the $(r,\ep)$-canonical neighborhood assumption;

\item $\mathcal M$ is $0$-complete in the sense of \cite[Definition 5.3]{bamlerkleiner2022}.
\end{enumerate}

In \cite[Theorem 1.1]{bamlerkleiner2022}, it was proved that if $\ep<\ep_{\mathrm{can}}$, where $\ep_{\mathrm{can}}$ is a universal constant, then any singular Ricci flow satisfying the above conditions with the same initial data is unique. In the following, we will always assume $\ep<\ep_{\mathrm{can}}$, and refer to the Ricci flow spacetime $\mathcal M$ as the singular Ricci flow starting from $(M^3,g_0)$.

%For later use, we have the following definition.

%\begin{defn} \label{def:regulartime}
%For the singular Ricci flow $\MM$, a time $t$ is called \textbf{regular} if $\MM_t$ is compact. Otherwise, $t$ is called %\textbf{singular}.    
%\end{defn}

For a singular Ricci flow $\mathcal M$, there exists a heat kernel $K(x;\cdot)$, constructed in \cite[Section 5]{lai2021}. We briefly recall this construction. Fix $x_0\in \mathcal M$. For the Ricci flows with surgery $\mathcal M_i$ and the associated Ricci flow spacetimes $\mathcal M_i'$ above, let $x_{i0}\in \mathcal M_i'$ be a sequence converging to $x_0$.

Suppose that $\mathcal M_i$ is given by the data
\[
\{(M_k,g_k(t))_{t\in [t_k^-,t_k^+]}\}_{1\le k\le N}
\]
as in Definition~\ref{def_RF_surgery}, where $t_N^+=t_{i0}:=\mathfrak t(x_{i0})$. On $M_N\times [t_N^-,t_N^+]$, let $u_i$ be the conjugate heat kernel of $(M_N,g_N(t))$ based at $(x_{i0},t_{i0})$. We extend $u_i$ to $M_{N-1}\times \{t_{N-1}^+\}$ by setting
\[
u_i(x,t_{N-1}^+)=u_i(x,t_N^-)
\quad \text{if } x\in \mathrm{Int}(X_{N-1}^+),
\]
and
\[
u_i(x,t_{N-1}^+)=0
\quad \text{if } x\in M_{N-1}\setminus \mathrm{Int}(X_{N-1}^+).
\]
Then, for any $(x,t)\in M_{N-1}\times [t_{N-1}^-,t_{N-1}^+]$, we define $u_i$ by the reproduction formula
\begin{equation*}
u_i(x,t)=\int_{\mathrm{Int}(X_{N-1}^+)} u_i(y,t_{N-1}^+)\,K_{N-1}(y,t_{N-1}^+;x,t)\,\d V_{g(t^+_{N-1})}(y),
\end{equation*}
where $K_{N-1}(y,t_{N-1}^+;x,t)$ denotes the heat kernel on $M_{N-1}\times [t_{N-1}^-,t_{N-1}^+]$. Iterating this procedure defines $u_i$ on all of $\mathcal M_i$. Thus, we set $K_i(x_0;\cdot)=u_i$. As $i\to\infty$, the functions $K_i(x_0;\cdot)$ converge to the conjugate heat kernel $K(x_0;\cdot)$ on $\mathcal M$.

The following result was proved in \cite[Section 5]{lai2021}.

\begin{thm}\label{thm:heatkernel}
The heat kernel $K$ constructed above on the singular Ricci flow $\mathcal M$ satisfies the following properties:
\begin{enumerate}[label=\textnormal{(\alph*)}]
\item For any $x_0\in \mathcal M$, the function
\[
K(x_0;\cdot):\mathcal M_{t<\mathfrak t(x_0)}\to \R
\]
is a smooth solution to the conjugate heat equation
\[
\square^*K(x_0;\cdot)=(-\partial_{\mathfrak t}-\Delta+\scal)K(x_0;\cdot)=0.
\]

\item For any $h\in C_c^0(\mathcal M)$ and any $x_0\in \mathcal M$,
\begin{equation*}
\lim_{t\nearrow \mathfrak t(x_0)}\int_{\mathcal M_t} K(x_0;x)\,h(x)\,\d V_{g_t}(x)=h(x_0).
\end{equation*}

\item Suppose that for some $r_0,T>0$, we have $|\Rm|\le r_0^{-2}$ on
\[
P_0:=P(x_0,r_0,-r_0^{2})
\]
(see \ucite[Definition 2.4]{lai2021}) and that $\mathfrak t(x_0)<T$. Then there exists a constant $C=C(r_0,T)>0$ such that
\[
K(x_0;\cdot)\le C
\quad \text{on } \mathcal M_{t<\mathfrak t(x_0)}\setminus P_0.
\]

\item For any $x_0\in \mathcal M$ and any $t<\mathfrak t(x_0)$, we have
\begin{equation*}
\int_{\mathcal M_t} K(x_0;x)\,\d V_{g_t}(x)=1.
\end{equation*}

\item For any $x,y\in \mathcal M$ with $\mathfrak t(x)>\mathfrak t(y)$, we have
\begin{equation*}
K(x;y)=\int_{\mathcal M_t} K(x;z)\,K(z;y)\,\d V_{g_t}(z)
\end{equation*}
for every $t\in (\mathfrak t(y),\mathfrak t(x))$.

\item For any $x_0\in \mathcal M$, the function
\[
K(\cdot;x_0):\mathcal M_{t>\mathfrak t(x_0)}\to \R
\]
is a smooth solution to the heat equation
\[
\square K(\cdot;x_0)=(\partial_{\mathfrak t}-\Delta)K(\cdot;x_0)=0.
\]

\item For any $h\in C_c^0(\mathcal M)$ and any $x_0\in \mathcal M$,
\begin{equation*}
\lim_{t\searrow \mathfrak t(x_0)}\int_{\mathcal M_t} K(x;x_0)\,h(x)\,\d V_{g_t}(x)=h(x_0).
\end{equation*}

\item Suppose that for some $r_0,T>0$, we have $|\Rm|\le r_0^{-2}$ on
\[
P_0:=P(x_0,r_0,r_0^{2})
\]
(see \ucite[Definition 2.4]{lai2021}) and that $\mathfrak t(x_0)<T$. Then there exists a constant $C=C(r_0,T)>0$ such that
\[
K(\cdot;x_0)\le C
\quad \text{on } \mathcal M_{t>\mathfrak t(x_0)}\setminus P_0.
\]
\end{enumerate}
\end{thm}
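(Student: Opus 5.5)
The statement is Theorem~\ref{thm:heatkernel}, quoted from \cite[Section 5]{lai2021}. I would prove it by passing to the limit $i\to\infty$ in the corresponding properties of the approximating kernels $K_i$ on the surgery spacetimes $\mathcal M_i'$. The limit is taken in smooth local convergence, with curvature-scale control coming from the canonical neighborhood assumption.

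\textbf{Properties of each $K_i$.} On each smooth piece $M_k\times[t_k^-,t_k^+]$, $K_i$ is built from the classical conjugate heat kernel by reproduction. Hence it solves $\square^*K_i=0$ away from surgery times and is nonnegative. At surgery times it is extended by zero on the discarded region. The identity $\int K_i\,\d V=1$ holds on the final piece. Going backward across a surgery, the integral can only drop, since mass on the removed caps is lost. Within smooth pieces it is conserved. So $\int_{\mathcal M_{i,t}}K_i\le 1$, and the semigroup property (e) holds for $K_i$ by construction.

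\textbf{Local estimates and convergence.} On a parabolic neighborhood $P_0$ of bounded curvature, the standard Gaussian upper bound for conjugate heat kernels (via Perelman's reduced volume, or a Harnack/Nash-type bound using $\kappa$-noncollapsing) gives $K_i\le C(r_0,T)$ away from the basepoint. One must check that this bound is uniform in $i$. I would argue by propagating the bound via the reproduction formula: each surgery only removes mass, and the supremum bound propagates backward since $\partial_t\sup K\le (\sup \scal^-)\sup K$ and $\scal$ is bounded below. Parabolic regularity then gives $C^\infty_{\rm loc}$ bounds on compact subsets of $\mathcal M$, and a subsequence converges to $K$. This yields (a) and (c), and (f), (h) follow symmetrically for the forward kernel. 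Property (b) follows from the local Gaussian behavior near $x_0$: at small scales the flow near $x_0$ is smooth and $\mathcal M_i'\to\mathcal M$ smoothly there, so $K_i(x_0;\cdot)$ concentrates at $x_0$ uniformly in $i$. Property (g) is analogous.

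\textbf{Main obstacle: no mass loss in the limit}, namely (d), and then (e). The issue is that mass may escape into high-curvature regions: surgery caps in $\mathcal M_i$, or ends of necks in the incomplete slices $\mathcal M_t$. I would argue as follows.
\begin{itemize}
\item The total mass lost at surgeries in $\mathcal M_i$ tends to $0$ as $\delta_i\to0$. By the canonical neighborhood structure, the removed regions sit at the ends of long $\ep$-horns of tiny volume. The conjugate heat kernel there is controlled by the volume (Lemma~\ref{lem:potent1}-type decay) together with the bound in (c). This gives $\int K_i\ge 1-o(1)$ on regions of curvature $\le\rho^{-2}$.
\item The mass of $K$ on $\{\scal>\rho^{-2}\}$ tends to $0$ as $\rho\to0$. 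Here I would use properness of $\scal$, the bound (c), and a volume estimate for the high-curvature region from noncollapsing and neck geometry.
\end{itemize}
Combining these gives $\int K=1$.

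For (e), I would write the reproduction identity for $K_i$, split the $z$-integral into bounded-curvature and high-curvature regions, and pass to the limit. The bounded-curvature part converges by smooth convergence. The high-curvature part is small by the same tightness estimate.
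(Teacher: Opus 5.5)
\textbf{A genuine gap: (c) and (h) uniformly in $i$.} Your architecture is the natural one: pass to the limit from the surgery kernels $K_i$, get local smooth convergence, then prove tightness to recover (d) and (e). It is how the paper describes $K$ being obtained; the theorem itself is quoted from \cite[Section 5]{lai2021}, not reproved. The local parts are fine. The bound $\int K_i\le 1$ plus the mean-value inequality gives uniform bounds on compact subsets of $\mathcal M_{t<\mathfrak t(x_0)}$, hence (a) and (f). Properties (b) and (g) reduce to barrier arguments near $x_0$.

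The gap is that your treatment of (d) and (e) rests entirely on (c) and (h) holding uniformly in $i$, and you do not establish this.
\begin{itemize}
\item Calling (c) a standard Gaussian bound does not work here. Such bounds are global: they need a Sobolev, Nash or entropy inequality on whole time-slices and a maximum principle across all surgeries.
\item $\kappa$-noncollapsing below scale $\ep$ is only a local volume bound. Reduced volume and Perelman's Harnack inequality give \emph{lower} bounds on $K$ (Theorem~\ref{thm:lower}), not upper bounds.
\item The upper bounds in this paper, Theorem~\ref{thm:ultra} and the Gaussian bound after it, are derived via Lemmas~\ref{usefuliden} and~\ref{lem:delta} from Theorems~\ref{thm:heatkernel} and~\ref{thm:heatkernel2}. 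Using them here would be circular.
\item Your backward propagation of $\sup K_i$ is valid, since $\scal$ is bounded below and extension by zero at a surgery cannot raise the supremum. But it only moves a bound that already holds on an \emph{entire} slice to earlier times.
\item It says nothing about points outside $P_0$ at times in $(\mathfrak t(x_0)-r_0^2,\mathfrak t(x_0))$, where $K_i$ is unbounded near $x_0$.
\item Even to start it at time $\mathfrak t(x_0)-r_0^2$, you need $K_i\le C$ on that whole slice, including regions of arbitrarily large curvature. There, local parabolic estimates together with $\int K_i\le 1$ only give bounds that degenerate as $\scal\to\infty$.
\end{itemize}
What is missing is a mechanism that controls $K_i$ in the high-curvature neck and cap regions, away from the basepoint, uniformly in $i$. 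Theorem~\ref{thm:heatkernel2} is the sharpened form of exactly this control, and it is the real content of the cited result.

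\textbf{Consequences for (d) and (e).} The gap carries over here in two ways.
\begin{itemize}
\item Lemma~\ref{lem:potent1} is a statement about the limit kernel, derived from (c) and Theorem~\ref{thm:heatkernel2}. Invoking ``Lemma~\ref{lem:potent1}-type decay'' for $K_i$ uniformly in $i$ is therefore circular.
\item The mass loss cannot be handled one surgery at a time. The a priori bound on the number of surgeries in $[t,\mathfrak t(x_0)]$ is of order $h_i^{-3}$, with $h_i$ the surgery scale. So a per-surgery bound $\sup K_i\cdot|\mathrm{cap}|\lesssim h_i^3$ sums to $O(1)$, not $o(1)$.
\end{itemize}
Given a uniform bound $K_i\le C$ on the glued-in caps, you could close the second point. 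Each cap has volume at most $O(\delta_i)$ times the volume discarded at that surgery, since the discarded half-neck alone has length of order $\delta_i^{-1}h_i$. The total discarded volume on $[0,T]$ is bounded. Uniform decay $K_i\scal^m\le C_m$ on the caps would also suffice.

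Once the uniform high-curvature control is in hand, the rest of your plan works. That covers the tightness argument (a pointwise bound times $|\{\scal>\rho^{-2}\}\cap\mathcal M_t|\to 0$) and the passage to the limit in (d) and (e).
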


We will also need the following refined estimate from \cite[Theorem 5.6]{lai2021}, which improves Theorem~\ref{thm:heatkernel}(c) at points of high curvature.

\begin{thm}\label{thm:heatkernel2}
Given $r_0,T>0$ with $r_0^2<T$, and $m\in \N$, there exists a constant $C_m=C(r_0,T,m)>0$ such that the following holds for any $t_0\in (r_0^2,T)$.

Let $\mathcal M$ be a singular Ricci flow with a normalized initial condition. For any $x_0\in \mathcal M$ with $\mathfrak t(x_0)=t_0$, suppose that
\[
|\Rm|\le r_0^{-2}\quad \text{on } P_0:=P(x_0,r_0,-r_0^{2}).
\]
Then, for any $x\in \mathcal M_{t<\mathfrak t(x_0)}\setminus P_0$, we have
\begin{equation*}
K(x_0;x)\,\scal^m(x)\le C_m.
\end{equation*}
\end{thm}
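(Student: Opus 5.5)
The plan is to prove the estimate uniformly on the surgery approximations $\mathcal M_i$ and then pass to the limit $K_i(x_0;\cdot)\to K(x_0;\cdot)$ from the construction above. Fix $x_0$ and the approximating $x_{i0}$, and write $u_i=K_i(x_0;\cdot)$. It then suffices to show $u_i\,\scal^m\le C_m$ on $\mathcal M_{i,t<t_{i0}}\setminus P_0$ with $C_m$ independent of $i$. Smooth convergence on compact subsets then gives the claim on $\mathcal M$.

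\textbf{Step 1 (reduction to high curvature).} Fix a large threshold $\Lambda=\Lambda(r_0,T)$, with $\Lambda\ge r(T)^{-2}$ so that the canonical neighborhood assumption holds where $\scal\ge\Lambda$. On $\{\scal\le\Lambda\}\setminus P_0$, Theorem~\ref{thm:heatkernel}(c), applied uniformly along the approximation as in Lai's argument, gives $u_i\le C(r_0,T)$. Hence $u_i\scal^m\le C\Lambda^m$ there, using also that $\scal$ is bounded below. It therefore remains to control $u_i\scal^m$ on the high-curvature region $\Omega=\{\scal>\Lambda\}$, which lies outside $P_0$ once $\Lambda>r_0^{-2}$.

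\textbf{Step 2 (barrier on $\Omega$).} On $\Omega$ every point is the center of an $\ep$-neck or an $\ep$-cap. I will compare $w=u_i\,\scal^m$ against a constant via the maximum principle for the conjugate heat operator, run backward in time from $t_{i0}$. Using $\partial_{\mathfrak t}\scal=\Delta\scal+2|\Ric|^2$, one computes
\[
\square^*(u_i\scal^m)=-u_i\,\square(\scal^m)-2\langle\nabla u_i,\nabla\scal^m\rangle .
\]
On $\Omega$ the canonical neighborhood structure gives $|\nabla\scal|\le C\scal^{3/2}$, $|\partial_{\mathfrak t}\scal|\le C\scal^2$, and, on necks, $2|\Ric|^2\approx\scal^2$. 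I would absorb the gradient cross term by working with $w=u_i\scal^m$ minus a small multiple of $u_i\scal^{m+1/2}$, or by rewriting the cross term as $\langle\nabla\log u_i,\dots\rangle$ and absorbing it into the $-u_i\square(\scal^m)$ term. The aim is to show that $w$ cannot attain a large interior maximum in $\Omega$. The boundary data of $w$ are then: $\{\scal=\Lambda\}$, where $w\le C\Lambda^m$ by Step 1; the final time $t_{i0}$, near which $w$ is controlled because $x_0$ has bounded curvature; and the surgery times.

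\textbf{Step 3 (surgery times).} At each surgery time, $u_i$ on the pre-surgery slice vanishes on the removed region $M_{k}\setminus \mathrm{Int}(X_k^+)$. On the surviving region, $u_i$ equals the post-surgery values. The caps glued in at surgery have curvature comparable to $r(t)^{-2}\delta^{-2}$ in a controlled way. So at the surgery slice the maximum-principle comparison propagates backward without gaining a factor, and the constants in Steps 1--2 depend only on $(r_0,T,m)$ and not on the surgery parameters.

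\textbf{Main obstacle.} The hard step is Step 2: producing a genuine supersolution on the high-curvature region. The sign of $\square(\scal^m)$ is favorable only on necks. On caps and near Bryant-like regions one must instead exploit that such regions occupy only a parabolic neighborhood of size $\scal^{-1/2}$ and are swept out within time $\sim\scal^{-1}$. If the direct computation fails, I would first try a local $L^1\to L^\infty$ mean value estimate for $u_i$ on parabolic cylinders of size $\scal(x)^{-1/2}$, combined with an inductive decay of $\int u_i$ on $\{\scal\ge 2^k\Lambda\}$ obtained from the fact that the conjugate heat flow carries only a fraction $\le 2^{-m'}$ of its mass into each dyadic curvature shell. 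I expect this dyadic mass-decay estimate, rather than the pointwise barrier, to be the robust route.
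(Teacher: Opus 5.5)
The paper gives no proof of this statement; it is quoted from \cite[Theorem 5.6]{lai2021}. So your proposal has to stand on its own, and it does not: Step 2 carries the whole content of the theorem, and as designed it fails. Steps 1 and 3 are reasonable reductions. Step 1 needs a uniform-in-$i$ version of Theorem~\ref{thm:heatkernel}(c) on the surgery approximations, which you should state as an assumption taken from Lai. In Step 3 the real point is that pre-surgery values of $u_i$ are either copied from the surviving region or set to zero, so upper bounds propagate backward; the curvature of the glued caps plays no role.

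The product identity you use in Step 2 is wrong. Since $\square^*=-\partial_{\mathfrak t}-\Delta+\scal$ and $\square^*u=0$,
\[
\square^*(u\,\scal^m)=-u\,(\partial_{\mathfrak t}+\Delta)\scal^m-2\langle\nabla u,\nabla\scal^m\rangle .
\]
The Laplacian therefore enters with the opposite sign from $-u\,\square(\scal^m)$, and the zeroth-order coefficient involves $\partial_{\mathfrak t}\scal+\Delta\scal=2\Delta\scal+2|\Ric|^2$ rather than $2|\Ric|^2$. On $\epsilon$-necks this is still about $\scal^2$. At a point $\epsilon$-close to the tip of a Bryant soliton, however, one has $\nabla\scal\approx0$, $\partial_{\mathfrak t}\scal\approx0$ and $\Ric\approx\frac{\scal}{3}g$. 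Hence $\Delta\scal\approx-\frac23\scal^2$ and $\square^*(u\scal^m)\approx\frac{2m}{3}\scal\,(u\scal^m)$.

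At a backward-in-time interior maximum of $w=u\scal^m$ you only know $\square^*w\ge\scal w$, so no contradiction is possible once $m\ge2$. Equivalently, $C\scal^{-m}$ is not a supersolution there, since $\square^*(\scal^{-m})\approx(1-\frac{2m}{3})\scal^{1-m}<0$. The cross term vanishes at the tip, so no rearrangement of $\langle\nabla u,\nabla\scal^m\rangle$ can help. With your formula the tip looked harmless, since $\square\scal=2|\Ric|^2>0$ there; with the correct one, the tip is exactly where the barrier breaks.

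What is missing is a non-local input: you need to show that the conjugate heat kernel measure of a regular point puts super-polynomially small mass near high-curvature cap regions. Your fallback would give the pointwise bound: a dyadic estimate $\nu_{x_0;t}(\{\scal\ge2^k\Lambda\})\lesssim2^{-km'}$, combined with a mean value inequality on parabolic cylinders of size $\scal^{-1/2}$. But that mass estimate is essentially an integrated restatement of the theorem, and you give no mechanism for it.

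The natural integrated computation does not close on its own either. One has $\partial_{\mathfrak t}\int\scal^m\,\d\nu_{x_0;t}=\int\square(\scal^m)\,\d\nu_{x_0;t}$, where $\square$ genuinely is the right operator. This has a good sign on necks. On caps where $|\nabla\scal|^2$ is comparable to $\scal^3$, however, one only has $\square(\scal^m)\ge-Cm^2\scal^{m+1}$, which is genuinely negative for large $m$. So an additional argument is needed to control the mass sitting in caps, for instance against the mass in adjacent long necks, and the proposal does not supply one. As written, it is a plan whose decisive step is open.
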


We end this section with the following definition.

\begin{defn}[Curvature radius]\label{def:curvatureradius}
For any $x \in \MM$, the curvature radius $r_{\Rm}(x)$ is defined to be the supremum of all $r>0$ such that $B_{g_t}(x,r)$ is relatively compact in $\MM_t$, and the product domain 
	\begin{align*}
B_{g_t}(x,r)\times \lc [\t(x)-r^2, \t(x)+r^2] \cap [0, \infty) \rc
	\end{align*}
is unscathed and satisfies the curvature bound $|\Rm| \le r^{-2}$.
\end{defn}

\section{Gromov--Hausdorff convergence at the first singular time}\label{sec:first}

In this section, we assume that $(M^3,(g_t)_{t\in [0,T)})$ is a closed Ricci flow with an orientable, normalized initial condition~$(M,g_0)$. We assume that $T<\infty$ is the first singular time. Note that by the curvature bound in Definition~\ref{def:normal}, we have $T \ge 2$; see, for instance, \cite[Equation (10.14)]{chowetal2010part1-4}.

Moreover, by the monotonicity of the Nash entropy, we have
\begin{align} \label{eq:nashlower}
\inf_{\tau \in (0, t]}\mathcal N_{(x,t)}(\tau)
\ge
\mathcal N_{(x,t)}(t)
\ge
\inf_{\tau\in (0,T]} \boldsymbol{\mu}(g_0,\tau)
\ge -Y_0,
\end{align}
for any $(x,t)\in M \times [0, T)$. Here, $Y_0$ is a constant depending only on $T$ by Definition~\ref{def:normal}. Thus, the Ricci flow has entropy bounded below by $-Y_0$ in the sense of \cite[Definition 2.20]{fangli2025structure}.

\subsection{Sequential compactness of time-slices}

We first recall the following $L^1$-curvature bound from \cite[Theorem 1.7]{fangli2025recti} and the resolution of Perelman's bounded diameter conjecture from \cite[Theorem 1.8]{fangli2025recti}.

\begin{thm}\label{thm:l1bound}
There exists a constant $C$, depending only on the flow, such that for any $t\in [0, T)$,
\begin{align*}
	\int_M |\Rm|\,\d V_{g_t}+\mathrm{diam}_{g_t}(M)\le C.
\end{align*}
\end{thm}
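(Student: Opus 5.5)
This statement is quoted directly from \cite[Theorems 1.7 and 1.8]{fangli2025recti}, so no new argument is needed here; I will only outline how the cited proof goes, and so how the two bounds fit together. The constant is allowed to depend on the flow, through $T$, $Y_0$ and the normalized initial data.

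\emph{The curvature integral.} The plan is to prove the stronger bound $\int_M r_{\Rm}^{-2}\,\d V_{g_t}\le C$. This suffices because $|\Rm|\le r_{\Rm}^{-2}$ pointwise.

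First, view the flow as a noncollapsed Ricci flow with entropy bounded below by $-Y_0$, using \eqref{eq:nashlower}. This places it in the framework of \cite{fangli2025structure}.

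Second, apply the neck decomposition theorem of \cite{fangli2025recti} to $M\times[t-r^2,t]$ at a fixed scale. This covers the high-curvature set by two kinds of pieces:
\begin{itemize}[label=]
\item \emph{neck regions} modeled on quotient cylinders $\R\times S^2$, which is where the three-dimensional rigidity of tangent flows enters;
\item \emph{bubble balls}, whose radii $r_i$ satisfy a content bound $\sum_i r_i^{1}\le C$, with exponent equal to the dimension of the singular strata.
\end{itemize}

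Third, estimate each piece separately.
\begin{itemize}[label=]
\item On a neck region, the curvature scale is comparable to the neck radius $r_x$. The integral of $r_{\Rm}^{-2}$ over a $\delta$-neck of radius $r_x$ is about $r_x^{-2}\cdot r_x^3=r_x$. Summing over the neck points therefore reduces, via the Ahlfors-type packing measure bound on neck centers, to a total bounded by the $1$-content.
\item On a bubble ball, one rescales and inducts on scale. The $1$-content bound then closes the estimate.
\end{itemize}

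\emph{The diameter.} Here the main tool is the canonical neighborhood theorem in high curvature.
\begin{itemize}[label=]
\item Any minimizing geodesic of $(M,g_t)$ passes through two kinds of regions: where $\scal\le r(t)^{-2}$, and where $\scal$ is large.
\item In the low-curvature region, noncollapsing together with the volume bound $|M|_t\le C$ bounds the number of disjoint balls of scale $r(t)$ that it can pass through. This controls the length spent there.
\item In the high-curvature region, the geodesic runs along $\delta$-tubes or through caps. Along a tube, its length is comparable to $\int \scal\,\d V$ over the tube, since the neck cross-section has area about $\scal^{-1}$.
\item The total length is therefore bounded by $C+C\int_M|\Rm|\,\d V_{g_t}$, which is finite by the first part.
\end{itemize}

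The hardest step is the content estimate in the neck decomposition. It requires cylindrical strong uniqueness, namely the Łojasiewicz inequality of \cite{fangli2025loja}, so that neck regions carry almost-$1$-rectifiable bounds uniformly in $t$. This is exactly the content of \cite{fangli2025recti}, which we invoke.
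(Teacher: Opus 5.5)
Your proposal is correct and agrees with the paper, which also simply quotes \cite[Theorems 1.7 and 1.8]{fangli2025recti}. Your diameter sketch, which bounds from below the $(|\scal|+1)$-mass of balls covering a minimizing geodesic, is the same argument the paper reruns in Proposition~\ref{prop:ballpacking} and Theorem~\ref{thm:bdddiam}. One small correction to your outline: in dimension three the bubble balls need no induction on scale. A $(2,\eta,r_b)$-symmetric point already forces $r_{\Rm}\ge c\,r_b$ on $B^*(x_b,3r_b/2)$, as used in the proof of Lemma~\ref{lem:key}, so each such ball contributes only $O(r_b)$ to $\int r_{\Rm}^{-2}$ and the content bound finishes the estimate directly.
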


Next, we prove the following proposition.

\begin{prop}\label{prop:ballpacking}
For any $t\in [0, T)$ and any $\ep>0$, the maximal cardinality of an $\ep$-separated subset of $(M,g_t)$ is bounded by a constant $C_\ep$ independent of $t$.
\end{prop}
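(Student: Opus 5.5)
The plan is a ball-packing argument with respect to the measure $\mu_t=(|\scal|+1)\,\d V_{g_t}$, as outlined in the introduction. By Theorem~\ref{thm:l1bound}, $\mu_t(M)\le \int_M|\Rm|\,\d V_{g_t}+|M|_t\le C$ uniformly in $t$. Here $|M|_t$ is bounded because the volume is nonincreasing when $\scal\ge -C$, and the lower bound on scalar curvature is preserved from the normalized initial data. If $\{x_1,\dots,x_N\}$ is $\ep$-separated in $(M,g_t)$, the balls $B_{g_t}(x_i,\ep/2)$ are disjoint. It therefore suffices to prove the following lower bound: there is $c_\ep>0$, independent of $t$, such that $\mu_t(B_{g_t}(x,\ep/2))\ge c_\ep$ for every $x$ whenever this ball is not all of $M$. (If some ball equals $M$, then $N=1$.) Then $N\le C/c_\ep$. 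For $t$ bounded away from $T$, say $t\le T-\tau_\ep$, the flow is smooth with bounded geometry, so the bound is immediate from noncollapsing and bounded curvature. The issue is uniformity as $t\nearrow T$.

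To prove the lower bound, set $r=\ep/4$ and consider the point $x$. If $\scal\le r(t)^{-2}$ and $|\Rm|\lesssim \ep^{-2}$ on a sizable portion of $B_{g_t}(x,r)$ near $x$, then $\kappa$-noncollapsing (Perelman) gives volume $\ge c\,\ep^3$, hence $\mu_t\ge c\,\ep^3$. Otherwise, high curvature appears at some scale $\rho\ll \ep$ inside the ball. In that case I invoke the canonical neighborhood theorem: every point with $\scal\ge r_*^{-2}$ has a neighborhood $\ep_0$-close to a $\kappa$-solution. From the classification, such a neighborhood is a $\delta$-neck, a cap (Bryant or a capped region), or a compact component close to a round quotient or an oval. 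In the compact-component case, the component is all of $M$ and has diameter $\lesssim \scal^{-1/2}$, so the ball would equal $M$. This is excluded once the scale is small relative to $\ep$; otherwise the curvature is bounded and we are in the first case.

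The key computation concerns $\delta$-tubes. On a $\delta$-neck at scale $\rho$, with $\scal\approx \rho^{-2}$ and cross-sectional area $\approx \rho^2$, the contribution $\int\scal\,\d V$ over a segment of length $L$ is comparable to $L$. Now take a minimizing $g_t$-geodesic from $x$ of length $\ep/2$ inside the ball, which exists since the ball is not all of $M$ and $M$ is connected. Partition its points according to whether $\scal<\Lambda\ep^{-2}$ or not. In high-curvature portions, the geodesic passes through necks or caps, and caps have diameter comparable to their scale. Hence a definite fraction of the length, at least $\ep/8$ say, either lies in $\delta$-tubes, contributing $\gtrsim\ep$ to $\mu_t$, or lies in regions of curvature $\lesssim\Lambda\ep^{-2}$. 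In the latter case, a point there has a ball of radius $\sim\ep/\sqrt\Lambda$ with bounded curvature, which gives volume $\gtrsim\kappa\ep^3\Lambda^{-3/2}$. To keep the neck contributions from overcounting, I use disjoint subsegments, applying Vitali to the neck regions along the geodesic.

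The main obstacle is the bookkeeping in the intermediate regime. There, curvature is large but below the canonical neighborhood threshold $r(t)^{-2}$, or the geodesic alternates between caps and necks on many scales. I would handle the first issue by choosing $\Lambda$ so that $\Lambda\ep^{-2}\ge r(T)^{-2}$ when $\ep$ is small, and otherwise use bounded curvature directly. For the second, I would use the fact that a minimizing geodesic cannot pass through the interior of a cap region deeply, since caps are ends. So except for $O(1)$ terminal pieces of length $\lesssim$ the local scale, the high-curvature part of the geodesic lies in $\delta$-tubes.
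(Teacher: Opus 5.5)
Your proposal is correct and follows essentially the same route as the paper. Both use the measure $\mu_t=(|\scal|+1)\,\d V_{g_t}$, bound $\mu_t(M)$ uniformly via the $L^1$-curvature estimate, and bound $\mu_t$ of a ball $\neq M$ from below by a dichotomy. Either some point has scalar curvature below a threshold $\gtrsim\ep^{-2}$, which gives curvature control on a comparable ball plus noncollapsing, or the whole minimizing geodesic lies in canonical neighborhoods; in that case the compact case is excluded, only $O(1)$ caps occur at its ends, and Vitali-disjoint $\delta$-necks give $\int\scal\,\d V\gtrsim\ep$.

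Two steps need tidying. First, the curvature bound on a ball of radius $\sim\ep/\sqrt\Lambda$ around a point with $\scal<\Lambda\ep^{-2}$ requires the canonical-neighborhood derivative estimate $|\nabla\scal|\le C\scal^{3/2}$ (as in Kleiner--Lott's Lemma 3.1), together with Hamilton--Ivey pinching to pass from $\scal$ to $|\Rm|$. Second, the volume is not nonincreasing; the bound on $|M|_t$ comes instead from $\partial_t|M|_t=-\int\scal\,\d V\le C|M|_t$, or from $\int|\scal|\,\d V\le C$.
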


\begin{proof}

We fix a time $t\in [0, T)$. Throughout the proof, all constants $C_i$ and $c_i$ may depend on the given Ricci flow, but are independent of $t$.

Define a measure $\mu_t$ by
\begin{align*}
\mu_t(A):=\int_A (|\scal|+1)\,\d V_{g_t}
\end{align*}
for any Borel subset $A\subset M$. By Theorem~\ref{thm:l1bound} and \cite[Proposition 9.3]{fangli2025structure}, there exists a constant $C_1>0$ such that
\begin{align}\label{eq:3001}
\mu_t(M)\le C_1.
\end{align}

Next, we claim that for any $x\in M$ and any sufficiently small $\ep>0$, if $B_{g_t}(x,\ep)\neq M$, then
\begin{align}\label{eq:001}
\mu_t(B_{g_t}(x,\ep))\ge c_\ep>0,
\end{align}
where $c_\ep$ is a constant depending on the Ricci flow and on $\ep$, but independent of $t$. Once \eqref{eq:001} is established, the conclusion follows immediately from \eqref{eq:3001}.

By the canonical neighborhood theorem (see \cite[Theorem 12.1]{perelman2002entropy} and \cite[Theorem 52.7]{kleinerlott2008notes}), for any $\delta>0$, there exists $r_\delta>0$ such that if $\scal(x,t)\ge r_\delta^{-2}$, then $(M,g_t,x)$ is $\delta$-close to a $\kappa$-solution at scale $\scal^{-\frac{1}{2}}(x, t)$. We will choose $\delta$ as a small universal constant later. First, we require that $r_\delta$ is sufficiently small so that $r_\delta \le \delta \ep$.

To prove \eqref{eq:001}, we consider two cases.

\textbf{Case 1.} There exists a point $y\in B_{g_t}(x,\ep/2)$ such that $\scal(y,t)<r_\delta^{-2}$.

In this case, it follows from \cite[Lemma 3.1]{kleinerlott2017singular} that
\[
\scal \le \eta^{-2}r_\delta^{-2}\qquad \text{on } B_{g_t}(y,\eta r_\delta)
\]
for some universal constant $\eta\in (0,1)$. Since $\eta r_{\delta} \le \ep$, Perelman's no-local-collapsing theorem \cite{perelman2002entropy} implies that
\begin{align*}
|B_{g_t}(x,\ep)|_t \ge |B_{g_t}(y,\eta r_\delta)|_t \ge c_1 r_\delta^3>0.
\end{align*}

Therefore, in this case we obtain
\begin{align}\label{eq:002}
\mu_t(B_{g_t}(x,\ep))\ge |B_{g_t}(x,\ep)|_t \ge c_1 r_\delta^3>0.
\end{align}

\textbf{Case 2.} $\scal \ge r_\delta^{-2}$ on $B_{g_t}(x,\ep/2)$.

In this case, fix a point $y\in \partial B_{g_t}(x,\ep/2)$ and choose a minimizing geodesic $\gamma(s)$ from $x$ to $y$ with respect to $g_t$. By assumption, $(M,g_t,\gamma(s))$ is $\delta$-close to a $\kappa$-solution at scale $\scal^{-\frac{1}{2}}(\gamma(s), t)$ for every $s\in [0,\ep/2]$. Next, choose disjoint balls $\{B_{g_t}(x_i,r_i)\}_{1\le i\le N}$, where $x_i=\gamma(s_i)$ and $r_i=\scal^{-1/2}(x_i,t)$, such that the balls $B_{g_t}(x_i,5r_i)$ cover $\gamma$.

For each $i$, the pointed manifold $(M,g_t,x_i)$ is $\delta$-close at scale $r_i$ to one of the following: (a) a $\delta$-neck; (b) a $\delta$-cap; or (c) a closed manifold of constant positive sectional curvature; see \cite[A.8]{kleinerlott2017singular}. In all three cases, comparison with the corresponding model implies that
\begin{align}\label{eq:003}
\int_{B_{g_t}(x_i,r_i)} |\scal|\,\d V_{g_t} \ge c_2 r_i>0.
\end{align}

Since $r_i\le r_\delta \le \delta \ep$, the pointed manifold $(M,g_t,x_i)$ cannot be $\delta$-close at scale $r_i$ to a closed manifold of constant positive sectional curvature. Indeed, otherwise the diameter of $(M,g_t)$ would be bounded by $C_2 r_i \le \ep$, where $C_2$ is a universal constant, contradicting the assumption that $B_{g_t}(x,\ep)\neq M$. Moreover, since $\gamma$ is a minimizing geodesic and all $x_i$ lie on $\gamma$, the number of indices $i$ for which $(M,g_t,x_i)$ is $\delta$-close to a $\delta$-cap is at most $2C_3$, where $C_3$ is a universal constant.

\begin{figure}[tbp]
    \centering
    \includegraphics[width=0.94\linewidth,trim=0 170 0 235,clip]{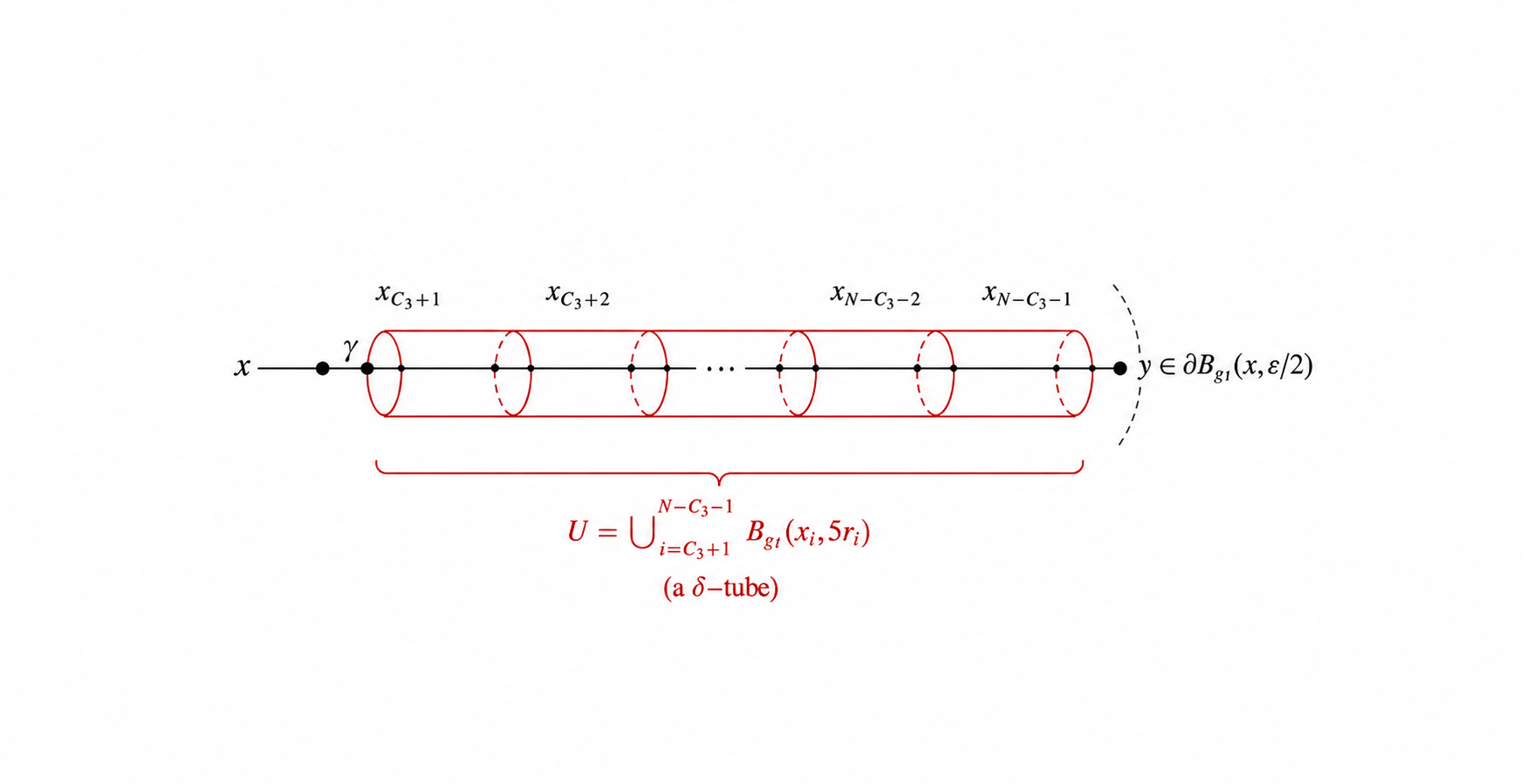}
    \caption{A $\delta$-neck.}
    \label{fig:delta-neck}
\end{figure}

Without loss of generality, we may assume that for every $C_3+1\le i\le N-C_3-1$, the pointed manifold $(M,g_t,x_i)$ is $\delta$-close to a $\delta$-neck at scale $r_i$; see Figure~\ref{fig:delta-neck}. Hence the open set
\begin{align*}
U:=\bigcup_{C_3+1\le i\le N-C_3-1} B_{g_t}(x_i,5r_i)
\end{align*}
forms a $\delta$-tube; see \cite[A.3]{kleinerlott2017singular}. The total length of $\gamma\cap U$ satisfies
\begin{align*}
\mathrm{length}(\gamma\cap U)\ge \ep/2-20C_3 r_\delta \ge \frac{\ep}{4},
\end{align*}
where the last inequality holds because $r_\delta \le \delta \ep$.

On the other hand, each segment $\gamma\cap B_{g_t}(x_i,5r_i)$, for $C_3+1\le i\le N-C_3-1$, is almost parallel to the axis of the model cylinder. Therefore,
\begin{align*}
10\sum_{i=C_3+1}^{N-C_3-1} r_i \ge \frac{1}{2}\mathrm{length}(\gamma\cap U)\ge \frac{\ep}{8}.
\end{align*}
Combining this with \eqref{eq:003}, we obtain
\begin{align}\label{eq:005}
\mu_t(B_{g_t}(x, \ep)) \ge \int_{B_{g_t}(x, \ep)} |\scal|\,\d V_{g_t} \ge \sum_i \int_{B_{g_t}(x_i, r_i)} |\scal|\,\d V_{g_t} \ge c_2 \sum_i r_i \ge c_3 \ep>0.
\end{align}

Combining \eqref{eq:002} and \eqref{eq:005}, we conclude that \eqref{eq:001} holds. This completes the proof.
\end{proof}

A direct consequence of Proposition~\ref{prop:ballpacking}, together with the uniform diameter bound in Theorem~\ref{thm:l1bound} and Gromov's compactness theorem \cite{gromov1981}, is the following existence result for sequential limits as~$t\nearrow T$.

\begin{cor}
In the above setting, for any sequence $\{t_j\}$ with $t_j\nearrow T$, after passing to a subsequence if necessary, we have
\begin{align} \label{eq:grsequence}
   (M, d_{g_{t_j}}) \xrightarrow[t_j \nearrow T]{\GHconvtext} (X_{\GH}, d_{\GH}), 
\end{align}  
where $d_{g_t}$ denotes the distance induced by $g_t$. The limit $(X_{\GH},d_{\GH})$ is a compact geodesic metric space.
\end{cor}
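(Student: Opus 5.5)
The corollary follows directly from Gromov's precompactness theorem once uniform bounds are in hand. We apply it to the family $\{(M,d_{g_{t_j}})\}_j$ of compact metric spaces. Gromov's criterion asks for two things: a uniform upper bound on the diameters, and, for every $\ep>0$, a uniform bound $N(\ep)$ on the cardinality of $\ep$-nets (equivalently, of maximal $\ep$-separated subsets).

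The diameter bound is exactly Theorem~\ref{thm:l1bound}, which gives $\diam_{g_t}(M)\le C$ for all $t\in[0,T)$. The covering bound comes from Proposition~\ref{prop:ballpacking}. That proposition bounds the cardinality of any $\ep$-separated set in $(M,g_t)$ by $C_\ep$, independently of $t$. A maximal $\ep$-separated set is automatically an $\ep$-net, so each $(M,d_{g_{t_j}})$ admits an $\ep$-net of cardinality at most $C_\ep$.

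Two small technical points need checking.
\begin{itemize}
\item Proposition~\ref{prop:ballpacking} was proved for sufficiently small $\ep$. Larger $\ep$ cause no trouble: an $\ep$-separated set is also $\ep'$-separated for every $\ep'<\ep$, so $C_{\ep'}$ bounds its cardinality as well.
\item The proposition assumed $B_{g_t}(x,\ep)\neq M$. When $B_{g_t}(x,\ep)=M$ for some $x$, any $2\ep$-separated set has only one point, so this case is trivial.
\end{itemize}

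With these uniform bounds, Gromov's theorem \cite{gromov1981} gives a subsequence converging in the Gromov--Hausdorff sense to a compact metric space $(X_{\GH},d_{\GH})$. This proves \eqref{eq:grsequence}.

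It remains to show that the limit is geodesic. Each $(M,d_{g_{t_j}})$ is a length space, since $d_{g_t}$ is induced by a Riemannian metric. A Gromov--Hausdorff limit of length spaces is a length space: given any two points of the limit and any $\eta>0$, one finds approximate midpoints by pulling back through the Gromov--Hausdorff approximations. A complete, locally compact length space is geodesic by the Hopf--Rinow--Cohn-Vossen theorem, and compactness of $X_{\GH}$ supplies both completeness and local compactness.

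None of these steps is a genuine obstacle. The substantive input is the $t$-independent packing bound in Proposition~\ref{prop:ballpacking}, which is already established; what remains is only a check of the hypotheses of standard metric-geometry facts.
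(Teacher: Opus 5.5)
Your proposal is correct and follows the same route as the paper, which derives the corollary directly from the $t$-independent packing bound of Proposition~\ref{prop:ballpacking}, the uniform diameter bound of Theorem~\ref{thm:l1bound}, and Gromov's compactness theorem. Your extra checks are sound and only fill in details the paper leaves implicit: monotonicity in $\ep$, the trivial case where a ball is all of $M$, and the length-space argument giving the geodesic property of the compact limit.
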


Note that the limit metric space $(X_{\GH},d_{\GH})$ may \emph{a priori} depend on the choice of the sequence $\{t_j\}$ and hence need not be unique. In what follows, we will prove that the limit metric space is in fact unique.

\subsection{The spacetime completion and the terminal slice}

We consider the $d^*$-distance on $M\times [1,T)$, defined as in \cite[Definition 3.2]{fangli2025structure}. More precisely, for any $x^*=(x,t), y^*=(y,s)\in M\times [1,T)$ with $s\le t$, we define
\begin{align}\label{defndstar}
 d^*(x^*,y^*):=\inf_{1\le \tau\le s}
 \max\left\{\sqrt{t-\tau},\,
 d_{W_1}^{g_{\tau}}\bigl(\nu_{x^*;\tau},\nu_{y^*;\tau}\bigr)\right\}.
\end{align}
Here $\nu$ denotes the conjugate heat kernel measure of the Ricci flow; see \cite[Definition 2.6]{fangli2025structure}. Moreover, $d_{W_1}^{g_t}$ denotes the $W_1$-Wasserstein distance with respect to $(M,d_{g_t})$; see \cite[Definition 2.1]{fangli2025structure}. Taking $\tau=1$ in \eqref{defndstar} shows that $M\times[1,T)$ has bounded diameter with respect to $d^*$ (see \cite[(9.1)]{fangli2025structure}). By \cite[Proposition 3.9(2)]{fangli2025structure}, the time function $\mathfrak t$ is $2$-H\"older continuous with respect to $d^*$, namely,
\begin{equation*}
\bigl|\mathfrak t(x^*)-\mathfrak t(y^*)\bigr|\le d^*(x^*,y^*)^2.
\end{equation*}

We then define $(Z,d_Z,\mathfrak t)$ to be the metric completion of $M\times [1,T)$ with respect to $d^*$. By construction, the completion adds only points in
\[
Z_T:=\mathfrak t^{-1}(T)\subset Z.
\]
Moreover, it is proved in \cite[Section 9]{fangli2025structure} that $(Z,d_Z,\mathfrak t)$ is a noncollapsed Ricci flow limit space over $[1,T]$ in the sense of \cite[Definition 3.21]{fangli2025structure}.

As in \cite[Definition 6.1]{fangli2025structure}, for each $t\in [1,T)$, we define a distance function on $Z_t:=\mathfrak t^{-1}(t)\subset Z$ by
\begin{align}\label{defnd0}
d_t^Z(x,y):=\lim_{s\nearrow t} d_{W_1}^{g_s}(\nu_{x;s},\nu_{y;s}) \in [0,+\infty]
\end{align}
for any $x,y\in Z_t$, where the limit exists by \cite[Lemma 5.29]{fangli2025structure}. If $t\in [1,T)$, then $(Z_t,d_t^Z)$ is precisely $(M,d_{g_t})$. Moreover, $(Z_T,d_T^Z)$ is isometric to Bamler's asymptotic boundary introduced in \cite[Section 2.6]{bamler2020structure}; see \cite[Lemma 9.1]{fangli2025structure}.

For the remainder of this section, we fix a sequence $\{t_j\}$ with $t_j\nearrow T$ such that \eqref{eq:grsequence} holds.

\begin{prop}\label{prop:embh}
There exists an isometric embedding
\begin{align*}
\iota:(Z_T,d_T^Z)\longrightarrow (X_{\GH},d_{\GH}).
\end{align*}
\end{prop}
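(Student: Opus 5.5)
We construct $\iota$ using $H$-centers. For $z\in Z_T$ and each $j$, we choose an $H$-center $z_j\in M$ of the conjugate heat kernel measure $\nu_{z;t_j}$. Here $H=H_3$ is the constant from \cite{fangli2025structure}, so that
\[
\mathrm{Var}_{t_j}(\nu_{z;t_j})\le H(T-t_j)\quad\text{and}\quad d_{W_1}^{g_{t_j}}(\delta_{z_j},\nu_{z;t_j})\le \sqrt{H(T-t_j)}.
\]
For points of $Z_T$, $\nu_{z;s}$ is defined through the extension of conjugate heat kernels to the completion; these measures still satisfy the variance bound, by \cite[Section 9]{fangli2025structure}. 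Using the Gromov--Hausdorff approximations $\phi_j:(M,d_{g_{t_j}})\to (X_{\GH},d_{\GH})$ from \eqref{eq:grsequence}, we set
\[
\iota(z):=\lim_{j\to\infty}\phi_j(z_j).
\]
The limit exists after passing to a subsequence, since $X_{\GH}$ is compact. To make the construction consistent for all $z$ at once, we first define it on a countable dense subset $D\subset Z_T$ by a diagonal argument. We then extend it by uniform continuity, which follows from the isometry property below together with the completeness of $X_{\GH}$.

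The key estimate is that, for $z,w\in Z_T$,
\[
d_{g_{t_j}}(z_j,w_j)\to d_T^Z(z,w).
\]
By the triangle inequality for $W_1$,
\[
\bigl|d_{g_{t_j}}(z_j,w_j)-d_{W_1}^{g_{t_j}}(\nu_{z;t_j},\nu_{w;t_j})\bigr|\le 2\sqrt{H(T-t_j)}.
\]
By definition \eqref{defnd0}, namely the limit from \cite[Lemma 5.29]{fangli2025structure} taken at $t=T$, we also have
\[
d_{W_1}^{g_{t_j}}(\nu_{z;t_j},\nu_{w;t_j})\to d_T^Z(z,w).
\]
Since $\phi_j$ are $\epsilon_j$-isometries with $\epsilon_j\to0$, it follows that
\[
d_{\GH}(\iota(z),\iota(w))=\lim_j d_{g_{t_j}}(z_j,w_j)=d_T^Z(z,w).
\]
This shows that $\iota$ is distance preserving on $D$, hence on all of $Z_T$ after extension. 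The limit is also finite: the diameter bound in Theorem~\ref{thm:l1bound} gives $d_T^Z<\infty$, so $d_T^Z$ is an honest metric.

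Two points need care. First, we must check that \eqref{defnd0} and its limit statement apply at the terminal time $T$ for points of the completion, not just for $t<T$. I would invoke the fact that $(Z,d_Z,\mathfrak t)$ is a noncollapsed Ricci flow limit space over $[1,T]$, so that the general results of \cite[Sections 5--6]{fangli2025structure} apply with $t=T$. Second, we need independence of the choices: any two $H$-centers of $\nu_{z;t_j}$ lie within $2\sqrt{H(T-t_j)}$ of each other. We also need density of $D$ in $(Z_T,d_T^Z)$, which I would obtain from separability of $(Z,d_Z)$ combined with the inequality $d_T^Z\le C d_Z$ on $Z_T$ coming from the monotonicity of $W_1$ distances. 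I expect this density step, which requires controlling $d_T^Z$ by $d_Z$, to be the main technical point. Failing that, one can avoid it altogether: define $\iota(z)$ as the limit along the fixed subsequence for every $z$ simultaneously, and note that isometry forces uniqueness of the limit once it holds on a dense set.
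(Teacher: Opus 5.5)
Your construction is the same as the paper's: $H_3$-centers at the times $t_j$, a diagonal argument on a countable set, the estimate $\bigl|d_{g_{t_j}}(z_j,w_j)-d_{W_1}^{g_{t_j}}(\nu_{z;t_j},\nu_{w;t_j})\bigr|\le 2\sqrt{H_3(T-t_j)}$ combined with \eqref{defnd0}, and extension by density. However, the step you flag yourself is a genuine gap: you never produce a countable $d_T^Z$-dense subset of $Z_T$.

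\begin{itemize}
\item The inequality $d_T^Z\le C\,d_Z$ does not follow from monotonicity. Since $s\mapsto d_{W_1}^{g_s}(\nu_{x;s},\nu_{y;s})$ is nondecreasing, letting $\tau\nearrow T$ in \eqref{defndstar} gives only the opposite bound $d_Z\le d_T^Z$. So $d_T^Z$ is the \emph{finer} metric, and separability of $(Z,d_Z)$ says nothing about separability of $(Z_T,d_T^Z)$; compare the discrete metric on $[0,1]$.
\item A reverse bound of this type is not a general principle. In the singular-flow setting, points on different components of $Z_t$ just after a neckpinch are $d_Z$-close but have $d_t^Z=\infty$.
\item Your fallback is circular. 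Along a fixed subsequence, $\lim_j\phi_j(z_j)$ is known to exist for every $z$ only after convergence has been arranged on a countable dense set, which is exactly what is missing. For an uncountable family you cannot diagonalize.
\end{itemize}

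The missing idea is to extract separability from the compactness of $X_{\GH}$, using your own construction.

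\begin{itemize}
\item Apply it to an arbitrary countable $\epsilon$-separated set $\{z_i\}\subset Z_T$. The diagonal argument and your key estimate give points $x_i\in X_{\GH}$ with $d_{\GH}(x_{i_1},x_{i_2})=d_T^Z(z_{i_1},z_{i_2})$.
\item So $\{x_i\}$ is $\epsilon$-separated in a compact space and must be finite.
\item An infinite $\epsilon$-separated set would contain a countably infinite one, so every $\epsilon$-separated subset of $(Z_T,d_T^Z)$ is finite. Hence $(Z_T,d_T^Z)$ is totally bounded.
\item Being complete, it is compact and therefore separable. Only then does your dense-subset-and-extension step go through.
\end{itemize}

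Alternatively, you could define $\iota(z)$ as the limit of $\phi_j(z_j)$ along a nonprincipal ultrafilter. This exists for every $z$ simultaneously because $X_{\GH}$ is compact, and it bypasses separability altogether. Some device of this kind is needed.
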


\begin{proof}
We first show that $(Z_T,d_T^Z)$ is separable, i.e.,\ it contains a countable dense subset. Fix $\ep>0$, and choose a countable $\ep$-separated set $\{z_i\}_{i=1}^N\subset Z_T$, where $N\in [1,\infty]$. Note that a maximal $\ep$-separated set in $Z_T$ may be uncountable.

For $z_1$, let $x_{1,j}^*$ be an $H_3$-center of $z_1$ on $M\times \{t_j\}$, where $H_3:=\pi^2+4$; see \cite[Definition 6.12]{fangli2025structure}. After passing to a subsequence of $\{t_j\}$, we may assume that $x_{1,j}^*$ converges to some point $x_1\in X_{\GH}$ in the Gromov--Hausdorff sense.

Repeating this construction for $z_2,z_3,\dots$, and then passing to a diagonal subsequence of $\{t_j\}$, still denoted by $\{t_j\}$, we may assume that for each $z_i$, the corresponding $H_3$-centers $x_{i,j}^*\in M\times \{t_j\}$ converge to some point $x_i\in X_{\GH}$ in the Gromov--Hausdorff sense.

We now define a map
\[
\iota:\{z_i\}_{i=1}^N\to \{x_i\}_{i=1}^N
\]
by setting
\[
\iota(z_i)=x_i,\qquad 1\le i\le N.
\]
By the definition \eqref{defnd0}, it follows that $\iota$ is isometric on $\{z_i\}_{i=1}^N$, namely,
\begin{align*}
d_{\GH}\bigl(\iota(z_{i_1}),\iota(z_{i_2})\bigr)
= d_{\GH}(x_{i_1},x_{i_2})
= d_T^Z(z_{i_1},z_{i_2}),
\qquad \forall\, i_1,i_2\in [1,N].
\end{align*}

In particular, $\{x_i\}_{i=1}^N$ is an $\ep$-separated subset of $X_{\GH}$. Since $(X_{\GH},d_{\GH})$ is compact, we conclude that $N$ must be finite. Hence every $\ep$-separated subset of $(Z_T,d_T^Z)$ is finite, which implies that $(Z_T,d_T^Z)$ is totally bounded. Since $(Z_T,d_T^Z)$ is complete, it follows that $(Z_T,d_T^Z)$ is compact, and in particular separable.

We may therefore choose a countable dense subset $\{z_i\}_{i\ge 1}\subset Z_T$. By the same argument as above, there exists a map
\[
\iota:\{z_i\}_{i\ge 1}\to X_{\GH}
\]
such that
\begin{align}\label{eq:subiso}
d_{\GH}\bigl(\iota(z_{i_1}),\iota(z_{i_2})\bigr)=d_T^Z(z_{i_1},z_{i_2}),
\qquad \forall\, i_1,i_2\ge 1.
\end{align}

Since $\{z_i\}_{i\ge 1}$ is dense in $Z_T$, \eqref{eq:subiso} allows us to extend $\iota$ uniquely to an isometric embedding from $(Z_T,d_T^Z)$ into $(X_{\GH},d_{\GH})$. This completes the proof.
\end{proof}

\subsection{Identification of the Gromov--Hausdorff limit}

Next, we prove that the isometric embedding $\iota$ is in fact surjective. Before doing so, we recall the neck decomposition theorem proved in \cite[Theorem 1.11]{fangli2025recti}. Although the theorem there is stated for four-dimensional noncollapsed Ricci flow limit spaces, an analogous neck decomposition also holds in dimension three. This can be seen either by taking the product of a three-dimensional Ricci flow limit space with a flat circle $S^1$, or by repeating the same proof verbatim in the three-dimensional setting. In fact, the proof is simpler in dimension three, since the only model space is the standard cylinder $S^2\times \R$; note that $\RP^2\times \R$ cannot occur because of the orientability assumption. We state the three-dimensional neck decomposition below. For the relevant definitions and notations, we refer the reader to \cite[Definition 5.9, Index]{fangli2025recti}.

\begin{thm}[3D neck decomposition theorem]\label{neckdecomgeneral3d}
Let $(Z, d_Z, \t)$ be a noncollapsed Ricci flow limit space arising as the pointed Gromov--Hausdorff limit of a sequence of orientable Ricci flows in $\MM(3, Y, T)$. For any constants $\delta>0$ and $\eta>0$, if $\zeta \le \zeta(Y, \delta, \eta)$, then the following holds.

Given $z_0 \in Z$ with $\t(z_0)-2 \zeta^{-2} r_0^2 \in \III^-$, we have the decomposition\textup{:}
	\begin{align*}
		&B^*(z_0,r_0)\subset \bigcup_a\big(\NNN_a\bigcap B^*(x_a,r_a)\big)\bigcup \bigcup_b B^*(x_b,r_b)\bigcup S^{1,\delta,\eta},\\
		&S^{1,\delta,\eta}\subset \bigcup_a\big(\CCC_{0,a}\bigcap B^*(x_a,r_a)\big)\bigcup\tilde{S}^{1,\delta,\eta},
	\end{align*}
with the following properties\textup{:}
	\begin{enumerate}[label=\textnormal{(\alph{*})}]
		\item For each $a$, $\NNN_a=B^*(x_a,2r_a)\setminus B^*_{r_x}(\CCC_a)$ is a $(1,\delta, \cc, r_a)$-cylindrical neck region, where $\cc=\cc(Y)$. 
		
		\item For each $b$, there exists a point in $B^*(x_b,2 r_b)$ which is $(2,\eta,r_b)$-symmetric.
		
		\item The following content estimates hold\textup{:}
		\begin{align*}
\sum_a r_a+\sum_b r_b+\HHH^1(S^{1,\delta,\eta})\leq C(Y) r_0 \quad \text{and} \quad \HHH^1(\tilde{S}^{1,\delta,\eta})=0.
	\end{align*}	
	\end{enumerate}
\end{thm}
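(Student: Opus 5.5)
The plan is to reduce the three-dimensional statement to the four-dimensional neck decomposition of \cite[Theorem 1.11]{fangli2025recti} by taking products with a flat circle. Let $(Z,d_Z,\t)$ be the pointed limit of orientable flows $(M_i^3,g_{i,t})$ in $\MM(3,Y,T)$. Consider the product flows $(M_i\times S^1, g_{i,t}+d\theta^2)$, where $S^1$ has fixed length $L$, chosen large compared with $r_0$ (for instance $L=\zeta^{-1}r_0$ up to a constant).

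\textbf{Step 1: the product limit.} The product flows lie in $\MM(4,Y',T)$ with $Y'=Y+C(L)$. This holds because conjugate heat kernels split as products, so the Nash entropy also splits, with the circle contributing a bounded term at scales $\le L^2$. Their limit is $Z\times S^1$ with the product $d^*$-structure, which I would check using the splitting of $W_1$-distances of product measures up to constants. The tangent flows of $Z\times S^1$ are tangent flows of $Z$ times $\R$. Symmetry notions correspond as follows:
\begin{itemize}[label={}]
\item \emph{Symmetric points.} A $(k,\eta,r)$-symmetric point of $Z$ yields a $(k+1,\eta',r)$-symmetric point of $Z\times S^1$ at scales $r\ll L$.
\item \emph{Cylindrical models.} The model $S^2\times\R$ becomes $S^2\times\R^2$. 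Orientability excludes $\RP^2\times\R$, so no other models occur.
\end{itemize}

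\textbf{Step 2: apply and project.} Apply the 4D decomposition, with index shifted by one, to $B^*((z_0,\theta_0),r_0)$ in $Z\times S^1$. Then project to $Z$ along the slice $\theta=\theta_0$, or better, intersect with $Z\times\{\theta_0\}$:
\begin{itemize}[label={}]
\item \emph{Neck regions.} A $(2,\delta,\cc,r_a)$-cylindrical neck region in the product is invariant to first order in $\theta$. Its center set $\CCC_a$ projects to a $1$-dimensional center set, giving a $(1,\delta,\cc,r_a)$-neck region in $Z$. This identification uses the definition \cite[Definition 5.9]{fangli2025recti} with its almost-$\R$-splitting direction removed, and is where the fiddly bookkeeping lies.
\item \emph{$b$-balls.} These carry $(3,\eta,r_b)$-symmetric points, which descend to $(2,\eta',r_b)$-symmetric points in $Z$. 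The descent works because a splitting of $\R^3$ in the product tangent flow contains the $S^1$ direction up to small error.
\item \emph{Content estimates.} These need care, since the 4D content is $\sum r_a^2$ together with $\HHH^2$. The balls meeting the slice $\theta=\theta_0$ are not controlled directly. I would instead average over $\theta_0\in S^1$ by Fubini, or use translation invariance and choose a good slice. This gives $\sum_a r_a+\sum_b r_b+\HHH^1(S^{1})\le C r_0^2\cdot L^{-1}\cdot$ const, which is $\le C r_0$ when $L\sim r_0$. Since $r_a\le r_0\lesssim L$, each ball of radius $r$ meets a proportion $\sim r/L$ of slices. The $\HHH^2$-null set $\tilde S$ has $\HHH^1$-null slices for a.e. $\theta$, by the coarea/Eilenberg inequality.
\end{itemize}

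\textbf{Main obstacle and fallback.} The hardest step is making the slicing argument rigorous. Statements about the product hold only up to errors. Both the neck-region definitions and the "a.e. $\theta$" choice must be compatible with fixed $\theta_0$, given that $z_0$ is arbitrary. If this proves too delicate, the fallback is to repeat the proof of \cite[Theorem 1.11]{fangli2025recti} verbatim in dimension three. Its ingredients carry over:
\begin{itemize}[label={}]
\item cylindrical strong uniqueness of $S^2\times\R$ tangent flows;
\item the neck structure theorem;
\item the inductive ball covering with Ahlfors-regular packing in dimension $1$.
\end{itemize}
The only model is $S^2\times\R$, so the classification step is simpler.
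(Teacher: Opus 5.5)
Your proposal is essentially the paper's argument. The paper gives no more than the remark that the three-dimensional statement follows from \cite[Theorem 1.11]{fangli2025recti} either by taking the product with a flat $S^1$ or by repeating the proof verbatim, the latter being simpler because orientability leaves $S^2\times\R$ as the only cylindrical model.

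Of your two routes, only the verbatim rerun (your fallback) is sound as written. In the product route, the choice $L\sim r_0$ in your content step would make the circle factor contribute Nash entropy of order $\log(r_0/\sqrt{T})$, which destroys the uniformity of $Y'$, $\zeta$ and $C(Y)$. You would instead need $L$ comparable to $\sqrt{T}$, and you should average only over slices with $|\theta-\theta_0|\le r_0$, since averaging over all of $S^1$ mostly sees slices that miss the ball. Transferring the neck regions to a slice is also real work rather than bookkeeping: the 4D decomposition is not $S^1$-equivariant, and the product $d^*$ restricted to a slice is only comparable to $d_Z$ up to constants.
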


Next, we prove the following key lemma.

\begin{lem}\label{lem:key}
For any $\ep>0$ and any sequence $\{x_j\}\subset M$, after passing to a subsequence of $\{t_j\}$ if necessary, the set $B_{g_{t_j}}(x_j,\ep)\times \{t_j\}$ contains an $H$-center of some point $z_j\in Z_T$ for all $j$, where $H=H(Y)>0$ is a constant.
\end{lem}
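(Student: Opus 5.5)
We argue by contradiction, using the neck decomposition (Theorem~\ref{neckdecomgeneral3d}) to show that some point of $Z_T$ has its $H$-center near $x_j$.

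\textbf{Setup of the contradiction.} Suppose the statement fails. Then for infinitely many $j$, no point of $B_{g_{t_j}}(x_j,\ep)\times\{t_j\}$ is an $H$-center of a point of $Z_T$; pass to this subsequence. Fix a small parameter $r\ll \ep$ and consider the spacetime points $x_j^*=(x_j,t_j)$. Since $t_j\nearrow T$ and $d^*$ is bounded, the $x_j^*$ subconverge in $(Z,d_Z)$ to some $z\in Z_T$. It is not enough to take $z$ itself: its $H_3$-centers at time $t_j$ need not lie near $x_j$, because mass of $\nu_{z;t_j}$ may escape along high-curvature regions. We must therefore exploit the structure near $z$.

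\textbf{Case where $\nu_{x_j^*;s}$ stays concentrated.} The natural tool is the variance bound for conjugate heat kernels, $\Var(\nu_{x^*;s})\le H_3(t-s)$, together with the fact that the $H$-center of $x_j^*$ at time $s$ lies within $\sqrt{H(t_j-s)}$ of $x_j$ whenever $x_j$ is regular at scale $\sqrt{t_j-s}$. Choose the scale $s=T-r^2$ and look at the parabolic ball $B^*(z_j',r)$ around a suitable point. If the region of $M\times\{t_j\}$ near $x_j$ has curvature radius $\gtrsim r$, then the points $(x_j,t_j)$ form a Cauchy-like family whose limit $w\in Z_T$ is regular. Its conjugate heat kernel measures at $t_j$ concentrate within $O(\sqrt{T-t_j})$ of $x_j$, so the $H$-center of $w$ lies in $B_{g_{t_j}}(x_j,\ep)$. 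This gives the contradiction in the low-curvature case.

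\textbf{High-curvature case.} If the curvature is large on $B_{g_{t_j}}(x_j,\ep/2)$, we apply Theorem~\ref{neckdecomgeneral3d} at scale $r_0\sim\ep$ on the limit space $Z$, and in a quantitative form on the approximating flows. The content estimate $\sum r_a+\sum r_b+\HHH^1(S^{1,\delta,\eta})\le C r_0$ together with $\HHH^1(\tilde S)=0$ prevents the ball from being filled entirely by neck-singular points. Comparing against Proposition~\ref{prop:ballpacking}'s argument, where a geodesic through a $\delta$-tube has length comparable to $\ep$, shows that the $\ep/2$-ball around $x_j$ must meet either a ball $B^*(x_b,r_b)$, which is $(2,\eta)$-symmetric and hence almost regular at scale $r_b$, or a neck region $\NNN_a$. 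In a neck region, the center set $\CCC_a$ consists of points of $Z_T$ whose $H$-centers track the axis of the cylinder. The axis of the neck passing through $x_j$ is almost parallel to $\gamma$, so centers of points in $\CCC_{0,a}\cap Z_T$, or limits of regular points in the $b$-balls, lie within $\ep$ of $x_j$. Taking $H=H(Y)$ large to absorb the bounded distortion between $H_3$-centers and centers along necks, and between time $t_j$ and the decomposition scale, yields the contradiction.

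\textbf{Main obstacle.} The hard step is the high-curvature case. There we must transfer the decomposition, which lives on $Z$ in the $d^*$-metric, to the time-slice $M\times\{t_j\}$ with the metric $d_{g_{t_j}}$. In particular we need to show that every point of a $\delta$-tube at time $t_j$ lies within $\ep$ of an $H$-center of some point of $Z_T$, uniformly in $j$. We would first try to prove that cylindrical neck regions at time $T$ project onto the corresponding tubes at times $t_j$ with only bounded loss in $H$.
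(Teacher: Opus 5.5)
\textbf{There is a genuine gap in the high-curvature case.} Your low-curvature step matches what the paper does: a point $y_j\in B_{g_{t_j}}(x_j,\ep)$ with $r_{\Rm}(y_j)$ bounded below is flowed forward to a regular point $z_j\in Z_T$, and $y_j$ is then an $H$-center of $z_j$. The high-curvature case, however, is the whole content of the lemma, and your closing paragraph concedes that it is not done. Two of the claims you make there are wrong.
\begin{itemize}
\item Meeting a ball $B^*(x_b,r_b)$ only gives a point with $r_{\Rm}\gtrsim r_b$. Since $r_b$ can be far smaller than $\sqrt{T-t_j}$, this produces no point of $Z_T$. You have to separate the balls with $r_b>\delta'$, which give a uniformly regular point, from those with $r_b\le\delta'$. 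By the content estimate, the latter contribute at most $C\delta'$ to $\int|\scal|$.
\item It is false that $\CCC_a$ consists of points of $Z_T$. Centers with $r_x>0$ can lie at times $<T$. When the nearest center $x'$ of a point $x\in B_j\cap\NNN_a$ is such a point, nothing in your sketch produces an $H$-center, near $x_j$, of a point of $Z_T$. Your proposed ``projection of neck regions at time $T$ onto tubes at time $t_j$'' breaks down exactly here.
\end{itemize}
Two further remarks. The statement that $\HHH^1(\tilde S)=0$ ``prevents the ball from being filled'' is vacuous, since $B_j$ lies at time $t_j<T$ and contains no singular points. Likewise, using Proposition~\ref{prop:ballpacking} to show that the ball meets a $b$-ball or a neck region is vacuous, because the decomposition already covers $Z_{[T/2,T)}$.

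\textbf{The missing idea is to turn the bad case into smallness of $\int_{B_j}|\scal|\,\d V_{g_{t_j}}$ and play it against Proposition~\ref{prop:ballpacking}.} In the paper, take the points of $B_j$ lying in a large neck $\NNN_a$ whose nearest centers are not in $Z_T$, and cover them by Vitali balls $B^*(x_j^i,10d_{x_j^i})$. One of two things happens.
\begin{itemize}
\item Some ball $B^*((x_j^i)',d_{x_j^i})$ meets $\CCC_a\cap Z_T$. Then monotonicity of $d_{W_1}$ together with Bamler's non-expanding estimate makes $x_j^i$ an $H$-center of that point, and you are done.
\item Otherwise $\int_{B_j^a}|\scal|\le C\mu_a(\CCC_a\cap Z_{[2t_j-T,T)})$. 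This tends to $0$ because $\mu_a$ is a finite packing measure and these sets decrease to the empty set.
\end{itemize}
Hence in every bad case $\int_{B_j}|\scal|\le C\delta'+\Psi(j^{-1})$. On the other hand, if $|\Rm|$ were unbounded on $B_{g_{t_j}}(x_j,\ep/2)$, the $\delta$-tube argument of Proposition~\ref{prop:ballpacking} would give $\int_{B_j}|\scal|\ge c_0\ep$. So the curvature is uniformly bounded there, and you are back in your low-curvature case. The real role of Proposition~\ref{prop:ballpacking} in this proof is this lower bound on the curvature integral.
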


\begin{proof}
For small parameters $\delta>0$ and $\eta>0$, to be chosen later, let $\zeta=\zeta(Y,\delta,\eta)$ be such that Theorem~\ref{neckdecomgeneral3d} applies to $(Z,d_Z,\mathfrak t)$. By \cite[Equation (9.1)]{fangli2025structure}, the diameter of $(Z,d_Z)$ is bounded by a constant depending on $T$ and $\mathrm{diam}_{g_0}(M)$. Consequently, by \cite[Propositions 5.34 and 5.35]{fangli2025structure}, the set $Z_{[T/2,T]}$ can be covered by at most $C_0$ balls $B^*(z_i,r_0)$ with $z_i\in Z_{[T/2,T]}$ and
\[
r_0:=\zeta\sqrt{T/8},
\]
where $C_0$ depends on the Ricci flow, $\delta$, and $\eta$. Applying Theorem~\ref{neckdecomgeneral3d} to each ball $B^*(z_i,r_0)$ and using the fact that $S^{1,\delta,\eta}\subset \mathcal S\subset Z_T$, we obtain
\begin{align}\label{eq:estima0a}
 Z_{[T/2, T)} \subset \bigcup_a \lc\NNN_a \bigcap B^*(x_a, r_a) \rc \bigcup \bigcup_b B^*(x_b, r_b),
\end{align}
together with the content estimate
\begin{align}\label{eq:estima0}
\sum_a r_a+\sum_b r_b \le C_1,
\end{align}
where $C_1$ depends on the Ricci flow, $\delta$, and $\eta$.

By \eqref{eq:estima0}, we may choose a constant $\delta'\ll \ep$ such that
\begin{align}\label{eq:estima0c}
\sum_{r_a\le \delta'} r_a+\sum_{r_b\le \delta'} r_b \le \delta'.
\end{align}

From \cite[Lemma 6.21(v)]{fangli2025recti}, it follows that for any $\sigma>0$, if $\eta\le \eta(T,\sigma)$, then every point $y\in B^*(x_b,3r_b/2)$ is $(3,\sigma,\sigma r_b)$-symmetric. If $\sigma=\sigma(T)$ is chosen sufficiently small, then \cite[Theorem 8.13]{fangli2025structure} implies that
\[
r_{\Rm}^{-2}(y)\le C(T)r_b^{-2},
\]
where $r_{\Rm}$ denotes the curvature radius.

Let
\[
B_j:=B_{g_{t_j}}(x_j,\ep)\times \{t_j\}.
\]
We consider the following cases.

\textbf{Case 1.} There exist infinitely many $j$ such that
\[
B_j\bigcap \bigcup_{r_b>\delta'} B^*(x_b,r_b)\neq \emptyset.
\]

Passing to a subsequence of $\{t_j\}$ if necessary, we may assume that
\[
B_j\bigcap \bigcup_{r_b>\delta'} B^*(x_b,r_b)\neq \emptyset
\qquad \text{for all } j.
\]
By the discussion above, there exists a point $y_j\in B_j$ such that
\begin{align} \label{eq:estima1xa}
r_{\Rm}(y_j)\ge c(T)r_b\ge c(T)\delta'>0.
\end{align}
Hence, for all sufficiently large $j$, we can find a point $z_j\in Z_T\cap \mathcal R$ lying on the flow line of $\partial_{\mathfrak t}$ through $y_j$. By \eqref{eq:estima1xa}, it follows from \cite[Proposition 2.21(i)]{fangli2025structure} that $y_j$ is an $H(T)$-center of $z_j$. This proves the conclusion in this case.

\textbf{Case 2.} For all $j$,
\[
B_j \bigcap \bigcup_{r_b>\delta'} B^*(x_b,r_b)=\emptyset.
\]

In this case, \eqref{eq:estima0a} implies that
\begin{align*}
B_j \subset \bigcup_a \lc\NNN_a \bigcap B^*(x_a, r_a) \rc \bigcup \bigcup_{r_b \le \delta'} B^*(x_b, r_b).
\end{align*}
We decompose
\begin{align*}
B_j=\Omega_j^1\cup \Omega_j^2,
\end{align*}
where
\begin{align*}
\Omega^1_j:=  B_j \bigcap \lc \bigcup_{r_a \le \delta'} \lc\NNN_a \bigcap B^*(x_a, r_a) \rc \bigcup\bigcup_{r_b \le \delta'} B^*(x_b, r_b) \rc
\end{align*}
and
\begin{align*}
\Omega^2_j:=  B_j \bigcap \lc \bigcup_{r_a > \delta'} \lc\NNN_a \bigcap B^*(x_a, r_a) \rc \rc.
\end{align*}

By the same argument as in the proof of \cite[Theorem 6.31]{fangli2025recti}, we may choose $\delta=\delta(T)$, $\eta=\eta(T)$, and $\zeta=\zeta(T)$ such that
\begin{align}\label{eq:estima1}
\int_{\Omega_j^1} |\scal|\,\d V_{g_{t_j}}
\le C(T)\left(\sum_{r_a\le \delta'} r_a+\sum_{r_b\le \delta'} r_b\right)
\le C(T)\delta',
\end{align}
where the last inequality uses \eqref{eq:estima0c}.

Next, we analyze $\Omega_j^2$. Since the number of cylindrical neck regions $\NNN_a$ with $r_a>\delta'$ is finite by \eqref{eq:estima0}, we may assume for simplicity that there is only one such cylindrical neck region, denoted by $\NNN_a$; the general case follows by applying the same argument to each of the finitely many necks. Let $\CCC_a$ denote its center; for the relevant definitions, see \cite[Definition 5.9]{fangli2025recti}.

\textbf{Case 2a.} There exist infinitely many $j$ such that
\[
B^a_j:=B_j \bigcap \NNN_a \bigcap B^*(x_a, r_a) = \emptyset.
\]

Passing to a subsequence if necessary, we may assume that $B_j^a=\emptyset$ for all $j$. Therefore, by \eqref{eq:estima1},
\begin{align}\label{eq:estima1a}
\int_{B_j} |\scal|\,\d V_{g_{t_j}}
=\int_{\Omega_j^1} |\scal|\,\d V_{g_{t_j}}
\le C(T)\delta'.
\end{align}

\textbf{Case 2b.} For all $j$, we have $B_j^a\neq \emptyset$.

For any $x\in B_j^a$, choose $x'\in \CCC_a$ such that
\[
d_x:=d_Z(x,x')=d_Z(x,\CCC_a).
\]
If $x'\in Z_T$, then, provided $\delta\le \delta(T)$, the point $x$ is an $H$-center of $x'$ for some universal constant $H>0$. Indeed, by the definition of cylindrical neck regions, $x'$ is $(1,\delta,\cc^{-5}d_x)$-cylindrical with respect to $\mathcal L_{x',\cc^{-5}d_x}$, where $\cc=\cc(T)$; see \cite[Definition 5.9]{fangli2025recti}. If $x$ were not an $H$-center of $x'$ for sufficiently large $H$, then one could find a point $y\in \mathcal L_{x',\cc^{-5}d_x}$ such that
\[
d_Z(x,y)\le d_x/10.
\]
By \cite[Definition 5.9(n4)]{fangli2025recti}, this would imply the existence of a point $y'\in \CCC_a$ such that
\[
d_Z(y,y')\le d_x/10,
\]
and hence
\[
d_Z(x,y')\le d_x/5,
\]
contradicting the definition of $d_x$.

Thus, to complete the proof in this subcase, we may assume that $x'\in \CCC_{a} \setminus Z_T$ for every $x\in B_j^a$ and every $j$.

For each $j$, by a Vitali covering argument, we can find points $x_j^i\in B_j^a$ such that the balls $\{B^*(x_j^i,2 d_{x_j^i})\}_i$ are mutually disjoint and
\begin{align*}
B_j^a\subset \bigcup_i B^*(x_j^i,10 d_{x_j^i}).
\end{align*}

If for each $j$ there exists an index $i_j$ such that
\[
B^*((x_j^{i_j})',d_{x_j^{i_j}})\bigcap \CCC_{a} \bigcap Z_T\neq \emptyset,
\]
then the proof is complete. Indeed, in that case, set $m_j=x_j^{i_j}$ and $m_j'=(x_j^{i_j})'$, and choose
\[
y^j\in B^*(m_j',d_{m_j})\bigcap \CCC_{a} \bigcap Z_T.
\]
By \cite[Definition 3.2 and the open-ball characterization following it]{fangli2025structure}, this implies that for some $0<s_j<d_{m_j}$,
\begin{align*}
d_{W_1}^{g_{T-s_j^2}}
\bigl(\nu_{m_j';\,T-s_j^2},\nu_{y^j;\,T-s_j^2}\bigr)
<s_j.
\end{align*}
Using the monotonicity of $d^{g_t}_{W_1}$-distance and Bamler's non-expanding estimate (see
\cite[Theorem 9.1]{bamler2020structure}), it follows that $m_j$ is an $H(T)$-center of $y^j$, and hence the proof is complete.

Therefore, we may assume that
\[
B^*((x_j^i)',d_{x_j^i}) \bigcap \CCC_{a} \bigcap Z_T=\emptyset
\qquad \text{for all } i,j.
\]
As in the proof of \cite[Theorem 6.31]{fangli2025recti}; see \cite[Equation (6.58)]{fangli2025recti}, we then obtain
\begin{align*}
\int_{B_j^a} |\scal|\,\d V_{g_{t_j}}
\le C(T)\sum_i d_{x_j^i}
\le C(T)\sum_i \mu_a\bigl(B^*((x_j^i)',d_{x_j^i})\bigr),
\end{align*}
where $\mu_a$ is the packing measure of $\CCC_a$; see \cite[Definition 5.12]{fangli2025recti}. Since the balls $\{B^*((x_j^i)',d_{x_j^i})\}_i$ are mutually disjoint and also disjoint from $\CCC_{a}\cap Z_T$, we have
\begin{align}\label{eq:estima2a}
\int_{B_j^a} |\scal|\,\d V_{g_{t_j}}
\le C(T)\,\mu_a\bigl(\CCC_{a}\cap Z_{[2t_j-T,T)}\bigr)
=\Psi(j^{-1}),
\end{align}
where $\Psi(j^{-1})\to 0$ as $j\to \infty$. Combining \eqref{eq:estima1} and \eqref{eq:estima2a}, we obtain
\begin{align}\label{eq:estima3}
\int_{B_j} |\scal|\,\d V_{g_{t_j}}
&=\int_{\Omega_j^1} |\scal|\,\d V_{g_{t_j}}+\int_{B_j^a} |\scal|\,\d V_{g_{t_j}} \notag\\
&\le C(T)\delta'+\Psi(j^{-1}).
\end{align}

By \eqref{eq:estima1a} and \eqref{eq:estima3}, we have shown that in Case 2, the integral $\int_{B_j} |\scal|\,\d V_{g_{t_j}}$ can be made arbitrarily small by first choosing $\delta'$ sufficiently small and then taking $j$ sufficiently large.

If $B_{g_{t_j}}(x_j,\ep)=M$ for infinitely many $j$, then for any $z_0\in Z$, an $H_3$-center of $z_0$ at time $t_j$ lies in $B_{g_{t_j}}(x_j,\ep)$, and the proof is complete. Thus, in the remainder of the proof we may assume that
\[
B_{g_{t_j}}(x_j,\ep)\neq M
\qquad \text{for all } j.
\]

We claim that
\[
|\Rm|\le C_3
\qquad \text{on } B_{g_{t_j}}(x_j,\ep/2)
\]
for some constant $C_3$ independent of $j$. Indeed, if this were false, then by the same argument as in Case 2 of the proof of Proposition~\ref{prop:ballpacking}, we would obtain
\begin{align*}
\int_{B_j} |\scal|\,\d V_{g_{t_j}}\ge c_0\ep>0,
\end{align*}
where $c_0$ is independent of $j$. This contradicts the smallness of $\int_{B_j} |\scal|\,\d V_{g_{t_j}}$ established above.

Since $|\Rm|\le C_3$ on $B_{g_{t_j}}(x_j,\ep/2)$, we can argue exactly as in Case 1 above to obtain the desired conclusion.

This completes the proof.
\end{proof}

With the aid of Lemma~\ref{lem:key}, we now prove the surjectivity of the map $\iota$ constructed in Proposition~\ref{prop:embh}.

\begin{prop}\label{prop:surj}
The map $\iota$ constructed in Proposition~\ref{prop:embh} is surjective.
\end{prop}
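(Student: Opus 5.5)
We argue by contradiction, using Lemma~\ref{lem:key} and the compactness of $(Z_T,d_T^Z)$ from Proposition~\ref{prop:embh}. Since $Z_T$ is compact and $\iota$ is an isometric embedding, $\iota(Z_T)$ is closed in $X_{\GH}$. Suppose some $p\in X_{\GH}\setminus\iota(Z_T)$ exists, and set $4\ep:=d_{\GH}(p,\iota(Z_T))>0$. Using Gromov--Hausdorff approximations $\phi_j:(M,d_{g_{t_j}})\to X_{\GH}$, choose $x_j\in M$ with $\phi_j(x_j)\to p$.

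By Lemma~\ref{lem:key}, after passing to a subsequence, there exist points $z_j\in Z_T$ and $H$-centers $y_j\in B_{g_{t_j}}(x_j,\ep)\times\{t_j\}$ of $z_j$. Since $Z_T$ is compact, a further subsequence gives $z_j\to z_\infty$ in $(Z_T,d_T^Z)$. Let $w_j$ be $H_3$-centers of $z_\infty$ at time $t_j$ (passing to the diagonal subsequence used to build $\iota$ if needed). By the construction of $\iota$ in Proposition~\ref{prop:embh} and its continuous extension, $\phi_j(w_j)\to\iota(z_\infty)$.

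It remains to show that
\[
\limsup_{j\to\infty} d_{g_{t_j}}(y_j,w_j)\le C\bigl(\sqrt{T-t_j}+d_T^Z(z_j,z_\infty)\bigr)\to 0,
\]
where $C$ depends only on $H$ and $H_3$. For this we use the standard fact that $H$-centers have the conjugate heat kernel measure concentrated within distance $\sqrt{H(T-t_j)}$ of them, with most of the mass in a ball of that radius, so that
\[
d_{g_{t_j}}(y_j,w_j)\le d_{W_1}^{g_{t_j}}(\nu_{z_j;t_j},\nu_{z_\infty;t_j})+C\sqrt{T-t_j}.
\]
By monotonicity of $d_{W_1}^{g_t}$ in $t$, we have $d_{W_1}^{g_{t_j}}(\nu_{z_j;t_j},\nu_{z_\infty;t_j})\le d_T^Z(z_j,z_\infty)$. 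Combining these,
\[
d_{\GH}(p,\iota(z_\infty))\le \lim\bigl(d_{g_{t_j}}(x_j,y_j)+d_{g_{t_j}}(y_j,w_j)\bigr)+o(1)\le\ep,
\]
which contradicts $d_{\GH}(p,\iota(Z_T))=4\ep$.

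The main obstacle is this last step: comparing two centers of different limit points and confirming that the $H$-center in Lemma~\ref{lem:key} (with its possibly different constant $H$) still converges to $\iota(z_\infty)$. I would first try the inequality $d_{g_t}(\text{center}_1,\text{center}_2)\le d_{W_1}^{g_t}+2\sqrt{H(T-t)}$, which follows from Bamler's variance bound for $H$-centers. If the monotonicity in $t$ fails to give uniformity, we can instead use the definition~\eqref{defnd0} at a fixed $s<T$ and then apply monotonicity from $s$ up to $t_j$.
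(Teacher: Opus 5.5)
Your proposal is correct and follows essentially the same route as the paper. Both argue by contradiction, use Lemma~\ref{lem:key} to produce an $H$-center $y_j$ near $x_j$, use compactness of $(Z_T,d_T^Z)$, and rely on the center-comparison estimate $d_{g_{t_j}}(y_j,w_j)\le d_{W_1}^{g_{t_j}}(\nu_{z_j;t_j},\nu_{z';t_j})+C\sqrt{T-t_j}$ together with monotonicity of $d_{W_1}^{g_t}$.

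The one difference is the comparison point: the paper uses a point $z_{i_0}$ of the countable dense set, where $\iota(z_{i_0})$ is by definition the limit of $H_3$-centers, while you use a limit point $z_\infty$. Your claim that $H_3$-centers of $z_\infty$ converge to $\iota(z_\infty)$ therefore needs one extra line: apply the same estimate to $z_\infty$ and a nearby dense point $z_i$.
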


\begin{proof}
We argue by contradiction. Since $(X_{\GH}, d_{\GH})$ is compact, $(Z_T,d_T^Z)$ is complete by \cite[Theorem 1.7(a)]{fangli2025structure}, and $\iota$ is an isometric embedding, it follows that $\iota(Z_T)$ is compact. Therefore, if $\iota(Z_T)\neq X_{\GH}$, then there exist $x^\infty\in X_{\GH}$ and $r_0>0$ such that
\begin{align}\label{eq:contr1}
B(x^\infty,2r_0)\cap \iota(Z_T)=\emptyset.
\end{align}

By the convergence \eqref{eq:grsequence}, we can find points $x_j\in M$ such that $(x_j,t_j)$ converges to $x^\infty$ in the Gromov--Hausdorff sense. After passing to a subsequence of $\{t_j\}$ if necessary, Lemma~\ref{lem:key} implies that for each $j$ there exists a point $y_j\in B_{g_{t_j}}(x_j,r_0)$ such that $(y_j,t_j)$ is an $H(T)$-center of some point $w_j\in Z_T$.

By the construction of $\iota$, there exists a countable dense subset $\{z_i\}_{i\ge 1}\subset Z_T$ such that $\iota(z_i)$ is defined as the Gromov--Hausdorff limit of a sequence $x_{i,j}^*\in M\times \{t_j\}$, where $x_{i,j}^*$ is an $H_3$-center of $z_i$. Since $(Z_T,d_T^Z)$ is compact, there exists a point $z_{i_0}$ such that
\begin{align}\label{eq:density2}
d_T^Z(w_j,z_{i_0})\le r_0/2
\end{align}
for infinitely many $j$. Passing to a further subsequence, we may assume that \eqref{eq:density2} holds for all $j$.

By the monotonicity estimate in \cite[Proposition 2.12]{fangli2025structure}, we have
\begin{align*}
d_{g_{t_j}}(y_j,x_{i_0,j}^*)
&\le d_{W_1}^{g_{t_j}}\bigl(\nu_{z_{i_0};t_j},\nu_{w_j;t_j}\bigr)+C(T)\sqrt{T-t_j} \\
&\le r_0/2+C(T)\sqrt{T-t_j}.
\end{align*}
It follows that any Gromov--Hausdorff limit of $(y_j,t_j)$, denoted by $y^\infty$, satisfies
\begin{align*}
d_{\GH}\bigl(y^\infty,\iota(z_{i_0})\bigr)\le r_0/2.
\end{align*}
On the other hand, since $y_j\in B_{g_{t_j}}(x_j,r_0)$, we obtain
\begin{align*}
d_{\GH}\bigl(x^\infty,\iota(z_{i_0})\bigr)\le 3r_0/2,
\end{align*}
which contradicts \eqref{eq:contr1}. This contradiction completes the proof.
\end{proof}

Combining Propositions~\ref{prop:embh} and~\ref{prop:surj}, we obtain the following theorem.

\begin{thm}\label{thm:convergence}
We have the Gromov--Hausdorff convergence
\begin{align*}
   (M, d_{g_t}) \longright[t \nearrow T]{\GHconvtext} (Z_T, d^Z_T). 
\end{align*}  
\end{thm}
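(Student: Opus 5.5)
The argument reduces full convergence as $t\nearrow T$ to convergence along sequences, using the results already proved. We argue by contradiction. Suppose $(M,d_{g_t})$ does not converge to $(Z_T,d^Z_T)$ in the Gromov--Hausdorff sense. Then there exist $\epsilon_0>0$ and a sequence $t_j\nearrow T$ such that
\[
d_{\GH}\bigl((M,d_{g_{t_j}}),(Z_T,d^Z_T)\bigr)\ge \epsilon_0 \qquad \text{for all } j.
\]
Here $d_{\GH}$ denotes the Gromov--Hausdorff distance between compact metric spaces. Note that $(Z_T,d_T^Z)$ is compact, as shown in the proof of Proposition~\ref{prop:embh}.

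By Proposition~\ref{prop:ballpacking}, Theorem~\ref{thm:l1bound} and the corollary following them, we may pass to a subsequence, still denoted $\{t_j\}$, along which \eqref{eq:grsequence} holds. That is, $(M,d_{g_{t_j}})\to (X_{\GH},d_{\GH})$ for some compact geodesic space. All subsequent constructions are carried out for this fixed subsequence, exactly as in this section.

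Proposition~\ref{prop:embh} then gives an isometric embedding $\iota:(Z_T,d^Z_T)\to (X_{\GH},d_{\GH})$, possibly after a further diagonal subsequence. Proposition~\ref{prop:surj}, which rests on Lemma~\ref{lem:key}, shows that $\iota$ is surjective, so $(X_{\GH},d_{\GH})$ is isometric to $(Z_T,d^Z_T)$. Consequently
\[
d_{\GH}\bigl((M,d_{g_{t_j}}),(Z_T,d^Z_T)\bigr)\to 0,
\]
which contradicts the choice of $\epsilon_0$. Hence every sequence $t_j\nearrow T$ has a subsequence converging to the same limit $(Z_T,d^Z_T)$, and therefore the whole family converges as $t\nearrow T$.

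The only point needing care is bookkeeping of subsequences. The embedding $\iota$ and the surjectivity argument each pass to further subsequences of $\{t_j\}$. This is harmless, because Gromov--Hausdorff limits along subsequences of a convergent sequence coincide with $X_{\GH}$, and only one subsequence is needed to reach the contradiction. The substantive difficulty, identifying every sequential limit with $Z_T$, was already handled in Lemma~\ref{lem:key} and Proposition~\ref{prop:surj}. The intrinsic description of $(Z_T,d^Z_T)$ through conjugate heat kernel measures is independent of the chosen sequence, so no further obstacle remains.
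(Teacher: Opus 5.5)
Your proposal is correct and follows the paper's argument, which obtains the theorem by combining sequential compactness with Proposition~\ref{prop:embh} and Proposition~\ref{prop:surj}. Your contradiction argument and your note on subsequences only spell out the standard step that every sequential limit being isometric to $(Z_T,d^Z_T)$ gives convergence of the whole family as $t\nearrow T$.
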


Note that Theorem~\ref{thm:convergence} remains valid even when \(M\) is non-orientable. The only difference is that, in the proof of Proposition~\ref{prop:ballpacking}, a \(\delta\)-neck may be modeled on \(\mathbb{RP}^2\times \mathbb R\). Similarly, in the proof of Lemma~\ref{lem:key}, one must also allow quotient cylindrical neck regions modeled on \(\mathbb{RP}^2\times \mathbb R\). The rest of the argument is unchanged.

\subsection{Structure of the singular set at the first singular time}

The metric space $(Z_T,d_T^Z)$ admits the following regular-singular decomposition:
\begin{align*}
Z_T=\mathcal R_T \sqcup \mathcal S_T.
\end{align*}
Note that $\mathcal S_T=\mathcal S$, since all singular points are contained in $Z_T$.

\begin{thm}\label{thm:singular}
The singular set $\mathcal S_T$ satisfies the following properties:
\begin{enumerate}[label=\textnormal{(\arabic*)}]
\item There exists a constant $C>0$, depending on the Ricci flow, such that for all $r>0$,
\begin{equation}\label{eq:volume}
\bigl|\{y\in Z_T \mid d_T^Z(y,\mathcal S_T)<r\}\bigr|_T\le Cr^2.
\end{equation}

\item The Minkowski dimension of $\mathcal S_T$ with respect to $d_T^Z$ satisfies
\begin{equation}\label{eq:dimension}
\dim_{\mathscr M}(\mathcal S_T)\le 1.
\end{equation}
\end{enumerate}
\end{thm}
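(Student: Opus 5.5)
Here $|\cdot|_T$ means the volume measure on $Z_T$; the natural choice is the limit of $|\cdot|_{t}$ restricted to $\mathcal R_T$, which is where the singular set has measure zero. The plan is to prove (1) via the neck decomposition, Theorem~\ref{neckdecomgeneral3d}, applied at scale $r$, and then to derive (2) from (1) together with a lower volume bound for balls in $(Z_T,d_T^Z)$.

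\textbf{Step 1: cover $\mathcal S_T$ at scale $r$.} Fix $r>0$ small. As in the proof of Lemma~\ref{lem:key}, I would cover $Z_{[T/2,T]}$ by at most $C_0$ balls $B^*(z_i,r_0)$ and apply Theorem~\ref{neckdecomgeneral3d} to each. Every singular point lies either in a neck region center $\CCC_a$ or in $S^{1,\delta,\eta}$, and the content estimate gives $\sum_a r_a+\sum_b r_b+\HHH^1(S^{1,\delta,\eta})\le C$. Balls $B^*(x_b,r_b)$ with $r_b\ge r$ contain no singular points, since their points have $r_{\Rm}\ge c r_b$ by \cite[Theorem 8.13]{fangli2025structure}. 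Balls with $r_b<r$ contribute at most $C\sum r_b\cdot r^{-1}$ balls of radius $r$.

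For each neck region, the packing measure $\mu_a$ is Ahlfors $1$-regular on $\CCC_a$ (\cite[Definition 5.12]{fangli2025recti}). Hence $\CCC_a\cap Z_T$ can be covered by at most $C r_a/r$ balls of radius $r$. The same holds for $S^{1,\delta,\eta}$ via its $\HHH^1$ bound, using a standard Vitali argument; for the measure-zero part one uses the refined decomposition. This yields
\[
N(\mathcal S_T,r)\le C r^{-1},
\]
where $N(\cdot,r)$ counts $d^*$-balls of radius $r$. Restricting to $Z_T$, $d^*$-balls of radius $r$ are contained in $d_T^Z$-balls of radius $Cr$. I expect this to follow from the definition together with the monotonicity of $d_{W_1}$, so the covering count also holds for $d_T^Z$.

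\textbf{Step 2: volume estimate (1).} For $y\in\mathcal S_T$, I would bound the volume of $B_{d_T^Z}(y,Cr)\cap\mathcal R_T$ by $Cr^3$. This is the main obstacle. A heat-kernel-based argument or the volume upper bound for noncollapsed limits (\cite[Section 5]{fangli2025structure}) should work: these bound the volume of a $d^*$-ball of radius $r$ at a time-slice by $Cr^3$. Alternatively, one can use Bishop-type control near cylindrical points, where the geometry is close to $S^2\times\R$ at scale $r$. Multiplying by the count from Step 1 gives $Cr^{-1}\cdot r^3=Cr^2$, which is \eqref{eq:volume}. The $r$-neighborhood is contained in the union of the doubled balls, so (1) follows.

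\textbf{Step 3: Minkowski dimension (2).} The covering number bound $N(\mathcal S_T,r)\le Cr^{-1}$ from Step 1 already gives $\dim_{\mathscr M}\mathcal S_T\le 1$ directly.

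Alternatively, (2) can be deduced from (1). The key input is a lower volume bound: the regular part of any $d_T^Z$-ball of radius $r$ centered near $\mathcal S_T$ has volume at least $cr^3$. This follows from noncollapsing at cylindrical scales, since at a cylindrical point at scale $r$ the ball contains a neck region of volume about $r^3$. A maximal $r$-separated set in $\mathcal S_T$ then has cardinality at most $Cr^2/(cr^3)=Cr^{-1}$.

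The delicate points are twofold. First, one must relate $d^*$-balls to $d_T^Z$-balls. Second, one must ensure that the covering of $\CCC_a\cap Z_T$ uses only the Ahlfors regularity of $\mu_a$ at scales $r\le r_a$. For scales $r>r_a$, a single ball suffices, with count $1\le r_a/r\cdot(r/r_a)$. These contributions are summed using $\sum_a r_a\le C$ only after noting that the number of necks with $r_a\ge r$ is at most $C/r$.
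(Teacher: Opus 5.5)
Your proposal has a genuine gap where you convert $d^*$-coverings into $d_T^Z$-coverings. The monotonicity you invoke says that $s\mapsto d_{W_1}^{g_s}(\nu_{x;s},\nu_{y;s})$ is nondecreasing, so $d_T^Z(x,y)$ is its supremum over $s<T$. By contrast, $d_Z(x,y)<r$ only produces one earlier time $\tau\in(T-r^2,T)$ at which this quantity is $<r$. This yields $d_Z\le d_T^Z$ (\cite[Lemma 6.4]{fangli2025structure}), i.e.\ $d_T^Z$-balls lie inside $d^*$-balls, which is the opposite of the inclusion you need. The reverse bound $d_T^Z\le Cd_Z$ is not a formal consequence of the definitions: on a singular flow, points in different components of a slice have $d_t^Z=\infty$ but $d_Z\le C$. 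Nothing available gives this bound on $\mathcal S_T$. Since (2) is a statement about $d_T^Z$, the ``direct'' conclusion of Step~3 is unsupported.

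Your alternative route does not close this gap either. Cylindricality does not imply the lower bound $|B_{d_T^Z}(s,r)\cap\mathcal R_T|_T\ge cr^3$ for $s\in\mathcal S_T$, because cylindricality is a spacetime statement about times $\le T-\ep r^2$, not about the slice $Z_T$. For the rotationally symmetric neckpinch, the final profile at distance $\rho$ from the pinch has radius of order $\rho/\sqrt{|\log\rho|}$, so this volume is only of order $r^3/|\log r|$.

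The counting in Step~1 is also incomplete, even for $d^*$-balls. Bounds on $\mathscr H^1(S^{1,\delta,\eta})$ and the fact that $\mathscr H^1(\tilde S^{1,\delta,\eta})=0$ give no control on covering numbers at a fixed scale $r$; a countable dense set has $\mathscr H^1=0$. A Vitali argument needs a lower bound $\gtrsim r$ on the measure of \emph{every} $r$-ball centered in the set, and you have this only on $\CCC_{0,a}$ at scales $\le r_a$. Likewise, $\sum_a r_a\le C$ does not bound the number of necks with $r_a<r$, each of which you charge one ball.

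The paper sidesteps both problems. For (2), it works on the smooth slices $(M,g_{t_j})$. The $\delta$-tube argument of Case~2 in Proposition~\ref{prop:ballpacking} gives $\int_{B_{g_t}(x_0,r)}|\scal|\,\d V_{g_t}\ge c_0r$ at every $x_0$ with $r_{\Rm}(x_0,t)<\delta r$, which is precisely the uniform lower regularity that Vitali needs. The $L^1$ bound of Theorem~\ref{thm:l1bound} then covers $\{r_{\Rm}<\delta r\}\cap M_t$ by $Cr^{-1}$ balls of radius $r$. Theorem~\ref{thm:convergence} transports this covering to $(Z_T,d_T^Z)$, because $H_3$-centers at time $t_j$ of points of $\mathcal S_T$ have $r_{\Rm}\to0$.

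For (1), no counting is needed. \cite[Corollary 6.28]{fangli2025recti} gives $|\{r_{\Rm}<r\}\cap Z_T|_T\le Cr^2$. Combining $d_Z\le d_T^Z$ with the Lipschitz bound on $r_{\Rm}$ (\cite[Proposition 7.6]{fangli2025structure}) places the $d_T^Z$-neighborhood of $\mathcal S_T$ inside $\{r_{\Rm}<Cr\}$.
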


\begin{proof}
(1): By \cite[Equation (9.1), Propositions 5.34 and 5.35]{fangli2025structure}, the set $Z_{[T/2,T]}$ can be covered by at most $C_0$ balls $B^*(z_i,r_0)$ with $z_i\in Z_{[T/2,T]}$ and $r_0:=\sqrt{T}/10$, where $C_0$ is a constant depending on the Ricci flow.

For each ball $B^*(z_i,r_0)$, \cite[Corollary 6.28]{fangli2025recti} yields
\begin{align*}
\bigl|\{r_{\Rm}<r\}\cap B^*(z_i,r_0)\cap Z_T\bigr|_T \le C(T)r^2.
\end{align*}
Taking the union over all such balls, we obtain
\begin{align}\label{eq:volume1}
\bigl|\{r_{\Rm}<r\}\cap Z_T\bigr|_T \le C(T)C_0r^2.
\end{align}

By \cite[Lemma 6.4]{fangli2025structure},
\begin{align*}
\{y\in Z_T \mid d_T^Z(y,\mathcal S_T)<r\}
\subset S':=\{y\in Z_T \mid d_Z(y,\mathcal S)<r\}.
\end{align*}
Since $r_{\Rm}=0$ on $\mathcal S$, it follows from \cite[Proposition 7.6]{fangli2025structure} that every $y\in S'$ satisfies
\begin{align*}
r_{\Rm}(y)<C(Y)r.
\end{align*}
Hence
\begin{align*}
S'\subset \{y\in Z_T \mid r_{\Rm}(y)<C(Y)r\}.
\end{align*}
Therefore, \eqref{eq:volume} follows from \eqref{eq:volume1}.

(2): It suffices to show that for every sufficiently small $r>0$, the set $\mathcal S_T$ can be covered by at most $Cr^{-1}$ balls of radius $r$ with respect to $d_T^Z$.

To this end, fix a time $t$ and $r>0$, and define
\begin{align*}
S_r(t):=\{(x,t)\mid r_{\Rm}(x,t)<\delta r\},
\end{align*}
where $\delta>0$ is a small parameter to be chosen. By the same argument as in Case 2 of the proof of Proposition~\ref{prop:ballpacking}, we can choose $\delta$ sufficiently small, depending only on the Ricci flow, such that for every $x_0\in S_r(t)$,
\begin{align*}
\int_{B_{g_t}(x_0,r)} |\scal|\,\d V_{g_t} \ge c_0r>0,
\end{align*}
where $c_0>0$ depends only on the Ricci flow and is independent of $t$. In view of Theorem~\ref{thm:l1bound}, a standard covering argument then shows that $S_r(t)$ can be covered by at most $C_0r^{-1}$ balls of radius $r$, where $C_0$ depends only on the Ricci flow.

Finally, by the Gromov--Hausdorff convergence in Theorem~\ref{thm:convergence}, it follows that $\mathcal S_T$ can also be covered by at most $C_0r^{-1}$ balls of radius $r$ with respect to $d_T^Z$. This proves \eqref{eq:dimension}.
\end{proof}

\section{Heat kernel estimates on singular Ricci flows} \label{sec:4}

To extend the results of Section~\ref{sec:first} to singular Ricci flows, one needs to establish the corresponding heat kernel estimates. Such estimates are standard and well known for closed Ricci flows. Therefore, we focus here on the new aspects arising in the singular setting, and refer to \cite{bamler2020heatkernel,li2025heatnotes} for arguments that are analogous to the closed case.

Throughout this section, let $(\mathcal M,\mathfrak t,\partial_{\mathfrak t},g)$ be a singular Ricci flow with a normalized initial condition. We denote by $K(\cdot;\cdot)$ the heat kernel on $\mathcal M$. We also define the conjugate heat kernel measure on the singular Ricci flow by
\begin{align*}
\nu_{x;t}=K(x;\cdot)\,\d V_{g_t}(\cdot).
\end{align*}
Moreover, we define the \textbf{potential function} $f_x$ on $\mathcal M_{<\t(x)}$ by
\begin{align*}
K(x;\cdot)=\frac{1}{(4\pi(\mathfrak t(x)-\mathfrak t(\cdot)))^{3/2}}e^{-f_x(\cdot)}.
\end{align*}

\subsection{Maximum principles and gradient estimates}

We begin with the following maximum principle; compare with \cite[Lemma 5.1]{kleinerlott2017singular}.

\begin{lem}[Maximum principle I]\label{lem:maximum}
Suppose that $u$ is a smooth function and $X$ is a vector field on $\mathcal M_{[a,b]}$ such that $|u|+|X|\le C$. Assume that
\begin{align*}
(\partial_{\mathfrak t}-\Delta+X)u\le 0
\end{align*}
on $\mathcal M_{[a,b]}$, and that $u\le 0$ on $\mathcal M_a$. Then $u\le 0$ on $\mathcal M_{[a,b]}$.
\end{lem}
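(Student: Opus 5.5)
The plan is to run the standard weak maximum principle on a compact exhaustion of $\mathcal M_{[a,b]}$ produced by the scalar curvature, in the spirit of \cite[Lemma 5.1]{kleinerlott2017singular}. Since the time-slices of $\mathcal M$ may be noncompact and incomplete, the interior maximum may not be attained. I would therefore subtract a small multiple of a barrier that blows up where $\scal\to\infty$. By property (a) of singular Ricci flows, $\scal$ is proper and bounded below on $\mathcal M_{\le b}$, so each sublevel set $\{\scal\le\rho\}\cap\mathcal M_{[a,b]}$ is compact.

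First I construct the barrier. Let $\psi:=\scal+C_0$ with $C_0$ chosen so that $\psi\ge1$ on $\mathcal M_{[a,b]}$. The evolution $(\partial_{\mathfrak t}-\Delta)\scal=2|\Ric|^2\ge\frac23\scal^2$ gives the good term. For the drift I use the gradient estimate $|\nabla\scal|\le C\scal^{3/2}$, valid where $\scal\ge r(b)^{-2}$ by the $(r,\ep)$-canonical neighborhood assumption. Together with $|X|\le C$, this gives
\[
(\partial_{\mathfrak t}-\Delta+X)\psi\ \ge\ \tfrac23\scal^2-C\scal^{3/2}
\]
in the high-curvature region. On the complementary region $\{\scal\le r(b)^{-2}\}\cap\mathcal M_{[a,b]}$, which is compact, $|\nabla\scal|$ is simply bounded. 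Hence $(\partial_{\mathfrak t}-\Delta+X)\psi\ge-C_1$ on all of $\mathcal M_{[a,b]}$, with $C_1$ depending on $b$ and the bound on $|X|$. I then set $\phi:=\psi+C_1(\mathfrak t-a)+1$, so that $\phi\ge1$, $\phi\to\infty$ as $\scal\to\infty$, and $(\partial_{\mathfrak t}-\Delta+X)\phi\ge0$.

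Next, fix $\varepsilon>0$ and consider $w:=u-\varepsilon\phi-\varepsilon(\mathfrak t-a)$. Then $(\partial_{\mathfrak t}-\Delta+X)w\le-\varepsilon<0$ and $w<0$ on $\mathcal M_a$. Since $|u|\le C$ and $\phi\to\infty$, we have $w<0$ outside the compact set $K_\varepsilon:=\{\scal\le\rho_\varepsilon\}\cap\mathcal M_{[a,b]}$ for a suitable $\rho_\varepsilon$. Suppose $w$ becomes nonnegative somewhere. Take the first time $t_1>a$ at which $\sup_{\mathcal M_{t_1}}w=0$; this supremum is attained at some $x_1\in K_\varepsilon$, and the first such time exists by compactness of $K_\varepsilon$. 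At $x_1$ we have $\nabla w=0$ and $\Delta w\le0$. To get $\partial_{\mathfrak t}w\ge0$ at $x_1$, I follow the integral curve of $\partial_{\mathfrak t}$ backward from $x_1$, along which $w<0$ for earlier times. This curve exists on a short interval $[t_1-\sigma,t_1]$ because $x_1$ is an interior point of $\mathcal M$ (as $t_1>a$) and $\mathcal M$ is a manifold with $\partial\mathcal M\subset\mathfrak t^{-1}(\partial I)$. Combining these three facts contradicts $(\partial_{\mathfrak t}-\Delta+X)w<0$. Letting $\varepsilon\to0$ then gives $u\le0$.

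The main obstacle is the barrier step, specifically controlling the first-order term $X\psi$ against the good quadratic term. This depends on the canonical neighborhood gradient bound $|\nabla\scal|\lesssim\scal^{3/2}$ and on the pinching $|\Ric|^2\ge\frac13\scal^2$. If needed, I would instead take $\psi=\scal^{1/2}+C_0$ or $\log(\scal+C_0)$ to make the drift term harmless, because any function tending to infinity with $\scal$ suffices for the exhaustion argument.
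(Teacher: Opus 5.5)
Your proposal is correct and follows essentially the same route as the paper. The paper also uses a barrier built from $\scal$ plus a time term, namely $\scal+Le^{\mathfrak t-a}$. It likewise absorbs the drift $X\scal$ into $2|\Ric|^2$ via the canonical-neighborhood bound $|\nabla\scal|\le C\scal^{3/2}$, and closes with the maximum argument made possible by the properness of $\scal$. Your first-time/backward-worldline step is just a more detailed version of the paper's global spacetime-maximum step.
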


\begin{proof}
Set
\[
\square_X:=\partial_{\mathfrak t}-\Delta+X,
\]
and choose a large constant $L>0$ to be determined later. Since $X$ is bounded, we compute
\begin{align*}
\square_X(\scal+Le^{\mathfrak t-a}) \ge 2|\Ric|^2-C|\nabla \scal|+L
\end{align*}
on $\mathcal M_{[a,b]}$. Since $\mathcal M$ satisfies the canonical neighborhood assumption, \cite[Lemma 5.14]{kleinerlott2017singular} implies that there exists a constant $r_0>0$ such that every $x\in \mathcal M_{[a,b]}$ satisfies
\begin{align}\label{eq:max001}
|\nabla \scal(x)| \le C\scal(x)^{3/2}
\end{align}
if $\scal(x)\ge r_0^{-2}$, and
\begin{align}\label{eq:max002}
|\nabla \scal(x)| \le Cr_0^{-3}
\end{align}
if $\scal(x)<r_0^{-2}$. Here, $C$ in both \eqref{eq:max001} and \eqref{eq:max002} denotes a universal constant. In the first case, by choosing $r_0$ smaller if necessary, we may arrange that
\begin{align*}
2|\Ric|^2 \ge C\scal^{3/2}
\end{align*}
at such points. In the second case, we choose $L$ sufficiently large so that
\[
L\ge Cr_0^{-3}+1.
\]
Hence we may choose $L>0$ such that
\begin{align}\label{eq:max003}
\square_X(\scal+Le^{\mathfrak t-a}) \ge 1 \qquad \text{on } \mathcal M_{[a,b]},
\end{align}
and
\begin{align}\label{eq:max003a}
\scal+L\ge 1 \qquad \text{on } \mathcal M_a.
\end{align}

For any $\ep>0$, it follows from \eqref{eq:max003} that
\begin{align}\label{eq:max004}
\square_X\bigl(u-\ep(\scal+Le^{\mathfrak t-a})\bigr)\le -\ep.
\end{align}

Set
\[
u_\ep:=u-\ep(\scal+Le^{\mathfrak t-a}).
\]
By \eqref{eq:max003a}, we have $u_\ep<0$ on $\mathcal M_a$. We claim that $u_\ep\le 0$ on $\mathcal M_{[a,b]}$. Indeed, otherwise $u_\ep$ would attain a positive maximum at some point $x_0\in \mathcal M_{(a,b]}$, since $u$ is bounded and therefore
\begin{align*}
\limsup_{\scal(x)\to +\infty} u_\ep(x)=-\infty.
\end{align*}
However, at such a maximum point $x_0$, we have $\square_Xu_\ep\ge 0$, contradicting \eqref{eq:max004}.

Letting $\ep\searrow 0$, we conclude that $u\le 0$ on $\mathcal M_{[a,b]}$.
\end{proof}

\begin{rem}\label{rem:anothercond}
As is clear from the proof of the previous lemma, the assumption that $u$ is bounded on $\mathcal M_{[a,b]}$ can be replaced by the weaker condition
\begin{align*}
\limsup_{\scal(x)\to +\infty}\frac{|u(x)|}{\scal(x)}=0.
\end{align*}
\end{rem}

As an application of Lemma~\ref{lem:maximum}, we obtain the following estimate.

\begin{lem}[Gradient estimate I]\label{lem:grad1}
Suppose $u$ is a bounded smooth solution of the heat equation $\square u=0$ on $\mathcal M_{[a,b]}$, and assume that
\[
|\nabla u|\le 1 \qquad \text{on } \mathcal M_a.
\]
Then
\begin{align*}
|\nabla u|\le 1 \qquad \text{on } \mathcal M_{[a,b]}.
\end{align*}
\end{lem}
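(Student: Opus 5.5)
The plan is to apply Lemma~\ref{lem:maximum} to the quantity $v:=|\nabla u|^2-1$, or rather to a regularized version of it. The Bochner formula along Ricci flow gives
\[
\square |\nabla u|^2=-2|\nabla^2 u|^2\le 0
\]
for a solution of $\square u=0$, since under $\partial_{\mathfrak t}g=-2\Ric$ the Ricci terms cancel exactly. Hence $\square v\le 0$ on $\mathcal M_{[a,b]}$ with $v\le 0$ on $\mathcal M_a$, and we may take $X=0$. The only hypothesis of Lemma~\ref{lem:maximum} that is not immediate is the boundedness of $v$: a priori we know only that $u$ is bounded, not $|\nabla u|$.

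To control $v$, I will use Remark~\ref{rem:anothercond}, which only requires
\[
\limsup_{\scal\to\infty}\frac{|\nabla u|^2}{\scal}=0
\]
on $\mathcal M_{[a,b]}$. This should follow from local parabolic estimates, carried out in the following order.
\begin{enumerate}[label=\textnormal{(\arabic*)}]
\item At a point $x$ with $\scal(x)=\rho^{-2}$ large and $\mathfrak t(x)\ge a+c\rho^2$, the canonical neighborhood assumption together with \cite[Lemma 3.1]{kleinerlott2017singular} gives a backward parabolic neighborhood of size $\eta\rho$ that is unscathed and has $|\Rm|\le C\rho^{-2}$.
\item On that neighborhood, Shi-type interior gradient estimates for the heat equation (e.g.\ the Bernstein-type computation for $|\nabla u|^2+Au^2$ with cutoffs) give $|\nabla u|\le C\rho^{-1}\sup|u|$.
\item Consequently $|\nabla u|^2\le C\scal\,\sup|u|^2$, which is only of order $\scal$, not $o(\scal)$.
\end{enumerate}

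Step (3) is the main obstacle, since it falls just short of the hypothesis in Remark~\ref{rem:anothercond}. I would fix it by changing the barrier. In the proof of Lemma~\ref{lem:maximum}, replace $\scal$ by $\scal^{p}$ with $p\in(1,3/2)$, and check that $\square\scal^p\ge c\scal^{p+1}-C$ at high curvature: the gradient term $p(p-1)\scal^{p-2}|\nabla\scal|^2\lesssim\scal^{p+1}$ is absorbed using \eqref{eq:max001} and the pinching $|\Ric|^2\ge\scal^2/3$. With this barrier, growth of order $\scal$ is admissible.

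Near the initial slice, where $\mathfrak t(x)-a<c\rho^2$, points of high curvature in $\mathcal M_{[a,a+\tau]}$ have curvature bounded in terms of that on $\mathcal M_a$ plus forward flow. Alternatively, I would first run the argument on $[a+s,b]$ and let $s\searrow 0$, using the continuity of $|\nabla u|$ on compact regions.

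Once the barrier is in place, the maximum principle argument of Lemma~\ref{lem:maximum} applied to $v-\ep(\scal^p+Le^{\mathfrak t-a})$ shows $v\le 0$ on $\mathcal M_{[a,b]}$, which is the claim $|\nabla u|\le 1$.
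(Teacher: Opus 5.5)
Your argument is correct in outline, but it takes a detour that the paper avoids.

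\textbf{Comparison with the paper.} The paper applies Lemma~\ref{lem:maximum} with Remark~\ref{rem:anothercond} to $|\nabla u|$ rather than to $|\nabla u|^2$. From $\square|\nabla u|^2=-2|\nabla^2u|^2$ and Kato's inequality $\bigl|\nabla|\nabla u|\bigr|\le|\nabla^2u|$, one gets $\square|\nabla u|\le 0$ wherever $\nabla u\neq 0$. This suffices, because a positive maximum of $|\nabla u|-1-\ep(\scal+Le^{\mathfrak t-a})$ can only occur where $|\nabla u|>1$. Your local bound $|\nabla u|^2\le C\scal$ from steps (1)--(2) then says $|\nabla u|=O(\scal^{1/2})=o(\scal)$. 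So the remark applies with the original barrier, and the obstacle in your step (3) never arises.

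Your route keeps the smooth quantity $|\nabla u|^2$, so no Kato step is needed, but you pay by rebuilding the barrier. That works, with one correction. In
$\square\scal^p=2p\scal^{p-1}|\Ric|^2-p(p-1)\scal^{p-2}|\nabla\scal|^2$,
the good term is $\ge\frac{2p}{3}\scal^{p+1}$. However, \eqref{eq:max001} only bounds the bad term by $p(p-1)C^2\scal^{p+1}$, with an unspecified universal $C$. So you need $(p-1)C^2<2/3$, i.e.\ $p$ sufficiently close to $1$, not an arbitrary $p\in(1,3/2)$. You should also replace $\scal^p$ by a smooth $\phi(\scal)$ that equals $\scal^p$ for large $\scal$, since $\scal$ can be negative.

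\textbf{The region near $\mathcal M_a$.} Your treatment here should be replaced; neither suggestion works as written.
\begin{itemize}
\item The first suggestion is not usable, since $\mathcal M_a$ may itself be a singular slice with unbounded curvature.
\item The second removes the growth problem, but it then requires $|\nabla u|\le 1+o(1)$ on the whole slice $\mathcal M_{a+s}$. Continuity on compact sets gives no control near the high-curvature part of that slice.
\end{itemize}
Instead, run the cutoff Bernstein computation of your step (2) on the backward parabolic neighborhood of $x$ of size $\eta\rho$, truncated at time $a$. This neighborhood is still unscathed with $|\Rm|\le C\rho^{-2}$. Use $|\nabla u|\le 1$ on $\mathcal M_a$ as initial data. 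This gives $|\nabla u|^2(x)\le C(1+\rho^{-2}\sup|u|^2)\le C(1+\scal(x))$ at every high-curvature point of $\mathcal M_{[a,b]}$. That is exactly the estimate the paper invokes under ``parabolic regularity.''
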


\begin{proof}
First, we have
\[
\square |\nabla u|\le 0
\]
on $\mathcal M_{[a,b]}$. Moreover, if $\scal(x)$ is sufficiently large, then by the canonical neighborhood assumption and parabolic regularity,
\[
|\nabla u(x)|^2\le C\,\scal(x).
\]
Therefore, the conclusion follows from Lemma~\ref{lem:maximum} and Remark~\ref{rem:anothercond}.
\end{proof}

Next, we generalize the following estimate from \cite{zhang2006some}.

\begin{lem}[Gradient estimate II]\label{lem:grad2}
Suppose $u$ is a smooth solution of the heat equation $\square u=0$ on $\mathcal M_{[a,b]}$ such that $0<u<1$. Then
\begin{align*}
\frac{|\nabla u|}{u}\le \sqrt{\frac{1}{\mathfrak t-a}}\,\sqrt{\log \frac{1}{u}}.
\end{align*}
\end{lem}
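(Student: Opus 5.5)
We follow Zhang's argument for closed manifolds: build a nonpositive subsolution and apply the maximum principle of Lemma~\ref{lem:maximum}, in the form of Remark~\ref{rem:anothercond}.

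\textbf{Normalization.} Fix $\ep\in(0,1/2)$. Replace $u$ by $u_\ep:=(1-\ep)u+\ep^2$. This is again a solution with $\ep^2\le u_\ep\le 1-\ep+\ep^2<1$, so $\log(1/u_\ep)$ is bounded above and below by positive constants depending on $\ep$. Set
\[
\phi:=(\mathfrak t-a)\,\frac{|\nabla u_\ep|^2}{u_\ep}-u_\ep\log\frac{1}{u_\ep}.
\]
The standard Bochner computation along the Ricci flow (the term $-2\Ric(\nabla u,\nabla u)$ from $\partial_{\mathfrak t}g$ cancels the Ricci term in Bochner's formula) gives
\[
\square\Big(\frac{|\nabla u_\ep|^2}{u_\ep}\Big)=-\frac{2}{u_\ep}\Big|\nabla^2u_\ep-\frac{\nabla u_\ep\otimes\nabla u_\ep}{u_\ep}\Big|^2\le 0,
\qquad
\square\Big(u_\ep\log\frac{1}{u_\ep}\Big)=\frac{|\nabla u_\ep|^2}{u_\ep}.
\]
Hence $\square\phi\le 0$ on $\mathcal M_{(a,b]}$, and $\phi\le 0$ on $\mathcal M_a$. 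If the maximum principle applies, we get $\phi\le 0$, i.e.
\[
|\nabla u_\ep|^2\le \frac{u_\ep^2\log(1/u_\ep)}{\mathfrak t-a}.
\]
Letting $\ep\to 0$ pointwise, using $u_\ep\to u$ and $\nabla u_\ep\to\nabla u$, gives the claim.

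\textbf{Growth hypothesis.} The main obstacle is checking the condition of Remark~\ref{rem:anothercond}: $|\phi|/\scal\to 0$ as $\scal\to\infty$, uniformly on $\mathcal M_{[a,b]}$. The term $u_\ep\log(1/u_\ep)$ is bounded. For the gradient term, $u_\ep\ge\ep^2$, so it suffices to show $|\nabla u|^2=o(\scal)$.

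For that, use the canonical neighborhood assumption as in Lemma~\ref{lem:grad1}. At a point $x$ with $\scal(x)$ large and $\mathfrak t(x)-a\ge\scal(x)^{-1}$, the backward parabolic neighborhood of size comparable to $\scal(x)^{-1/2}$ has bounded geometry after rescaling. Interior parabolic estimates applied to the bounded solution $u$ then give $|\nabla u|(x)\le C\scal(x)^{1/2}$.

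The gradient term therefore grows at most like a constant multiple of $\scal$, not $o(\scal)$, so the remark does not apply directly. There are two ways around this.

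\emph{Option 1.} Strengthen the barrier to $\phi-\delta(\scal+Le^{\mathfrak t-a})$. The proof of Lemma~\ref{lem:maximum} only needs $\limsup$ of this expression as $\scal\to\infty$ to be $-\infty$. That would require a bound of the form $|\nabla u|^2\le\delta\ep^2\scal$, which is still not free.

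\emph{Option 2 (preferred).} Use the better scaling from the canonical neighborhoods. In a $\delta$-neck of scale $r=\scal^{-1/2}$, the solution $u$ is bounded on the whole backward neighborhood of size $\delta^{-1}r$, not just size $r$. Interior estimates on that larger region give $|\nabla u|\le C\delta\,\scal^{1/2}$. Since $\delta\to 0$ as $\scal\to\infty$ (the canonical neighborhood parameter improves at higher curvature), this yields $|\nabla u|^2=o(\scal)$. The same holds for caps, because they are modeled on $\kappa$-solutions of bounded geometry at that scale.

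\textbf{Short times.} Near $\mathfrak t=a$, where $\mathfrak t-a<\scal^{-1}$, the factor $(\mathfrak t-a)$ already gives $(\mathfrak t-a)|\nabla u_\ep|^2/u_\ep\le C\ep^{-2}\scal^{-1}|\nabla u|^2$. Bounding this requires a gradient bound near the initial slice. If none is available, first shift the starting time to $a'>a$, run the argument for $\phi$ with $\mathfrak t-a'$, and let $a'\searrow a$. In either case Remark~\ref{rem:anothercond} then applies and the proof closes.
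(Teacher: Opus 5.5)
Your overall scheme is the paper's. You normalize $u$ away from $0$ and $1$, apply Lemma~\ref{lem:maximum} to Zhang's quantity $\phi$ (your computation $\square\phi\le 0$ is correct), shift the initial time to $a+\delta$, and let the parameters tend to zero. You are also right about the growth condition. The only available a priori bound, $|\nabla u|^2\le C(1+\scal)$ from interior estimates at the curvature scale, gives $|\phi|=O(\scal)$, not the $o(\scal)$ required by Remark~\ref{rem:anothercond}. The paper's proof passes from $|\nabla u|^2\le C_\delta(1+\scal)$ straight to that remark, so this point genuinely needs an argument.

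\textbf{The gap is that your Option~2 does not supply that argument.} Interior parabolic estimates are governed by the curvature scale: where $|\Rm|\sim r^{-2}$ they give $|\nabla u|\lesssim r^{-1}$. Enlarging the domain does not by itself improve this. A gain of a factor $\delta$ would have to come from a different mechanism, such as decay of the $S^2$-modes over backward time $\gg r^2$, or a compactness argument combined with a Liouville theorem for bounded ancient solutions on $\kappa$-solutions. Either route would also need the backward parabolic region of spatial size $\delta^{-1}r$ and temporal size $\delta^{-2}r^2$ to be unscathed and close to a $\kappa$-solution. You would have to justify that separately, since the canonical neighborhood assumption only concerns time-slices. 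For caps, bounded geometry at scale $r$ gives exactly $|\nabla u|\lesssim\scal^{1/2}$, with no gain at all. So, as written, the hypothesis of the maximum principle is not verified.

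\textbf{The fix is to make the barrier superlinear in $\scal$.} Your Option~1 was the right instinct; the barrier just has to grow faster than $\scal$, not merely be rescaled. In the proof of Lemma~\ref{lem:maximum}, replace $\scal$ by $(\scal+C_0)^p$, with $C_0$ chosen so that $\scal+C_0\ge 1$ on $\mathcal M_{[a,b]}$ and $p>1$. Then
\[
\square(\scal+C_0)^p=2p(\scal+C_0)^{p-1}|\Ric|^2-p(p-1)(\scal+C_0)^{p-2}|\nabla\scal|^2 .
\]
Use $|\Ric|^2\ge\scal^2/3$ together with \eqref{eq:max001}, and let $C$ denote the constant there. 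At high-curvature points this is at least $p(\scal+C_0)^{p-1}\scal^2\bigl(\tfrac23-(p-1)C^2\bigr)\ge 0$ once $p$ is close to $1$. On the low-curvature region it is bounded below by \eqref{eq:max002} and absorbed by $Le^{\t-a}$.

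With this barrier, $\phi-\eta\bigl((\scal+C_0)^p+Le^{\t-a}\bigr)\to-\infty$ as $\scal\to\infty$, using only $|\phi|\le C(1+\scal)$. It therefore attains its maximum on a compact sublevel set of $\scal$. The rest of your argument, including the shift of the initial time and the limits $\ep\to 0$ and $a'\searrow a$, goes through unchanged.
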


\begin{proof}
Without loss of generality, we may assume that
\[
\delta\le u\le 1-\delta
\]
for some constant $\delta>0$. Indeed, we may replace $u$ by $u(1-2\delta)+\delta$ and then let $\delta\searrow 0$.

Since $u$ is bounded, standard parabolic regularity together with the same argument as in the proof of Lemma~\ref{lem:grad1} implies that
\begin{align}\label{eq:grad001}
|\nabla u|^2\le C_\delta(1+\scal)\qquad \text{on } \mathcal M_{[a+\delta,b]}.
\end{align}

Next, we have (see \cite[Proposition 6.1]{li2025heatnotes})
\begin{align*}
\square \left((\mathfrak t-a-\delta)\frac{|\nabla u|^2}{u}-u\log\frac{1}{u}\right)
=
-\frac{2(\mathfrak t-a-\delta)}{u}\left|\nabla^2u-\frac{\d u\otimes \d u}{u}\right|^2
\le 0.
\end{align*}
Since $\delta\le u\le 1-\delta$, it follows from \eqref{eq:grad001}, Lemma~\ref{lem:maximum}, and Remark~\ref{rem:anothercond} that
\begin{align*}
(\mathfrak t-a-\delta)\frac{|\nabla u|^2}{u}\le u\log\frac{1}{u}
\qquad \text{on } \mathcal M_{[a+\delta,b]}.
\end{align*}
Letting $\delta\searrow 0$ yields the desired conclusion.
\end{proof}

Next, we generalize the gradient estimate in \cite[Theorem 4.1]{bamler2020heatkernel} to our setting. We recall the function
\begin{equation*}
\Phi(x)=\int_{-\infty}^x (4\pi)^{-1/2}e^{-t^2/4}\,\d t.
\end{equation*}
Note that $\Phi_t(x):=\Phi(t^{-1/2}x)$ is a solution to the one-dimensional heat equation with initial condition~$1_{[0,\infty)}$.

\begin{lem}[Gradient estimate III]\label{lem:grad3}
Suppose $u$ is a smooth solution of the heat equation $\square u=0$ on $\mathcal M_{[a,b]}$ such that $0<u<1$. Given a constant $L\ge 0$, assume that
\[
|\nabla \Phi_L^{-1}(u)|\le 1 \qquad \text{on } \mathcal M_a
\]
if $L>0$. Then for any $t\in [a,b]$,
\begin{align*}
|\nabla \Phi_{L+t-a}^{-1}(u)|\le 1 \qquad \text{on } \mathcal M_t.
\end{align*}
\end{lem}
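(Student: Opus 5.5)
We follow Bamler's proof of \cite[Theorem 4.1]{bamler2020heatkernel}, which is a maximum-principle argument for the quantity $w:=\Phi^{-1}_{L+\mathfrak t-a}(u)$. The new work is to make the maximum principle valid on the possibly noncompact, incomplete slices of $\mathcal M$. We use Lemma~\ref{lem:maximum} together with Remark~\ref{rem:anothercond}.

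\textbf{Step 1: reduction.} As in Lemma~\ref{lem:grad2}, replace $u$ by $(1-2\delta)u+\delta$ and later let $\delta\searrow 0$. This gives $\delta\le u\le 1-\delta$, so $w$ is bounded and $\Phi'$ evaluated along $w$ is bounded below. When $L=0$, replace $a$ by $a+\delta'$ and $L$ by a tiny $L'>0$. The hypothesis at the new initial time then follows from Lemma~\ref{lem:grad2}, because $|\nabla u|/u\lesssim (\mathfrak t-a)^{-1/2}\sqrt{\log(1/u)}$ matches the behaviour of $\Phi^{-1}$ near $0$ and $1$. Letting $L'\to0$ and $\delta'\to 0$ recovers the case $L=0$. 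Keeping track of this matching is the delicate part of the reduction.

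\textbf{Step 2: Bamler's computation.} Set $\tau:=L+\mathfrak t-a$ and $w=\Phi_\tau^{-1}(u)$. Since $\Phi_\tau$ solves the one-dimensional heat equation, a direct computation gives
\[
\square w=-\frac{w}{2\tau}\bigl(|\nabla w|^2-1\bigr).
\]
Using Bochner's formula on Ricci flow, where the Ricci terms cancel under $\partial_t g=-2\Ric$, one obtains for $v:=|\nabla w|^2-1$ an inequality of the form
\[
\square v\le -\frac{w}{\tau}\langle\nabla w,\nabla v\rangle-\frac{|\nabla w|^2}{\tau}\,v
\]
wherever $v>0$. Bamler does this pointwise, so it carries over verbatim to $\mathcal M$. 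The inequality has the shape $(\partial_{\mathfrak t}-\Delta+X)v\le c\,v$ with drift $X=\tau^{-1}w\nabla w$ and zeroth-order coefficient $c=-|\nabla w|^2/\tau$, which is nonpositive. For the maximum principle we only need to apply it to $v_+$, and we may use the cutoff $\min(v,1)$ trick, or multiply $v$ by $e^{-K(\mathfrak t-a)}$, to absorb any bounded positive part of $c$.

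\textbf{Step 3: justifying the maximum principle (main obstacle).} Lemma~\ref{lem:maximum} requires the drift $X$ to be bounded, but $|\nabla w|$ is not known to be bounded a priori. It also requires the growth condition of Remark~\ref{rem:anothercond}, namely $|v|/\scal\to 0$ as $\scal\to\infty$.

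For the growth condition: on $\mathcal M_{[a+\delta,b]}$, $u$ is bounded, so parabolic regularity in canonical neighborhoods gives $|\nabla u|^2\le C_\delta(1+\scal)$, exactly as in \eqref{eq:grad001}. This yields $|v|\le C(1+\scal)$. That is linear growth rather than $o(\scal)$, so we sharpen it. In regions of high curvature, every point sits in an $\ep$-neck or cap at scale $\scal^{-1/2}$, and $u$ is a bounded caloric function on a backward parabolic neighborhood of that scale which is almost constant along the spherical directions. To obtain $o(\scal)$, use the interior gradient estimate applied to $u-u(x)$, combined with the oscillation decay of bounded caloric functions on necks. If that proves too delicate, an alternative is to modify the proof of Lemma~\ref{lem:maximum} itself, replacing the barrier $\scal$ by $\scal^{1+\alpha}$; this works because $\square \scal^{1+\alpha}$ gains positively from $|\Ric|^2$ in high-curvature regions.

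For the unbounded drift: the maximum-principle argument only uses $X$ at an interior maximum point of $v-\ep(\text{barrier})$, where $\nabla v$ is controlled by the barrier gradient. The term $X\cdot\nabla(\text{barrier})$ is then bounded by $C\scal^{1/2}|\nabla \scal|/\tau$. This is handled as in \eqref{eq:max001} by using $|\nabla\scal|\le C\scal^{3/2}$ and the strong positivity $\square\scal\ge 2|\Ric|^2\ge c\scal^2$ at high curvature.

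With these two points in place, $v\le 0$ at $\mathcal M_a$ propagates to $v\le 0$ on $\mathcal M_{[a,b]}$, which is the claim.
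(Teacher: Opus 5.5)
\textbf{Genuine gap in Step 3.} Your Steps 1 and 2 are fine. The identity $\square w=-\frac{w}{2\tau}(|\nabla w|^2-1)$ and the resulting drift inequality for $v$ are correct. Your handling of $L=0$ via Lemma~\ref{lem:grad2}, applied to $u$ and to $1-u$, is more careful than the paper's ``as before''.

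The problem is in Step 3: your absorption of the unbounded drift does not close. At an interior maximum of $v-\ep B$ you need $\square B+X\cdot\nabla B>0$. The bound $|\nabla u|^2\le C_\delta(1+\scal)$ only gives $|X|\le C\scal^{1/2}$. Combined with $|\nabla \scal|\le \eta\,\scal^{3/2}$, the error $|X|\,|\nabla\scal|$ is of order $\scal^2$. That is the same order as the good term $2|\Ric|^2\ge \frac23\scal^2$. The constant in front depends on $C_\delta$, $\sup|w|$, $\tau^{-1}$ and the cap constant $\eta$, which is not small, and nothing makes it smaller than $\frac23$. In Lemma~\ref{lem:maximum} the drift is bounded, so the corresponding error is $O(\scal^{3/2})$ and genuinely lower order. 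The same order-matching defeats your alternative barrier $\scal^{1+\alpha}$, or any power of $\scal$, once the drift has size $\scal^{1/2}$. Separately, the improvement from $|v|\le C(1+\scal)$ to $o(\scal)$ needed for Remark~\ref{rem:anothercond} is only sketched, not proved.

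\textbf{The missing observation.} None of this is needed, because $|\nabla w|$ is bounded; this is exactly the paper's proof.
\begin{enumerate}[label=\textnormal{(\arabic*)}]
\item After reducing to $L>0$ and $\delta\le u\le 1-\delta$, the hypothesis on $\mathcal M_a$ gives $|\nabla u|=\partial_x\Phi_L(w)\,|\nabla w|\le (4\pi L)^{-1/2}$ there.
\item Lemma~\ref{lem:grad1}, applied to $\sqrt{4\pi L}\,u$, propagates this bound to all of $\mathcal M_{[a,b]}$.
\item Since $w$ is bounded and $\tau\in[L,L+b-a]$, $\partial_x\Phi_\tau(w)$ is bounded below. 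Hence $|\nabla w|$, the drift $X=\tau^{-1}w\nabla w$, and $v_+$ are all bounded.
\item Lemma~\ref{lem:maximum} then applies verbatim, with no growth condition to check.
\end{enumerate}
In your own $L=0$ reduction, Lemma~\ref{lem:grad2} already gives $|\nabla u|\le (2e)^{-1/2}(\mathfrak t-a)^{-1/2}$ on $\mathcal M_{[a+\delta',b]}$, which yields the same boundedness.

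You should also check that the hypothesis on $\mathcal M_a$ survives the replacement $u\mapsto(1-2\delta)u+\delta$. It does, because $\Phi'\circ\Phi^{-1}$ is concave.
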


\begin{proof}
As before, we may assume that $\delta\le u\le 1-\delta$ and $L>0$. Write
\[
u(x)=\Phi_{L+\t(x)-a}\bigl(h(x)\bigr).
\]
Then both $h$ and $|\nabla h|$ are uniformly bounded on $\mathcal M_{[a,b]}$. A direct computation shows that
\begin{align}\label{eq:grad002}
\square |\nabla h|^2
&=
-2|\nabla^2 h|^2
-\frac{1}{L+\mathfrak t-a}\langle \nabla (h^2),\nabla |\nabla h|^2\rangle
+\frac{1}{2(L+\mathfrak t-a)}(1-|\nabla h|^2)|\nabla h|^2.
\end{align}
Therefore, if we set
\[
v:=(|\nabla h|^2-1)_+,
\]
then \eqref{eq:grad002} implies that
\begin{align*}
\square v+\frac{1}{L+\mathfrak t-a}\langle \nabla h^2,\nabla v\rangle \le 0.
\end{align*}
Applying Lemma~\ref{lem:maximum}, we conclude that $v\equiv 0$ on $\mathcal M_{[a,b]}$, and hence the desired estimate follows.
\end{proof}

\subsection{Branches, Wasserstein distances, and variance monotonicity}

Next, we introduce the following definition.

\begin{defn}[Branch]
Let $\mathcal M$ be a singular Ricci flow, and let $b>0$ be a constant. A \textbf{branch} is a subset $\mathcal M'\subset \mathcal M_{[0,b]}$ such that for every $t\in [0,b]$, the slice $\mathcal M'_t$ is a connected component of $\mathcal M_t$. Note that a branch is uniquely determined by its final time-slice $\mathcal M'_b$; see \cite[Proposition 5.17]{kleinerlott2017singular}.

For a branch $\mathcal M'$ and $t\in [0,b]$, we denote by $(\overline{\mathcal M'_t},d_{g_t})$ the metric completion of $\mathcal M'_t$ with respect to $d_{g_t}$. By the canonical neighborhood assumption, this completion adds only finitely many points, each corresponding to an $\ep$-horn in $\mathcal M'_t$.

In particular, for any $x_0\in \mathcal M$, there exists a unique branch $\mathcal M'$, called the branch \textbf{with respect to $x_0$}, such that $\mathcal M'_t$ contains the support of $\nu_{x_0;t}$.
\end{defn}

It is clear that Lemmas~\ref{lem:maximum},~\ref{lem:grad1},~\ref{lem:grad2}, and~\ref{lem:grad3} extend verbatim to any branch of $\mathcal M$.

\begin{defn}[Wasserstein distance and variance]\label{def:w1}
For any $x,y\in \mathcal M$ and $t\le \min\{\mathfrak t(x),\mathfrak t(y)\}$, we define the \textbf{$W_1$-Wasserstein distance} and the \textbf{variance} associated with the conjugate heat kernels based at $x$ and $y$ as follows.

Let $\mathcal M^x$ and $\mathcal M^y$ be branches containing $x$ and $y$, respectively. If $\mathcal M_t^x=\mathcal M_t^y$, then we define $d_{W_1}^{g_t}(\nu_{x;t},\nu_{y;t})$ to be the usual $W_1$-Wasserstein distance on $(\overline{\mathcal M_t^x},d_{g_t})$. If $\mathcal M_t^x\neq \mathcal M_t^y$, then we define
\[
d_{W_1}^{g_t}(\nu_{x;t},\nu_{y;t})=+\infty.
\]

The variance between two probability measures $\nu_{x;t}$ and $\nu_{y;t}$ at time $t$ is defined by
\begin{equation*}
\Var_t(\nu_{x;t},\nu_{y;t})
:=\int_{\mathcal M_t}\int_{\mathcal M_t} d_{g_t}^2(x_1,x_2)\,\d \nu_{x;t}(x_1)\,\d \nu_{y;t}(x_2).
\end{equation*}
Here we set $d_{g_t}(x_1,x_2)=+\infty$ if $x_1$ and $x_2$ lie in different connected components of $\mathcal M_t$. For simplicity, we write
\[
\Var_t(\nu_{x;t})=\Var_t(\nu_{x;t},\nu_{x;t}).
\]
\end{defn}

By Lemma~\ref{lem:grad1}, one obtains the following monotonicity formula exactly as in the smooth case; cf.\ \cite[Lemma 2.7]{bamler2020heatkernel}.

\begin{lem}[Monotonicity I]\label{lem:mono1}
For any $x,y\in \mathcal M$ and $t\le \min\{\mathfrak t(x),\mathfrak t(y)\}$, the quantity
\begin{align*}
d_{W_1}^{g_t}(\nu_{x;t},\nu_{y;t})
\end{align*}
is nondecreasing in $t$. In particular, if $\mathfrak t(x)=\mathfrak t(y)=t_0$, then
\begin{align*}
d_{W_1}^{g_t}(\nu_{x;t},\nu_{y;t})\le d_{g_{t_0}}(x,y).
\end{align*}
\end{lem}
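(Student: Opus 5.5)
The plan is to follow Bamler's argument for closed flows \cite[Lemma 2.7]{bamler2020heatkernel}, using Kantorovich--Rubinstein duality together with the gradient estimate of Lemma~\ref{lem:grad1}. First, if $\mathcal M^x_t\neq\mathcal M^y_t$ for some $t$, then the same holds for every later time $t'\ge t$. Indeed, branches are determined by their later slices (\cite[Proposition 5.17]{kleinerlott2017singular}): if $\mathcal M^x_{t'}=\mathcal M^y_{t'}$, then the two branches would agree at all earlier times. So the set of times where the distance is $+\infty$ is an upper interval, and monotonicity there is trivial. We may therefore fix $t_1<t_2\le\min\{\mathfrak t(x),\mathfrak t(y)\}$ with $\mathcal M^x_{t_2}=\mathcal M^y_{t_2}$, and work inside the common branch $\mathcal M'$. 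Lemma~\ref{lem:grad1} holds verbatim on a branch.

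By duality on the metric completion $(\overline{\mathcal M'_{t_1}},d_{g_{t_1}})$,
\[
d_{W_1}^{g_{t_1}}(\nu_{x;t_1},\nu_{y;t_1})=\sup_{\phi}\Bigl(\int\phi\,\d\nu_{x;t_1}-\int\phi\,\d\nu_{y;t_1}\Bigr),
\]
where the supremum runs over $1$-Lipschitz $\phi$. The completion adds only finitely many horn points. Since $\nu_{x;t_1}$ is absolutely continuous, these points carry no mass, so it suffices to test against bounded $1$-Lipschitz $\phi$ on $\mathcal M'_{t_1}$ (truncate, using $\nu$ a probability measure). After a standard mollification, we may assume $\phi$ is smooth with $|\nabla\phi|\le 1+\eta$. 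Now solve the heat equation $\square u=0$ forward on $\mathcal M'_{[t_1,t_2]}$ with $u(\cdot,t_1)=\phi$, via $u(z)=\int K(z;w)\phi(w)\,\d V_{g_{t_1}}(w)$. This $u$ is bounded by $\sup|\phi|$, by Theorem~\ref{thm:heatkernel}(d)-type unit mass for the forward kernel, or directly by Lemma~\ref{lem:maximum}.

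Lemma~\ref{lem:grad1} applied to $u/(1+\eta)$ gives $|\nabla u|\le 1+\eta$ on $\mathcal M'_{t_2}$, so $u(\cdot,t_2)$ is $(1+\eta)$-Lipschitz for the length metric $d_{g_{t_2}}$. By the reproduction formula, Theorem~\ref{thm:heatkernel}(e), and Fubini,
\[
\int\phi\,\d\nu_{x;t_1}=\int u(\cdot,t_2)\,\d\nu_{x;t_2},
\]
and likewise for $y$. Hence the dual expression at $t_1$ is at most $(1+\eta)\,d_{W_1}^{g_{t_2}}(\nu_{x;t_2},\nu_{y;t_2})$. Letting $\eta\to0$ and taking the supremum gives monotonicity. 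For the last claim with $\mathfrak t(x)=\mathfrak t(y)=t_0$, let $t\nearrow t_0$ and use $\nu_{x;t}\to\delta_x$ from Theorem~\ref{thm:heatkernel}(b). With a bounded Lipschitz test function, whose uniform continuity near $x$ handles the lack of compact support, we get $\limsup_{t\nearrow t_0} d_{W_1}\le d_{g_{t_0}}(x,y)$.

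The main obstacle is justifying the analytic steps on the noncompact, incomplete slices. Three points need care. First, the identity $\int\phi\,\d\nu_{x;t_1}=\int u\,\d\nu_{x;t_2}$ requires Fubini together with the semigroup property in Theorem~\ref{thm:heatkernel}(e), for merely bounded $\phi$ rather than $C_c$ test functions. Second, one must check that the forward solution $u$ is bounded and attains the initial data $\phi$, so that Lemma~\ref{lem:grad1} applies. Third, Lemma~\ref{lem:grad1} needs the growth condition $|\nabla u|^2\le C\scal$ near the horns. I would handle the first two by approximating $\phi$ with compactly supported cutoffs, using unit mass of $\nu$ and the decay of $K$ at high curvature (Theorem~\ref{thm:heatkernel2}) to pass to the limit. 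The third would follow from the canonical neighborhood assumption and interior parabolic estimates, as indicated in the proof of Lemma~\ref{lem:grad1}.
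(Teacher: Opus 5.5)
Your proposal is correct and takes the same approach as the paper, which simply invokes Bamler's Kantorovich--Rubinstein duality argument \cite[Lemma 2.7]{bamler2020heatkernel} with Lemma~\ref{lem:grad1} supplying preservation of the Lipschitz bound; your handling of branches, truncation, Tonelli for the reproduction formula, and the growth condition near horns fills in what the paper leaves implicit. One simplification: the ``in particular'' statement follows directly from your main argument with $t_2=t_0$, since $\nu_{x;t_0}=\delta_x$ and $u(x)-u(y)\le(1+\eta)\,d_{g_{t_0}}(x,y)$, so you do not need the limiting argument via weak convergence, which as phrased would not by itself control the supremum in the dual formula.
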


Next, we prove the geodesic convexity of each connected component of a time-slice of $\mathcal M$.

\begin{prop}[Geodesic convexity]\label{prop:geodesicconvex}
For each $T>0$, there exists a constant $r_0=r_0(T)>0$ such that the following holds. Define
\begin{align*}
\Omega_r(t):=\{x\in \mathcal M_t \mid \scal(x)\le r^{-2}\}.
\end{align*}
Suppose that $\mathcal M_t'$ is a connected component of $\mathcal M_t$ with $t\le T$. Then for any $r\le r_0$, there exists $r'<r$ such that any two points in $\Omega_r(t)\cap \mathcal M_t'$ can be connected by a minimizing geodesic contained in $\Omega_{r'}(t)\cap \mathcal M_t'$.

In particular, $\mathcal M_t'$ is geodesically convex.
\end{prop}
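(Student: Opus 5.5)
The approach is to exploit the canonical neighborhood structure of the high-curvature region. Every point with $\scal\ge r_0^{-2}$ is $\ep$-close to a $\kappa$-solution, and by Hamilton--Ivey pinching and the classification of $\kappa$-solutions it is the center of an $\ep$-neck, an $\ep$-cap, or lies in a component of controlled geometry. We argue that a minimizing geodesic between points of low curvature never needs to enter regions of very high curvature. The reason is that such regions consist of long $\ep$-tubes ending in horns, or of caps, and a minimizing path has no reason to travel deep into a horn or cap.

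\textbf{Step 1 (existence of a minimizer).} Fix $x,y\in\Omega_r(t)\cap\mathcal M_t'$. Choose $r_0=r_0(T)$ so small that $\scal\ge r_0^{-2}/2$ forces the canonical neighborhood structure, uniformly for $t\le T$. Take a minimizing sequence of curves $\gamma_k$ from $x$ to $y$ with lengths tending to $d_{g_t}(x,y)$.

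\textbf{Step 2 (modifying curves away from high curvature).} We show each $\gamma_k$ can be replaced by a curve in $\Omega_{r'}(t)$ of no greater length, up to an error that vanishes as $k\to\infty$. Choose $r'\ll r$ so that every point with $\scal\ge r'^{-2}$ is the center of an $\ep$-neck or $\ep$-cap. Consider a maximal subarc of $\gamma_k$ in $\{\scal>r'^{-2}\}$ whose endpoints $p,q$ lie on $\{\scal=r'^{-2}\}$.

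First suppose the component of $\{\scal\ge r'^{-2}/2\}$ containing the subarc is an $\ep$-tube or a tube capped off. In a neck, scalar curvature is almost constant on the central sphere. Along a tube connecting two points of curvature about $r'^{-2}$ and passing through points of much higher curvature, the curvature must go up and come back down. By the derivative estimates (\cite[Lemma 5.14]{kleinerlott2017singular}) and the neck geometry, one can then shortcut along the tube at the level $\scal\approx r'^{-2}$, through the neck cross-section near $p$ and $q$. Indeed, $p$ and $q$ are close, at distance $\lesssim r'$, if the excursion returns through the same neck; if instead the tube is traversed, the curvature along it is comparable to that at its ends.

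The cleaner route I would take is by contradiction. The canonical neighborhood assumption and the $\ep$-neck structure imply that curvature along a minimizing geodesic passing through an $\ep$-tube is roughly monotone on each side of its maximum. The model cases confirm this: on a $\kappa$-solution, a geodesic between low-curvature points cannot enter a cap region (by the Bryant soliton or cap geometry, a geodesic entering a cap and leaving comes back strictly longer). Caps which are dead ends can therefore be excised by shortening. Also, $\ep$-tubes whose curvature increases toward the middle cannot occur, since a $\kappa$-solution of a neck type has curvature varying slowly along the axis, with at most one direction of increase between two necks.

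\textbf{Step 3 (compactness).} Once all $\gamma_k$ are confined to $\Omega_{r'}(t)\cap\mathcal M_t'$, which is relatively compact in $\mathcal M_t$ because $\scal$ is proper, Arzel\`a--Ascoli gives a limit curve. By lower semicontinuity of length, this limit is a minimizing geodesic in $\Omega_{r'}(t)\cap\mathcal M_t'$.

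\textbf{Step 4 (geodesic convexity).} Any two points of $\mathcal M_t'$ lie in some $\Omega_r(t)$ with $r\le r_0$, so $\mathcal M_t'$ is geodesically convex.

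The main obstacle is Step 2: making precise the claim that a near-minimizing curve does not profit from entering very high curvature regions. I would first try to prove that along any $\ep$-tube the scalar curvature is monotone along the axis up to a factor $(1+C\ep)$, after excluding a bounded number of cap regions. The shortcut would then be a cross-section sphere of diameter $\lesssim r'$, which costs only $O(r')$ length. A genuinely length-nonincreasing replacement would use the fact that a minimizer crossing a neck is almost parallel to its axis, as in the proof of Proposition~\ref{prop:ballpacking}.
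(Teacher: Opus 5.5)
\textbf{There is a genuine gap in Step 2.}

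You try to push near-minimizing curves out of $\{\scal>r'^{-2}\}$ for a threshold $r'$ fixed by the canonical neighborhood scale. To justify this you assert that along a traversed $\ep$-tube the curvature is comparable to that at its ends, and that tubes whose curvature increases toward the middle cannot occur. Both assertions are false in a singular Ricci flow. Shortly before a neckpinch, a component of $\mathcal M_t'\setminus\Omega_r(t)$ can be an $\ep$-tube with both boundary spheres on $\partial\Omega_r(t)$ and with curvature arbitrarily large in the middle. Every curve joining points on its two sides must cross that peak. So no $r'$ chosen from the canonical neighborhood scale works, and Step 2 cannot be carried out as stated.

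The statement only asks for some $r'$, which may depend on $t$ and on the component. The right move is therefore to include such regions rather than avoid them. Since $\scal$ is proper, $\Omega_r(t)\cap\mathcal M_t'$ is compact and its boundary consists of finitely many neck spheres. Hence the tube components (both ends on $\partial\Omega_r(t)$) and the cap components of the complement are compact and finite in number. The only noncompact components are $\ep$-horns, each attached along a single neck sphere. These dead ends are the only places a curve must be kept out of.

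\textbf{The shortcut does not shorten.} Your replacement of the excursion between $p$ and $q$ by a path in the level set $\{\scal\approx r'^{-2}\}$ is not length-nonincreasing. The excursion may dip only slightly past the level set, so the replacement can cost an extra $O(r')$. This error does not vanish as $k\to\infty$, so the modified curves need not form a minimizing sequence. Appealing to minimizers being almost parallel to the neck axis is circular, since existence of a minimizer is what you are proving.

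The missing device is a buffer. Let $\Sigma$ be the neck sphere, of scale $\sigma$, along which a horn is attached, and place the cutoff sphere $\Sigma'$ at axial distance $10^{10}\sigma$ inside the horn. A curve that goes past $\Sigma'$ must leave and return through the same neck, so it spends length at least $10^{10}\sigma$ between $\Sigma$ and $\Sigma'$. Its entry and exit points on $\Sigma$ can be joined within $\Sigma$ by a path of length at most $10^{5}\sigma$, so the replacement is strictly shorter.

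Consequently every curve of a minimizing sequence can be taken inside the compact set $K$ formed by $\Omega_r(t)\cap\mathcal M_t'$, the tube and cap components, and these collars. Arzel\`a--Ascoli in $K$ then yields a minimizing geodesic, and $r'$ is chosen with $\sup_K\scal\le r'^{-2}$. This is how the paper's proof proceeds.
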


\begin{proof}
Fix $t\le T$, a connected component $\mathcal M_t'$, and $r\le r_0$, and set
\[
\Omega:=\mathcal M_t'\cap \Omega_r(t).
\]
It suffices to prove that there exists a compact set $K\subset \mathcal M_t'$ containing $\Omega$ such that any two points in $\Omega$ can be connected by a minimizing geodesic contained in $K$. We may assume that $\mathcal M_t'$ is noncompact, since otherwise we may simply take $K=\mathcal M_t'$.

By the canonical neighborhood assumption, if $r_0$ is chosen sufficiently small, then every connected component of $\mathcal M_t'\setminus \Omega$ is one of the following:
\begin{enumerate}[label=\textnormal{(\Alph*)}]
\item an $\ep$-tube whose two boundary components lie in $\partial \Omega$;
\item an $\ep$-horn whose unique boundary component lies in $\partial \Omega$;
\item an $\ep$-cap whose unique boundary component lies in $\partial \Omega$.
\end{enumerate}
See \cite[Definition 58.2]{kleinerlott2008notes} for the relevant definitions. In each case, the boundary of a component of $\mathcal M_t'\setminus \Omega$ consists of central spheres of $\ep$-necks.

We now enlarge $\Omega$ by adjoining all components of type \textnormal{(A)} and \textnormal{(C)} above, and denote the resulting compact set by $\Omega'$. Note that every component of $\partial \Omega'$ is the center of an $\ep$-neck, and the intersection of $\Omega'$ with such an $\ep$-neck corresponds to $(-\ep^{-1},0]\times S^2\subset \R\times S^2$. We then enlarge $\Omega'$ to a compact set $K$ so that its intersection with each such $\ep$-neck corresponds to
\[
(-\ep^{-1},10^{10}]\times S^2.
\]
It is clear that every connected component of $\mathcal M_t'\setminus K$ is an $\ep$-horn.

It remains to show that this set $K$ has the desired property. Suppose not. Then there exist points $x_0,y_0\in \Omega$ such that no minimizing geodesic joining $x_0$ and $y_0$ is contained in $K$. By definition, there exists a sequence of curves $\gamma_i(s)$ in $\mathcal M_t'$, parametrized by arclength, joining $x_0$ and $y_0$ such that
\begin{align*}
S_i:=\mathrm{length}(\gamma_i)\to d_{g_t}(x_0,y_0).
\end{align*}

If, after passing to a subsequence, the curves $\gamma_i$ are all contained in $K$, then by taking a limit we obtain a minimizing geodesic $\gamma_\infty$ in $K$ connecting $x_0$ and $y_0$, contradicting our assumption. Hence we may assume that $\gamma_i$ is not contained in $K$ for any $i$.

\begin{figure}[tbp]
    \centering
    \includegraphics[width=0.70\linewidth,trim=0 65 0 90,clip]{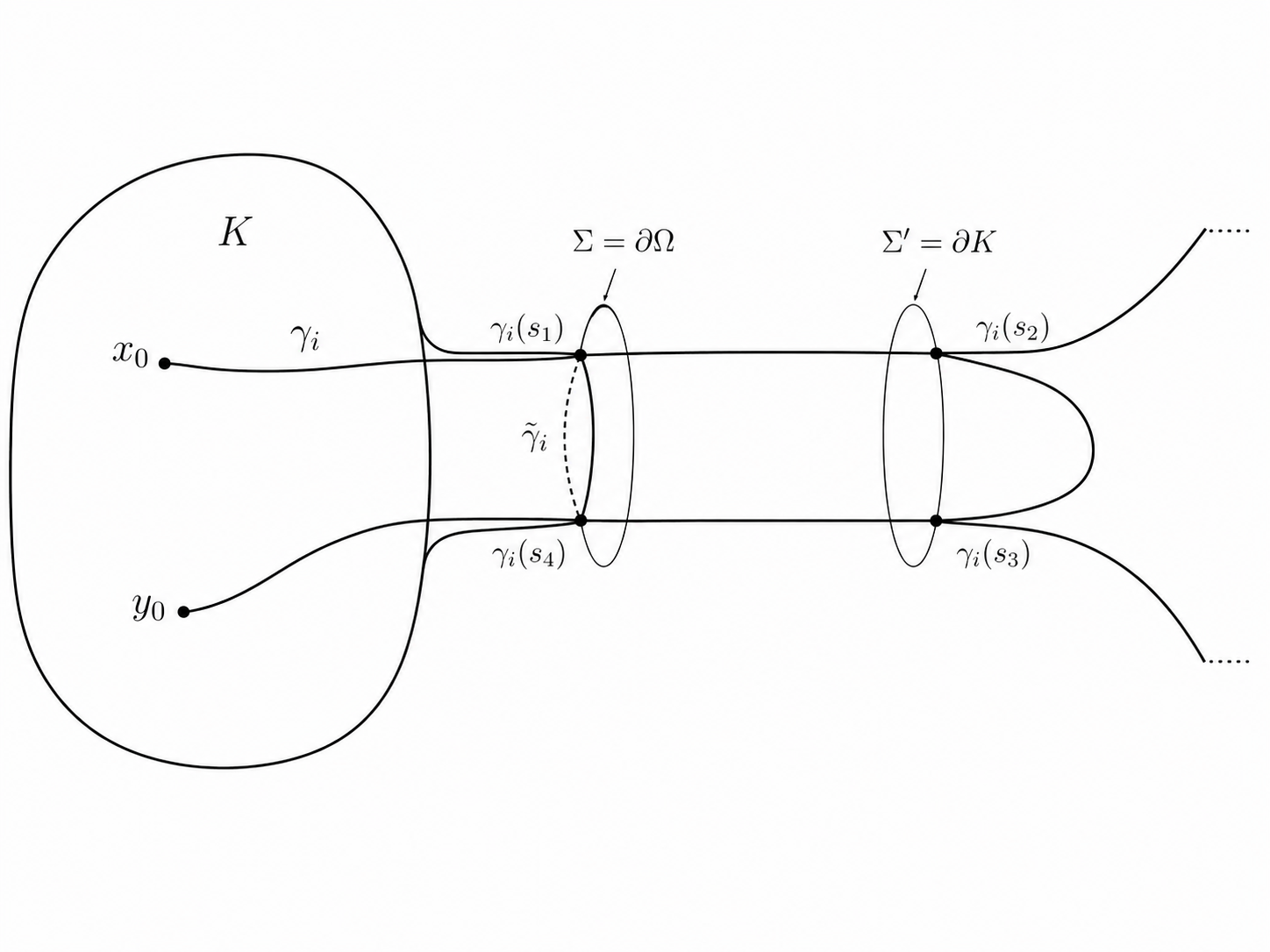}
    \caption{A shorter curve $\tilde\gamma_i$.}
    \label{fig:shorter-curve}
\end{figure}

For each $i$, by the construction of $K$, there exist parameters
\[
0\le s_1<s_2<s_3<s_4\le S_i
\]
such that $\gamma_i(s_1)$ and $\gamma_i(s_4)$ lie on some component $\Sigma$ of $\partial \Omega$, while $\gamma_i(s_2)$ and $\gamma_i(s_3)$ lie on the corresponding component $\Sigma'$ of $\partial K$. Moreover, $\gamma_i$, when restricted to $[0, s_2] \cup [s_3, S_i]$, is contained in $K$. Here, $\Sigma$ and $\Sigma'$ correspond to $\{0\}\times S^2$ and $\{10^{10}\}\times S^2$, respectively, inside an $\ep$-neck. Comparing with the standard cylinder, we see that
\begin{align*}
(s_2-s_1)+(s_4-s_3)\ge 10^{10}\sigma,
\end{align*}
where $\sigma$ is the scale of the $\ep$-neck. On the other hand, the points $\gamma_i(s_1)$ and $\gamma_i(s_4)$ can be joined by a curve lying entirely in $\Sigma$ whose length is at most $10^5\sigma$. Replacing the portion of $\gamma_i$ between $\gamma_i(s_1)$ and $\gamma_i(s_4)$ by this shorter curve, we obtain a new curve $\tilde\gamma_i$ that is contained in $K$ and has strictly shorter length than $\gamma_i$; see Figure~\ref{fig:shorter-curve}.

As before, taking a limit of the curves $\tilde\gamma_i$ yields a minimizing geodesic in $K$, contradicting our assumption. This proves the proposition.
\end{proof}

\begin{lem}[Monotonicity II]\label{lem:mono2}
For any $x,y\in \mathcal M$ and any
\[
s<t\le \min\{\mathfrak t(x),\mathfrak t(y)\},
\]
we have
\begin{align*}
\Var_s(\nu_{x;s},\nu_{y;s})+H_3 s
\le
\Var_t(\nu_{x;t},\nu_{y;t})+H_3 t,
\end{align*}
where $H_3=\pi^2+4$.
\end{lem}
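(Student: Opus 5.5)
The plan is to follow Bamler's proof of variance monotonicity in the closed case (\cite[Corollary 3.7]{bamler2020heatkernel}), checking where the noncompactness and incompleteness of the time-slices of $\mathcal M$ enter. If $\mathcal M^x_t\neq\mathcal M^y_t$, then $\Var_t=+\infty$ and there is nothing to prove. So I assume that $\nu_{x;t}$ and $\nu_{y;t}$ live on a common component. By uniqueness of branches, the same then holds at every earlier time $s$, and I work on the branch $\mathcal M'$.

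\textbf{Step 1: reduction.} Fix $s<t$. By the reproduction formula in Theorem~\ref{thm:heatkernel}(e), $\nu_{x;s}=\int\nu_{z;s}\,\d\nu_{x;t}(z)$, and likewise for $y$. Hence
\[
\Var_s(\nu_{x;s},\nu_{y;s})=\int\!\!\int \Var_s(\nu_{z_1;s},\nu_{z_2;s})\,\d\nu_{x;t}(z_1)\,\d\nu_{y;t}(z_2).
\]
It therefore suffices to prove the statement when $x$ and $y$ are points $z_1,z_2\in\mathcal M'_t$, namely
\[
\Var_s(\nu_{z_1;s},\nu_{z_2;s})\le d_{g_t}^2(z_1,z_2)+H_3(t-s).
\]
Fubini is justified once each side is known to be finite. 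Finiteness follows from Gaussian-type decay, or more crudely from the bound $d\le \diam$ of the component, which is finite by Proposition~\ref{prop:geodesicconvex} and the canonical neighborhood structure (finitely many $\ep$-horns of finite length).

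\textbf{Step 2: the core estimate.} As in Bamler, consider the function $u(w_1,w_2,\tau):=\int\!\!\int d^2_{g_\tau}(a,b)\,K(w_1;a)K(w_2;b)$ for $\tau<t$. I will use the heat-equation characterization: the function $(w,\tau')\mapsto\int d_{g_s}^2(a,\cdot)\,\d\nu_{w;s}$ solves the heat equation in $w$ on $\mathcal M'_{[s,t]}$. Two facts are needed. First, $d_{g_s}(a,\cdot)$ is $1$-Lipschitz, so by Lemma~\ref{lem:grad1} the heat flow of $d_{g_s}(a,\cdot)$ remains $1$-Lipschitz. Second, the $H_3=\pi^2+4$ bound comes from combining this with the variance bound $\Var_s(\nu_{w;s})\le H_3(t-s)$ (Bamler's $H_n$-estimate, whose proof uses Lemma~\ref{lem:grad3} with $L=0$ and the spectral/Poincaré argument giving the $\pi^2$ term, plus the $\scal$-term giving $4$).

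Concretely, I would prove the one-point bound first. Differentiating $\tau\mapsto \Var_\tau(\nu_{w;\tau})$ and integrating by parts on $\mathcal M'_\tau$ gives
\[
\frac{\d}{\d\tau}\Var_\tau\ge -H_3,
\]
using $|\nabla d|=1$, the Laplacian comparison in the barrier sense, and Lemma~\ref{lem:grad3}. The two-point version then follows from the triangle inequality, exactly as in \cite{bamler2020heatkernel}.

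\textbf{Main obstacle.} The integrations by parts on the incomplete, noncompact slices $\mathcal M'_\tau$ are the difficulty. My approach is to exhaust $\mathcal M'_\tau$ by the compact sets $K$ constructed in the proof of Proposition~\ref{prop:geodesicconvex}, whose boundary components are central spheres of $\ep$-necks deep inside $\ep$-horns. The boundary terms are bounded by products of $K(w;\cdot)$, $|\nabla K|$, the distance (bounded by the diameter), and the area of neck spheres, which is $\lesssim \scal^{-1}$. By Theorem~\ref{thm:heatkernel2}, $K(w;\cdot)\scal^m\le C_m$ outside a parabolic neighborhood of $w$, and parabolic regularity in the canonical neighborhoods gives $|\nabla K|\le C\scal^{1/2}K+C\scal^{-m}$. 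Hence the boundary terms decay like $\scal^{-m}$ and vanish as the cutoff spheres go to the ends of the horns. Geodesic convexity (Proposition~\ref{prop:geodesicconvex}) ensures that $d_{g_\tau}$ on $\overline{\mathcal M'_\tau}$ is realized by geodesics in the regular part, so the Laplacian comparison for $d$ is valid.
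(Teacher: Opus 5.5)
Your Step~1 is correct (Tonelli suffices, since the integrand is nonnegative), and it is exactly the paper's final step, where the pointwise comparison is integrated against $\nu_{x_0;t_0}\otimes\nu_{y_0;t_0}$. The gap is in Step~2. After the reduction you need the sharp pointwise bound $\Var_s(\nu_{z_1;s},\nu_{z_2;s})\le d_{g_t}^2(z_1,z_2)+H_3(t-s)$. Your route cannot deliver it: preservation of $1$-Lipschitz functions (a first-moment fact, essentially Lemma~\ref{lem:mono1}), plus the one-point bound, plus a triangle inequality. Set $h_a(w):=\int d_{g_s}(a,b)\,\d\nu_{w;s}(b)$, split $\int d_{g_s}^2(a,\cdot)\,\d\nu_{z_2;s}$ into $h_a(z_2)^2$ plus the fluctuation, and use $h_a(z_2)\le h_a(z_1)+d_{g_t}(z_1,z_2)$. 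You end up with $d_{g_t}^2+2d_{g_t}\sqrt{H_3(t-s)}+\tfrac32 H_3(t-s)$, and there is no general metric-space mechanism to remove the cross term. Moreover, the one-point bound is not available as input: in the paper (as in Bamler's), Corollary~\ref{cor:concern} is deduced from this very lemma by taking $x=y$.

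The missing ingredient is the two-point inequality on the fiber product of the branch,
\[
(\partial_{\mathfrak t}-\Delta_p-\Delta_q)\,d_{g_t}^2(p,q)\ge -H_3 ,
\]
in the barrier/viscosity sense. Its mechanism is none of the ones you list. It is not single-variable Laplacian comparison, which is useless without a lower Ricci bound, and it involves neither Lemma~\ref{lem:grad3}, a Poincar\'e inequality, nor a scalar-curvature term.

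Instead, take a minimizing $g_t$-geodesic $\gamma$ of length $\ell$, which exists by Proposition~\ref{prop:geodesicconvex}. Move $p$ and $q$ with the fields $\cos(\pi s/2\ell)E_i$ and $\sin(\pi s/2\ell)E_i$. Since their squares sum to $1$, the curvature terms of the second variation cancel exactly against $\partial_{\mathfrak t}\,\mathrm{length}_{g_t}(\gamma)=-\int_\gamma\Ric(\gamma',\gamma')$. This leaves $-\pi^2/(2\ell)$ for $(\partial_{\mathfrak t}-\Delta)$ of the distance barrier. Multiplying by $2\ell$ gives $-\pi^2$, and the gradient term $-2|\nabla_{(p,q)}d|^2=-4$ supplies the rest of $H_3=\pi^2+4$.

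Note that your derivative of $\Var_\tau(\nu_{w;\tau})$ equals $\iint(\partial_\tau-\Delta_a-\Delta_b)d^2\,\d\nu\,\d\nu$, so it already requires this inequality. Once you have it, the identical computation with $\nu_{x;\tau}\otimes\nu_{y;\tau}$ proves the lemma directly, with no triangle inequality.

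The paper also avoids differentiating against the nonsmooth $d^2$ and the associated boundary terms. It compares $d_{g_t}^2$ with the product heat flow of a compactly supported $\phi\le d^2_{g_{s_0}}$, shifted by $-H_3(t-s_0)$. The comparison uses the maximum principle of Lemma~\ref{lem:maximum} on the product, with barrier $\scal(p)+\scal(q)+Le^{\mathfrak t-s_0}$. It then integrates via the semigroup property and removes $\phi$ by monotone convergence.
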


\begin{proof}
Fix $x_0,y_0\in \mathcal M$ and times
\[
s_0<t_0\le \min\{\mathfrak t(x_0),\mathfrak t(y_0)\}.
\]

If
\[
\Var_{t_0}(\nu_{x_0;t_0},\nu_{y_0;t_0})=+\infty,
\]
then the desired inequality is trivial. Hence we may assume that
\[
\Var_{t_0}(\nu_{x_0;t_0},\nu_{y_0;t_0})<+\infty.
\]
By the definition of the variance, this implies that the supports of
$\nu_{x_0;t_0}$ and $\nu_{y_0;t_0}$ lie in the same connected component of
$\mathcal M_{t_0}$. Denote this component by $\mathcal M'_{t_0}$, and let
\[
\mathcal M'\subset \mathcal M_{[s_0,t_0]}
\]
be the branch determined by $\mathcal M'_{t_0}$. By the uniqueness of branches
and by the semigroup property of the heat kernel, for every
$t\in [s_0,t_0]$ the measures $\nu_{x_0;t}$ and $\nu_{y_0;t}$ are supported in
$\mathcal M'_t$.

On the fiber product
\[
\mathcal P:=\{(p,q)\in \mathcal M'\times \mathcal M'\mid \mathfrak t(p)=\mathfrak t(q)\in [s_0,t_0]\},
\]
endowed on each time-slice with the product metric $g_t\oplus g_t$, the function
\[
(p,q)\longmapsto d_{g_t}^2(p,q)
\]
satisfies
\begin{align}\label{eq:mono2003}
(\partial_{\mathfrak t}-\Delta_p-\Delta_q)d_{g_t}^2(p,q)\ge -H_3
\end{align}
in the barrier and viscosity sense. Indeed, the proof of \cite[Theorem 3.5]{bamler2020heatkernel} is local: one applies the second variation formula along a minimizing geodesic between $p$ and $q$ at the fixed time $t$. Such a minimizing geodesic exists in $\mathcal M_t'$ by Proposition~\ref{prop:geodesicconvex}. 

Let $\phi\in C_c^\infty(\mathcal M_{s_0}'\times \mathcal M_{s_0}')$ satisfy
\[
0\le \phi(p,q)\le d_{g_{s_0}}^2(p,q).
\]
Define, for $(p,q)\in \mathcal M_t'\times \mathcal M_t'$ and $t\in [s_0,t_0]$,
\[
u(p,q,t):=
\iint_{\mathcal M_{s_0}'\times \mathcal M_{s_0}'}
\phi(a,b)K(p;a)K(q;b)\,\d V_{g_{s_0}}(a)\d V_{g_{s_0}}(b)-H_3(t-s_0),
\]
where $K(p;a)$ denotes the heat kernel from the spacetime point $p$ back to the point $a\in \mathcal M_{s_0}'$. Then
\[
(\partial_{\mathfrak t}-\Delta_p-\Delta_q)u=-H_3,
\qquad
u(\cdot,\cdot,s_0)=\phi.
\]
By \eqref{eq:mono2003}, the function $u-d_{g_t}^2$ is a viscosity subsolution of the heat equation on $\mathcal P$ and is nonpositive at time $s_0$. 

Applying the product maximum principle in the same way as Lemma~\ref{lem:maximum}, using the barrier $\scal(p)+\scal(q)+Le^{\mathfrak t-s_0}$, we obtain
\begin{align}\label{eq:mono2004}
u(p,q,t)\le d_{g_t}^2(p,q)
\qquad \text{for all }t\in [s_0,t_0].
\end{align}

Using the semigroup property of the heat kernel and \eqref{eq:mono2004}, we obtain
\begin{align}\label{eq:mono2005}
&\iint_{\mathcal M_{s_0}'\times \mathcal M_{s_0}'}
\phi\,\d\nu_{x_0;s_0}\,\d\nu_{y_0;s_0}+H_3s_0 \notag\\
={}&
\iint_{\mathcal M_{t_0}'\times \mathcal M_{t_0}'}
 u\,\d\nu_{x_0;t_0}\,\d\nu_{y_0;t_0}+H_3t_0 \notag\\
\le{}&
\iint_{\mathcal M_{t_0}'\times \mathcal M_{t_0}'}
 d_{g_{t_0}}^2\,\d\nu_{x_0;t_0}\,\d\nu_{y_0;t_0}+H_3t_0.
\end{align}
Finally, take an increasing sequence of nonnegative compactly supported smooth functions $\phi_k$ with $\phi_k\le d_{g_{s_0}}^2$ and
$\phi_k\nearrow d_{g_{s_0}}^2$ almost everywhere with respect to $\nu_{x_0;s_0}\otimes\nu_{y_0;s_0}$. Letting $k\to\infty$ in \eqref{eq:mono2005} and applying monotone convergence yields the desired inequality.
\end{proof}

The following corollary follows immediately from Lemma~\ref{lem:mono2} (cf. \cite[Corollary 3.8]{bamler2020heatkernel}).

\begin{cor}[$H_3$-concentration]\label{cor:concern}
For any $x\in \mathcal M$ and any $t\le \mathfrak t(x)$, we have
\begin{align*}
\Var_t(\nu_{x;t})\le H_3\bigl(\mathfrak t(x)-t\bigr).
\end{align*}
\end{cor}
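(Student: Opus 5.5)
The plan is to apply Lemma~\ref{lem:mono2} with $x=y$ and $s=t$, and then let $t$ increase to $\mathfrak t(x)$. Fix $x\in\mathcal M$ and $s<t<\mathfrak t(x)$. Lemma~\ref{lem:mono2}, applied with $y=x$, gives
\[
\Var_s(\nu_{x;s})\le \Var_t(\nu_{x;t})+H_3(t-s).
\]
So it suffices to show
\[
\liminf_{t\nearrow \mathfrak t(x)}\Var_t(\nu_{x;t})=0 .
\]
Once this holds, taking the limit in the display gives $\Var_s(\nu_{x;s})\le H_3(\mathfrak t(x)-s)$. The endpoint case $s=\mathfrak t(x)$ is trivial, with the convention that $\nu_{x;\mathfrak t(x)}=\delta_x$.

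The vanishing of the variance is a local statement near $x$. Choose $r_0>0$ with $r_0\le r_{\Rm}(x)/2$, so that $P(x,r_0,-r_0^2)$ is an unscathed parabolic neighborhood with bounded curvature. I split the variance into two parts:
\[
\Var_t(\nu_{x;t})\le \int\!\!\int_{U_t\times U_t}d_{g_t}^2\,\d\nu\,\d\nu+2\int\!\!\int_{(\mathcal M_t\times\mathcal M_t)\setminus (U_t\times U_t)}d_{g_t}^2\,\d\nu\,\d\nu ,
\]
where $U_t$ denotes the time-$t$ slice of $P(x,r_0,-r_0^2)$ and $\nu=\nu_{x;t}$.

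The first part tends to $0$ by Theorem~\ref{thm:heatkernel}(b). Its integrand is bounded on $U_t\times U_t$ by continuity, and the measures concentrate at $x$ in the sense that $\nu_{x;t}(U_t\cap B_{g_t}(x,\rho)^c)\to0$ for every $\rho>0$. For the second part, the density satisfies $K(x;\cdot)\le C$ outside $P_0$ by Theorem~\ref{thm:heatkernel}(c), and the mass outside $U_t$ tends to $0$ by Theorem~\ref{thm:heatkernel}(b),(d).

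The main obstacle is that $d_{g_t}^2$ is unbounded on the noncompact, incomplete slice. I expect it to be bounded on the relevant component in any case. By Proposition~\ref{prop:geodesicconvex} and the canonical neighborhood structure, each component of $\mathcal M_t$ is a compact core with finitely many $\ep$-horns attached, and its metric completion adds only finitely many points, so its diameter $D(t)$ is finite. Moreover $D(t)$ stays bounded for $t$ close to $\mathfrak t(x)$, since the branch through $x$ is smooth there and the curvature scale of the core stays bounded below.

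If uniform boundedness of the diameter is delicate, I would instead use a weighted tail estimate. I would bound $\int d^2\,\d\nu_{x;t}$ over far regions using Theorem~\ref{thm:heatkernel2}, which gives $K\,\scal^m\le C_m$, together with the fact that horn volumes at distance $\ge A$ are controlled by powers of $\scal$. Either route shows the second part tends to $0$ as $t\nearrow\mathfrak t(x)$, and this completes the proof.
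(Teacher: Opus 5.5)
The gap is in your reduction to $\liminf_{t\nearrow\mathfrak t(x)}\Var_t(\nu_{x;t})=0$. The far part of your splitting needs $D(t)=\diam_{g_t}(\mathcal M'_t)$ to stay bounded as $t\nearrow\mathfrak t(x)$, not merely to be finite for each $t$. Your justification, that ``the branch through $x$ is smooth there'', is false in general. Only $x$ has an unscathed backward parabolic neighbourhood; the rest of the component $\mathcal M'_t$ can develop singularities at times accumulating at, or equal to, $\mathfrak t(x)$.

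For instance, suppose $\mathcal M$ comes from a closed flow with first singular time $T$ and $x$ is a regular point of $\mathcal M_T$. Then $\mathcal M'_t=(M,g_t)$ for $t<T$, and a uniform bound on $D(t)$ is exactly Perelman's bounded diameter conjecture (Theorem~\ref{thm:l1bound}). In the general singular setting it is Theorem~\ref{thm:bdddiam}, whose proof uses the neck decomposition and hence the heat-kernel machinery built on this very corollary, so invoking it would be circular.

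Your fallback does not close the gap either. The bound $K\,\scal^m\le C_m$ only damps high-curvature contributions by powers of $\scal$. It gives no control on the length of long $\delta$-tubes in the interior of the component; as in the proof of Proposition~\ref{prop:ballpacking}, bounding that length amounts to an $L^1$ bound on $\scal$. Nor does it control far-away low-curvature regions, where $K$ is merely bounded. Gaussian concentration is also unavailable here, since it is phrased via $H_3$-centres, whose existence is what you are proving.

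The limit is unnecessary. Lemma~\ref{lem:mono2} is stated for $s<t\le\min\{\mathfrak t(x),\mathfrak t(y)\}$, so you may take $y=x$ and $t=\mathfrak t(x)$ directly. There $\nu_{x;\mathfrak t(x)}=\delta_x$ and $\Var_{\mathfrak t(x)}(\delta_x,\delta_x)=0$, which gives $\Var_s(\nu_{x;s})\le H_3(\mathfrak t(x)-s)$ at once; this is the paper's proof.

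The passage to the final time is already handled inside the proof of that lemma. There the comparison $u\le d_{g_t}^2$ is obtained on the fibre product up to and including $t_0=\mathfrak t(x)$, by the maximum principle with barrier $\scal(p)+\scal(q)+Le^{\mathfrak t-s_0}$. This needs only that $u-d_{g_t}^2$ is bounded above and that $\scal$ is proper. The comparison is then tested against $\delta_x\otimes\delta_x$ through the semigroup identity, so no diameter bound ever enters.
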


As in \cite[Definition 3.10]{bamler2020heatkernel}, we introduce the following definition.

\begin{defn}
A point $z\in \mathcal M_t$ is called an \textbf{$H$-center} of $x_0\in \mathcal M$ for a constant $H>0$ if $t<\mathfrak t(x_0)$ and
\begin{align*}
\Var_t(\delta_z,\nu_{x_0;t})\le H\bigl(\mathfrak t(x_0)-t\bigr).
\end{align*}
By Corollary~\ref{cor:concern}, an $H_3$-center exists for every $t<\mathfrak t(x_0)$.
\end{defn}

The following result can be proved in the same way as \cite[Propositions 3.12 and 3.13]{bamler2020heatkernel}.

\begin{prop}\label{existenceHncenter}
Any two $H_3$-centers $z_1,z_2\in \mathcal M_t$ of $x_0$ satisfy
\begin{align*}
d_{g_t}(z_1,z_2)\le 2\sqrt{H_3\bigl(\mathfrak t(x_0)-t\bigr)}.
\end{align*}
Moreover, if $z\in \mathcal M_t$ is an $H$-center of $x_0$, then for any $L>0$,
\begin{align*}
\nu_{x_0;t}\Bigl(B_{g_t}\bigl(z,\sqrt{LH(\mathfrak t(x_0)-t)}\bigr)\Bigr)\ge 1-L^{-1}.
\end{align*}
\end{prop}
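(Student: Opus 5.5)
Both parts use only the $H$-center condition, Corollary~\ref{cor:concern}, and the triangle inequality for $d_{g_t}$ on the relevant branch. Throughout, write $\tau:=\mathfrak t(x_0)-t$.

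\textbf{First claim.} Let $z_1,z_2\in\mathcal M_t$ be $H_3$-centers of $x_0$. Since $\Var_t(\delta_{z_i},\nu_{x_0;t})<\infty$, each $z_i$ lies in the connected component $\mathcal M'_t$ of the branch with respect to $x_0$, where $\nu_{x_0;t}$ is supported. Hence $d_{g_t}(z_1,z_2)<\infty$. For every $w\in\mathcal M'_t$ we have $d_{g_t}(z_1,z_2)\le d_{g_t}(z_1,w)+d_{g_t}(w,z_2)$. Integrate this against the probability measure $\nu_{x_0;t}$, using $\int_{\mathcal M_t}K(x_0;\cdot)\,\d V_{g_t}=1$ from Theorem~\ref{thm:heatkernel}(d). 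Cauchy--Schwarz then gives
\[
d_{g_t}(z_1,z_2)\le \Var_t(\delta_{z_1},\nu_{x_0;t})^{1/2}+\Var_t(\delta_{z_2},\nu_{x_0;t})^{1/2}\le 2\sqrt{H_3\tau}.
\]
This is the whole argument for the first claim. The only point specific to the singular setting is that the distances involved are finite, which holds because the supports lie in one component. Incompleteness of $\mathcal M'_t$ does not matter here, since we only use the triangle inequality for the length metric $d_{g_t}$ (equivalently, for its completion).

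\textbf{Second claim.} Let $z$ be an $H$-center of $x_0$ and set $\rho:=\sqrt{LH\tau}$. By Chebyshev's inequality applied to $d_{g_t}^2(z,\cdot)$ under $\nu_{x_0;t}$,
\[
\nu_{x_0;t}\bigl(\mathcal M_t\setminus B_{g_t}(z,\rho)\bigr)\le \rho^{-2}\int d_{g_t}^2(z,w)\,\d\nu_{x_0;t}(w)=\frac{\Var_t(\delta_z,\nu_{x_0;t})}{LH\tau}\le L^{-1}.
\]
Since $\nu_{x_0;t}$ is a probability measure, the stated lower bound $\nu_{x_0;t}\bigl(B_{g_t}(z,\rho)\bigr)\ge 1-L^{-1}$ follows.

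\textbf{Main obstacle.} The one point needing care is measurability and finiteness of $d_{g_t}(z,\cdot)$ on the possibly noncompact, incomplete slice. This is immediate: $d_{g_t}(z,\cdot)$ is continuous on the component $\mathcal M'_t$, and $\nu_{x_0;t}$ vanishes on the other components, where the distance was set to $+\infty$.
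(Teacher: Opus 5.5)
Your proof is correct and follows the paper's route. The paper defers to Bamler's Propositions 3.12 and 3.13, which prove the first bound by the triangle inequality averaged against $\nu_{x_0;t}$ with Cauchy--Schwarz (equivalently $d_{W_1}\le\sqrt{\Var}$), and the second by Chebyshev's inequality applied to $d_{g_t}^2(z,\cdot)$. Your observation that finite variance forces the centers into the component carrying $\nu_{x_0;t}$ is the only additional point needed in the singular setting.
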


Next, we establish estimates for the potential function and higher-order estimates for the conjugate heat kernel, which will play an important role in justifying integration by parts.

\begin{lem}\label{lem:potent1}
For any $x_0\in \mathcal M$ and any $t<\mathfrak t(x_0)$, let $\mathcal M'$ be the branch associated with $x_0$. Suppose that $U\subset \mathcal M_t'$ is an $\ep$-neck of scale $r$ such that
\[
t\le \mathfrak t(x_0)-2r^2.
\]
Then the following estimates hold.
\begin{enumerate}[label=\textnormal{(\roman*)}]
\item For any $k\ge 0$ and $m\ge 0$, on $U$ we have 
\begin{align*}
|\nabla^k K(x_0;\cdot)| \le C_{m,k} r^{m-k},
\end{align*}
where $C_{m,k}$ is a constant depending on $x_0$, $m$, and $k$.

\item On $U$, we have
\begin{align*}
|\nabla f_{x_0}|^2 \le C r^{-2}(1+f_{x_0}),
\end{align*}
where $C$ is a constant depending on $x_0$.

\item For any $k\ge 1$, we have
\begin{align*}
|\nabla^k f_{x_0}| \le C_k r^{-k} e^{C_k\sqrt{f_{x_0}}},
\end{align*}
where $C_k$ is a constant depending on $k$ and $x_0$.
\end{enumerate}
\end{lem}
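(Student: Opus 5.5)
Since $U$ is an $\ep$-neck of scale $r$ with $t\le \mathfrak t(x_0)-2r^2$, I would first get a uniform bound on the conjugate heat kernel and then improve it with the high-curvature decay. The canonical neighborhood assumption and Hamilton--Ivey pinching give $\scal\approx r^{-2}$ on $U$. The neck is a canonical neighborhood, so it persists backward in time on a parabolic region $P:=U\times[t-\theta r^2,t]$ for a universal $\theta>0$, with $|\nabla^k\Rm|\le C_k r^{-2-k}$ there.

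For (i), I first treat $r$ bounded below. The spacetime point $x_0$ has some curvature radius $r_0(x_0)>0$, so Theorem~\ref{thm:heatkernel}(c) gives $K(x_0;\cdot)\le C(x_0)$ away from $P(x_0,r_0,-r_0^2)$. If $r$ is small, $\scal\ge r^{-2}$ forces $U$ to lie outside that parabolic neighborhood. Theorem~\ref{thm:heatkernel2} then gives $K(x_0;\cdot)\le C_m\scal^{-m}\le C_m r^{2m}$ on all of $P$. I would then apply interior parabolic (Shi-type/Schauder) estimates to the conjugate heat equation $\square^*K=0$ on the rescaled parabolic cylinder $P$, where the geometry is uniformly controlled after scaling by $r^{-2}$. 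This yields $|\nabla^k K|\le C r^{-k}\sup_P K\le C_{m,k}r^{2m-k}$, which after relabeling $m$ gives (i). For $r$ bounded below, (i) reduces to smoothness of $K$ on a compact set, since only finitely many scales and positions matter up to the constant. I don't need to worry much about this case because the constants may depend on $x_0$.

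For (ii), I would rescale $P$ to unit scale and work with $K$ there, normalized by its value. On the rescaled region the function $u=K(x_0;\cdot)$ solves a conjugate heat equation with bounded geometry, $0<u$, and the Hamilton/Zhang-type gradient estimate (the conjugate analogue of Lemma~\ref{lem:grad2}) applies locally. I would apply it to $u/A$ with $A:=\sup_P u$: a local version gives $|\nabla u|^2/u^2\le C r^{-2}(1+\log(A/u))$ on the interior. A local version is needed since maximum principles on $P$ require cutoffs, using a Souplet--Zhang style localized estimate. Writing $\log(A/u)=f_{x_0}+\frac32\log(4\pi(\mathfrak t(x_0)-t))+\log A$ and using the bound on $A$ from (i) together with $\mathfrak t(x_0)-t\ge 2r^2$ to control the additive terms, I get $|\nabla f|^2\le Cr^{-2}(1+f)$. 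The delicate point is that $\log A$ and $\log(\mathfrak t(x_0)-t)$ must be absorbed into the constant $C$ times $(1+f)$. This needs $f$ to be bounded below on $U$, which holds since $f\gtrsim m\log(1/r)$ there by (i). It also needs $\mathfrak t(x_0)-t$ bounded above in terms of $x_0$, which is fine.

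For (iii), I would bootstrap: $f$ satisfies $-\partial_t f=\Delta f-|\nabla f|^2+\scal-\tfrac{3}{2(\mathfrak t(x_0)-t)}$. With the pointwise bound $|\nabla f|\le Cr^{-1}\sqrt{1+f}$ from (ii), $f$ varies by at most $O(\sqrt{f})$ across a unit-scaled ball. So $u=e^{-f}$, normalized by its value at the center, is bounded above and below by $e^{\pm C\sqrt f}$ on the rescaled cylinder. The higher derivative estimates of the linear equation for $u$ then give $|\nabla^k u|/u\le C_k r^{-k}e^{C_k\sqrt f}$. Expressing $\nabla^k f$ as a polynomial in $\nabla^j u/u$ gives (iii). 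I expect the main obstacle to be the localized Harnack-type comparison in (ii)--(iii), namely controlling how $f$ oscillates over the whole parabolic cylinder rather than just spatially. For that I would first use the spatial gradient bound along the slice and then a local parabolic Harnack inequality at unit scale to cover the time direction.
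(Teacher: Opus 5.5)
Your architecture matches the paper's. For (i) you use Theorem~\ref{thm:heatkernel2} plus interior estimates. For (ii) you use a Hamilton/Zhang-type gradient estimate normalized by a sup bound. For (iii) you use the spatial oscillation bound from (ii), a Harnack inequality in time, interior estimates and the chain rule. However, two steps fail as written.

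First, your parabolic region is on the \emph{wrong side} of $t$. Since $\square^*=-\partial_{\mathfrak t}-\Delta+\scal$ is backward parabolic, interior derivative bounds, localized gradient estimates and the Harnack inequality for $K(x_0;\cdot)$ at time $t$ need the solution on $U\times[t,t+\theta r^2]$. On your $P=U\times[t-\theta r^2,t]$, the slice at time $t$ is the initial face of the backward equation, and nothing is controlled there. This is exactly what the hypothesis $t\le\mathfrak t(x_0)-2r^2$ is for. The paper works on $\mathcal M_{[t,t+r^2]}$, which stays a definite time below $\mathfrak t(x_0)$ and, for small $r$, away from $P(x_0,r_0,-r_0^2)$. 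So Theorem~\ref{thm:heatkernel2} still gives $K(x_0;\cdot)\le C_m r^{2m}$ there. What you need is therefore forward persistence of the neck for time $\theta r^2$, which follows from $|\partial_{\mathfrak t}\scal|\le C\scal^2$ at high-curvature points and $0$-completeness. The backward persistence supplied by the canonical neighborhood assumption does not help.

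Second, and more substantively, no localized estimate gives (ii) in the form you claim. A Souplet--Zhang cutoff argument yields $|\nabla u|/u\le Cr^{-1}(1+\log(A/u))$, so after your bookkeeping you get only $|\nabla f|^2\le Cr^{-2}(1+f)^2$. Data on a cylinder cannot give more. At unit scale on a shrinking neck, take $w=\eta e^{-2\lambda}e^{\lambda s}\phi(\mathfrak t)$ with $\phi'=(\scal-\lambda^2)\phi$ and $\phi(t)=1$. It solves $\square^*w=0$ and satisfies $w\le\eta$ on $\{|s|\le 2\}\times[t,t+\theta]$, so it is compatible with every sup bound you get from (i). Yet at $s=0$ it has $|\nabla\log w|^2=\lambda^2$ while $1+f\approx\log(1/\eta)+2\lambda$, which violates (ii) as $\lambda\to\infty$. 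The single power of the logarithm in (ii) is therefore a global statement. It must use information about $K(x_0;\cdot)$ beyond the cylinder, for example a Hamilton/Zhang-type estimate proved by the maximum principle on an entire slab under a global bound $w\le C_0$, rather than a cutoff argument on $P$. The paper's (ii) appeals to \cite[Theorem 16.52]{chowetal2010part1-4} for $\square^*w=0$ on the slab $\mathcal M_{[t,t+r^2]}$, with $C_0$ from Theorem~\ref{thm:heatkernel}(c) in place of your local $A$. Your bookkeeping of the additive constants, via the lower bound on $f$ from (i), is fine. This gap propagates: your (iii) needs $|\nabla\sqrt f|\le Cr^{-1}$, which uses exactly the form of (ii). With only the local bound, the same argument gives $e^{Cf}$ in place of $e^{C\sqrt f}$, so (iii) as stated is not reached.
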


\begin{proof}
For simplicity, write
\[
w=K(x_0;\cdot),\qquad f=f_{x_0}.
\]

\textbf{(i)} Since $\square^*w=0$, the conclusion follows directly from standard parabolic estimates together with Theorem~\ref{thm:heatkernel2}.

\textbf{(ii)} By Theorem~\ref{thm:heatkernel}(c), we have $w\le C_0$ on $U$. Since $\square^*w=0$ on $\mathcal M_{[t,t+r^2]}$, it follows from \cite[Theorem 16.52]{chowetal2010part1-4} that
\begin{align*}
\frac{|\nabla w|^2}{w^2} \le C_1 r^{-2}\left(1+\log\frac{C_0}{w}\right)
\end{align*}
on $U$. In view of the definition of $f$, this proves \textnormal{(ii)}.

\textbf{(iii)} In the proof, all constants $Q_i$ depend on $x_0$.

Fix $z\in U$. By \textnormal{(ii)}, we have
\[
|\nabla \sqrt{f}| \le Q_1 r^{-1}.
\]
Hence there exists a constant $Q_2>1$ such that
\begin{align*}
f(z)-Q_2(\sqrt{f(z)}+1)\le f(y)\le f(z)+Q_2(\sqrt{f(z)}+1)
\end{align*}
for any $y\in B_{g_t}(z,2r)$. Therefore,
\begin{align}\label{eq:poten003}
Q_3^{-1}w(z)e^{-Q_2\sqrt{f(z)}}\le w(y)\le Q_3 w(z)e^{Q_2\sqrt{f(z)}}
\end{align}
for some constant $Q_3>1$ and all $y\in B_{g_t}(z,2r)$.

Since $\square^*w=0$, it follows from the parabolic Harnack inequality and \eqref{eq:poten003} that
\begin{align*}
w \le Q_4 e^{Q_4\sqrt{f(z)}} w(z)
\quad \text{on } B_{g_t}(z,r)\times [t,t+r^2].
\end{align*}
By the standard parabolic interior estimate, we then obtain
\begin{align*}
|\nabla^k w|(z)\le Q'_k r^{-k} e^{Q_4\sqrt{f(z)}} w(z),
\end{align*}
where $Q_k'$ is a constant depending on $x_0$ and $k$. Using the identity
\[
f=-\log w-\frac{3}{2}\log\bigl(4\pi(\mathfrak t(x_0)-t)\bigr)
\]
and differentiating repeatedly, we conclude by induction that
\begin{align*}
|\nabla^k f|(z)\le C_k r^{-k} e^{C_k\sqrt{f(z)}}.
\end{align*}
This proves \textnormal{(iii)}.
\end{proof}

Next, we prove the following identity, which will be used repeatedly.

\begin{lem}\label{usefuliden}
Suppose $u$ is a smooth spacetime function defined in a neighborhood of $\mathcal M_t$ such that
\[
|u|+|\nabla u| \le L(1+\scal^L)
\]
for some constant $L>1$. Then for any $x_0$ with $\mathfrak t(x_0)\ge t$, if both $|\square u|$ and $|\Delta u|$ are integrable with respect to $\d \nu_{x_0;t}$, then
\begin{align*}
\partial_{\mathfrak t}\int_{\mathcal M_t} u\,\d \nu_{x_0;t}
=
\int_{\mathcal M_t} \square u\,\d \nu_{x_0;t}.
\end{align*}
\end{lem}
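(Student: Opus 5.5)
The plan is to reduce the identity to a computation on compact exhaustions of the branch, and to show that all boundary terms disappear in the limit. Let $\mathcal M'$ be the branch with respect to $x_0$ and write $w=K(x_0;\cdot)$. Formally,
\[
\partial_{\mathfrak t}\int u\,w\,\d V=\int(\partial_{\mathfrak t}u)w+u(\partial_{\mathfrak t}w-\scal w)\,\d V=\int(\partial_{\mathfrak t}u)w-u\,\Delta w\,\d V,
\]
using $\square^*w=0$. After integrating by parts twice, $\int u\,\Delta w=\int w\,\Delta u$, and the right-hand side becomes $\int \square u\,\d\nu_{x_0;t}$. All of the work lies in justifying differentiation under the integral and the two integrations by parts on the possibly noncompact, incomplete slice $\mathcal M'_t$.

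\textbf{Step 1 (exhaustion).} As in the proof of Proposition~\ref{prop:geodesicconvex}, for small $r$ I take compact domains $\Omega^r_s\subset\mathcal M'_s$, for $s$ near $t$, whose boundary components are central two-spheres $\Sigma$ of $\ep$-necks of scale $\sigma\lesssim r$, where $\scal\sim\sigma^{-2}$. These exhaust $\mathcal M'_s$ as $r\to0$. Two cases need care.
\begin{itemize}[label={}]
\item If $t=\mathfrak t(x_0)$, the statement reduces to the delta-function limit, Theorem~\ref{thm:heatkernel}(b).
\item For $t<\mathfrak t(x_0)$, the necks have scale $\sigma\ll\sqrt{\mathfrak t(x_0)-t}$ once $r$ is small, so Lemma~\ref{lem:potent1} applies on them.
\end{itemize}

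\textbf{Step 2 (boundary terms on a fixed domain).} On $\Omega^r_t$, Green's formula gives
\[
\int_{\Omega}(u\Delta w-w\Delta u)=\int_{\partial\Omega}(u\,\partial_n w-w\,\partial_n u).
\]
By the hypothesis $|u|+|\nabla u|\le L(1+\scal^L)\le C\sigma^{-2L}$, by Lemma~\ref{lem:potent1}(i) ($|w|+|\nabla w|\le C_m\sigma^{m}$ for every $m$), and by $|\Sigma|\le C\sigma^2$, each boundary integral is $O(\sigma^{m-2L})$. Only finitely many boundary spheres occur, and their number is controlled by the canonical neighborhood structure; to be safe I would bound their count by a power of $r^{-1}$ using $\scal$-properness. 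Choosing $m$ large, the boundary terms tend to $0$ as $r\to0$.

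\textbf{Step 3 (time derivative).} I will work in integrated form rather than differentiating directly. For $t_1<t_2$ near $t$, I flow the domain along $\partial_{\mathfrak t}$; the region stays unscathed for short times away from the high-curvature ends. This gives
\[
\int_{\Omega_{t_2}}uw-\int_{\Omega_{t_1}}uw=\int_{t_1}^{t_2}\!\!\int_{\Omega_s}(\square u)\,w\,\d V\,\d s+\text{boundary terms}.
\]
The boundary terms come from the Green identity, and there is no flux from moving the domain, since it is transported by $\partial_{\mathfrak t}$ and the volume form changes by $-\scal\,\d V$, which is absorbed. Letting $r\to0$, I use the integrability of $|\square u|$ and $|\Delta u|$ against $\nu_{x_0;s}$ together with dominated convergence. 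The bound on $u$ combined with Theorem~\ref{thm:heatkernel2} ($w\,\scal^m\le C$ away from $x_0$) shows that $u$ is $\nu$-integrable, and that the integrals converge uniformly in $s$ near $t$. From this I obtain
\[
\int u\,\d\nu_{x_0;t_2}-\int u\,\d\nu_{x_0;t_1}=\int_{t_1}^{t_2}\int\square u\,\d\nu_{x_0;s}\,\d s.
\]
Continuity in $s$ of the integrand then yields the derivative identity.

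The main obstacle is Step 3. I need a dominating function for $(\square u)w$ that is uniform in $s$ near $t$, since the hypothesis only gives integrability at the single time $t$. I would handle this by splitting into two regions. On the high-curvature region I use the polynomial decay of $w$ in $\scal$ (Theorem~\ref{thm:heatkernel2}) and compare neighboring times via canonical neighborhoods. If that fails, I would interpret the derivative as one-sided or in the distributional sense, and upgrade it using continuity.
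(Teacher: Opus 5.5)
Your Steps 1 and 2 are exactly the paper's argument. The paper also gets integrability of $u$ from Theorem~\ref{thm:heatkernel2} and finite volume. It applies Green's formula on compact domains $\Omega_i$ whose boundary components are central spheres of $\ep_i$-necks of scale $r_i\to0$. It kills the boundary terms with Lemma~\ref{lem:potent1}(i) together with $\sigma_t(\partial\Omega_i)\le Cr_i^2$; your count of boundary spheres is harmless since $m$ is arbitrary.

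\textbf{The gap.} It is the one you flag in Step 3, and your proposed remedy does not close it. The hypotheses control only $|u|+|\nabla u|$, plus integrability of $\square u$ and $\Delta u$ against $\nu_{x_0;t}$ at the single time $t$. Nothing dominates $(\square u)w$, or $(\partial_{\mathfrak t}u)w$, on $\mathcal M_s$ for $s\neq t$. Decay of $w$ in $\scal$ cannot compensate for an uncontrolled $\square u$, and a one-sided or distributional derivative is a different statement.

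The paper does not get around this either. It bounds $\bigl|\partial_{\mathfrak t}\int_{\mathcal M_t}u\,\d\nu_{x_0;t}-\partial_{\mathfrak t}\int_{\Omega_i}u\,\d\nu_{x_0;t}\bigr|$ by the time-$t$ tail $\int_{\mathcal M_t\setminus\Omega_i}(2|\square u|w+|\Delta u|w+|u\Delta w|)\,\d V_{g_t}$. That step tacitly differentiates the noncompact tail under the integral sign, which is precisely the uniformity you are missing.

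The issue is substantive. Suppose $\mathcal M'_t$ contains an $\ep$-horn. Take disjoint necks $A_k$ in it of scales $\sigma_k\to0$, and cutoffs $\chi_k$ on $A_k$ with $|\nabla\chi_k|\le C\sigma_k^{-1}$, transported along worldlines. Let $\beta$ be a cutoff with $\beta=1$ near $0$, and set
\[
u=\sum_k\chi_k\,\beta\Bigl(\frac{\mathfrak t-t}{c\sigma_k^2}\Bigr)\sin^2\Bigl(\frac{\mathfrak t-t}{\tau_k}\Bigr).
\]
This sum is locally finite, so $u$ is smooth, $0\le u\le1$ and $|\nabla u|\le C(1+\scal)$. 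Moreover $u=\partial_{\mathfrak t}u=0$ on $\mathcal M_t$, so $\square u=\Delta u=0$ there. Let $m_k:=\int\chi_k\,\d\nu_{x_0;t}>0$ and choose $\tau_k\le m_k$ small enough. Then $F(s):=\int u\,\d\nu_{x_0;s}$ satisfies $F\ge0=F(t)$ and $F(t\pm\pi\tau_k/2)\ge m_k/2$. So its difference quotients at $t+\pi\tau_k/2$ are $\ge1/\pi$ and at $t-\pi\tau_k/2$ are $\le-1/\pi$, and $F$ is not differentiable at $t$.

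\textbf{How to close it.} A locally uniform hypothesis must be added, for example $|u|+|\nabla u|+|\square u|\le L(1+\scal^L)$ on a slab $\mathcal M_{[t-\epsilon,t+\epsilon]}$. This is the kind of control available in the applications, such as Proposition~\ref{prop:nash1}. With it your integrated route works, but run it with a spacetime cutoff rather than transported domains. A domain reaching curvature scale $r$ is only guaranteed to survive for times of order $r^2$, so you cannot fix $t_1<t_2$ and then let $r\to0$.

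Instead take $\eta_r=\beta(r^2\scal)$. By properness of $\scal$, $u\eta_r$ has compact support in $\mathcal M_{[t_1,t_2]}$, so $\frac{\d}{\d s}\int u\eta_r\,\d\nu_{x_0;s}=\int\square(u\eta_r)\,\d\nu_{x_0;s}$ with no boundary terms. The error $u\square\eta_r-2\langle\nabla u,\nabla\eta_r\rangle$ is $O(r^{-2L-2})$ and is supported where $\scal\ge r^{-2}$. There $w\le C_m r^{2m}$ uniformly in $s$, by Theorem~\ref{thm:heatkernel2}.

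Dominated convergence then gives $F(t_2)-F(t_1)=\int_{t_1}^{t_2}G$, with $G(s)=\int\square u\,\d\nu_{x_0;s}$ continuous, hence $F'=G$. Along this route the hypothesis on $\Delta u$ is not even needed.

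Finally, your remark on $t=\mathfrak t(x_0)$ is inaccurate: Theorem~\ref{thm:heatkernel}(b) gives only continuity of $F$ there, not its derivative.
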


\begin{proof}
For simplicity, write $w=K(x_0;\cdot)$.

We first show that the growth assumption on $u$ implies that $u$ is integrable with respect to $\d \nu_{x_0;t}$. Indeed, by Theorem~\ref{thm:heatkernel2}, we have
\[
w\le C\scal^{-2L}.
\]
On the other hand, the total volume of $\mathcal M_t$ is finite by \cite[Proposition 5.5(1)]{kleinerlott2017singular}. Hence the assumption on $u$ implies that
\begin{align*}
\int_{\mathcal M_t} |u|\,\d \nu_{x_0;t}<\infty.
\end{align*}
Similarly, Lemma~\ref{lem:potent1}(i) implies that
\begin{align*}
\int_{\mathcal M_t} |u\,\Delta w|\,\d V_{g_t}<\infty.
\end{align*}

A direct computation shows that for any domain $\Omega\subset\subset \mathcal M_t$,
\begin{align}\label{eq:useiden000}
\partial_{\mathfrak t}\int_{\Omega} u\,\d \nu_{x_0;t}
&=
\int_{\Omega} (\square u)w+(\Delta u)w-u(\Delta w)\,\d V_{g_t} \notag\\
&=
\int_{\Omega} \square u\,\d \nu_{x_0;t}
+\int_{\partial\Omega} \frac{\partial u}{\partial \vec n}w-u\frac{\partial w}{\partial \vec n}\,\d \sigma_t,
\end{align}
where $\vec n$ is the outward unit normal along $\partial\Omega$, and $\d \sigma_t$ is the induced area form on $\partial\Omega$.

As in the proof of Proposition~\ref{prop:geodesicconvex}, we can find an exhaustion of the branch $\mathcal M'$ associated with $x_0$ by domains $\Omega_i$ such that every component of $\partial\Omega_i$ is a central two-sphere
of an $\ep_i$-neck of scale $r_i$, where $r_i,\ep_i\to 0$. For each $i$, we have
\begin{align}\label{eq:useiden001}
&\left|\partial_{\mathfrak t}\int_{\mathcal M_t} u\,\d \nu_{x_0;t}
-\partial_{\mathfrak t}\int_{\Omega_i} u\,\d \nu_{x_0;t}\right|
+\left|\int_{\mathcal M_t\setminus \Omega_i} \square u\,\d \nu_{x_0;t}\right| \notag\\
\le\;&
\left|\int_{\mathcal M_t\setminus \Omega_i}
2|(\square u)|\,w+|(\Delta u)w|+|u(\Delta w)|\,\d V_{g_t}\right|
\xrightarrow[i\to\infty]{}0,
\end{align}
since
\[
|(\square u)|\,w+|(\Delta u)w|+|u(\Delta w)|
\]
is integrable with respect to $\d V_{g_t}$.

On the other hand,
\begin{align*}
\left|\int_{\partial\Omega_i} \frac{\partial u}{\partial \vec n}w-u\frac{\partial w}{\partial \vec n}\,\d \sigma_t\right|
\le
\int_{\partial\Omega_i} |\nabla u|\,w+|u|\,|\nabla w|\,\d \sigma_t.
\end{align*}
Note that $\sigma_t(\partial\Omega_i)\le Cr_i^2$. Using the assumption on $u$ together with Lemma~\ref{lem:potent1}(i), we obtain
\begin{align*}
\lim_{i\to\infty}
\left|\int_{\partial\Omega_i} \frac{\partial u}{\partial \vec n}w-u\frac{\partial w}{\partial \vec n}\,\d \sigma_t\right|
=0.
\end{align*}
Combining this with \eqref{eq:useiden000} and \eqref{eq:useiden001} yields the desired identity.
\end{proof}

Next, we prove the following result, which extends Theorem~\ref{thm:heatkernel}(b).

\begin{lem}\label{lem:delta}
For any $x_0\in \mathcal M$ and any $h\in C(\mathcal M'_{[a,\mathfrak t(x_0)]})$ satisfying
\[
|h| \le L(1+\scal^L)
\]
for some constant $L>1$, where $\mathcal M'$ is the branch associated with $x_0$, we have
\begin{equation*}
\lim_{t\nearrow \mathfrak t(x_0)}\int_{\mathcal M_t} h\,\d \nu_{x_0;t}=h(x_0).
\end{equation*}
\end{lem}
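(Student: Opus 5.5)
The plan is to reduce to Theorem~\ref{thm:heatkernel}(b) by a cutoff argument, using the polynomial decay of the conjugate heat kernel near high-curvature regions from Theorem~\ref{thm:heatkernel2} to control the tail. Since $\scal$ is proper on $\mathcal M_{\le T}$ for $T>\mathfrak t(x_0)$, the sublevel set $\{\scal\le A\}\cap \mathcal M'_{[a,\mathfrak t(x_0)]}$ is compact for every $A$. I would fix a cutoff $\chi_A\in C_c^0(\mathcal M)$ with $0\le\chi_A\le 1$, $\chi_A=1$ on $\{\scal\le A\}\cap\mathcal M_{[a,\mathfrak t(x_0)]}$, and support in $\{\scal\le 2A\}$. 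I would then extend $h\chi_A$ continuously with compact support; this is possible by Tietze's theorem, after shrinking slightly in time so that $h$ is defined near the support. Then write
\[
\int_{\mathcal M_t} h\,\d\nu_{x_0;t}=\int_{\mathcal M_t} h\chi_A\,\d\nu_{x_0;t}+\int_{\mathcal M_t} h(1-\chi_A)\,\d\nu_{x_0;t}.
\]
For $A>\scal(x_0)$ we have $\chi_A(x_0)=1$, so Theorem~\ref{thm:heatkernel}(b) gives that the first term tends to $h(x_0)$ as $t\nearrow\mathfrak t(x_0)$.

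The second term is the tail, and the task is to bound it uniformly in $t$ close to $\mathfrak t(x_0)$. Choose $r_0>0$ so small that $|\Rm|\le r_0^{-2}$ on $P_0=P(x_0,r_0,-r_0^2)$; this is possible because $x_0$ is a smooth point with $r_{\Rm}(x_0)>0$. Take $A$ larger than $\sup_{P_0}\scal$, so that $1-\chi_A$ vanishes on $P_0$. On the complement of $P_0$, Theorem~\ref{thm:heatkernel2} with $m=L+2$ gives
\[
K(x_0;\cdot)\le C_{L+2}\,\scal^{-L-2}.
\]
Hence
\[
\Bigl|\int h(1-\chi_A)\,\d\nu_{x_0;t}\Bigr|\le \int_{\{\scal\ge A\}\cap\mathcal M_t} L(1+\scal^L)\,C\,\scal^{-L-2}\,\d V_{g_t}\le C' A^{-2}\,|\mathcal M_t|_t .
\]
The volume $|\mathcal M_t|_t$ is uniformly bounded for $t\le\mathfrak t(x_0)$ by \cite[Proposition 5.5(1)]{kleinerlott2017singular}, so the tail is bounded by $C''A^{-2}$ uniformly in $t\in[\mathfrak t(x_0)-r_0^2,\mathfrak t(x_0))$. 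This smallness is the key point.

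To finish, send $t\nearrow\mathfrak t(x_0)$ first and then $A\to\infty$; this gives $\limsup_t\bigl|\int h\,\d\nu_{x_0;t}-h(x_0)\bigr|\le C''A^{-2}$, and letting $A\to\infty$ proves the claim.

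The main obstacle is the uniformity of the constant in Theorem~\ref{thm:heatkernel2}. That theorem is stated for the base point $x_0$ itself with a fixed $P_0$, and applies to all $t<\mathfrak t(x_0)$ outside $P_0$, so the constant depends only on $r_0$, $T$ and $m$, as needed. The remaining care is the cutoff construction, where $h$ is only defined on the branch over $[a,\mathfrak t(x_0)]$. Since $\nu_{x_0;t}$ is supported on $\mathcal M'_t$, only the values of $h$ there matter. Alternatively, one could avoid Tietze altogether by approximating $\chi_A h$ uniformly on a compact set by a function in $C_c^0(\mathcal M)$, and absorb the error using $\nu_{x_0;t}(\mathcal M_t)=1$.
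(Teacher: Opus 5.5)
Your overall strategy matches the paper's, and your tail estimate is correct: apply Theorem~\ref{thm:heatkernel}(b) to a cut-off piece, and bound the high-curvature remainder by Theorem~\ref{thm:heatkernel2} (take an integer $m\ge L+2$) together with $|\mathcal M_t|_t<\infty$, which gives $C'A^{-2}$ uniformly in $t$. The step that fails as written is producing a function in $C_c^0(\mathcal M)$ that agrees with $h\chi_A$ on $\mathcal M'_t$ for $t$ near $t_0:=\mathfrak t(x_0)$. Tietze, or uniform approximation, needs $h\chi_A$ to extend continuously to the closure of $\mathcal M'_{[t_0-\delta,t_0]}\cap\mathrm{supp}\,\chi_A$, but the branch need not be closed in $\mathcal M$.

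Here is a concrete failure. Suppose a neck pinch at time $t_0$ separates $\mathcal M'_{t_0}$ from another component $\mathcal M''_{t_0}$. For $t<t_0$ the connected slice $\mathcal M'_t$ then also contains the bounded-curvature region that flows into $\mathcal M''_{t_0}$. Take a small product neighborhood $V\times[t_0-\delta,t_0]$ of a point of $\mathcal M''_{t_0}$, set $h=\phi\sin\bigl(1/(t_0-\mathfrak t)\bigr)$ there with $\phi$ a bump, and set $h=0$ elsewhere on the branch. This $h$ is admissible: it is continuous on $\mathcal M'_{[a,t_0]}$ because $V\times\{t_0\}$ is not in the branch, and it is bounded. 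But for large $A$, $h\chi_A=h$ near $V\times\{t_0\}$, so it has no continuous extension there and cannot be uniformly approximated by $C_c^0(\mathcal M)$ functions. Your remark that only values on $\mathcal M'_t$ matter is true, but it does not remove the obstruction, because these bad points lie in $\mathcal M'_t$.

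The repair is to localize at $x_0$. The paper does this by building its cutoff region $U_r$ from worldlines through a compact subset of $\mathcal M'_{t_0}$, so it is a compact product set inside the branch where Tietze applies. Concretely, take $\psi\in C_c^0(\mathcal M)$ with $0\le\psi\le 1$ and $\psi=1$ near $x_0$. Let it be supported in a small two-sided unscathed product neighborhood of $x_0$ whose part at times $\le t_0$ lies in $\mathcal M'$.

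\begin{itemize}
\item Theorem~\ref{thm:heatkernel}(b) gives $\int h\psi\,\d\nu_{x_0;t}\to h(x_0)$.
\item Applying (b) to $\psi$ itself, together with $\nu_{x_0;t}(\mathcal M_t)=1$, gives $\int(1-\psi)\,\d\nu_{x_0;t}\to 0$.
\item Since $|h|\le C_A$ on $\{\scal\le A\}$ by the growth assumption, $\int_{\{\scal\le A\}}|h|(1-\psi)\,\d\nu_{x_0;t}\le C_A\int(1-\psi)\,\d\nu_{x_0;t}\to 0$.
\item The part on $\{\scal>A\}$ is handled by your tail estimate.
\end{itemize}

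This concentration step is genuinely needed. The bounded-curvature part of $\mathcal M'_t$ that flows into $\mathcal M''_{t_0}$ lies outside any cutoff region built from $\mathcal M'_{t_0}$, and Theorem~\ref{thm:heatkernel2} does not control it.
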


\begin{proof}
Set $t_0=\mathfrak t(x_0)$ and
\[
\Omega_r=\{x\in \mathcal M'_{t_0}\mid \scal(x)\le r^{-2}\}.
\]

As in the proof of Proposition~\ref{prop:geodesicconvex}, we may enlarge $\Omega_r$ slightly to a domain $\Omega_r'$ such that every component of $\partial \Omega_r'$ is the center of an $\ep$-neck of scale $r$. Using the worldlines through $\Omega_r'$, one constructs a product domain
\[
U_r=\Omega_r'\times [t_0-\delta_r,t_0].
\]
If $\delta_r$ is chosen sufficiently small, then for any $t\in [t_0-\delta_r,t_0]$, the slice $\Omega_r'\times \{t\}$ contains all points with $\scal\le 2r^{-2}$, and every boundary component is the center of an $\ep$-neck of scale in $[0.9r,1.1r]$. Similarly, one defines $\Omega_{r/2}$, $\Omega_{r/2}'$, and
\[
U_{r/2}=\Omega_{r/2}'\times [t_0-\delta_r,t_0],
\]
which contain $\Omega_r$, $\Omega_r'$, and $U_r$, respectively.

Choose a cutoff function $\psi$ supported in $U_{r/2}$ such that $\psi=1$ on $U_r$. Then Theorem~\ref{thm:heatkernel}(b) implies that
\begin{align}\label{eq:delta001}
\lim_{t\nearrow t_0}\int_{U_{r/2}\cap \mathcal M_t'} h\psi\,\d \nu_{x_0;t}=h(x_0).
\end{align}
On the other hand, by Theorem~\ref{thm:heatkernel2}, we have
\[
K(x_0;\cdot)\le C\scal^{-2L}
\qquad \text{on } \mathcal M_t'\setminus U_r.
\]
Therefore, using the growth assumption on $h$, we obtain
\begin{align*}
\lim_{r\to 0}\sup_{t\in [t_0-\delta_r,t_0]}
\int_{\mathcal M_t'\setminus U_r} |h|\,\d \nu_{x_0;t}=0.
\end{align*}
Combining this with \eqref{eq:delta001} completes the proof.
\end{proof}

With the help of Lemmas~\ref{lem:maximum},~\ref{lem:grad1},~\ref{lem:grad3},~\ref{usefuliden}, and~\ref{lem:delta}, we can follow the proof of \cite[Theorem 11.1]{bamler2020heatkernel} to obtain the following $L^p$-Poincar\'e inequality.

\begin{thm}[$L^p$-Poincar\'e inequality]
Given $x_0\in \mathcal M_{t_0}$, the following inequality holds for every $p\ge 1$:
\begin{align*}
\int_{\mathcal M_s} |u|^p\,\d \nu_{x_0;s} \le C(p)(t_0-s)^{\frac{p}{2}}\int_{\mathcal M_s} |\nabla u|^p\,\d \nu_{x_0;s},
\end{align*}
where $s < t_0$, $\mathcal M'$ is the branch associated with $x_0$, and $u$ is any function satisfying $u\in W^{1,p}(\mathcal M_s',\d \nu_{x_0;s})$ and
\[
\int_{\mathcal M_s'} u\,\d \nu_{x_0;s}=0.
\]
One may choose $C(1)=\sqrt{\pi}$ and $C(2)=2$.
\end{thm}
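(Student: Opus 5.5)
We follow Bamler's proof of \cite[Theorem 11.1]{bamler2020heatkernel}. The idea is to dualize the Poincaré inequality into a gradient bound for solutions of the forward heat equation started at time $s$, and to obtain that bound from Lemmas~\ref{lem:grad1} and~\ref{lem:grad3}. We work throughout on the branch $\mathcal M'$ associated with $x_0$. By a density argument in $W^{1,p}(\mathcal M_s',\d\nu_{x_0;s})$, we may assume that $u\in C_c^\infty(\mathcal M_s')$. Indeed, $\nu_{x_0;s}$ has finite total mass, and Theorem~\ref{thm:heatkernel2} shows that $K(x_0;\cdot)$ decays faster than any power of $\scal$ near the horns. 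Hence truncating $u$ with cutoffs supported outside $\epsilon$-necks of small scale, as in Proposition~\ref{prop:geodesicconvex}, costs arbitrarily little in both $\int|u|^p\,\d\nu$ and $\int|\nabla u|^p\,\d\nu$. Afterwards we subtract a constant to restore the mean-zero condition.

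\textbf{Case $p=1$.} It suffices to show that for every $a\in\mathbb R$,
\begin{equation*}
\int_{\mathcal M'_s}|u-a|\,\d\nu_{x_0;s}\le \sqrt{\pi(t_0-s)}\,\|\nabla u\|_{L^1(\d\nu_{x_0;s})}.
\end{equation*}
Following Bamler, we reduce to indicator functions through the coarea formula, i.e.\ to an isoperimetric-type statement. For a smooth domain $\Omega\subset\mathcal M'_s$, let $v$ solve $\square v=0$ on $\mathcal M'_{[s,t_0]}$ with $v(\cdot,s)=\chi_\Omega$ (smoothed). Such a $v$ is given by the heat kernel via Theorem~\ref{thm:heatkernel}(f),(g), and it satisfies $0<v<1$. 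Lemma~\ref{lem:grad3} with $L\searrow0$ yields
\begin{equation*}
|\nabla\Phi^{-1}_{t-s}(v)|\le1,
\end{equation*}
which gives the gradient bound $|\nabla v|\le (4\pi(t-s))^{-1/2}$. Next we differentiate
\begin{equation*}
t\mapsto\int v(\cdot,t)\,\d\nu_{x_0;t}
\end{equation*}
using Lemma~\ref{usefuliden}. This requires the growth hypotheses, which hold by Lemma~\ref{lem:grad1} and boundedness of $v$. Combining this with Lemma~\ref{lem:delta} to evaluate the limit at $t_0$ produces the bound
\begin{equation*}
\nu_{x_0;s}(\Omega)\,\nu_{x_0;s}(\Omega^c)\lesssim \sqrt{t_0-s}\,|\partial\Omega|_{\nu},
\end{equation*}
with the sharp constant $\sqrt{\pi}$ exactly as in the smooth case. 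Alternatively, one can take the dual route for $p=1$: for test functions $|\phi|\le1$, set $w=\phi$ at time $s$, solve forward, and bound $\int u\phi\,\d\nu$ by integrating by parts.

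\textbf{Case $p\geq1$, general.} Let $w$ solve $\square w=0$ forward from $w(\cdot,s)=u$. By Lemma~\ref{usefuliden}, $\int w\,\d\nu_{x_0;t}$ is constant, and by Lemma~\ref{lem:delta} it equals $w(x_0)$. The mean-zero assumption therefore gives $w(x_0)=0$. We then use the semigroup/Bochner identity
\begin{equation*}
\frac{\d}{\d t}\int |w|^p\,\d\nu_{x_0;t}=-p(p-1)\int|w|^{p-2}|\nabla w|^2\,\d\nu_{x_0;t},
\end{equation*}
again via Lemma~\ref{usefuliden}, together with the gradient contraction coming from $\square|\nabla w|\le0$ and the maximum principle (Lemma~\ref{lem:maximum}). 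This gives
\begin{equation*}
|\nabla w|^p(\cdot,t)\le\int|\nabla u|^p\,\d\nu_{\cdot;s}.
\end{equation*}
Integrating in $t$ yields the inequality with a constant $C(p)$. For $p=2$, the sharp constant $2$ follows from the identity
\begin{equation*}
\int u^2\,\d\nu_s=2\int_s^{t_0}\int|\nabla w|^2\,\d\nu_t\,\d t,
\end{equation*}
combined with $|\nabla w|^2\le P_{t}^s|\nabla u|^2$, which comes from $\square|\nabla w|^2\le -2|\nabla^2w|^2\le0$ and Lemma~\ref{lem:maximum}.

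\textbf{Main obstacle.} The hardest step is justifying each application of Lemma~\ref{usefuliden} and of the maximum principle. This requires the polynomial growth bounds $|w|+|\nabla w|\le L(1+\scal^L)$ for forward solutions on the incomplete slices. These follow from canonical neighborhoods and parabolic regularity, as in Lemma~\ref{lem:grad1}, since $u$ has compact support and $w$ is bounded. The other delicate point is the integrability of $\Delta w$ and $\square(|w|^p)$ near the horns, which holds because of the rapid decay of $K(x_0;\cdot)$ in Lemma~\ref{lem:potent1}.
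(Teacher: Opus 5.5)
The overall strategy matches the paper's, and $p=2$ is fine, but the sharp constant $C(1)=\sqrt\pi$ does not follow from the steps you describe, and the general-$p$ argument only covers $p\ge2$.

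\textbf{What is correct.} The paper simply runs Bamler's proof of \cite[Theorem 11.1]{bamler2020heatkernel} on the branch $\mathcal M'$. Lemmas~\ref{lem:maximum}, \ref{lem:grad1}, \ref{lem:grad3}, \ref{usefuliden} and~\ref{lem:delta} replace the closed-manifold maximum principle and integrations by parts, exactly as you propose. Your $p=2$ argument (energy identity plus $\square|\nabla w|^2\le0$) is correct and gives $C(2)=2$.

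\textbf{The case $p=1$ is not proved.}
\begin{enumerate}[label=\textnormal{(\roman*)}]
\item The reduction ``for every $a\in\mathbb R$'' is false: let $|a|\to\infty$. The correct reduction is
\[
\int|u-\bar u|\,\d\nu\le\iint|u(x)-u(y)|\,\d\nu(x)\,\d\nu(y)=2\int_{\mathbb R}\nu(\{u>a\})\,\nu(\{u\le a\})\,\d a ,
\]
followed by the coarea formula. So what you need is $2\nu(\Omega)\nu(\Omega^c)\le\sqrt{\pi(t_0-s)}\,|\partial\Omega|_\nu$.
\item The map $t\mapsto\int v\,\d\nu_{x_0;t}$ is constant, so differentiating it gives nothing.
\item If you meant $\int v^2\,\d\nu_{x_0;t}$, then using only $|\nabla v|\le(4\pi(t-s))^{-1/2}$ and $\int|\nabla v_t|\,\d\nu_{x_0;t}\le|\partial\Omega|_\nu$ gives $\nu(\Omega)\nu(\Omega^c)\le2\sqrt{(t_0-s)/\pi}\,|\partial\Omega|_\nu$. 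That is the constant $4/\sqrt\pi$, which is larger than $\sqrt\pi$.
\item Your dual route, $\int hv\,\d\nu_{x_0;s}=2\int_s^{t_0}\int\langle\nabla u,\nabla w\rangle\,\d\nu_{x_0;t}\,\d t$ with $|\nabla w|\le(\pi(t-s))^{-1/2}$, loses the same factor.
\end{enumerate}
In each case, replacing $\Phi'_{t-s}(\Phi^{-1}_{t-s}(v))$ by its maximum throws away exactly what makes the constant sharp.

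\textbf{The missing idea.} Feed the full $\Phi$-estimate into a monotone quantity. Set $I_\sigma:=\Phi_\sigma'\circ\Phi_\sigma^{-1}=\sigma^{-1/2}I_1$, so that $I_1I_1''=-\tfrac12$. For $0<v<1$ with $\square v=0$ and $\sigma=L+t-s$, a direct computation gives
\[
\square\, I_{\sigma}(v)=\frac{|\nabla v|^2-I_{\sigma}(v)^2}{2\sigma\, I_{\sigma}(v)}\le 0,
\]
and the inequality is exactly Lemma~\ref{lem:grad3}. Then:
\begin{enumerate}[label=\textnormal{(\alph*)}]
\item Start from $v_s=\Phi_L(\rho)$, where $\rho$ is a $1$-Lipschitz signed distance to $\partial\Omega$. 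This is where the $L>0$ case of Lemma~\ref{lem:grad3} enters.
\item Lemmas~\ref{usefuliden} and~\ref{lem:delta} give $I_{L+t_0-s}\bigl(v(x_0)\bigr)\le\int\Phi_L'(\rho)\,\d\nu_{x_0;s}$.
\item Letting $L\searrow0$ yields the Gaussian isoperimetric inequality $I_{t_0-s}(\nu(\Omega))\le|\partial\Omega|_\nu$.
\item The elementary bound $I_\tau(a)\ge2a(1-a)/\sqrt{\pi\tau}$, with equality at $a=\tfrac12$, then gives $C(1)=\sqrt\pi$.
\end{enumerate}

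\textbf{General $p$.} Your argument works only for $p\ge2$. Turning the identity for $\frac{\d}{\d t}\int|w|^p\,\d\nu_t$ into an inequality needs the H\"older bound
\[
\int|w|^{p-2}|\nabla w|^2\,\d\nu\le\Bigl(\int|w|^p\,\d\nu\Bigr)^{(p-2)/p}\Bigl(\int|\nabla w|^p\,\d\nu\Bigr)^{2/p},
\]
which fails for $p<2$. For $1<p<2$, deduce the inequality from the $p=1$ case applied to $\mathrm{sign}(u-m)|u-m|^p$, where $m$ is a median of $u$.
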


Similarly, using Lemmas~\ref{lem:maximum},~\ref{lem:grad2},~\ref{usefuliden}, and~\ref{lem:delta}, one can follow the proof of \cite[Theorem 1.10(2)]{heinnaber2014new} to derive the following logarithmic Sobolev inequality.

\begin{thm}[Logarithmic Sobolev inequality]\label{thm:logsob}
Given $x_0\in \mathcal M_{t_0}$, the following inequality holds:
\begin{align*}
\int_{\mathcal M_s} u\log u\,\d \nu_{x_0;s}
\le
(t_0-s)\int_{\mathcal M_s} \frac{|\nabla u|^2}{u}\,\d \nu_{x_0;s},
\end{align*}
where $s< t_0$, $\mathcal M'$ is the branch associated with $x_0$, and $u$ is any function such that $\sqrt{u}\in W^{1,2}(\mathcal M_s',\d \nu_{x_0;s})$ and
\[
\int_{\mathcal M_s'} u\,\d \nu_{x_0;s}=1.
\]
If equality holds, then either $u$ is constant or $(\mathcal M_s',g_s)$ splits off an $\R$-factor.
\end{thm}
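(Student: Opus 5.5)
The plan is to follow the semigroup/interpolation proof of \cite[Theorem 1.10(2)]{heinnaber2014new}, with the analytic input on the possibly noncompact, incomplete slices supplied by Lemmas~\ref{lem:maximum}, \ref{lem:grad2}, \ref{usefuliden} and~\ref{lem:delta}. By density, I first reduce to $u$ smooth on $\mathcal M_s'$ with $\delta\le u\le \delta^{-1}$ and $u\equiv$ const outside a compact set. The general case should then follow by truncating $\sqrt u$, using monotone and dominated convergence, and noting that both sides are continuous under $W^{1,2}(\d\nu_{x_0;s})$-approximation of $\sqrt u$. Let $v$ be the solution of the forward heat equation $\square v=0$ on the branch $\mathcal M'_{[s,t_0]}$ with $v(\cdot,s)=u$. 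I would construct $v$ as $v(x)=\int_{\mathcal M_s}K(x;y)u(y)\,\d V_{g_s}(y)$, using Theorem~\ref{thm:heatkernel}(d),(f),(g) together with the conservation $\int K(x;\cdot)\,\d V_{g_s}=1$. Then $\delta\le v\le\delta^{-1}$, and the reproduction formula gives $\int_{\mathcal M_t} v\,\d\nu_{x_0;t}=1$ for all $t\in[s,t_0)$. By Lemma~\ref{lem:delta}, $v(x_0)=\lim_{t\nearrow t_0}\int v\,\d\nu_{x_0;t}=1$.

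Next I consider $\Phi(t):=\int_{\mathcal M_t} v\log v\,\d\nu_{x_0;t}$. A direct computation gives $\square(v\log v)=-|\nabla v|^2/v$. Lemma~\ref{usefuliden} then yields $\Phi'(t)=-\int |\nabla v|^2/v\,\d\nu_{x_0;t}$. To apply that lemma I must check the growth condition $|v\log v|+|\nabla(v\log v)|\le L(1+\scal^L)$. This follows from boundedness of $v$ together with $|\nabla v|^2\le C(1+\scal)$, which comes from the canonical neighborhood assumption and parabolic regularity exactly as in \eqref{eq:grad001}. I also need integrability of $|\Delta(v\log v)|$ against $\nu$, which comes from the same local parabolic estimates combined with the decay $K(x_0;\cdot)\le C_m\scal^{-m}$ of Theorem~\ref{thm:heatkernel2}. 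Since $v(x_0)=1$, Lemma~\ref{lem:delta} gives $\Phi(t)\to 0$ as $t\nearrow t_0$. Integrating,
\[
\int u\log u\,\d\nu_{x_0;s}=\int_s^{t_0}\int_{\mathcal M_t}\frac{|\nabla v|^2}{v}\,\d\nu_{x_0;t}\,\d t .
\]

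The key step is a monotonicity: $t\mapsto\int |\nabla v|^2/v\,\d\nu_{x_0;t}$ is nonincreasing. For this I use the Bochner-type identity $\square(|\nabla v|^2/v)=-\tfrac{2}{v}|\nabla^2 v-\d v\otimes\d v/v|^2\le 0$, which holds under Ricci flow. Combined with Lemma~\ref{usefuliden}, this gives $\frac{\d}{\d t}\int |\nabla v|^2/v\,\d\nu\le 0$. The hypotheses of Lemma~\ref{usefuliden} require polynomial-in-$\scal$ bounds on $|\nabla v|^2/v$ and its first derivatives, and these again follow from $v\ge\delta$ and local parabolic estimates on canonical neighborhoods. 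Bounding the integrand by its value at $t=s$ then gives the inequality with constant $(t_0-s)$. I expect this justification of integration by parts and of the limit at $t=s$ to be the main obstacle. The boundary terms in Lemma~\ref{usefuliden} vanish by Lemma~\ref{lem:potent1}, but continuity of $\int|\nabla v|^2/v\,\d\nu$ as $t\searrow s$ must be checked, using the smooth compactly-perturbed initial data and Lemma~\ref{lem:grad2} to control $|\nabla v|/v$ uniformly.

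For the rigidity statement, equality forces equality in the monotonicity for a.e.\ $t$. This gives $\nabla^2\log v\equiv 0$ on $\mathcal M_t'$ for $t$ near $s$. Letting $t\searrow s$, $\nabla\log u$ is a parallel vector field on $(\mathcal M_s',g_s)$. Either it vanishes, in which case $u$ is constant by the normalization, or, since $\mathcal M_s'$ is connected and geodesically convex (Proposition~\ref{prop:geodesicconvex}), the de Rham decomposition splits off an $\R$-factor.
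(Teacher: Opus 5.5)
Your proposal is correct and follows essentially the route the paper intends: the paper simply defers to the proof of \cite[Theorem 1.10(2)]{heinnaber2014new}, namely the entropy-dissipation argument along the forward heat flow $v$ of $u$, with $\frac{\d}{\d t}\int v\log v\,\d\nu_{x_0;t}=-\int |\nabla v|^2/v\,\d\nu_{x_0;t}$ and a nonincreasing Fisher information coming from $\square(|\nabla v|^2/v)\le 0$; Lemmas~\ref{lem:maximum}, \ref{lem:grad2}, \ref{usefuliden} and~\ref{lem:delta} serve only to justify the integrations by parts on the incomplete slices and the endpoint limit $v(x_0)=1$, as in your plan. The one point to tighten is the equality case: equality is not inherited by your smooth approximants, so the rigidity must be extracted for the given $u$ on the slices $\mathcal M_t'$, $t\in(s,t_0)$, where $v$ is smooth, and de Rham's theorem needs completeness, which $\mathcal M_s'$ lacks—but it is not needed, because a nonzero parallel field $\nabla\log v$ would be Killing and hence tangent to the compact level sets of the proper function $\scal$, while $\log v$ grows linearly along its (therefore complete) integral curves, so equality in fact forces $u$ to be constant.
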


An application of Theorem~\ref{thm:logsob} is the following Gaussian concentration estimate, proved in \cite{heinnaber2014new}. In our setting, one can follow the same argument as in \cite[Theorem 2.15(i)]{fangli2025structure}.

\begin{thm}[Gaussian concentration]\label{thm:Gaussian}
Given $x_0\in \mathcal M$, for any $\epsilon>0$, any $L>0$, and any $t< t_0=\mathfrak t(x_0)$, we have
\begin{align*}
\nu_{x_0;t}\bigl(\mathcal M_t\setminus B_{g_t}(z,L)\bigr)
\le
C(\ep)\exp\left(-\frac{L^2}{(4+\epsilon)(t_0-t)}\right),
\end{align*}
where $z\in \mathcal M_t$ is any $H_3$-center of $x_0$.
\end{thm}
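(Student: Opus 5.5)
We follow the Hein--Naber route used in \cite[Theorem 2.15(i)]{fangli2025structure}: first derive a Gaussian bound on exponential moments of $1$-Lipschitz functions via the logarithmic Sobolev inequality (Herbst argument), then convert it into a tail bound around an $H_3$-center. We work on the branch $\mathcal M'$ associated with $x_0$, where $\nu_{x_0;t}$ is supported. If $B_{g_t}(z,L)\supset\mathcal M_t'$, there is nothing to prove, so we may assume otherwise.

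\textbf{Step 1 (Herbst argument).} Let $\phi$ be a bounded $1$-Lipschitz function on $\mathcal M_t'$ with $\int\phi\,\d\nu_{x_0;t}=0$. We may approximate $\phi$ by smooth functions with $|\nabla\phi|\le 1$ that are constant on the horn ends beyond the exhaustion domains $\Omega_i$ constructed in Lemma~\ref{usefuliden}. For $\lambda\in\R$, set $u_\lambda=e^{\lambda\phi}/Z(\lambda)$, where $Z(\lambda)=\int e^{\lambda\phi}\,\d\nu_{x_0;t}$. Then $\sqrt{u_\lambda}\in W^{1,2}$, since $\phi$ is bounded and Lipschitz. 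Applying Theorem~\ref{thm:logsob} with $\tau=t_0-t$ gives
\[
\lambda Z'(\lambda)-Z(\lambda)\log Z(\lambda)\le \tau\lambda^2 Z(\lambda).
\]
Hence $(\lambda^{-1}\log Z)'\le\tau$. Since $\lambda^{-1}\log Z\to 0$ as $\lambda\to0$, we obtain $Z(\lambda)\le e^{\tau\lambda^2}$. The Chebyshev inequality then yields
\[
\nu_{x_0;t}(\phi\ge a)\le e^{-a^2/(4\tau)}.
\]

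\textbf{Step 2 (applying to the distance from $z$).} Take $\phi=\min\{d_{g_t}(z,\cdot),D\}-m$, where $m$ is its mean. Here we use that $d_{g_t}(z,\cdot)$ is $1$-Lipschitz on $\mathcal M_t'$; the geodesic convexity of Proposition~\ref{prop:geodesicconvex} ensures the distance is intrinsic. Since $z$ is an $H_3$-center, Cauchy--Schwarz gives
\[
m\le\sqrt{\Var_t(\delta_z,\nu_{x_0;t})}\le\sqrt{H_3\tau}.
\]
Letting $D\to\infty$, we get
\[
\nu_{x_0;t}\bigl(\mathcal M_t\setminus B_{g_t}(z,L)\bigr)\le \exp\bigl(-(L-\sqrt{H_3\tau})_+^2/(4\tau)\bigr).
\]

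\textbf{Step 3 (absorbing the shift).} We use $(L-a)^2\ge \frac{4}{4+\epsilon}L^2-C(\epsilon)a^2$ with $a=\sqrt{H_3\tau}$. This gives
\[
\exp\bigl(-(L-a)^2/(4\tau)\bigr)\le e^{C(\epsilon)H_3/4}\exp\bigl(-L^2/((4+\epsilon)\tau)\bigr).
\]
When $L<\sqrt{H_3\tau}$, we instead use the trivial bound $1$, which is also absorbed into $C(\epsilon)$.

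\textbf{Main obstacle.} The main difficulty is justifying Step 1 on the incomplete, noncompact slice. We need that the test functions are admissible in Theorem~\ref{thm:logsob}, and that differentiating $Z(\lambda)$ under the integral is legitimate. Both follow from the boundedness of $\phi$ and $\nu_{x_0;t}(\mathcal M_t)=1$; the latter is Theorem~\ref{thm:heatkernel}(d). One also needs to check that truncation and smoothing preserve $|\nabla\phi|\le1$ up to an error $\to0$. We handle this by approximating within the exhaustion $\Omega_i$ and using the decay in Theorem~\ref{thm:heatkernel2} to control the mass in the horn ends.
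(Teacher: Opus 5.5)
Your proposal is correct and follows essentially the route the paper indicates: the paper defers to the Hein--Naber argument as carried out in \cite[Theorem 2.15(i)]{fangli2025structure}, which is the Herbst argument from the logarithmic Sobolev inequality of Theorem~\ref{thm:logsob}, applied to the (truncated) distance from an $H_3$-center, with the mean bounded by $\sqrt{H_3(t_0-t)}$ and the resulting shift absorbed into $C(\epsilon)$. As a minor aside, the appeal to Proposition~\ref{prop:geodesicconvex} in Step 2 is unnecessary, since $d_{g_t}(z,\cdot)$ is $1$-Lipschitz with $|\nabla d_{g_t}(z,\cdot)|\le 1$ almost everywhere on any Riemannian manifold, complete or not.
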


\subsection{Pointed entropies on singular Ricci flows} \label{sub:entropy}

Next, we introduce the following pointed entropy quantities, originally defined in \cite{heinnaber2014new} for closed Ricci flows.

\begin{defn}
Perelman's $\mathcal W$-entropy and the Nash entropy\index{Nash entropy} based at $x_0\in \mathcal M_{t_0}$ are defined, for any $\tau\in (0,t_0]$, by
\begin{align*}
\mathcal W_{x_0}(\tau)
&=
\int_{\mathcal M_{t_0-\tau}}
\Bigl(
\tau(2\Delta f_{x_0}-|\nabla f_{x_0}|^2+\scal)+f_{x_0}-3
\Bigr)\,\d \nu_{x_0;t_0-\tau},\\
\mathcal N_{x_0}(\tau)
&=
\int_{\mathcal M_{t_0-\tau}} f_{x_0}\,\d \nu_{x_0;t_0-\tau}-\frac{3}{2},
\end{align*}
where $f_{x_0}$ is the potential function based at $x_0$.
\end{defn}

\begin{lem}\label{lem:welldefined}
For any $x_0\in \mathcal M_{t_0}$ and any $\tau\in (0,t_0]$, both $\mathcal N_{x_0}(\tau)$ and $\mathcal W_{x_0}(\tau)$ are well-defined and finite.
\end{lem}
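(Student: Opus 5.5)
To show that $\mathcal N_{x_0}(\tau)$ and $\mathcal W_{x_0}(\tau)$ are well-defined and finite, I will prove absolute integrability of each integrand with respect to $\d\nu_{x_0;t}$, where $t=t_0-\tau$, on the branch $\mathcal M'_t$ associated with $x_0$. Write $w=K(x_0;\cdot)$ and $f=f_{x_0}$. The only issue is the behavior at the ends of $\mathcal M_t'$, i.e.\ where $\scal\to\infty$; on any compact subset everything is smooth. I split $\mathcal M'_t=\Omega\cup(\mathcal M'_t\setminus\Omega)$, with $\Omega=\{\scal\le r^{-2}\}$ compact and $r\ll\sqrt{\tau}$. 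The complement is covered by $\ep$-necks and caps of scale comparable to $\scal^{-1/2}\le r$, so that $t\le t_0-2(\text{scale})^2$ and Lemma~\ref{lem:potent1} applies there. The volume of $\mathcal M_t$ is finite by \cite[Proposition 5.5(1)]{kleinerlott2017singular}.

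\textbf{Nash entropy.} Since $|f|\,w=|w\log w+c\,w|$ with $c=\tfrac32\log(4\pi\tau)$, and $w$ is bounded above on $\mathcal M_t$ (Theorem~\ref{thm:heatkernel}(c), or a local bound near $x_0$'s past since $t<t_0$ is fixed), the function $|w\log w|$ is bounded by a constant. Integrability therefore follows from finite volume. In fact $f\,w$ is also bounded below, since $w\log w\ge -e^{-1}$.

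\textbf{$\mathcal W$-entropy.} The terms $\scal\,w$ and $f\,w$ are integrable by Theorem~\ref{thm:heatkernel2} (giving $w\le C\scal^{-2}$ at high curvature) together with finite volume. For the gradient term, $|\nabla f|^2w=|\nabla w|^2/w$. On the necks, Lemma~\ref{lem:potent1}(ii) gives $|\nabla f|^2\le Cr^{-2}(1+f)\le C\scal(1+f)$, so $|\nabla f|^2w\le C\scal(1+|\log w|+1)w$. Combining this with $w\le C_m\scal^{-m}$ from Theorem~\ref{thm:heatkernel2} and the bound $|\log w|\,w^{1/2}\le C$, the integrand is $O(\scal^{-2})$, which is integrable.

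For $\Delta f\,w$, I use $\Delta f=|\nabla f|^2-\Delta w/w$, so that $|\Delta f|\,w\le |\nabla w|^2/w+|\Delta w|$. By Lemma~\ref{lem:potent1}(i), $|\Delta w|\le C_m r^{m-2}$ on a neck of scale $r\sim\scal^{-1/2}$, which is summable against finite volume. Caps at scale $\le r$ are handled in the same way: Theorem~\ref{thm:heatkernel2} and parabolic interior estimates, as in Lemma~\ref{lem:potent1}(i), apply on any canonical neighborhood, not only necks.

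\textbf{Main obstacle.} The delicate point is the uniformity of the neck estimates, i.e.\ that the constants in Lemma~\ref{lem:potent1} are independent of the neck, depending only on $x_0$. This requires the scale condition $t\le t_0-2r^2$, which holds once $r_0\ll\sqrt\tau$. Note that $\tau=t_0$ (time $0$) is allowed. There the slice $\mathcal M_0$ is compact and smooth, so the claim is immediate. Once these uniform bounds are in place, everything reduces to polynomial decay in $\scal$ against finite volume, so both entropies are finite.
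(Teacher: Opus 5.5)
Your proposal is correct, but it takes a different route from the paper's.

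\textbf{Nash entropy.} The paper notes only that $f_{x_0}$ is bounded below on $\mathcal M_{t_0-\tau}$. It then appeals to Gaussian concentration (Theorem~\ref{thm:Gaussian}) and a standard argument for the rest. Your observation that $|f_{x_0}|\,w\le |w\log w|+|c|\,w$ is pointwise bounded is more elementary. Combined with the finiteness of $|\mathcal M_t|_t$, it suffices precisely because the time-slices have finite volume.

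\textbf{$\mathcal W$-entropy.} The paper never shows that the individual terms are $\nu$-integrable. It uses the differential Harnack inequality to see that the integrand $v$ is nonpositive, so $\mathcal W_{x_0}(\tau)\in[-\infty,0]$ is well-defined. It then integrates $2\tau\Delta f_{x_0}$ by parts on a neck exhaustion $\Omega_i$, killing the boundary flux with Lemma~\ref{lem:potent1}(ii) and Theorem~\ref{thm:heatkernel2}. Since $\tau|\nabla f_{x_0}|^2\ge0$ and both $\scal$ and $f_{x_0}$ are bounded below, $\int_{\Omega_i}v\,\d\nu$ is bounded below uniformly in $i$. Monotone convergence in $i$ then gives finiteness.

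You instead prove absolute integrability of $\scal w$, $fw$, $|\nabla f|^2w$ and $|\Delta f|\,w$ separately. You do this through pointwise polynomial decay in $\scal$ along the horns (Lemma~\ref{lem:potent1}(i),(ii) and Theorem~\ref{thm:heatkernel2}) against finite volume.

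\textbf{What each route buys.} Your route avoids both the Harnack inequality and the integration by parts. It also yields the stronger statement that each term lies in $L^1(\nu_{x_0;t_0-\tau})$, which is convenient for the Remark that follows the lemma. The paper's route needs the neck estimates only on the boundary spheres $\partial\Omega_i$ rather than throughout the horns, and it gives $\mathcal W_{x_0}(\tau)\le 0$ for free.

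\textbf{A simplification.} You do not need to extend Lemma~\ref{lem:potent1} to caps, which you assert without proof. As in the proof of Proposition~\ref{prop:geodesicconvex}, adjoining the tube and cap components of $\mathcal M'_t\setminus\{\scal\le r^{-2}\}$ to $\{\scal\le r^{-2}\}$ gives a compact set, on which the integrand is bounded. Only the $\ep$-horns are noncompact, and they are unions of necks of scale at most $r\ll\sqrt{\tau}$, where your uniform bounds apply.
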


\begin{proof}
On $\mathcal M_{t_0-\tau}$, the function $f_{x_0}$ is bounded from below; see Theorem~\ref{thm:heatkernel}(c). The finiteness of $\mathcal N_{x_0}(\tau)$ then follows from Theorem~\ref{thm:Gaussian} and a standard argument; see \cite[Proposition 9.2]{li2025heatnotes}.

By the differential Harnack inequality (see \cite[Corollary 5.8]{lai2021}), $\mathcal W_{x_0}(\tau)$ is well-defined and nonpositive. To show that it is finite, let $\{\Omega_i\}$ be a sequence of domains exhausting $\mathcal M'_{t_0-\tau}$, where $\mathcal M'$ is the branch associated with $x_0$, such that every component of $\partial \Omega_i$ is a central two-sphere
of an $\ep_i$-neck of scale $r_i$, with $r_i,\ep_i\to 0$.

Set $v=\tau(2\Delta f_{x_0}-|\nabla f_{x_0}|^2+\scal)+f_{x_0}-3$. Then we have
\begin{align}\label{eq:nash001}
\int_{\Omega_i} v\,\d \nu_{x_0;t_0-\tau}
&=
\int_{\Omega_i}
\Bigl(
\tau(|\nabla f_{x_0}|^2+\scal)+f_{x_0}-3
\Bigr)\,\d \nu_{x_0;t_0-\tau}
+2\tau \int_{\partial \Omega_i} \frac{\partial f_{x_0}}{\partial \vec n}K(x_0;\cdot)\,\d \sigma_{t_0-\tau}.
\end{align}

As in the proof of Lemma~\ref{usefuliden}, we estimate
\begin{align*}
\left|\int_{\partial \Omega_i} \frac{\partial f_{x_0}}{\partial \vec n}K(x_0;\cdot)\,\d \sigma_{t_0-\tau}\right|
&\le
\int_{\partial \Omega_i} |\nabla f_{x_0}|\,K(x_0;\cdot)\,\d \sigma_{t_0-\tau}\\
&\le
Cr_i \sup_{\partial \Omega_i} \sqrt{f_{x_0}+1}\,K(x_0;\cdot),
\end{align*}
where in the last inequality we used Lemma~\ref{lem:potent1}(ii).

By Theorem~\ref{thm:heatkernel2}, it follows that
\begin{align*}
\lim_{i\to\infty}
\int_{\partial \Omega_i} \frac{\partial f_{x_0}}{\partial \vec n}K(x_0;\cdot)\,\d \sigma_{t_0-\tau}
=0.
\end{align*}
Combining this with \eqref{eq:nash001}, we conclude that $\mathcal W_{x_0}(\tau)$ is finite.
\end{proof}

\begin{rem}
After integration by parts, justified as in the proof of Lemma~\ref{lem:welldefined}, we have
\begin{align*}
\mathcal W_{x_0}(\tau)
=
\int_{\mathcal M_{t_0-\tau}}
\Bigl(
\tau(|\nabla f_{x_0}|^2+\scal)+f_{x_0}-3
\Bigr)\,\d \nu_{x_0;t_0-\tau}.
\end{align*}
\end{rem}

Next, with the aid of Lemma~\ref{usefuliden}, one can follow the same proof as in the closed Ricci flow setting (see, for example, \cite[Proposition 5.2]{bamler2020heatkernel}) to obtain the following monotonicity properties.

\begin{prop}\label{prop:nash1}
For any $x_0\in \mathcal M_{t_0}$, the following statements hold.
\begin{enumerate}[label=\textnormal{(\alph*)}]
\item $\mathcal W_{x_0}(0):=\lim_{\tau \searrow 0} \mathcal W_{x_0}(\tau)=0$, and for any $\tau_0\in (0,t_0]$,
\begin{align*}
\mathcal W_{x_0}(\tau_0)
=
-2\int_0^{\tau_0}
\tau \int_{\mathcal M_{t_0-\tau}}
\left|
\Ric+\nabla^2 f_{x_0}-\frac{g}{2\tau}
\right|^2
\,\d \nu_{x_0;t_0-\tau}\,\d \tau.
\end{align*}
In particular, $\mathcal W_{x_0}(\tau)$ is nonincreasing in $\tau$.

\item $\mathcal N_{x_0}(0):=\lim_{\tau \searrow 0} \mathcal N_{x_0}(\tau)=0$, and for any $\tau_0\in (0,t_0]$,
\begin{align*}
\mathcal N_{x_0}(\tau_0)
=
\frac{1}{\tau_0}\int_0^{\tau_0} \mathcal W_{x_0}(\tau)\,\d \tau.
\end{align*}

\item For any $0<\tau_1\le \tau_2\le t_0$,
\begin{align*}
\mathcal W_{x_0}(\tau_2)\le \mathcal N_{x_0}(\tau_2)\le \mathcal N_{x_0}(\tau_1).
\end{align*}
\end{enumerate}
\end{prop}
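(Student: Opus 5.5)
The plan is to follow the closed-case argument of \cite[Proposition 5.2]{bamler2020heatkernel}, replacing every integration by parts with Lemma~\ref{usefuliden} or with the exhaustion argument from the proof of Lemma~\ref{lem:welldefined}. Fix $x_0\in\mathcal M_{t_0}$, write $f=f_{x_0}$ and $\tau=t_0-\mathfrak t$, and let $\mathcal M'$ be the branch associated with $x_0$. All integrals live on $\mathcal M'_{t_0-\tau}$, where the conjugate heat kernel measure is supported.

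\textbf{Step 1: the entropy derivatives.} First I compute, pointwise on $\mathcal M'_{<t_0}$, the standard identity
\[
\square^*\bigl(v\,K(x_0;\cdot)\bigr)=-2\tau\Bigl|\Ric+\nabla^2 f-\tfrac{g}{2\tau}\Bigr|^2K(x_0;\cdot),
\]
with $v=\tau(2\Delta f-|\nabla f|^2+\scal)+f-3$. Next I show
\[
\frac{d}{d\tau}\mathcal W_{x_0}(\tau)=-2\tau\int\Bigl|\Ric+\nabla^2 f-\tfrac{g}{2\tau}\Bigr|^2\d\nu_{x_0;t_0-\tau}.
\]
To justify this, integrate over the exhaustion $\Omega_i$, whose boundary components are central spheres of $\ep_i$-necks of scale $r_i$. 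Once differentiation in time is exchanged with the integral, the boundary terms are of the form $\int_{\partial\Omega_i}(|\nabla(vK)|+|v||\nabla K|)\,\d\sigma$. By Lemma~\ref{lem:potent1}(i),(iii) these are bounded by $Cr_i^{2}\cdot r_i^{m-3}e^{C\sqrt f}$. Theorem~\ref{thm:heatkernel2} gives $K\le C\scal^{-m}$ near the necks, and this decay absorbs $e^{C\sqrt f}$, because $f\approx-\log K$ so that $e^{C\sqrt f}K\le K^{1/2}$ for $K$ small. Choosing $m$ large, the boundary terms tend to $0$. The interior differences are handled by integrability, exactly as in \eqref{eq:useiden001}; integrability of $|\Ric|^2$, $|\nabla^2 f|^2$, and the other terms against $\nu$ follows from the same polynomial-in-$\scal$ bounds. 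Similarly, Lemma~\ref{usefuliden} applied to $u=f$ (whose growth is controlled by Lemma~\ref{lem:potent1}(ii)) gives
\[
\frac{d}{d\tau}\bigl(\tau\mathcal N_{x_0}(\tau)\bigr)=\mathcal W_{x_0}(\tau).
\]
This is the usual computation, using the integrated-by-parts form of $\mathcal W$ from the Remark above.

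\textbf{Step 2: the limits as $\tau\searrow0$.} Near $x_0$ the flow is smooth on a parabolic neighbourhood $P(x_0,r_0,-r_0^2)$. I would show $\mathcal N_{x_0}(\tau)\to0$ and $\mathcal W_{x_0}(\tau)\to0$ by splitting each integral into two parts. Inside a small neighbourhood of $x_0$, the standard short-time asymptotics of the heat kernel of a smooth flow apply: $f\to|x|^2/4\tau$ in rescaled coordinates, which gives the Euclidean values. Outside that neighbourhood, Theorem~\ref{thm:Gaussian} supplies Gaussian decay of $\nu_{x_0;t_0-\tau}$, and Theorem~\ref{thm:heatkernel2} and Lemma~\ref{lem:potent1} give polynomial bounds on $f$, $|\nabla f|^2$ and $\scal$ away from $P_0$, so this contribution tends to $0$. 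Lemma~\ref{lem:delta} can be used for the bounded-growth terms such as $\tau\scal$. I expect this to be the main obstacle: the heat kernel is no longer exactly Euclidean-like at scale $\sqrt\tau$ away from $P_0$, so the exterior contribution of $f\,\d\nu$ must be controlled uniformly in $\tau$ via the Gaussian concentration bound together with the lower bound on $f$.

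\textbf{Step 3: conclusion.} Integrating the derivative formula from Step 1 with $\mathcal W_{x_0}(0)=0$ gives (a). Integrating $\frac{d}{d\tau}(\tau\mathcal N_{x_0})=\mathcal W_{x_0}$ with $\tau\mathcal N_{x_0}(\tau)\to0$ gives (b). For (c), by (b) $\mathcal N_{x_0}(\tau)$ is the average of the nonincreasing function $\mathcal W_{x_0}$ over $[0,\tau]$. Hence $\mathcal N_{x_0}(\tau_2)\ge\mathcal W_{x_0}(\tau_2)$, and
\[
\frac{d}{d\tau}\mathcal N_{x_0}(\tau)=\frac{\mathcal W_{x_0}(\tau)-\mathcal N_{x_0}(\tau)}{\tau}\le0,
\]
which proves (c).
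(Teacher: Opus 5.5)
Your proposal is correct and is essentially the paper's argument. The paper simply invokes the closed-case proof of \cite[Proposition 5.2]{bamler2020heatkernel}, with Lemma~\ref{usefuliden} and the neck-exhaustion argument (boundary terms killed by Lemma~\ref{lem:potent1} and Theorem~\ref{thm:heatkernel2}) justifying each integration by parts, just as you do. Your extra steps are sound: you compute $\frac{d}{d\tau}\mathcal W_{x_0}$ by direct exhaustion and absorb $e^{C\sqrt{f}}$ into $K^{1/2}$, which is needed since $v$ need not satisfy the polynomial growth hypothesis of Lemma~\ref{usefuliden}. Step 2 could also be shortened: it suffices to show $\mathcal N_{x_0}(\tau)\to 0$, since $\mathcal W_{x_0}(\tau)\to 0$ then follows from the monotonicity of $\mathcal W_{x_0}$ and the averaging identity in (b).
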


We now recall the following definition, analogous to \cite[Definition 2.20]{fangli2025structure}.

\begin{defn}\label{def:entropybound}
A singular Ricci flow $\mathcal M$ is said to have entropy bounded below by $-Y$ over $[0,T]$ if
\begin{align*}
\inf_{x\in \mathcal M_{[0,T]}} \inf_{\tau>0} \mathcal N_x(\tau)\ge -Y,
\end{align*}
where the inner infimum is taken over all $\tau>0$ for which the Nash entropy $\mathcal N_x(\tau)$ is well-defined.
\end{defn}

Note that if a singular Ricci flow has a normalized initial condition, then Proposition~\ref{prop:nash1} implies, as in \eqref{eq:nashlower}:
\begin{align}\label{entropybd-Y1}
\inf_{\tau \in (0, \t(x)]}\mathcal N_x(\tau)
\ge
\mathcal N_x(\mathfrak t(x))
\ge
\inf_{\tau\in (0,T]} \boldsymbol{\mu}(g_0,\tau)
\ge -Y_0,
\end{align}
for any $x\in \mathcal M_{[0,T]}$, where $Y_0$ is a constant depending only on $T$.

Next, with the help of Lemmas~\ref{lem:grad3},~\ref{usefuliden}, and~\ref{lem:delta}, we can follow verbatim the proofs of \cite[Proposition 4.2, Theorem 5.9, Corollary 5.11]{bamler2020heatkernel} to obtain the following properties of the modified Nash entropy, defined by
\begin{align*}
\mathcal N_s^*(x):=\mathcal N_x(\mathfrak t(x)-s), \qquad \forall\, s\in [0,\mathfrak t(x)).
\end{align*}

\begin{thm}\label{thm:nash}
For any $s\ge 0$, if $\scal\ge \scal_{\min}$ on $\mathcal M_s$, then the following properties hold.
\begin{enumerate}[label=\textnormal{(\roman*)}]
\item On $\mathcal M_t$ for $t>s$, we have
\begin{align*}
|\nabla \mathcal N_s^*|
\le
\left(\frac{3}{2(t-s)}-\scal_{\min}\right)^{1/2},
\qquad
-\frac{3}{2(t-s)}\le \square \mathcal N_s^* \le 0.
\end{align*}

\item For any $0\le s<t^*\le \min\{t_1,t_2\}$, and any $x_1\in \mathcal M_{t_1}$ and $x_2\in \mathcal M_{t_2}$, we have
\begin{align*}
\mathcal N_s^*(x_1)-\mathcal N_s^*(x_2)
\le
\left(\frac{3}{2(t^*-s)}-\scal_{\min}\right)^{1/2}
d_{W_1}^{g_{t^*}}(\nu_{x_1;t^*},\nu_{x_2;t^*})
+\frac{3}{2}\log\left(\frac{t_2-s}{t^*-s}\right).
\end{align*}
\end{enumerate}
\end{thm}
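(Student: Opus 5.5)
Following Bamler's argument for \cite[Proposition 4.2, Theorem 5.9, Corollary 5.11]{bamler2020heatkernel}, I will prove (i) as a pointwise statement at a fixed $x_0\in\mathcal M_{t_0}$, $t_0>s$, and then obtain (ii) by integrating (i) against conjugate heat kernel measures. Throughout, write $\tau=t_0-s$ and $f=f_{x_0}$ on $\mathcal M_s$, and let $\mathcal M'$ be the branch associated with $x_0$. By Proposition~\ref{prop:nash1} and Lemma~\ref{lem:welldefined}, the quantity $\mathcal N^*_s(x_0)=\int_{\mathcal M_s} f\,\d\nu_{x_0;s}-\tfrac32$ is finite.

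\textbf{Gradient bound.} I first differentiate $\mathcal N_s^*$ in a spatial direction $v$ at $x_0$. Differentiating under the integral gives
\[
\partial_v\mathcal N_s^*(x_0)=-\int_{\mathcal M_s}(f-\bar f)\,\frac{\partial_v K(x_0;\cdot)}{K(x_0;\cdot)}\,\d\nu_{x_0;s},\qquad \bar f:=\int_{\mathcal M_s} f\,\d\nu_{x_0;s}.
\]
Here $\partial_v K(\cdot;y)$ is taken in the first variable. It is a solution of the heat equation in $x_0$, so Lemma~\ref{lem:grad2}, or rather its sharper Hein--Naber form via Lemma~\ref{lem:grad3} and Theorem~\ref{thm:logsob}, controls $\int(\partial_vK/K)^2\,\d\nu\le \tfrac{1}{2\tau}$. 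Cauchy--Schwarz then gives
\[
|\nabla\mathcal N^*_s|^2\le \tfrac{1}{2\tau}\Var_{\nu}(f).
\]
The next step is the variance bound $\Var_\nu(f)\le \tfrac32+\tau\int(\ldots)$. As in Bamler, I expect this to come from the $L^2$-Poincaré inequality applied to $f$ together with the identity $\int(\tau|\nabla f|^2)\,\d\nu=\tfrac32-\tau\int\scal\,\d\nu$. That identity holds after integration by parts, using $\tau(2\Delta f-|\nabla f|^2+\scal)-\tfrac32$ integrated. Combining these and using $\scal\ge\scal_{\min}$ yields $\int|\nabla f|^2\,\d\nu\le \tfrac{3}{2\tau}-\scal_{\min}$, and hence the stated gradient bound. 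Bamler's actual route applies the Poincaré inequality to the test function $\partial_vK/K$ against $f$, i.e.\ he bounds $\int (f-\bar f)h\,\d\nu\le \|\nabla f\|_{L^2}\|h\|$-type quantities with $h=\partial_v K/K$; I will follow that.

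\textbf{Bounds on $\square\mathcal N^*_s$.} For $\square\mathcal N^*_s$ I compute $\partial_{t_0}$ and $\Delta_{x_0}$ of $\int f\,\d\nu$. The upper bound $\square\mathcal N^*_s\le0$ should follow from convexity of $u\log u$ in the first variable, exactly as in the closed case. The lower bound $-\tfrac{3}{2\tau}$ comes from $\partial_\tau(\tau\mathcal N)=\mathcal W\le$ and from $\mathcal W\ge -\tfrac{3}{2}$-type estimates via Proposition~\ref{prop:nash1}. Every integration by parts on the incomplete slice $\mathcal M_s$ is justified by the exhaustion by $\ep$-neck domains and the decay estimates of Lemma~\ref{lem:potent1}, as in Lemma~\ref{usefuliden}. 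Differentiation under the integral in $x_0$ is justified by the local smoothness of $K$ together with the tail bounds of Theorem~\ref{thm:heatkernel2}.

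\textbf{Part (ii).} For (ii), I first consider the case $t_1=t_2=t^*$. There I use the Lipschitz bound from (i) on the slice $\mathcal M_{t^*}$, combined with the Kantorovich duality for $W_1$ on the completion $\overline{\mathcal M'_{t^*}}$. This gives $|\mathcal N^*_s(x_1)-\mathcal N^*_s(x_2)|\le$ (Lipschitz constant)$\cdot d_{g_{t^*}}$ for points, but I need measures. So instead I use that $t\mapsto\int\mathcal N^*_s\,\d\nu_{x_i;t}$ is nondecreasing in $t$, by $\square\mathcal N_s^*\le0$ and Lemma~\ref{usefuliden}. Its total increase is bounded via $\square\ge-\tfrac{3}{2(t-s)}$, which integrates to $\tfrac32\log\frac{t_2-s}{t^*-s}$; Lemma~\ref{lem:delta} evaluates the endpoint at $t\nearrow t_i$. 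Then I couple $\nu_{x_1;t^*}$ and $\nu_{x_2;t^*}$ with the Lipschitz bound at time $t^*$. If the two branches differ, the $W_1$ distance is infinite and the inequality is trivial.

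\textbf{Main obstacle.} I expect the main difficulty to be justifying the growth condition of Lemma~\ref{usefuliden} for $u=\mathcal N_s^*$ near the horns. That requires showing $|\mathcal N_s^*|+|\nabla\mathcal N_s^*|\lesssim 1+\scal^L$, which I plan to obtain from the gradient bound of (i) together with a lower bound on $\mathcal N$ from \eqref{entropybd-Y1}.
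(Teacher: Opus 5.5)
Your overall route (Bamler's Proposition 4.2, Theorem 5.9 and Corollary 5.11, with Lemmas~\ref{lem:grad3}, \ref{usefuliden}, \ref{lem:delta} handling the incomplete slices) is the paper's. However, your argument for the bounds on $\square\mathcal N_s^*$ is wrong, and (ii) needs both of those bounds. Write $K=K(x_0;\cdot)$ and $\mathcal N_s^*(x_0)=-\int_{\mathcal M_s}K\log K\,\d V_{g_s}-\frac32\log\bigl(4\pi(t_0-s)\bigr)-\frac32$. Then $\square_{x_0}K=0$ gives
\[
\square\mathcal N_s^*(x_0)=\int_{\mathcal M_s}\frac{|\nabla_{x_0}K|^2}{K}\,\d V_{g_s}-\frac{3}{2(t_0-s)}.
\]
Convexity of $u\log u$ is exactly what makes the integral nonnegative. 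So it yields the \emph{lower} bound $-\frac{3}{2(t_0-s)}$, not $\square\mathcal N_s^*\le0$. Your proposed route to the lower bound, via $\partial_\tau(\tau\mathcal N)=\mathcal W$ and a bound ``$\mathcal W\ge-\frac32$'', does not work. The operator $\square$ moves the base point with $s$ fixed, which is not the $\tau$-derivative at a fixed base point, and no universal bound $\mathcal W\ge-\frac32$ holds.

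The upper bound is the substantive half. It is equivalent to $\sum_{i=1}^3\int(\partial_{e_i}K/K)^2\,\d\nu_{x_0;s}\le\frac{3}{2\tau}$, i.e.\ the sharp estimate $\int(\partial_vK/K)^2\,\d\nu_{x_0;s}\le\frac{1}{2\tau}$ summed over an orthonormal frame. This is the same estimate you need for the gradient bound, and its constant must come from Lemma~\ref{lem:grad3}; Lemma~\ref{lem:grad2} is not sharp. Apply Lemma~\ref{lem:grad3} with $L=0$ to the solution with initial data $\chi_X$ on $\mathcal M'_s$. This gives $\int_X\partial_vK\,\d V_{g_s}\le\tau^{-1/2}\Phi'\bigl(\Phi^{-1}(\nu_{x_0;s}(X))\bigr)$ for every Borel $X$. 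Integrating this over the superlevel sets of $\partial_vK/K$ yields the $L^2$ bound.

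There are two smaller corrections. First, in the gradient bound, $\int\tau|\nabla f|^2\,\d\nu=\frac32-\tau\int\scal\,\d\nu$ is not an identity. Integration by parts only shows $\tau\int(|\nabla f|^2+\scal)\,\d\nu-\frac32=\mathcal W_{x_0}(\tau)-\mathcal N_{x_0}(\tau)$. The inequality you need, that this is $\le0$, is Proposition~\ref{prop:nash1}(c). With it, Cauchy--Schwarz, the sharp $L^2$ bound above, and the Poincar\'e inequality with $C(2)=2$ give $|\nabla\mathcal N_s^*|^2\le\int|\nabla f|^2\,\d\nu\le\frac{3}{2\tau}-\scal_{\min}$.

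Second, in (ii), $\square\mathcal N_s^*\le0$ makes $t\mapsto\int\mathcal N_s^*\,\d\nu_{x_1;t}$ non\emph{increasing}, not nondecreasing. With Lemma~\ref{lem:delta} this gives $\mathcal N_s^*(x_1)\le\int\mathcal N_s^*\,\d\nu_{x_1;t^*}$. The lower bound on $\square\mathcal N_s^*$ gives $\mathcal N_s^*(x_2)\ge\int\mathcal N_s^*\,\d\nu_{x_2;t^*}-\frac32\log\frac{t_2-s}{t^*-s}$. Kantorovich duality on $\overline{\mathcal M'_{t^*}}$, with the Lipschitz bound from (i) and Proposition~\ref{prop:geodesicconvex}, then finishes.
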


Using the basic properties of the Nash entropy, one can prove the following no-local-collapsing results in the same way as \cite[Theorems 6.1 and 6.2]{bamler2020heatkernel}.

\begin{prop}\label{prop:nolocal}
For any $x_0\in \mathcal M_{t_0}$ with $t_0\ge r^2$, the following statements hold.
\begin{enumerate}[label=\textnormal{(\alph*)}]
\item If $B_{g_{t_0}}(x_0,r)$ is relatively compact in $\mathcal M_{t_0}$ and $\scal\le r^{-2}$ on $B_{g_{t_0}}(x_0,r)$, then
\begin{align*}
|B_{g_{t_0}}(x_0,r)|_{t_0}
\ge
c\exp\bigl(\mathcal N_{x_0}(r^2)\bigr)\,r^3>0,
\end{align*}
where $c$ is a universal constant.

\item If $\scal\ge \scal_{\min}$ on $\mathcal M_{t_0-r^2}$, then
\begin{align*}
|B_{g_{t_0-r^2}}(z,\sqrt{2H_3}\,r)|_{t_0-r^2}
\ge
C(\scal_{\min}r^2)\exp\bigl(\mathcal N_{x_0}(r^2)\bigr)\,r^3>0,
\end{align*}
where $z\in \mathcal M_{t_0-r^2}$ is an $H_3$-center of $x_0$.
\end{enumerate}
\end{prop}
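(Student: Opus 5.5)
Both parts will be obtained by adapting Bamler's arguments for closed flows (\cite[Theorems 6.1 and 6.2]{bamler2020heatkernel}). The singular setting only enters where integrals over the possibly noncompact, incomplete slices $\mathcal M_t$ must be justified. For those steps I will rely on the branch structure, the tail estimates of Theorem~\ref{thm:heatkernel2}, and the exhaustion by domains bounded by central spheres of $\ep$-necks, as in Lemma~\ref{usefuliden}. Throughout, $\mathcal M'$ denotes the branch associated with $x_0$; all measures $\nu_{x_0;t}$ are supported in it.

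\textbf{Part (a).} Set $B:=B_{g_{t_0}}(x_0,r)$, which is relatively compact with $\scal\le r^{-2}$ on it.

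\emph{Step 1: lower bound for the heat kernel near $x_0$ at slightly earlier times.} I argue via the conjugate heat kernel pulled back along worldlines. Using Theorem~\ref{thm:nash}(i), the Nash entropy $\mathcal N^*_{t_0-r^2}$ is Lipschitz in space with constant $Cr^{-1}$ (here $\scal_{\min}\ge -C$ from normalization). It also satisfies $\square\mathcal N^*\le0$. Following Bamler, I then consider $\int_{B}K(x_0;\cdot)$ at time $t_0-\theta r^2$ for small $\theta$; more precisely, the heat-kernel mass of the parabolic neighborhood.

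\emph{Step 2: volume comparison.} By Proposition~\ref{existenceHncenter} (the $H_3$-center concentration), at time $t_0-\theta r^2$ at least half of $\nu_{x_0;t_0-\theta r^2}$ lies in a ball of radius $\sqrt{2H_3\theta}\,r$ around an $H_3$-center. I use $\scal\le r^{-2}$ to control distance distortion and worldline behaviour over $[t_0-\theta r^2,t_0]$, via the standard distance-distortion argument, which is local since $B$ is relatively compact. From this I conclude that the center and the ball lie within $B$ transported along worldlines.

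\emph{Step 3: entropy bound on the mass.} The key inequality is Jensen applied to $\int f\,\d\nu$ on a set $E$ with $\nu(E)\ge 1/2$: it gives $|E|\ge c\,e^{\mathcal N}\tau^{3/2}$ up to the correction $\int_E \log$ terms. Monotonicity of $\mathcal N$ (Proposition~\ref{prop:nash1}(c)) gives $\mathcal N_{x_0}(\theta r^2)\ge\mathcal N_{x_0}(r^2)$. Finally, the evolution of volume, $\partial_t\mathrm{d}V=-\scal\,\mathrm{d}V$ with $\scal\le r^{-2}$, transfers the volume back to time $t_0$.

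\textbf{Part (b).} This is more direct, following \cite[Theorem 6.2]{bamler2020heatkernel}. Set $t=t_0-r^2$ and $E=B_{g_t}(z,\sqrt{2H_3}\,r)$; then $\nu_{x_0;t}(E)\ge1/2$ by Proposition~\ref{existenceHncenter} with $L=2$. I need a pointwise upper bound $K(x_0;\cdot)\le C(\scal_{\min}r^2)\,r^{-3}\exp(-\mathcal N_{x_0}(r^2))$ on $\mathcal M_t$. This follows from the Lipschitz bound of Theorem~\ref{thm:nash}(i), the semigroup property (Theorem~\ref{thm:heatkernel}(e)), and the $L^\infty$-bound argument of \cite[Theorem 7.1]{bamler2020heatkernel}. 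That argument is an $L^1$–$L^\infty$ bound via the Nash entropy oscillation and Theorem~\ref{thm:nash}(ii). Given this bound, $1/2\le\int_E K\le |E|\cdot\sup K$, which yields the claim.

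\textbf{Main obstacle.} The hardest step is the pointwise heat kernel upper bound in terms of $\mathcal N$. Its proof in the closed case uses integration by parts and the reproduction formula over whole slices. Here I would verify each integration through Lemma~\ref{usefuliden} and the exhaustion $\Omega_i$, with vanishing boundary terms via Lemma~\ref{lem:potent1}. Finiteness of all quantities comes from Lemma~\ref{lem:welldefined}.
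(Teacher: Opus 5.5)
There are genuine gaps in both parts. They are not the integration-by-parts issues you single out, which Lemmas~\ref{usefuliden} and~\ref{lem:welldefined} already handle; the problems are in how the hypotheses are used.

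\textbf{Part (a).} The hypothesis gives $\scal\le r^{-2}$ on $B$ only on the single slice $\mathcal M_{t_0}$. Your Steps~2 and~3 use it on the whole interval $[t_0-\theta r^2,t_0]$: to control distance distortion and worldlines, and to compare volumes via $\partial_t\,\d V=-\scal\,\d V$.

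Nothing in the hypothesis gives this. An upper bound on $\scal$ at one time yields neither the Ricci bounds that distance distortion needs nor any bound on $\scal$ or on backward worldlines at earlier times. The canonical neighborhood assumption gives such control only below the canonical-neighborhood scale and with non-universal constants. Here $c$ must be universal, and $r$ is constrained only by $t_0\ge r^2$.

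Step~3 has a further gap. Jensen on $E$ gives only the upper bound $\nu(E)^{-1}\int_E f\,\d\nu\le\log(|E|/\nu(E))-\tfrac32\log(4\pi\tau)$. To conclude you also need a lower bound on $\int_E f\,\d\nu$ in terms of $\mathcal N$, which you never supply.

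The curvature bound has to be used on $\mathcal M_{t_0}$ itself. One way is a Perelman-type argument:
\begin{enumerate}
\item Choose $\rho\le r$ by the usual radius selection, so that $B_{g_{t_0}}(x_0,\rho)$ satisfies volume doubling and has volume ratio no larger than at scale $r$.
\item Take a test density $\phi^2/\|\phi\|_2^2$ supported in this ball and flow it backward by the conjugate heat equation to time $t_0-\rho^2/2$.
\item Use monotonicity of Perelman's $\mathcal W$ along this flow, with $\tau=\rho^2/2+t_0-t$.
\item At time $t_0$, the $\tau\scal$ term is bounded by the hypothesis and the entropy term by $\log|B_{g_{t_0}}(x_0,\rho)|_{t_0}$.
\item At time $t_0-\rho^2/2$, the evolved density is a mixture of the densities of $\nu_{x;t_0-\rho^2/2}$, $x\in B_{g_{t_0}}(x_0,\rho)$. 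Concavity of $-u\log u$, the Lipschitz bound of Theorem~\ref{thm:nash}(i) and Proposition~\ref{prop:nash1} then bound its entropy below by $\mathcal N_{x_0}(r^2)-C$.
\end{enumerate}

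\textbf{Part (b).} The step $\tfrac12\le\nu_{x_0;t}(E)\le|E|_t\sup K(x_0;\cdot)$ is fine. The problem is the pointwise bound you feed into it, which is Theorem~\ref{thm:ultra}. The paper obtains that bound only after this proposition, by following Bamler's Theorem 7.1. That argument turns a pointwise lower bound on the forward heat kernel over an $H_3$-center ball into a lower bound on its total mass, and it does so using exactly the volume estimate (b). So your derivation of (b) is circular.

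A direct proof is available with tools already in place before the proposition:
\begin{enumerate}
\item Set $\tau=r^2$ and $t=t_0-r^2$. Proposition~\ref{prop:nash1}(c), namely $\mathcal W\le\mathcal N$, gives $\int\tau|\nabla f_{x_0}|^2\,\d\nu_{x_0;t}\le\tfrac32-\tau\scal_{\min}$.
\item Let $\bar f=\mathcal N_{x_0}(r^2)+\tfrac32$. The $L^2$-Poincar\'e inequality then bounds $\|f_{x_0}-\bar f\|_{L^2(\nu_{x_0;t})}$ by $C(\scal_{\min}r^2)$.
\item Hence $\int_E f_{x_0}\,\d\nu_{x_0;t}\ge\nu_{x_0;t}(E)\,\bar f-C(\scal_{\min}r^2)$.
\item Combining this with the Jensen upper bound on $E$ and $\nu_{x_0;t}(E)\ge\tfrac12$ gives $|E|_t\ge C(\scal_{\min}r^2)\exp(\mathcal N_{x_0}(r^2))\,r^3$.
\end{enumerate}
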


By the same argument, we also obtain the following $\ep$-regularity theorem from \cite[Theorem 10.2]{bamler2020heatkernel}; see also \cite[Theorem 1.6]{heinnaber2014new}.

\begin{thm}\label{thm:epregularity}
There exists a universal constant $\ep>0$ such that for any $x\in \mathcal M_t$ with $t\ge r^2$, if $\mathcal N_x(r^2)\ge -\ep$, then
\begin{align*}
r_{\Rm}(x)\ge \ep r.
\end{align*}
\end{thm}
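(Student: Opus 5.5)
We follow the strategy of \cite[Theorem 10.2]{bamler2020heatkernel} and argue by contradiction via a blow-up and compactness argument. The compactness framework is built for closed flows, so we will only use it on a fixed region of bounded curvature, where the singular Ricci flow is an ordinary smooth flow. Suppose there are points $x_i\in\mathcal M_{t_i}$ and scales $r_i$ with $t_i\ge r_i^2$, $\mathcal N_{x_i}(r_i^2)\ge -\ep_i$ with $\ep_i\to 0$, but $r_{\Rm}(x_i)<\ep_i r_i$.

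\textbf{Step 1: choose good basepoints.} Parabolically rescale so that $r_i=1$. Scaling preserves the canonical neighborhood structure up to scale and preserves all the heat kernel estimates of Section~\ref{sec:4}. Apply a point-picking argument to find nearby points $x_i'$ with the following properties:
\begin{itemize}
\item $r_{\Rm}(x_i')=:\rho_i\to 0$;
\item $r_{\Rm}\ge \rho_i/2$ on a parabolic neighborhood of $x_i'$ of size $A_i\rho_i$, with $A_i\to\infty$;
\item $\mathcal N_{x_i'}(\tau)\ge -\Psi(\ep_i)$ for all $\tau\le 1/2$.
\end{itemize}
The entropy control at $x_i'$ comes from transferring the lower bound at $x_i$. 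We use the gradient and $\square$ bounds for $\mathcal N^*_s$ in Theorem~\ref{thm:nash}, together with the monotonicity of $\tau\mapsto\mathcal N_x(\tau)$ from Proposition~\ref{prop:nash1}(c). The needed lower bound $\scal_{\min}\ge -C$ holds because the Hamilton--Ivey pinching gives a lower scalar curvature bound on each time slice. Proposition~\ref{existenceHncenter} and Theorem~\ref{thm:Gaussian} localize the relevant conjugate heat kernel measures near $H_3$-centers, so the search for $x_i'$ stays within bounded $W_1$-distance.

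\textbf{Step 2: pass to a limit.} Rescale by $\rho_i^{-1}$. The neighborhoods of $x_i'$ are then unscathed, with $|\Rm|\le 4$ on backward parabolic regions of size $A_i\to\infty$, and $|\Rm|(x_i')\sim 1$. By Proposition~\ref{prop:nolocal}(a) the flows are noncollapsed, so Hamilton's compactness theorem gives a smooth limit ancient flow with bounded curvature and $|\Rm|(x_\infty)\neq 0$.

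\textbf{Step 3: identify the limit.} The entropy bounds become $\mathcal N_{x_i'}(\tau)\ge -\Psi(\ep_i)$ for all $\tau\le \rho_i^{-2}/2$. We pass the conjugate heat kernels to the limit using the local estimates of Lemma~\ref{lem:potent1} and Theorem~\ref{thm:heatkernel2}, plus Gaussian concentration for tightness. This yields a conjugate heat kernel on the limit whose Nash entropy is $\equiv 0$. By Proposition~\ref{prop:nash1}(a),(b) the $\mathcal W$-entropy then vanishes, forcing $\Ric+\nabla^2 f-\frac{g}{2\tau}\equiv 0$. The equality case of the log-Sobolev inequality (Theorem~\ref{thm:logsob}) then shows the limit is flat Euclidean space. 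This contradicts $|\Rm|(x_\infty)\neq0$.

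\textbf{Main obstacle.} The hard part is Step 3, specifically ensuring no conjugate heat kernel mass escapes in the limit. Mass could be lost through the high-curvature regions, that is the horns and necks near singularities, rather than staying inside the bounded-curvature neighborhoods. I would control this with the decay $K\,\scal^m\le C_m$ from Theorem~\ref{thm:heatkernel2}, applied on the region where $r_{\Rm}<\rho_i/2$, which lies a definite rescaled distance away. Combined with Theorem~\ref{thm:Gaussian}, this forces the lost mass to $0$. The resulting upper semicontinuity of the Nash entropy under the limit is what makes the entropy vanish on the limit.
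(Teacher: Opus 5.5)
Your proposal takes the same route as the paper. The paper gives no separate proof: it only asserts that the blow-up argument of \cite[Theorem 10.2]{bamler2020heatkernel} (see also \cite[Theorem 1.6]{heinnaber2014new}) carries over with the tools of Section~\ref{sec:4} (Theorem~\ref{thm:nash}, Propositions~\ref{prop:nash1} and~\ref{prop:nolocal}, Theorem~\ref{thm:Gaussian}), which is exactly the contradiction, point-picking and rigidity scheme you outline. Two singular-specific points, which neither you nor the paper spells out, deserve a line in a write-up: the point-picking must terminate even though $r_{\Rm}$ is not bounded below near $\mathcal S$ (a non-terminating chain would converge in $Z$ to a singular point with Nash entropy $\ge -\Psi(\ep_i)$ at all small scales, which the cylindrical and spherical tangent flows of Subsection~\ref{subsec:tan} rule out because their entropy is at most a negative universal constant); and the control of escaping mass should come from Theorem~\ref{thm:Gaussian} rather than Theorem~\ref{thm:heatkernel2}, whose constants depend on $r_0$ and $T$ and are not known to be uniform along your rescaled sequence.
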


\subsection{Heat kernel upper and lower bounds}

First, as in the case of closed Ricci flows; cf.\ \cite[Corollaries 9.4, 9.5]{perelman2002entropy}, we have the following lower bound for the heat kernel by using the differential Harnack inequality (see \cite[Corollary 5.8]{lai2021}). Recall that the reduced distance between $x\in \mathcal M_t$ and $y\in \mathcal M_s$, with $t>s\ge 0$, is defined as follows:
\begin{align*}
l_x(y)
=
\frac{1}{2\sqrt{t-s}}
\inf\left\{
\mathcal L(\gamma)\;:\;
\gamma(m)\in \mathcal M_m \text{ for } m\in [s,t],\ \gamma \text{ connects } y \text{ to } x
\right\},
\end{align*}
where
\begin{align*}
\mathcal L(\gamma)
=
\int_s^t \sqrt{t-m}\Bigl(|\pi_S\gamma'(m)|^2+\scal(\gamma(m))\Bigr)\,\d m,
\end{align*}
and $\pi_S\gamma'(m)$ denotes the spatial component of $\gamma'(m)$ in $T\mathcal M_m$. Note that a reduced geodesic exists for such $x$ and $y$, since $\mathcal M$ satisfies the canonical neighborhood assumption and $\scal$ is a proper function on $\mathcal M$; see also \cite[Corollary 5.13]{kleinerlott2020singular2}.

\begin{thm}[Heat kernel estimate I]\label{thm:lower}
For any $x\in \mathcal M_t$ and $y\in \mathcal M_s$ with $t>s\ge 0$, we have
\begin{align*}
K(x;y)\ge \frac{\exp\bigl(-l_x(y)\bigr)}{(4\pi(t-s))^{3/2}}.
\end{align*}
\end{thm}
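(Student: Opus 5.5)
The plan is to follow Perelman's argument from \cite[Corollaries 9.4, 9.5]{perelman2002entropy}, with the differential Harnack inequality for the conjugate heat kernel on the singular Ricci flow (\cite[Corollary 5.8]{lai2021}) as the input. Fix $x\in\mathcal M_t$ and write $w=K(x;\cdot)=(4\pi\tau)^{-3/2}e^{-f_x}$ with $\tau=t-\mathfrak t(\cdot)$. The Harnack inequality states that
\[
v:=\tau\bigl(2\Delta f_x-|\nabla f_x|^2+\scal\bigr)+f_x-3\le 0
\]
on $\mathcal M_{<t}$. Since $\square^* w=0$, we can rewrite $\Delta f_x$ in terms of the backward time derivative of $f_x$. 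Combined with $v\le 0$, this gives the pointwise inequality
\[
\frac{\d}{\d\tau}f_x \le \frac12\bigl(\scal+|\nabla f_x|^2\bigr)-\frac{f_x}{2\tau}
\]
along the time direction, which is the standard form used in the comparison.

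Next, let $\gamma:[s,t]\to\mathcal M$ with $\gamma(m)\in\mathcal M_m$ be any admissible curve from $y$ to $x$. Parametrize it by $\tau=t-m$ and differentiate $\sqrt{\tau}\,f_x(\gamma(\tau))$ with respect to $\tau$. Apply Cauchy--Schwarz in the form $\langle\nabla f_x,\gamma'\rangle\le \tfrac12|\nabla f_x|^2+\tfrac12|\gamma'|^2$, where the sign is arranged so that the $|\nabla f_x|^2$ terms cancel. Together with the inequality above, this yields
\[
\frac{\d}{\d\tau}\bigl(2\sqrt\tau f_x(\gamma(\tau))\bigr)\le \sqrt\tau\bigl(\scal+|\pi_S\gamma'|^2\bigr).
\]
Integrating from $\tau=\bar\tau$ to $\tau=t-s$ and letting $\bar\tau\searrow0$ gives $2\sqrt{t-s}\,f_x(y)\le \mathcal L(\gamma)+\lim_{\bar\tau\to0}2\sqrt{\bar\tau}f_x(\gamma(\bar\tau))$. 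Taking the infimum over $\gamma$ (a minimizer exists, as noted before the statement) then gives $f_x(y)\le l_x(y)$, provided the boundary term vanishes. Exponentiating this inequality is exactly the claimed bound.

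The main obstacle is the boundary term $\limsup_{\bar\tau\to0}\sqrt{\bar\tau}f_x(\gamma(\bar\tau))\le 0$, that is, $f_x$ should not blow up faster than $\bar\tau^{-1/2}$ near the basepoint. The basepoint $x$ lies in the regular spacetime, so $r_{\Rm}(x)>0$ and a small parabolic neighbourhood $P$ of $x$ is unscathed with bounded geometry. On $P$ the kernel behaves like a smooth local heat kernel. I would get a local Gaussian lower bound $w\ge c\tau^{-3/2}e^{-Cd^2/\tau}$ for $\tau$ small, either by comparison via the maximum principle (Lemma~\ref{lem:maximum}) on $P$ or directly from the smooth approximations $K_i$ in Lai's construction, which are honest kernels near $x$ for large $i$. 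Since the minimizing $\gamma$ has $d_{g_{t-\bar\tau}}(\gamma(\bar\tau),x)=O(\sqrt{\bar\tau})$ (its $\mathcal L$-length is finite), this gives $f_x(\gamma(\bar\tau))\le C$ and hence the boundary term vanishes. Alternatively, I would prove the inequality $f_i\le l_i$ for the smooth flows with surgery, where surgeries only decrease the kernel and the standard closed argument applies on each piece, and then pass to the limit $i\to\infty$ using the local smooth convergence $K_i\to K$ and lower semicontinuity of the reduced distance.
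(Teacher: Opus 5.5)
Your main route is the paper's: the paper obtains this bound exactly as in Perelman's Corollaries 9.4--9.5, with Lai's differential Harnack inequality $v\le 0$ as the only input specific to singular Ricci flows. Your local Gaussian lower bound near the basepoint is a reasonable way to handle the boundary term, which the paper leaves implicit. Two slips need fixing.

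First, combining $v\le0$ with $\square^*w=0$ gives, along worldlines, $\partial_\tau f_x\le\tfrac12(\scal-|\nabla f_x|^2)-\tfrac{f_x}{2\tau}$, with a \emph{minus} sign. The inequality you displayed, with $+|\nabla f_x|^2$, is true but too weak: adding $\langle\nabla f_x,\gamma'\rangle\le\tfrac12|\nabla f_x|^2+\tfrac12|\gamma'|^2$ to it cancels nothing.

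Second, finiteness of $\mathcal L(\gamma)$ does not give $d=O(\sqrt{\bar\tau})$. Cauchy--Schwarz only yields $d^2\le C\sqrt{\bar\tau}\int_0^{\bar\tau}\sqrt\tau|\gamma'|^2\,\mathrm{d}\tau=o(\sqrt{\bar\tau})$; the $O(\sqrt{\bar\tau})$ bound does hold for $\mathcal L$-geodesics, but via the geodesic equation. The weaker bound is enough: $f_x\le C+Cd^2/\bar\tau$ then gives $\sqrt{\bar\tau}f_x(\gamma(\bar\tau))\to0$ for every admissible curve. So you need no minimizer and can take the infimum over curves directly.

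I would drop the alternative through the flows with surgery. Perelman's Harnack inequality is a statement about fundamental solutions. On the pieces before the last surgery, $K_i$ is instead a cut-off superposition of them. Just before each surgery time it has a near-step profile across the cut, and there $v>0$, so the closed argument cannot be run piece by piece. Moreover, passing $K_i\ge(4\pi\tau)^{-3/2}e^{-l_i}$ to the limit requires $\liminf_i l_i\le l$. That comes from transplanting almost-minimizing curves of the limit, not from lower semicontinuity of the reduced distance.
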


As an application of Theorem~\ref{thm:lower}, one can prove the following result exactly as in \cite[Proposition 2.21]{fangli2025structure}.

\begin{prop}\label{HncenterScal}
For any $x_0\in \mathcal M_{[0,T]}$ with $t_0:=\mathfrak t(x_0)\ge r^2$ and any constant $R_0>0$, there exists a constant $C=C(T,R_0)>0$ such that the following statements hold.
\begin{enumerate}[label=\textnormal{(\roman*)}]
\item Assume that
\[
|\scal|\le R_0r^{-2}
\]
at the point $x_0(t)\in \mathcal M_t$ for all $t\in [t_0-r^2,t_0]$, where $x_0(t)$ denotes the flow of $x_0$ along its worldline. If $z\in \mathcal M_t$ is an $H_3$-center of $x_0$ with $t\in [t_0-r^2,t_0]$, then
\begin{align*}
d_{g_t}(x_0(t),z)\le C\sqrt{t_0-t}.
\end{align*}

\item Assume that
\[
|\scal|\le R_0r^{-2}
\]
at both $x_0(t)\in \mathcal M_t$ and $y(t)\in \mathcal M_t$ for all $t\in [t_0-r^2,t_0]$. Then for any $t\in [t_0-r^2,t_0]$,
\begin{align*}
d_{g_t}(x_0(t),y(t))
\le
d_{g_{t_0}}(x_0,y(t_0))+C\sqrt{t_0-t}.
\end{align*}
\end{enumerate}
\end{prop}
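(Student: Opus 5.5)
The plan follows \cite[Proposition 2.21]{fangli2025structure}, replacing the closed-flow inputs by Theorem~\ref{thm:lower}, Corollary~\ref{cor:concern} and Proposition~\ref{existenceHncenter}. Both parts rest on one estimate: if the scalar curvature along the worldline of $x_0$ is bounded by $R_0r^{-2}$, then $\nu_{x_0;t}$ gives definite mass to a ball of radius comparable to $\sqrt{t_0-t}$ around $x_0(t)$.

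\emph{Step 1 (reduced distance along the worldline).} Fix $t\in[t_0-r^2,t_0)$ and test $l_{x_0}$ with the worldline $\gamma(m)=x_0(m)$, $m\in[t,t_0]$. Its spatial velocity vanishes, so
\[
l_{x_0}(x_0(t))\le \frac{1}{2\sqrt{t_0-t}}\int_t^{t_0}\sqrt{t_0-m}\,R_0r^{-2}\,\d m=\frac{R_0(t_0-t)}{3r^2}\le \frac{R_0}{3}.
\]
The same argument bounds $l_{x_0}$ at every point $y$ whose worldline over $[t,t_0]$ stays in the region where $\scal\lesssim R_0r^{-2}$. Concatenate a short spatial segment from $y$ at time $t$ with the worldline. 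Since the $\mathcal L$-functional weights spatial velocity by $\sqrt{t_0-m}$, this shows $l_{x_0}(y)\le C(R_0)$ whenever $d_{g_t}(y,x_0(t))\le \sqrt{t_0-t}$.

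\emph{Step 2 (local mass).} By Theorem~\ref{thm:lower}, $K(x_0;\cdot)\ge c(R_0)(t_0-t)^{-3/2}$ on $B:=B_{g_t}(x_0(t),\sqrt{t_0-t})$. Hence
\[
\nu_{x_0;t}(B)\ge c(R_0)(t_0-t)^{-3/2}|B|_t\ge c(T,R_0).
\]
For the volume lower bound, use that $\scal\le R_0r^{-2}\le R_0(t_0-t)^{-1}$ at $x_0(t)$. Lemma 3.1 of \cite{kleinerlott2017singular} then gives curvature control on a ball of radius $\eta\sqrt{t_0-t}/\sqrt{R_0}$, and Proposition~\ref{prop:nolocal}(a) with the entropy bound \eqref{entropybd-Y1} gives volume $\ge c(T,R_0)(t_0-t)^{3/2}$. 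I expect this to be the main obstacle. The first difficulty is ensuring the short spatial segment and the worldlines in Step~1 stay in a curvature-controlled, unscathed region of the singular flow. For that I would use canonical neighborhoods and the properness of $\scal$, shrinking the ball by a factor $c(R_0)$ if necessary. The second difficulty is that Theorem~\ref{thm:lower} requires $t>s$, which holds here.

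\emph{Step 3 (conclusion of (i)).} Let $z$ be an $H_3$-center. Apply Proposition~\ref{existenceHncenter} with $L=2/c(T,R_0)$: the ball $B_{g_t}(z,\sqrt{LH_3(t_0-t)})$ has $\nu_{x_0;t}$-mass $>1-c/2$. It must therefore intersect $B$, since otherwise the two masses would sum to more than $1$. This yields $d_{g_t}(x_0(t),z)\le C\sqrt{t_0-t}$.

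\emph{Part (ii).} Choose $H_3$-centers $z_x$ of $x_0$ and $z_y$ of $y(t_0)$ at time $t$. By (i) applied to both points,
\[
d_{g_t}(x_0(t),y(t))\le d_{g_t}(z_x,z_y)+2C\sqrt{t_0-t}.
\]
Bound $d_{g_t}(z_x,z_y)\le d_{W_1}^{g_t}(\nu_{x_0;t},\nu_{y(t_0);t})+2\sqrt{H_3(t_0-t)}$, using that each center lies within $\sqrt{\Var}$ of its measure in $W_1$ by Corollary~\ref{cor:concern}. Lemma~\ref{lem:mono1} then bounds the $W_1$ term by $d_{g_{t_0}}(x_0,y(t_0))$. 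If the two points lie in different components, the right side is infinite and there is nothing to prove.
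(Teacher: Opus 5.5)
Your proof is correct in substance, but it turns the reduced-distance bound into distance control by a different mechanism than the paper. The paper only invokes Theorem~\ref{thm:lower} and defers to \cite[Proposition 2.21]{fangli2025structure}. Because the hypothesis there is purely pointwise along the worldline, that argument presumably uses only the single value $K(x_0;x_0(t))\ge (4\pi(t_0-t))^{-3/2}e^{-R_0/3}$. It compares this value with a Gaussian upper bound for $K(x_0;\cdot)$ centered at the $H_3$-center $z$, of the form $C e^{Y}(t_0-t)^{-3/2}\exp\bigl(-d_{g_t}^2(z,\cdot)/(C(t_0-t))\bigr)$. Evaluating at $x_0(t)$ gives $d_{g_t}(z,x_0(t))\le C\sqrt{t_0-t}$ directly, with no volume or local curvature input.

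You instead spread the lower bound over a ball of radius $c\sqrt{t_0-t}$ around $x_0(t)$, bound its volume from below, and finish with the concentration estimate in Proposition~\ref{existenceHncenter}. That needs genuinely local geometry along the worldline, which here comes only from the three-dimensional canonical neighborhood structure. So your constant depends on $T$ also through the canonical-neighborhood scale, not only through the entropy bound. In return you need no Gaussian upper bound at all. This is convenient, since the paper's Gaussian bound (Heat kernel estimate III) is proved afterwards and its proof itself uses part~(i). Part~(ii) is the same in both routes: apply (i) at $x_0$ and at $y(t_0)$, then use Lemma~\ref{lem:mono1}.

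Two details in Step~1 need fixing.

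First, the functional $\mathcal L$ only accepts curves with $\gamma(m)\in\mathcal M_m$, so a spatial segment at the fixed time $t$ is not an admissible competitor. Spread the displacement over a time interval of length comparable to $t_0-t$. For example, move from $y$ to $x_0(t+\theta)$ during $[t,t+\theta]$ with $\theta=c(t_0-t)$, then follow the worldline. This yields $l_{x_0}(y)\le C\bigl(1+d_{g_t}^2(y,x_0(t))/(t_0-t)\bigr)$.

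Second, what makes this legitimate is not properness of $\scal$ but the derivative bounds from canonical neighborhoods, as packaged in \cite[Lemma 3.1]{kleinerlott2017singular}. Let $r_{\mathrm{can}}=r(T)$ be the canonical-neighborhood scale. Each $x_0(m)$ has an unscathed backward parabolic neighborhood of radius comparable to $\min\{r/\sqrt{R_0},\,r_{\mathrm{can}}\}\ge c(T,R_0)\,r$. On it, $|\Rm|\le C(T,R_0)r^{-2}$ by Hamilton--Ivey pinching. These neighborhoods contain the modified curve, which also handles the fact that forward worldlines in $\mathcal M$ need not exist. They also give the volume bound in Step~2 via noncollapsing. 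In particular, the control radius in Step~2 is $\eta\min\{\sqrt{(t_0-t)/R_0},\,r_{\mathrm{can}}\}$ rather than $\eta\sqrt{(t_0-t)/R_0}$. This is harmless because $t_0-t\le T$.
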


Next, using Theorem~\ref{thm:nash}, one can follow verbatim the proofs of \cite[Theorems 7.1 and 7.5]{bamler2020heatkernel}; see also \cite[Theorem 10.1]{li2025heatnotes}, to obtain the following heat kernel estimate and its gradient estimate.

\begin{thm}[Heat kernel estimate II]\label{thm:ultra}
For any $x\in \mathcal M_t$ and $y\in \mathcal M_s$ with $t>s\ge 0$, if $\scal\ge \scal_{\min}$ on $\mathcal M_s$, then there exists a constant $C=C(\scal_{\min}(t-s))>0$ such that
\begin{align*}
K(x;y)\le \frac{C}{(t-s)^{3/2}}\exp\bigl(-\mathcal N_x(t-s)\bigr),
\end{align*}
and
\begin{align}\label{eq:gradient}
\frac{|\nabla_x K|(x;y)}{K(x;y)}
\le
\frac{C}{(t-s)^{1/2}}
\sqrt{
\log\left(
\frac{C\exp\bigl(-\mathcal N_x(t-s)\bigr)}{(t-s)^{3/2}K(x;y)}
\right)
}.
\end{align}
\end{thm}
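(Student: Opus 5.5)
The plan is to follow Bamler's proofs of \cite[Theorems 7.1 and 7.5]{bamler2020heatkernel}. The new work is confined to the points where the singular setting intervenes: noncompact, incomplete slices, and the maximum principle and integration by parts. We treat the upper bound first and the gradient estimate second.

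\textbf{Upper bound.} Fix $x\in\mathcal M_t$ and write $\tau=t-s$. By the semigroup property (Theorem~\ref{thm:heatkernel}(e)) at the intermediate time $t'=t-\tau/2$,
\[
K(x;y)=\int_{\mathcal M_{t'}}K(x;z)K(z;y)\,\d V_{g_{t'}}(z)\le \sup_{z}K(z;y).
\]
So the task is to bound $K(\cdot;y)$ at time $t'$ in terms of Nash entropies. We use two inputs.
\begin{enumerate}[label=\textnormal{(\roman*)}]
\item A lower volume bound near $H_3$-centers, from Proposition~\ref{prop:nolocal}(b).
\item The Lipschitz control of $\mathcal N^*_s$ from Theorem~\ref{thm:nash}, combined with the $W_1$-monotonicity of Lemma~\ref{lem:mono1} and the $H_3$-concentration of Corollary~\ref{cor:concern}. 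This shows that $\mathcal N_z(\tau/2)$ and $\mathcal N_x(\tau)$ differ by at most $C(\scal_{\min}\tau)$ when $z$ lies in a ball of radius comparable to $\sqrt\tau$ around an $H_3$-center.
\end{enumerate}
Bamler's argument then runs as follows. Suppose $K(x;y)$ is large. Using the $L^1$-bound $\int K(\cdot;y)\,\d V\le 1$ backward in time, which follows from $\partial_{\mathfrak t}\int K(\cdot;y)\,\d V=-\int \scal K\le -\scal_{\min}\int K$, one finds a region of small volume carrying most of the mass of $\nu_{x;t'}$. This contradicts (i). All the integrals involved are justified by Lemma~\ref{usefuliden}, since $K(\cdot;y)$ is bounded away from $P(y,\cdot)$ by Theorem~\ref{thm:heatkernel}(h).

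\textbf{Gradient estimate.} For \eqref{eq:gradient}, fix $y$ and consider $u=K(\cdot;y)/A$ on $\mathcal M_{[t-\tau/2,t]}$, where
\[
A:=C\tau^{-3/2}\exp\bigl(-\mathcal N_x(t-s)\bigr).
\]
By the first part, together with the entropy comparison above applied to nearby base points, $u<1$ on a neighborhood. Applying Lemma~\ref{lem:grad2} to $u$ with $a=t-\tau/2$ gives
\[
\frac{|\nabla u|}{u}\le \sqrt{\frac{2}{\tau}}\,\sqrt{\log\frac1u},
\]
which is exactly \eqref{eq:gradient}.

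\textbf{Main obstacle.} The hypothesis $0<u<1$ is required on a whole slab $\mathcal M_{[a,b]}$ of the branch, not only near $x$. We would handle this in two steps. First, bound $K(\cdot;y)$ uniformly on the slab via the first part, using that $\mathcal N_{x'}(t'-s)$ is bounded below by $-Y_0$ from \eqref{entropybd-Y1}. Second, restrict to a subslab via Bamler's localization with $\Phi$-functions, using Lemma~\ref{lem:grad3} to convert the local estimate. The only singular-specific justification needed is the growth condition in Remark~\ref{rem:anothercond}, namely $|\nabla u|^2\le C(1+\scal)$ near horns. This follows from Theorem~\ref{thm:heatkernel}(h) and the parabolic estimates on $\ep$-necks, as in Lemma~\ref{lem:potent1}.
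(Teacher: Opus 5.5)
\textbf{There is a genuine gap in your upper bound.} The first problem is the reduction $K(x;y)\le \sup_{z\in\mathcal M_{t'}}K(z;y)$, which discards all dependence on $x$. The supremum over the whole slice is governed by points $z$ near $y$, and their entropy can be far below $\mathcal N_x(\tau)$. So that supremum cannot be bounded by $C\tau^{-3/2}e^{-\mathcal N_x(\tau)}$ in general.

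If you instead meant to bound $K(z;y)\le C(\tau/2)^{-3/2}e^{-\mathcal N_z(\tau/2)}$ and integrate against $\nu_{x;t'}$ using (ii) and Theorem~\ref{thm:Gaussian}, that is the same estimate at half the time gap. It returns the bound at gap $\tau$ with the constant multiplied by a fixed factor larger than $1$, so the induction does not close.

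More seriously, the claimed contradiction does not exist. Proposition~\ref{prop:nolocal}(b) bounds from below the volume of the ball $B_{g_{t'}}(z,\sqrt{2H_3}\,r)$. That is perfectly compatible with $\nu_{x;t'}$ putting most of its mass on a subset of tiny volume. Ruling this out would need a pointwise upper bound on the density $K(x;\cdot)$ at time $t'$, which is exactly the estimate being proved. Even the preceding step needs an a priori upper bound on $K(\cdot;y)$ near the center. Without one, the large value $\int K(\cdot;y)\,\d\nu_{x;t'}$ could come from a set of negligible $\nu_{x;t'}$-mass, so you cannot extract a small-volume set of large mass from the $L^1$ bound.

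\textbf{The missing idea is a spreading step,} and this is what the gradient estimates of Section~\ref{sec:4} are for. A route that works is the following.
\begin{enumerate}
\item Argue by contradiction at a first (or almost maximal) point where $K(\cdot;y)\le A(\mathfrak t-s)^{-3/2}e^{-\mathcal N^*_s}$ fails, so the reverse bound holds at earlier times.
\item Combine this with Theorem~\ref{thm:nash}(ii) and Gaussian concentration. This gives a point $w$ within distance $\sim r$ of the $H_3$-center $z$ at time $t'$ where $K(w;y)$ is comparable to its local upper bound $\Lambda\approx A\tau^{-3/2}e^{-\mathcal N_x(\tau)}$.
\item Apply Lemma~\ref{lem:grad3}, whose Lipschitz scale is $\sqrt{t'-s}$. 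Taking $t-t'$ small relative to $\tau$, it forces $K(\cdot;y)\ge c\Lambda$ on the whole ball $B_{g_{t'}}(z,\sqrt{2H_3}\,r)$.
\item Only now does Proposition~\ref{prop:nolocal}(b) give $\int_{\mathcal M_{t'}}K(\cdot;y)\,\d V_{g_{t'}}\ge cA$. This contradicts the $L^1$ bound for large $A$. That bound is $e^{-\scal_{\min}(t'-s)}$, not $1$, and it is the source of the dependence on $\scal_{\min}(t-s)$.
\end{enumerate}

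Applying Lemma~\ref{lem:grad3} (or Lemma~\ref{lem:grad2}) requires a bound on $K(\cdot;y)$ over the whole slab. The crude bound from $\mathcal N\ge -Y_0$ makes the constants depend on $Y_0$. To obtain $C=C(\scal_{\min}(t-s))$, you must localize, for instance by truncating using the Lipschitz bound on $\mathcal N^*_s$. The same caveat applies to your gradient part: step (1) only gives \eqref{eq:gradient} with a $Y_0$-dependent constant, and your vaguely stated step (2) is precisely where that dependence must be removed.
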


An immediate application of Theorem~\ref{thm:ultra} is the volume non-inflating estimate from \cite[Theorem 8.1]{bamler2020heatkernel}.

\begin{prop}\label{prop:volumenoninflat}
For any $x_0\in \mathcal M_{t_0}$ with $t_0\ge r^2$, if $\scal\ge \scal_{\min}$ on $\mathcal M_{t_0-r^2}$, then for any $L\ge 1$,
\begin{align*}
|B_{g_{t_0}}(x_0,Lr)|_{t_0}
\le
C(\scal_{\min}r^2)\exp\bigl(\mathcal N_{x_0}(r^2)\bigr)\exp(C_0L^2)\,r^3,
\end{align*}
where $C_0$ is a universal constant.
\end{prop}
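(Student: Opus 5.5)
The plan is to follow Bamler's argument for \cite[Theorem 8.1]{bamler2020heatkernel}: a lower bound on the conjugate heat kernel at the final time $t_0$, forced by the upper bound of Theorem~\ref{thm:ultra}, bounds the volume once we integrate a probability measure. Set $t:=t_0$ and $s:=t_0-r^2$, and fix $y\in B_{g_{t_0}}(x_0,Lr)$. We work with the forward heat kernel $K(\cdot;y')$ for $y'\in\mathcal M_s$ and reverse the roles of the two points. By the reproduction formula (Theorem~\ref{thm:heatkernel}(e)) and the semigroup property, the natural object is
\[
u(y):=\int_{\mathcal M_s}K(y;y')\,\d\nu_{x_0;s}(y').
\]
We intend to use the symmetric trick instead. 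For $y\in B_{g_{t_0}}(x_0,Lr)$, apply Theorem~\ref{thm:ultra} at the base point $y$ and at the base point $x_0$ to bound $K(x_0;\cdot)$ and $K(y;\cdot)$ from above on $\mathcal M_s$. Then compare the two conjugate heat kernels via the gradient estimate~\eqref{eq:gradient}, integrated along a minimizing $g_{t_0}$-geodesic from $x_0$ to $y$. This geodesic exists and stays in a compact region by Proposition~\ref{prop:geodesicconvex}.

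Concretely, the key steps are as follows. (1) Pick an $H_3$-center $z$ of $x_0$ at time $s$. By Proposition~\ref{existenceHncenter}, $\nu_{x_0;s}(B_{g_s}(z,\sqrt{2H_3}r))\ge 1/2$. (2) Integrating \eqref{eq:gradient} in the $x$-variable along the geodesic of length at most $Lr$ gives, for fixed $y'\in B_{g_s}(z,\sqrt{2H_3}r)$, that $\sqrt{\log(C e^{-\mathcal N_x(r^2)}r^{-3}/K(x;y'))}$ is $C r^{-1}$-Lipschitz in $x$. The point is that $F=\sqrt{\log(A/K)}$ has $|\nabla F|\le C/r$ by \eqref{eq:gradient}, and the Nash entropy varies by a bounded amount along the curve by Theorem~\ref{thm:nash}(i). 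Hence $K(y;y')\ge c\,e^{-C_0L^2}e^{\mathcal N}\cdots$ reduces to a lower bound for $K(x_0;y')$ on a set of large $\nu_{x_0;s}$-mass. (3) To get that lower bound, note that on a set $E\subset B_{g_s}(z,\sqrt{2H_3}r)$ of $\nu_{x_0;s}$-measure at least $1/4$ we have $K(x_0;\cdot)\ge c\,r^{-3}e^{-\mathcal N_{x_0}(r^2)}$. Otherwise the mass in that ball would be at most its volume times a small constant, and the volume is bounded above by Theorem~\ref{thm:ultra} together with Proposition~\ref{prop:nolocal}(b) style bookkeeping. Alternatively one can use the integral bound directly, as Bamler does. (4) Finally, integrate over $y$:
\[
1\ge\int_{E}\int_{B_{g_{t_0}}(x_0,Lr)}K(y;y')\,\d V_{g_{t_0}}(y)\,\d V_{g_s}(y'),
\]
using $\int_{\mathcal M_{t_0}}K(y;y')\,\d V_{g_{t_0}}(y)\le C$. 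This bound holds because the integral is a heat-equation solution with initial data $\delta_{y'}$, whose total mass evolves by $\partial_t\int=-\int\scal\le -\scal_{\min}$; it is justified by Lemma~\ref{usefuliden}. Inserting the pointwise lower bound and $|E|_s\gtrsim r^3e^{\mathcal N}$ then yields $|B_{g_{t_0}}(x_0,Lr)|_{t_0}\le C e^{\mathcal N_{x_0}(r^2)}e^{C_0L^2}r^3$.

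The main obstacle is step (2) in the singular setting. The gradient estimate has to be integrated along curves, and the total-mass identity in (4) has to be justified on noncompact, incomplete slices. For the curves, Proposition~\ref{prop:geodesicconvex} keeps geodesics inside $\Omega_{r'}(t_0)$. For the mass identity, the boundary terms near $\ep$-horns must be killed via Lemma~\ref{lem:potent1}, exactly as in Lemma~\ref{usefuliden}. Once these are in place, the remaining computation is verbatim from the closed case, and the constant depends only on $\scal_{\min}r^2$.
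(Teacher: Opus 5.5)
Your route is the one the paper intends: Bamler's proof of \cite[Theorem 8.1]{bamler2020heatkernel}, driven by Theorem~\ref{thm:ultra}. You bound the forward kernel $K(y;y')$ from below uniformly in $y\in B_{g_{t_0}}(x_0,Lr)$ by integrating \eqref{eq:gradient} and Theorem~\ref{thm:nash}(i) along a $g_{t_0}$-geodesic from $x_0$, and then integrate in $y$ against the mass bound for $K(\cdot;y')$. However, step (3), which supplies the starting lower bound for $K(x_0;y')$, is circular as you argue it. To show $K(x_0;\cdot)\ge c\,r^{-3}e^{-\mathcal N_{x_0}(r^2)}$ on a large part of $B_{g_s}(z,\sqrt{2H_3}\,r)$, you need an \emph{upper} bound on $|B_{g_s}(z,\sqrt{2H_3}\,r)|_s$. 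That is itself a volume non-inflating estimate, at another base point and time. Theorem~\ref{thm:ultra} and Proposition~\ref{prop:nolocal}(b) only give \emph{lower} volume bounds, and your ``alternative'' is not spelled out. The missing observation is elementary. Since $\nu_{x_0;s}$ is a probability measure and $\int_{\mathcal M_s}f_{x_0}\,\d\nu_{x_0;s}=\mathcal N_{x_0}(r^2)+\tfrac32$, some $y'\in\mathcal M_s$ has $f_{x_0}(y')\le\mathcal N_{x_0}(r^2)+\tfrac32$, i.e.
\[
K(x_0;y')\ge(4\pi r^2)^{-3/2}e^{-\mathcal N_{x_0}(r^2)-3/2}.
\]
A single such point suffices, so the $H_3$-center $z$ and the set $E$ are unnecessary. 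If you want a set, Markov's inequality applied to $f_{x_0}-\mathcal N_{x_0}(r^2)+C$ (nonnegative by the upper bound in Theorem~\ref{thm:ultra}) gives one of $\nu_{x_0;s}$-mass at least $3/4$.

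Step (4) is misnormalized. For each $y'$ the inner integral is at most $e^{-\scal_{\min}r^2}$, so your double integral against $\d V_{g_s}(y')$ is bounded by $e^{-\scal_{\min}r^2}|E|_s$, not by $1$. As written, inserting $|E|_s\gtrsim r^3e^{\mathcal N_{x_0}(r^2)}$ gives the dimensionally wrong bound $|B_{g_{t_0}}(x_0,Lr)|_{t_0}\lesssim e^{C_0L^2}$. With the correct bound, $|E|_s$ simply cancels. Using the single point $y'$ above, the conclusion reads
\[
|B_{g_{t_0}}(x_0,Lr)|_{t_0}\cdot c\,e^{-C_0L^2}r^{-3}e^{-\mathcal N_{x_0}(r^2)}\le\int_{\mathcal M_{t_0}}K(\cdot;y')\,\d V_{g_{t_0}}\le e^{-\scal_{\min}r^2}.
\]
Three smaller points. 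First, the lower bound in step (2) should carry $e^{-\mathcal N_{x_0}(r^2)}$, not $e^{\mathcal N}$. Second, set $\Lambda:=\log\bigl(Ce^{-\mathcal N_x(r^2)}r^{-3}/K(x;y')\bigr)$. Then $|\nabla\Lambda|\le Cr^{-1}(1+\sqrt{\Lambda})$, so it is $\sqrt{1+\Lambda}$, not $\sqrt{\Lambda}$, that is $Cr^{-1}$-Lipschitz. Third, Lemma~\ref{usefuliden} concerns integrals against $\nu_{x_0;t}$, so the forward mass bound needs its own exhaustion argument. There the boundary flux is killed by the boundedness of $K(\cdot;y')$ from Theorem~\ref{thm:heatkernel}(h) together with interior estimates on the necks.
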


We now state the following sharp upper bound for the heat kernel. This was proved originally in \cite[Theorem 7.2]{bamler2020heatkernel} for closed Ricci flows, with the coefficient $4+\ep$ replaced by $8+\ep$, and was later improved in \cite[Theorem 2.15(ii)]{fangli2025structure}. Note that the proof carries over verbatim from \cite[Theorem 11.4]{li2025heatnotes}. The key step is to establish an analogue of \cite[Lemma 11.3]{li2025heatnotes}, whose proof is based on a contradiction argument.

\begin{thm}[Heat kernel estimate III]
For any $x\in \mathcal M_t$, $y\in \mathcal M_s$ with $t>s\ge 0$, and any $\ep>0$, if $\scal\ge \scal_{\min}$ on $\mathcal M_s$, then there exists a constant $C=C(\ep,\scal_{\min}(t-s))>0$ such that
\begin{align*}
K(x;y)\le \frac{C\exp\bigl(-\mathcal N_x(t-s)\bigr)}{(t-s)^{3/2}}
\exp\left(-\frac{d_{g_s}^2(z,y)}{(4+\ep)(t-s)}\right),
\end{align*}
where $z\in \mathcal M_s$ is any $H_3$-center of $x$.
\end{thm}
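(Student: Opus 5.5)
The plan is to follow the structure of the proof of \cite[Theorem 11.4]{li2025heatnotes}, as the remark preceding the statement suggests. I would first reduce to a pointwise bound of the form
\begin{align*}
K(x;y)\le \frac{C\exp\bigl(-\mathcal N_x(t-s)\bigr)}{(t-s)^{3/2}}\exp\left(-\frac{d_{g_s}^2(z,y)}{(4+\ep)(t-s)}\right).
\end{align*}
This reduction combines three ingredients: the on-diagonal bound of Theorem~\ref{thm:ultra}; the gradient estimate \eqref{eq:gradient}, which gives a local Harnack-type control on $K(x;\cdot)$ on $g_s$-balls of radius comparable to $\sqrt{t-s}$; and an integrated Gaussian tail estimate for $\nu_{x;s}$ outside a ball around $z$. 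The tail estimate is furnished by Theorem~\ref{thm:Gaussian}, with exponent $(4+\ep)$. The mechanism is standard. If $K(x;y)$ were much larger than the claimed bound, then by \eqref{eq:gradient} it would stay comparably large on a ball $B_{g_s}(y,c\sqrt{t-s})$. By Proposition~\ref{prop:nolocal}(b) and Proposition~\ref{prop:volumenoninflat}, this ball has volume comparable to $\exp(\mathcal N_x(t-s))(t-s)^{3/2}$. Consequently $\nu_{x;s}(\mathcal M_s\setminus B_{g_s}(z,d_{g_s}(z,y)-c\sqrt{t-s}))$ would exceed the Gaussian tail bound. Since the variable $y$ enters \eqref{eq:gradient} through the backward direction, I would run this argument with the roles arranged as in \cite[Theorem 7.2]{bamler2020heatkernel}. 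That is, I would use the semigroup property (Theorem~\ref{thm:heatkernel}(e)) at the midpoint time $(t+s)/2$ to split $K(x;y)$ into a forward and a backward factor.

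This naive argument loses a constant factor in the exponent, producing $8+\ep$ instead of $4+\ep$. The sharp constant requires the analogue of \cite[Lemma 11.3]{li2025heatnotes}. That lemma is an upper bound for $K(x;y)$ in terms of $\nu_{x;s}$ of a small ball around $y$, with only an $\ep$-loss, proved by contradiction. I would establish it as follows. Suppose the lemma fails for fixed $\ep$. Then there exist singular Ricci flows $\mathcal M^i$ (after parabolic rescaling so that $t-s=1$) and points $x_i,y_i$ violating it. The Nash entropy is bounded below by \eqref{entropybd-Y1} and the scalar lower bound is uniform. Hence one can pass to a limit. The limit is taken in the $\mathbb F$-sense, or concretely as smooth Cheeger--Gromov convergence of the regular parts near $y_i$, which is legitimate because the canonical neighborhood assumption controls high-curvature regions and Theorem~\ref{thm:heatkernel2} shows that the heat kernel is negligible there. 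In the limit, the gradient estimate Lemma~\ref{lem:grad3} and the integration-by-parts identity Lemma~\ref{usefuliden} pass through, and one obtains a contradiction with the sharp semigroup/Gaussian structure, exactly as in the closed case.

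The main obstacle is this contradiction step. Unlike the closed case, the compactness here must be carried out for a sequence of singular Ricci flows, whose time-slices are incomplete. My first attempt would avoid any global compactness. I would localize. If $y_i$ has bounded curvature scale, I would pass to smooth local limits around $y_i$ on a parabolic neighbourhood. If instead the curvature scale of $y_i$ tends to zero, then Theorem~\ref{thm:heatkernel2} already gives $K(x_i;y_i)\scal^m(y_i)\le C_m$, and combining this with the Gaussian tail bound dispatches the case directly. In the bounded-curvature case, I would verify that every ingredient of \cite[Lemma 11.3]{li2025heatnotes} is one of the results established above: Lemma~\ref{lem:maximum}, Lemmas~\ref{lem:grad1}--\ref{lem:grad3}, Theorem~\ref{thm:Gaussian}, and Theorem~\ref{thm:ultra}. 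With the sharp lemma in hand, the final assembly is a Vitali/covering argument over dyadic annuli around $z$ that repeats \cite[Theorem 11.4]{li2025heatnotes} verbatim. It uses Proposition~\ref{existenceHncenter} to make the bound independent of the choice of the $H_3$-center $z$, at the cost of enlarging $C$.
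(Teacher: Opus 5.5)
There is a genuine gap, and it lies exactly where the singular setting differs from the closed one: the case $r_{\Rm}(y)\ll\sqrt{t-s}$, for instance when $y$ sits in a thin neck at time $s$.

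Every local step you use at $y$ requires bounded geometry around $y$ at scale $\sqrt{t-s}$. This includes Harnack-type control of $K(x;\cdot)$ on $B_{g_s}(y,c\sqrt{t-s})$, a volume lower bound for that ball, and turning the integrated tail of Theorem~\ref{thm:Gaussian} into a pointwise bound. Note that Proposition~\ref{prop:nolocal}(b) bounds the volume of a ball around the $H_3$-center $z$, not around $y$, and part (a) needs $\scal\le r^{-2}$ on the ball.

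Your way of dispatching this case via Theorem~\ref{thm:heatkernel2} does not work, for three reasons:
\begin{itemize}
\item Its constant $C_m=C(r_0,T,m)$ depends on $T$ and on a curvature bound on $P(x_0,r_0,-r_0^2)$ around the \emph{base point}, which is your $x_i$.
\item Its hypotheses and constants are not invariant under rescaling to $t-s=1$.
\item It contains neither the Gaussian factor in $d_{g_s}(z,y)$ nor $e^{-\mathcal N_x(t-s)}$.
\end{itemize}
So along a contradiction sequence it gives no uniform control. Conversely, when $r_{\Rm}(y)\gtrsim\sqrt{t-s}$, a direct mean value argument essentially suffices. Your compactness/localization scheme over a sequence of singular flows therefore misses the real difficulty. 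No convergence theory for such sequences is set up, and you leave the final contradiction unspecified.

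The missing idea is what the analogue of \cite[Lemma 11.3]{li2025heatnotes} actually is and how it is proved. It is not an $\ep$-loss statement. It is a \emph{non-sharp} Gaussian bound
$K(x;y_0)\le \bar Q\,e^{-\mathcal N^*_{s_0}(x)}(t-s_0)^{-3/2}\exp\bigl(-d_{g_{s_0}}^2(z,y_0)/(\bar Q(t-s_0))\bigr)$,
where $z\in\mathcal M_{s_0}$ is an $H_3$-center of $x$ and $\bar Q=\bar Q(\scal_{\min}(t-s_0))$ is independent of $y_0$. It is proved by point-picking inside the one flow, with $y_0$ fixed:
\begin{itemize}
\item If the bound fails at some $x_0$, the point-picking lemma \cite[Lemma 11.2]{li2025heatnotes} gives $x_k\in\mathcal M_{t_k}$, with $t_k=s_0+8^{-k}(t_0-s_0)$, violating it with $Q_k=2^k\bar Q$.
\item Set $d_k=d_{g_{s_0}}(z_k,y_0)$ and $r_k=\sqrt{t_k-s_0}$. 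Theorem~\ref{thm:ultra} forces $d_k/r_k\to\infty$.
\item Since $r_k\to 0$ and $y_0$ is a fixed regular point, eventually $r_{\Rm}(y_0)\gg r_k$.
\item Proposition~\ref{HncenterScal}(i) and Lemma~\ref{lem:mono1} give $d_{g_t}(z_k^t,y_0(t))\ge d_k-C_1r_k$ for $t\in[s_0,s_0+r_k^2]$.
\item Theorem~\ref{thm:Gaussian} then gives $\nu_{x_k;t}(B_{g_t}(y_0(t),r_k))\le C\exp(-d_k^2/(6r_k^2))$.
\item The standard mean value inequality at $y_0$ at scale $r_k$ bounds $K(x_k;y_0)$ by $Cr_k^{-3}\exp(-d_k^2/(6r_k^2))$, contradicting the choice of $x_k$.
\end{itemize}
High curvature at $y$ is thus handled by shrinking the scale, not by a uniform local estimate. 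The sharp exponent $4+\ep$ then comes from running the argument of \cite[Theorem 11.4]{li2025heatnotes} with this bound, which is valid for every $y$.
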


\begin{proof}
We sketch the proof following the presentation in \cite[Theorem 11.4]{li2025heatnotes}.

Fix $y_0\in \mathcal M_{s_0}$. We first claim the following analogue of \cite[Lemma 11.3]{li2025heatnotes}: for any $x\in \mathcal M_t$ with $t\ge s_0$,
\begin{align}\label{eq:heatupper001}
K(x;y_0)\le \bar Q \frac{\exp\bigl(-\mathcal N_{s_0}^*(x)\bigr)}{(t-s_0)^{3/2}}
\exp\left(-\frac{d_{g_{s_0}}^2(z,y_0)}{\bar Q(t-s_0)}\right),
\end{align}
where $z\in \mathcal M_{s_0}$ is any $H_3$-center of $x$, and $\bar Q$ is a constant depending on $\scal_{\min}(t-s_0)$.

Suppose that \eqref{eq:heatupper001} fails for some $x_0\in \mathcal M_{t_0}$. Then, setting $Q_k=2^k\bar Q$ and
\[
t_k:=8^{-k}(t_0-s_0)+s_0,\qquad k\in \mathbb N,
\]
one may apply the point-picking lemma from \cite[Lemma 11.2]{li2025heatnotes} to obtain sequences $x_k\in \mathcal M_{t_k}$ and $z_k\in \mathcal M_{s_0}$ satisfying
\begin{align}\label{eq:heatupper002}
K(x_k;y_0)\ge Q_k \frac{\exp\bigl(-\mathcal N_{s_0}^*(x_k)\bigr)}{(t_k-s_0)^{3/2}}
\exp\left(-\frac{d_{g_{s_0}}^2(z_k,y_0)}{Q_k(t_k-s_0)}\right),
\end{align}
where $z_k$ is an $H_3$-center of $x_k$. Set
\[
d_k:=d_{g_{s_0}}(z_k,y_0),\qquad r_k:=\sqrt{t_k-s_0}.
\]
Then Theorem~\ref{thm:ultra}, together with \eqref{eq:heatupper002}, implies that
\begin{align}\label{eq:heatupper003}
\frac{d_k}{r_k}\ge \sqrt{Q_k(\log Q_k-C_0)}\xrightarrow[k\to\infty]{}+\infty
\end{align}
for some constant $C_0>0$.

For $k$ sufficiently large, we may assume that $r_{\Rm}(y_0)\gg r_k$. For each $k$ and each $t\in [s_0,s_0+r_k^2]$, let $z_k^t\in \mathcal M_t$ be an $H_3$-center of $x_k$. Also, let $y_0(t)\in \mathcal M_t$ denote the point obtained by flowing $y_0$ along its worldline.

\textbf{Claim.} There exists a constant $C_1>0$ such that
\[
d_{g_t}(z_k^t,y_0(t))\ge d_k-C_1r_k.
\]

By Proposition~\ref{HncenterScal}(i), for any $t\in [s_0,s_0+r_k^2]$, the point $y_0$ is an $H$-center of $y_0(t)$ for some constant $H$ independent of $k$ and $t$. By Lemma~\ref{lem:mono1}, this implies
\begin{align*}
d_{g_t}(z_k^t,y_0(t))
&\ge d_{W_1}^{g_t}(\nu_{z_k^t;t},\delta_{y_0(t)})-\sqrt{H_3(t_k-t)} \\
&\ge d_{W_1}^{g_{s_0}}(\nu_{z_k^t;s_0},\nu_{y_0(t);s_0})-\sqrt{H_3(t_k-t)} \\
&\ge d_k-\sqrt{H_3(t_k-s_0)}-\sqrt{H(t-s_0)}-\sqrt{H_3(t_k-t)} \\
&\ge d_k-C_1r_k,
\end{align*}
which proves the claim.

Now define
\[
U_k(t):=B_{g_t}(y_0(t),r_k),\qquad t\in [s_0,s_0+r_k^2].
\]
Then Theorem~\ref{thm:Gaussian} yields
\begin{align}\label{eq:heatupper004}
\int_{U_k(t)} K(x_k;\cdot)\,\d V_{g_t}
&\le C_2 \exp\left(-\frac{(d_{g_t}(z_k^t,y_0(t))-r_k)^2}{5r_k^2}\right) \notag\\
&\le C_3 \exp\left(-\frac{d_k^2}{6r_k^2}\right),
\end{align}
where in the last inequality we used \eqref{eq:heatupper003} together with the claim above.

Since $\square^*K(x_k;\cdot)=0$, it follows from \eqref{eq:heatupper004}, the fact that $r_{\Rm}(y_0)\gg r_k$, and the standard mean value inequality that
\begin{align*}
K(x_k;y_0)\le C_4 \exp\left(-\frac{d_k^2}{6r_k^2}\right).
\end{align*}
This contradicts \eqref{eq:heatupper002}, in view of \eqref{eq:heatupper003}.

Once \eqref{eq:heatupper001} is established, one can follow exactly the same argument as in \cite[Theorem 11.4]{li2025heatnotes} to complete the proof.
\end{proof}

\section{Completion of the singular Ricci flow}

Throughout this section, we consider a singular Ricci flow $\mathcal M$ with a normalized initial condition~$(M,g_0)$. Recall that for any constant $T\ge 1$, there exists a constant $Y=Y(T)>0$ such that $\mathcal M_{[0,T]}$ has entropy bounded below by $-Y$; see Definition~\ref{def:entropybound} and \eqref{entropybd-Y1}.

\subsection{Spacetime completion and conjugate heat kernel measures}

Following \cite[Definition 3.2]{fangli2025structure} and the definition in \eqref{defndstar}, we define a spacetime function $d^*$ on $\MM_{[1,\infty)}$.

\begin{defn}\label{defndstardistance}
For any $x\in \mathcal M_t$ and any $y\in \mathcal M_s$ with $t \ge s \ge 1$, we define
\begin{align*}
d^*(x,y):=\inf_{1\le \tau\le s}
\max\left\{\sqrt{t-\tau},\,
 d_{W_1}^{g_{\tau}}(\nu_{x;\tau},\nu_{y;\tau})\right\}.
\end{align*}
\end{defn}

The $2$-H\"older estimate
\begin{align*}
|\mathfrak t(x)-\mathfrak t(y)|\le d^*(x,y)^2
\qquad \text{for all } x,y\in \MM_{[1,\infty)}
\end{align*}
follows directly from Definition~\ref{defndstardistance}, as in \cite[Proposition 3.9(2)]{fangli2025structure}. Thus, $(\MM_{[1,\infty)},d^*,\mathfrak t)$ is a parabolic metric space; see \cite[Definition 3.19]{fangli2025structure}.

We first prove the following diameter bound.

\begin{lem}
For any $T>1$ and $x,y\in \mathcal M_{[1,T]}$, we have
\begin{align*}
d^*(x,y)\le C,
\end{align*}
where $C$ is a constant depending on $T$ and on the diameter of $(M,g_0)$.
\end{lem}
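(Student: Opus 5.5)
The plan is to take $\tau=1$ in Definition~\ref{defndstardistance} and bound the two terms in the maximum separately. Let $x\in\mathcal M_t$ and $y\in\mathcal M_s$ with $T\ge t\ge s\ge 1$. Choosing $\tau=1$ gives
\begin{align*}
d^*(x,y)\le \max\Bigl\{\sqrt{t-1},\, d_{W_1}^{g_1}\bigl(\nu_{x;1},\nu_{y;1}\bigr)\Bigr\}\le \max\Bigl\{\sqrt{T-1},\, d_{W_1}^{g_1}\bigl(\nu_{x;1},\nu_{y;1}\bigr)\Bigr\}.
\end{align*}
So it suffices to bound $d_{W_1}^{g_1}(\nu_{x;1},\nu_{y;1})$ by a constant depending only on the diameter of $(M,g_0)$.

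The key point is that the singular Ricci flow is still a smooth, compact, connected flow at time $1$. The normalization $|\Rm|\le 10^{-3}$ on $(M,g_0)$, combined with the standard doubling-time estimate (as used at the start of Section~\ref{sec:first}, cf.\ \cite[Equation (10.14)]{chowetal2010part1-4}), shows that the smooth Ricci flow from $g_0$ exists on $[0,2]$ with $|\Rm|\le C$ there, for a universal constant $C$.

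The singular Ricci flow agrees with this smooth flow before the first singular time. This follows from the construction via surgeries: no surgery occurs while the curvature stays below $r(t)^{-2}$. Alternatively, it follows from the uniqueness in \cite{bamlerkleiner2022}. Hence $\mathcal M_{[0,2]}\cong M\times[0,2]$ and $\mathcal M_1=(M,g_1)$.

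In particular, $\mathcal M_1$ is a single compact connected component, assuming $M$ is connected. If $M$ is not connected, one argues componentwise, or takes the maximum over the finitely many components. Consequently, the branches through $x$ and $y$ both have time-$1$ slice equal to $\mathcal M_1$, so the $W_1$-distance at time $1$ is finite. Since both measures are probability measures on $(M,d_{g_1})$, it satisfies
\begin{align*}
d_{W_1}^{g_1}\bigl(\nu_{x;1},\nu_{y;1}\bigr)\le \diam_{g_1}(M).
\end{align*}

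Finally, since $|\Ric|\le C$ on $M\times[0,1]$, integrating $\partial_t g=-2\Ric$ gives $e^{-2C}g_0\le g_1\le e^{2C}g_0$. Hence $\diam_{g_1}(M)\le e^{C}\diam_{g_0}(M)$. Combining these estimates yields $d^*(x,y)\le \max\{\sqrt{T-1},\,e^{C}\diam_{g_0}(M)\}$, which is the claimed bound.

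The main obstacle is the identification of $\mathcal M_1$ with the smooth compact slice $(M,g_1)$: one must ensure that no singularity or surgery region appears before time $2$. I would handle this through the universal curvature bound on $[0,2]$ coming from the normalization, together with the fact that the canonical neighborhood and surgery thresholds $r(t)^{-2}$ are far above this bound.
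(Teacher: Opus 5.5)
Your proposal is correct and matches the paper's proof: take $\tau=1$ in Definition~\ref{defndstardistance}, bound the $W_1$-term by $\diam_{g_1}(M)$, and compare $g_1$ with $g_0$ using the curvature bound on $[0,1]$ that comes from the normalization. The paper takes the identification $\mathcal M_1=(M,g_1)$ for granted, which you justify. Your remark about disconnected $M$ is unnecessary, and it would not give a finite bound for points in different components, since then $d_{W_1}^{g_\tau}=+\infty$ for every $\tau$; $M$ is implicitly connected here.
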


\begin{proof}
By Definition~\ref{defndstardistance}, for any $x,y\in \mathcal M_{[1,T]}$,
\begin{align*}
d^*(x,y)
&\le \max\left\{\sqrt{T-1},\,d_{W_1}^{g_1}(\nu_{x;1},\nu_{y;1})\right\} \\
&\le \max\left\{\sqrt{T-1},\,\mathrm{diam}_{g_1}(M)\right\} \\
&\le \max\left\{\sqrt{T-1},\,2\mathrm{diam}_{g_0}(M)\right\},
\end{align*}
where, in the last inequality, we used the fact that $\mathrm{diam}_{g_1}(M) \le e^{2\sup_{M \times[0,1]}|\Ric|} \mathrm{diam}_{g_0}(M) \le 2 \mathrm{diam}_{g_0}(M)$ by the normalized condition.
\end{proof}

We now list some basic properties of $d^*$, all of which can be proved verbatim as in the closed Ricci flow case; see \cite[Lemma 3.4, Proposition 3.9, Lemma 3.13, Proposition 3.18]{fangli2025structure}.

\begin{prop}\label{prop:basicproperty}
The function $d^*$ on $\MM_{[1,\infty)}$ satisfies the following properties.
\begin{enumerate}[label=\textnormal{(\arabic*)}]
\item $d^*$ is a distance function on $\MM_{[1,\infty)}$.

\item For any $x,y\in \mathcal M_t$ with $t \ge 1$, we have
\begin{align*}
d^*(x,y)\le d_{g_t}(x,y).
\end{align*}

\item For any $x\in \mathcal M_t$, suppose that $z\in \mathcal M_s$ is an $H$-center of $x$, where $t \ge s \ge 1$. Then
\begin{align*}
d^*(x,z)\le \max\{1,\sqrt H\}\sqrt{t-s}.
\end{align*}

\item For any $T>1$ and $x,y\in \mathcal M_{[1,T]}$, we have
\begin{align*}
|r_{\Rm}(x)-r_{\Rm}(y)|\le C(T)\,d^*(x,y).
\end{align*}
\end{enumerate}
\end{prop}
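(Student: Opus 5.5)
The four properties are checked one at a time, following \cite[Lemma 3.4, Proposition 3.9, Lemma 3.13, Proposition 3.18]{fangli2025structure}. The ingredients are Lemma~\ref{lem:mono1} (monotonicity of $d_{W_1}$), Corollary~\ref{cor:concern} and Proposition~\ref{existenceHncenter} (variance and $H$-centers), the reproduction formula of Theorem~\ref{thm:heatkernel}(e), and Theorem~\ref{thm:heatkernel}(b) together with Lemma~\ref{lem:delta}. The one new issue throughout is that slices may be disconnected. In that case $d_{W_1}^{g_\tau}$ may equal $+\infty$, and all arguments should be run inside the branch determined by the relevant points.

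\textbf{(2).} Take $\tau=s=t$ in Definition~\ref{defndstardistance}. The first term of the maximum vanishes, and the second equals $d_{W_1}^{g_t}(\nu_{x;t},\nu_{y;t})$, understood as a limit $\tau\nearrow t$. This is bounded by $d_{g_t}(x,y)$ by Lemma~\ref{lem:mono1}.

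\textbf{(3).} Take $\tau=s$, where $s<t$ is the time of the $H$-center $z$. Since $\nu_{z;s}=\delta_z$, and $W_1\le W_2$ with $W_2^2\le \Var$,
\[
d_{W_1}^{g_s}(\nu_{x;s},\delta_z)\le \Var_s(\delta_z,\nu_{x;s})^{1/2}\le \sqrt{H(t-s)} .
\]
Hence $d^*(x,z)\le \max\{1,\sqrt H\}\sqrt{t-s}$.

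\textbf{(1).} Symmetry and nonnegativity are immediate from the definition. The triangle inequality follows as in the closed case. Given $x,y,w$ at times $t_x\ge t_y\ge t_w$ (the other orderings are handled similarly), choose near-optimal $\tau_1$ for the pair $(x,y)$ and $\tau_2$ for $(y,w)$. Then use $\tau=\min(\tau_1,\tau_2)$, the $W_1$ triangle inequality at time $\tau$, and the fact from Lemma~\ref{lem:mono1} that $d_{W_1}^{g_\tau}$ is nondecreasing in $\tau$.

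For this to work across slices, I need the following statement: if $d_{W_1}^{g_\tau}(\nu_x,\nu_y)<\infty$, then $\nu_{x;\tau'}$ and $\nu_{y;\tau'}$ lie in a common component for all $\tau'\le\tau$. This holds because branches are determined by their final slice, as in the definition of branch.

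Positivity is the step I expect to be the main obstacle. Suppose $d^*(x,y)=0$. The $2$-H\"older bound forces $\mathfrak t(x)=\mathfrak t(y)$. Then there are $\tau_i\nearrow t$ with $d_{W_1}^{g_{\tau_i}}(\nu_{x;\tau_i},\nu_{y;\tau_i})\to 0$, and I would argue as follows.
\begin{itemize}
\item By Corollary~\ref{cor:concern} and Proposition~\ref{existenceHncenter}, $\nu_{x;\tau_i}$ concentrates at scale $\sqrt{t-\tau_i}$ around an $H_3$-center.
\item By Proposition~\ref{HncenterScal}(i), since $x$ and $y$ are regular points of the smooth spacetime, that center lies within $C\sqrt{t-\tau_i}$ of the worldline point $x(\tau_i)$; the same holds for $y$.
\item It follows that $d_{g_{\tau_i}}(x(\tau_i),y(\tau_i))\to0$, and smooth convergence along worldlines gives $x=y$.
\end{itemize}
Alternatively, one can test against compactly supported $h$ via Theorem~\ref{thm:heatkernel}(b) and Lipschitz test functions, using Kantorovich duality.

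\textbf{(4).} I follow \cite[Proposition 3.18]{fangli2025structure}. Assume $r_{\Rm}(x)\ge r_{\Rm}(y)+Ad^*(x,y)$ for a large $A$, and set $r=r_{\Rm}(x)$. If $d^*(x,y)$ is small relative to $r$, then by the definition of $d^*$ there is a time $\tau\ge\mathfrak t(x)-d^{*2}$ at which the two conjugate heat kernel measures are $W_1$-close. Inside the unscathed parabolic neighborhood of $x$, where the curvature is bounded, Proposition~\ref{HncenterScal} and the Gaussian concentration of Theorem~\ref{thm:Gaussian} place the worldline of $y$ within $Cd^*$ of that of $x$. The definition of curvature radius then gives $r_{\Rm}(y)\ge r-C(T)d^*$, a contradiction.

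If instead $d^*(x,y)\gtrsim r$, the bound $r_{\Rm}\le C(T)$ on $\mathcal M_{[1,T]}$ (Definition~\ref{def:curvatureradius} together with the time range) gives the inequality trivially. This step is the delicate one for the singular flow: the product domain in Definition~\ref{def:curvatureradius} must be checked to be unscathed. I would get this from the fact that it lies in the region where the curvature is $\le C r^{-2}$, and $\scal$ is proper.
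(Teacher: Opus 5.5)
Your arguments for (1)--(3) are fine and match the closed-case proofs the paper cites. Parts (2) and (3) are direct choices of $\tau$. The triangle inequality works with $\tau=\min\{\tau_1,\tau_2\}$ and Lemma~\ref{lem:mono1}; one checks that $\max\{\mathfrak t(x),\mathfrak t(w)\}-\tau<a^2+b^2$. Your positivity argument is valid because $x$ and $y$ are both regular points, so Proposition~\ref{HncenterScal}(i) applies to each worldline separately.

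The gap is in (4), in the claim that Proposition~\ref{HncenterScal} and Gaussian concentration place the worldline of $y$ within $Cd^*$ of that of $x$.

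\begin{itemize}
\item Proposition~\ref{HncenterScal} needs a scalar curvature bound along the backward worldline of $y$ on $[\tau,\mathfrak t(y)]$. That bound is essentially what you are trying to prove.
\item In a singular Ricci flow you do not even know a priori that this backward worldline reaches time $\tau$.
\item Theorem~\ref{thm:Gaussian} and the $W_1$-closeness only say that the measure $\nu_{y;\tau}$ concentrates near $x(\tau)$. They say nothing about where $y$ itself lies.
\end{itemize}

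The missing step is a scale-invariant a priori bound: $r_{\Rm}(y)\ge c\,r$ whenever $d^*(x,y)\le\delta r$, with $c,\delta$ depending only on $T$. You can get it from the tools in Section~\ref{sec:4}:
\begin{itemize}
\item The bound $|\Rm|\le r^{-2}$ on the product domain of $x$ gives $\mathcal N_x(2\theta r^2)\ge-\epsilon/2$ for a small $\theta$.
\item Theorem~\ref{thm:nash}(ii), with $t^*=\tau$ and $s=\tau-\theta r^2$, transfers this to $\mathcal N_y(\mathfrak t(y)-s)\ge-\epsilon$ once $d^*\le\delta r$.
\item Theorem~\ref{thm:epregularity} then yields $r_{\Rm}(y)\ge\epsilon\sqrt{\theta}\,r$.
\end{itemize}
Alternatively, use \cite[Proposition 9.16(b)]{bamler2023compactness}, as the paper does in the proof of Lemma~\ref{lem:curve}.

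Only after this step can you apply Proposition~\ref{HncenterScal}(i) to both worldlines. That gives $d_{g_\tau}(x(\tau),y(\tau))\le Cd^*$, so $y$ lies in the unscathed product domain of $x$, and $r_{\Rm}(y)\ge r-C(T)d^*$ follows. Scale invariance is essential here. A localization based on non-scale-invariant bounds such as Theorem~\ref{thm:heatkernel}(c),(h) only works for $d^*\le\delta(r)$, so it does not give a uniform Lipschitz constant.

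Two smaller points:
\begin{itemize}
\item Once $y$ is inside the product domain of $x$, the smaller product domain around $y$ inherits unscathedness from that of $x$. Properness of $\scal$ is not needed.
\item The case $d^*\gtrsim r$ is trivial and needs no upper bound on $r_{\Rm}$.
\end{itemize}
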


As in \cite[Definition 3.5]{fangli2025structure}, we denote by $B^*(\cdot,r)$ a metric ball in $\MM_{[1,\infty)}$. The following volume estimates can be proved in the same way as in \cite[Propositions 3.14 and 3.15]{fangli2025structure}, using Propositions~\ref{prop:nolocal} and~\ref{prop:volumenoninflat}.

\begin{prop}\label{prop:volume}
Given $T>1$, for $x\in \mathcal M_{[1,T]}$ and $r>0$ with $\mathfrak t(x)-r^2\ge 1$, the following conclusions hold.
\begin{enumerate}[label=\textnormal{(\roman*)}]
\item For any $t \ge 1$,
\begin{equation*}
|B^*(x,r)\cap \mathcal M_t|_t \le Cr^3,
\end{equation*}
where $C>0$ is a universal constant, and $|\cdot|_t$ denotes the volume with respect to $\d V_{g_t}$.

\item We have
\begin{equation*}
0<c(T)r^5\le |B^*(x,r)|\le Cr^5,
\end{equation*}
where $C>0$ is a universal constant, and $|\cdot|$ denotes the spacetime volume with respect to $\d V_{g_t}\,\d t$.

\item For any $L\ge \sqrt{T}$, we have
\begin{align*}
|B^*(x,L) \cap \mathcal M_{[1,T]}|\le C(T)e^{C_1L^2},
\end{align*}
where $C_1$ is a universal constant.
\end{enumerate}
\end{prop}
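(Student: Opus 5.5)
The three bounds come from the closed-flow arguments of \cite[Propositions 3.14 and 3.15]{fangli2025structure}, with the heat kernel inputs replaced by Propositions~\ref{prop:nolocal}, \ref{prop:volumenoninflat}, the Gaussian concentration Theorem~\ref{thm:Gaussian}, and Corollary~\ref{cor:concern}. Everything is confined to the branch associated with $x$, since $d_{W_1}$ is infinite across branches.

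For (i), I would use Definition~\ref{defndstardistance}. If $y\in B^*(x,r)\cap\mathcal M_t$, then $|t-\mathfrak t(x)|\le r^2$. Taking $\tau=\min\{t,\mathfrak t(x)\}-r^2\ge 1$ and using Lemma~\ref{lem:mono1}, we get
\[
d_{W_1}^{g_s}(\nu_{x;s},\nu_{y;s})\le r \quad\text{for some } s\in[\mathfrak t(x)-2r^2,\mathfrak t(x)].
\]
Let $z\in\mathcal M_s$ be an $H_3$-center of $x$. Combining the displayed bound with Corollary~\ref{cor:concern} shows that $\nu_{y;s}$ assigns mass at least $1/2$ to $B_{g_s}(z,Cr)$.

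Integrating over $y$ and using $\int K(y;\cdot)\,\d V_{g_t}(y)$ (the adjoint heat kernel), we obtain
\[
|B^*(x,r)\cap\mathcal M_t|_t\le 2\int_{B_{g_s}(z,Cr)}\Bigl(\int_{\mathcal M_t}K(y;w)\,\d V_{g_t}(y)\Bigr)\d V_{g_s}(w).
\]
The inner integral is the solution of the forward heat equation with initial data $1$, evolved backwards in volume. It is bounded by $e^{C}$ because $\scal$ is bounded below on $\mathcal M_{[0,T]}$ (Hamilton--Ivey, normalized data) and the time span is at most $3r^2\le C$. That reduces (i) to an upper volume bound for $B_{g_s}(z,Cr)$. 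This bound follows from Proposition~\ref{prop:volumenoninflat} applied at a point near $z$, together with $\mathcal N\le 0$ from Proposition~\ref{prop:nash1}.

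The delicate point in (i) is that the constant must be universal while $\scal_{\min}r^2$ enters. I would handle this by noting that $\scal_{\min}\ge -C$ and the relevant scales satisfy $r^2\le T$. If $C(T)$ dependence is unavoidable, it is harmless for the later applications.

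For the upper bound in (ii), I would integrate (i) over $t\in[\mathfrak t(x)-r^2,\mathfrak t(x)+r^2]$. For the lower bound, I would show that $B^*(x,r)$ contains the parabolic set
\[
\bigcup_{t\in[\mathfrak t(x)-r^2/4,\,\mathfrak t(x)-r^2/8]} B_{g_t}(z_t,cr),
\]
where $z_t$ is an $H_3$-center of $x$. The containment follows from Proposition~\ref{prop:basicproperty}(2),(3) and the triangle inequality. Each slice has volume at least $c\,e^{-Y(T)}r^3$ by Proposition~\ref{prop:nolocal}(b) and the entropy bound \eqref{entropybd-Y1}. Integrating in time gives $c(T)r^5$.

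For (iii), I would use the same argument as in (i) with $r$ replaced by $L$, taking $\tau=1$ when needed. The volume of $B_{g_s}(z,CL)$ is controlled by Proposition~\ref{prop:volumenoninflat} with the $\exp(C_0L^2)$ factor, applied at scale $\sqrt{s-1}$ or comparable. Integrating over $[1,T]$ then yields $C(T)e^{C_1L^2}$.

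I expect the main obstacle to be the Fubini/adjoint step in (i) on possibly noncompact, incomplete slices. I would justify it with the exhaustion by $\ep$-neck boundaries together with the decay in Theorem~\ref{thm:heatkernel2}, exactly as in Lemma~\ref{usefuliden}.
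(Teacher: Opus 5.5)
Your route is the one the paper intends. The paper simply defers to the closed-flow proofs of \cite[Propositions 3.14 and 3.15]{fangli2025structure}, with Propositions~\ref{prop:nolocal} and~\ref{prop:volumenoninflat} as the new inputs. Those proofs are exactly this Bamler-type argument: transport the slice back in time using $W_1$-closeness to an $H_3$-center, control the transported mass by $\scal_{\min}$, and apply volume non-inflating; the lower bound comes from no-local-collapsing at $H_3$-centers. Several steps need repair as written, however.

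(1) The time $\tau=\min\{t,\mathfrak t(x)\}-r^2$ can be smaller than $1$, and is even negative if $r>1$. Use $s=\max\{t,\mathfrak t(x)\}-r^2\in[1,\min\{t,\mathfrak t(x)\}]$ instead. Any $\tau'$ realizing a value $<r$ in Definition~\ref{defndstardistance} satisfies $\tau'>s$, so Lemma~\ref{lem:mono1} gives $d_{W_1}^{g_s}(\nu_{x;s},\nu_{y;s})<r$. This $s$ is independent of $y$, and $\mathfrak t(x)-s,\ t-s\le r^2$.

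(2) The containment in your lower bound for (ii) does not follow from Proposition~\ref{prop:basicproperty}(3). That result only gives $d^*(x,z_t)\le\sqrt{H_3(\mathfrak t(x)-t)}\approx 1.86\,r$ when $\mathfrak t(x)-t=r^2/4$. Instead take $\rho^2:=\mathfrak t(x)-t\in[\theta r^2/2,\theta r^2]$ with $\theta=(16H_3)^{-1}$, and use the balls $B_{g_t}(z_t,\sqrt{2H_3}\,\rho)$ of Proposition~\ref{prop:nolocal}(b). On these balls $d^*(x,\cdot)<r/4+r/(2\sqrt 2)<r$.

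(3) The exchange of integrals is just Tonelli. What actually needs the neck-exhaustion argument is the mass bound $\int_{\mathcal M_t}K(y;w)\,\d V_{g_t}(y)\le e^{-\scal_{\min}(t-s)}$. There the relevant control is on the forward kernel $y\mapsto K(y;w)$, namely Theorem~\ref{thm:heatkernel}(h) plus interior estimates on the necks. Theorem~\ref{thm:heatkernel2} does not apply, since it concerns $K(x_0;\cdot)$ in its second variable. Alternatively, pass to the limit from the surgery flows, where forward mass only decreases at surgeries, and use Fatou.

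(4) In (iii), apply Proposition~\ref{prop:volumenoninflat} at time $1$ with scale $1$ (the flow starts at $0$), not at scale $\sqrt{s-1}$.

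Your concern about universality is justified, and your $C(T)$ fallback is the right conclusion. Since $\scal\ge-3/(2\mathfrak t)$, your argument gives a universal constant whenever $r^2$ is at most a fixed fraction of $\mathfrak t(x)$, for instance $r\le 1$. In that range $(t/s)^{3/2}$, $|\scal_{\min}|r^2$, and the non-inflating constant at a scale comparable to $r$ are all universally bounded.

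For large $r$, however, dependence on $T$ cannot be avoided. Take the expanding flow $(1+4\kappa t)g_0$ on a hyperbolic manifold of curvature $-\kappa$ with very large injectivity radius, and set $\mathfrak t(x)=1+r^2$. As $\tau\searrow 1$, the conjugate heat kernel measures have first moments $\int d_{g_\tau}(x,\cdot)\,\d\nu_{x;\tau}=O(\log r)$. Hence $B^*(x,r)\cap\mathcal M_{\mathfrak t(x)}$ contains all $y$ with $d_{g_1}(x,y)<r-O(\log r)$, a set whose volume grows exponentially in $r$. So your $C(T)$ version is the correct form of (i) and of the upper bound in (ii).
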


Next, we define
\begin{align*}
(Z,d_Z,\mathfrak t)
\end{align*}
to be the metric completion of $\MM_{[1,\infty)}$ with respect to $d^*$. For simplicity, we set $\mathcal R:=\MM_{[1,\infty)}$, which inherits from $\MM_{[1,\infty)}$ the structure of a Ricci flow spacetime $(\mathcal R,\mathfrak t,\partial_{\mathfrak t},g)$, and define $\mathcal S:=Z\setminus \mathcal R$. Thus we obtain the decomposition
\begin{align}\label{eq:regulardecom}
Z=\mathcal R\sqcup \mathcal S.
\end{align}
As above, we denote metric balls in $Z$ with respect to $d_Z$ by $B^*(\cdot,\cdot)$.

As in \cite[Section 5]{fangli2025structure}, one can extend the heat kernel and the conjugate heat kernel measure to points in $\mathcal S$.

\begin{defn}[Conjugate heat kernel measures on $Z$]
For any $x\in \mathcal S$, we define the \textbf{conjugate heat kernel} $K(x;\cdot)$ by
\begin{align}\label{eq:converg}
K(x;\cdot):=\lim_{i\to\infty} K(x_i;\cdot),
\end{align}
where $x_i\in \mathcal R$ converges to $x$ with respect to $d_Z$. The \textbf{conjugate heat kernel measure} $\nu_{x;s}$ based at $x$ is then defined to be the Borel measure on $(\mathcal R_s,g_s)$ given by
\begin{align*}
\d \nu_{x;s}:=K(x;\cdot)\,\d V_{g_s}
\end{align*}
for any $s<\mathfrak t(x)$, and we set $\nu_{x;\mathfrak t(x)}=\delta_x$. Moreover, we define the \textbf{branch} associated with $x$ to be a subset $\mathcal R'\subset \mathcal R_{[1,\mathfrak t(x))}$ such that for each $t\in [1,\mathfrak t(x))$, the slice $\mathcal R_t'$ is a connected component of $\mathcal R_t$ containing the support of $\nu_{x;t}$.
\end{defn}

The existence of the limit in \eqref{eq:converg}, as well as its independence of the approximating sequence $\{x_i\}$, follows from the next lemma. The proof is the same as that of \cite[Corollary 5.18]{fangli2025structure}, based on \eqref{eq:gradient}.

\begin{lem}\label{lem:continuheat}
For any $T>1$ and $x,y\in \mathcal R_{[1,T]}$ with $r=d_Z(x,y)$ and $\max\{\mathfrak t(x),\mathfrak t(y)\}-r^2>1$, and for any $1\le s<s'<\max\{\mathfrak t(x),\mathfrak t(y)\}-r^2$ and any $w\in \mathcal R_s$, we have
\begin{align*}
|K(x;w)-K(y;w)|\le C(T,s'-s)r.
\end{align*}
\end{lem}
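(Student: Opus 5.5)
We follow the closed-flow argument behind \cite[Corollary 5.18]{fangli2025structure}: write the difference as an integral of the spatial gradient of the heat kernel, estimated by \eqref{eq:gradient}, against the difference of two conjugate heat kernel measures at an intermediate time, controlled by $W_1$. Assume $\mathfrak t(y)\le\mathfrak t(x)$ and set $t^*:=\mathfrak t(y)-r^2$ (more precisely, a time at which the infimum in Definition~\ref{defndstardistance} is nearly attained). By that definition there is $\tau\in[t^*,\mathfrak t(y)]$, up to an arbitrarily small error, with $d_{W_1}^{g_\tau}(\nu_{x;\tau},\nu_{y;\tau})\le r$. By Lemma~\ref{lem:mono1} the same bound holds at every time $\tau'\in[1,\tau]$, in particular at $\tau'=s'$, since $s'<t^*\le \tau$. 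Finiteness of this distance also forces $x,y$ to share a branch at time $s'$.

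Next apply the reproduction formula, Theorem~\ref{thm:heatkernel}(e):
\[
K(x;w)-K(y;w)=\int_{\mathcal M_{s'}}K(z;w)\,\d(\nu_{x;s'}-\nu_{y;s'})(z).
\]
By Kantorovich duality on $(\overline{\mathcal M'_{s'}},d_{g_{s'}})$, this is at most $r\cdot\sup_{\mathcal M'_{s'}}|\nabla_z K(z;w)|$, provided $z\mapsto K(z;w)$ is Lipschitz on $\mathcal M'_{s'}$. The points added in the completion are only the finitely many horn ends, so a Lipschitz function on $\mathcal M'_{s'}$ extends to the completion and duality applies.

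It remains to bound $|\nabla_zK|(z;w)$ uniformly for $z\in\mathcal M_{s'}$, $w\in\mathcal M_s$, with constants depending only on $T$ and $s'-s$. Use Theorem~\ref{thm:ultra} with $t-s$ replaced by $s'-s$, together with the lower bound $\scal\ge\scal_{\min}(T)$ (Hamilton--Ivey) and the entropy bound \eqref{entropybd-Y1}. From \eqref{eq:gradient},
\[
|\nabla K|\le \frac{C}{\sqrt{s'-s}}\,K\sqrt{\log(A/K)},\qquad A:=\frac{Ce^{Y}}{(s'-s)^{3/2}}.
\]
Since $K\le A$ by the same theorem, and $\sup_{0<K\le A}K\sqrt{\log(A/K)}\le CA$, we get $|\nabla K|\le C(T,s'-s)$. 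This gives $|K(x;w)-K(y;w)|\le C(T,s'-s)\,(r+\text{error})$; letting the error tend to $0$ proves the lemma.

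The main obstacle is justifying duality on the incomplete, noncompact slice. One option is to truncate with the exhausting domains $\Omega_i$ whose boundaries are neck spheres, as in Lemma~\ref{usefuliden}. The other is to use that $K(\cdot;w)$ is bounded and Lipschitz in $z$ and that the measures $\nu_{x;s'}$, $\nu_{y;s'}$ give no mass to the completion points. A second subtlety is the near-attainment of the infimum defining $d^*$, which we handle with the arbitrary $\epsilon$-slack above together with Lemma~\ref{lem:mono1}.
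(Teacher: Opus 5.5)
Your proposal is correct and follows essentially the argument the paper defers to (\cite[Corollary 5.18]{fangli2025structure}). The ingredients are the reproduction formula at time $s'$, a uniform bound on $|\nabla_z K(z;w)|$ over $\mathcal M_{s'}$ from \eqref{eq:gradient} combined with the entropy and scalar curvature lower bounds, and $W_1$-duality against $d_{W_1}^{g_{s'}}(\nu_{x;s'},\nu_{y;s'})\le r$, which comes from Definition~\ref{defndstardistance} and Lemma~\ref{lem:mono1}.

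There is one small slip. When $\mathfrak t(y)\le\mathfrak t(x)$, the relevant threshold is $\mathfrak t(x)-r^2$, not $\mathfrak t(y)-r^2$, because the hypothesis does not give $s'<\mathfrak t(y)-r^2$. However, a near-minimizing $\tau$ satisfies $\tau\ge\mathfrak t(x)-(r+\epsilon)^2>s'$ for small $\epsilon$. This is all you need, and it also yields $s'<\mathfrak t(y)$, which the reproduction formula requires.
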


By Lemma~\ref{lem:continuheat}, each $\nu_{x;s}$ is a probability measure on $\mathcal R_s$ and satisfies the reproduction formula. Moreover, by passing to the limit, all heat kernel estimates from the previous section remain valid for $K(z;\cdot)$ for any $z\in Z$.

As in Definition~\ref{def:w1}, one can define the $W_1$-Wasserstein distance and the variance between two conjugate heat kernel measures based at $x,y\in Z$. Using Lemma~\ref{lem:continuheat} and taking the limit, one obtains that Lemma~\ref{lem:mono1}, Lemma~\ref{lem:mono2}, Corollary~\ref{cor:concern}, Proposition~\ref{existenceHncenter}, and Theorem~\ref{thm:Gaussian} all extend to general conjugate heat kernel measures based at points of $Z$.

Similarly, one can extend the definitions of $\mathcal N_z$ and $\mathcal W_z$ to points $z\in \mathcal S$. Then all estimates in Subsection~\ref{sub:entropy} continue to hold after passing to the limit.

Next, we define $r_{\Rm}(x)=0$ for every $x\in \mathcal S$, which is justified by the following lemma. For this reason, we call $\mathcal S$ the singular set of $Z$, and \eqref{eq:regulardecom} becomes a regular-singular decomposition.

\begin{lem}\label{lem:curve}
For any $x\in \mathcal S$, suppose that a sequence $x_i\in \mathcal R$ converges to $x$ with respect to $d_Z$. Then
\begin{align}\label{eq:curlimit}
\lim_{i\to\infty} r_{\Rm}(x_i)=0.
\end{align}
\end{lem}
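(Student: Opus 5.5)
We argue by contradiction, using the Lipschitz continuity of the curvature radius from Proposition~\ref{prop:basicproperty}(4). Let $T>\mathfrak t(x)+1$. Since $\{x_i\}$ is $d_Z$-Cauchy, Proposition~\ref{prop:basicproperty}(4) gives
\[
|r_{\Rm}(x_i)-r_{\Rm}(x_j)|\le C(T)\,d^*(x_i,x_j).
\]
Hence $\lim_i r_{\Rm}(x_i)=:r_\infty$ exists. It therefore suffices to rule out $r_\infty>0$. Suppose instead that $r_\infty>0$. Then for all large $i$ we have $r_{\Rm}(x_i)\ge r_\infty/2=:\rho$.

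\textbf{Step 1: locate a regular limit point.} By the definition of the curvature radius, the parabolic neighborhood
\[
B_{g_{t_i}}(x_i,\rho)\times[t_i-\rho^2,t_i+\rho^2],\qquad t_i:=\mathfrak t(x_i),
\]
is unscathed, relatively compact in its time slice, and satisfies $|\Rm|\le\rho^{-2}$. Since $t_i\to\mathfrak t(x)$ by the $2$-H\"older property, we fix a reference time $s:=\mathfrak t(x)-\rho^2/4$. Consider the points $x_i(s)$ obtained by flowing $x_i$ along worldlines. By Proposition~\ref{HncenterScal}(ii), or directly from the curvature bound and the Ricci flow equation, these points satisfy
\[
d_{g_s}(x_i(s),x_j(s))\le C\,d_{W_1}^{g_s}(\nu_{x_i;s},\nu_{x_j;s})+C\sqrt{|t_i-t_j|}\;(+\text{small}).
\]
The key point is to compare $d_{g_s}$ between regular worldline points with $d^*$. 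Proposition~\ref{HncenterScal}(i) shows that $x_i(s)$ is an $H$-center of $x_i$ with $H=H(T,\rho)$. Proposition~\ref{existenceHncenter}, Lemma~\ref{lem:mono1} and the definition of $d^*$ then bound $d_{g_s}(x_i(s),x_j(s))$ by $d_{W_1}^{g_s}(\nu_{x_i;s},\nu_{x_j;s})+C\sqrt{\mathfrak t(x)-s}$. This alone is not small, so we refine it as follows.

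\textbf{Step 2: use smooth local convergence.} By the bounded-curvature neighborhoods and Shi's estimates, the pointed flows on $B_{g_{t_i}}(x_i,\rho)\times[t_i-\rho^2,t_i]$ have uniformly bounded geometry. We claim that $x_i$ stays in a fixed compact region of $\mathcal R$. Indeed, $x_i(s)$ lies within bounded $g_s$-distance of an $H_3$-center of $x$ at time $s$, and $\scal(x_i(s))\le C\rho^{-2}$. Since $\scal$ is proper on $\mathcal M_{\le T}$, the points $x_i(s)$ lie in the compact set $\{\scal\le C\rho^{-2}\}\cap\mathcal M_s$. After passing to a subsequence, $x_i(s)\to y_s\in\mathcal M_s$. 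Because the worldlines through $y_s$ persist on $[s,s+\rho^2/2]$ by unscathedness (which follows from the uniform curvature bounds), we can flow forward to $y:=y_s(\mathfrak t(x))\in\mathcal R$. Smooth dependence then gives $x_i\to y$ in the manifold topology of $\mathcal M$.

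\textbf{Step 3: conclude.} Convergence in the manifold topology implies $d^*$-convergence. To see this, take $\tau=\min\{\mathfrak t(x_i),\mathfrak t(y)\}-\eta^2$ in Definition~\ref{defndstardistance} and use the continuity of the heat kernel in its base point (Theorem~\ref{thm:heatkernel}(a),(e) together with Lemma~\ref{lem:continuheat}-type gradient bounds \eqref{eq:gradient}). This yields $d^*(x_i,y)\to 0$. Hence $x=y\in\mathcal R$, contradicting $x\in\mathcal S$.

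The main obstacle is Step 2. One must show that the sequence $x_i$ does not escape to the "ends" of $\mathcal M$, such as $\ep$-horns or components that disappear, while $r_{\Rm}$ stays bounded below. Properness of $\scal$, combined with the $H$-center control of Proposition~\ref{HncenterScal}, should handle this. Some care is also needed with the time shift, since different $t_i$ require worldlines defined on a common interval. The uniform lower bound $\rho$ on the unscathed interval guarantees this.
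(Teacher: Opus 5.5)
Your proof is correct, but it takes a different route from the paper. Both arguments start the same way: the limit exists by Proposition~\ref{prop:basicproperty}(4), and one assumes it is positive. You then use properness of $\mathrm{R}$ on $\mathcal M_{\le T}$ to extract a limit $y\in\mathcal R$ of the $x_i$ in the manifold topology. Compatibility of $d^*$ with that topology then forces $x=y\in\mathcal R$.

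The paper instead stays inside the $H$-center framework. It shows that, for fixed $s<\mathfrak t(x)$, the worldline points $x_i(s)$ become $2H_3$-centers of $x$. It then invokes Bamler's regularity statement \cite[Proposition 9.16(b)]{bamler2023compactness} to place $x$ inside the regular parabolic neighborhood $P_i$. The paper's argument never uses properness of the scalar curvature, so it is the version that transfers to general noncollapsed limit spaces. Yours is more elementary, but it relies on a structural property specific to Kleiner--Lott singular Ricci flows.

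Your write-up can be tightened in three places.

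\textbf{Step~1.} It is never used, and its displayed inequality is not what Proposition~\ref{HncenterScal}(ii) states. You can drop it.

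\textbf{Step~2.} There is no need to flow back to time $s$ and forward again. Since $|\mathrm{Rm}|(x_i)\le\rho^{-2}$ and $\mathfrak t(x_i)\to\mathfrak t(x)$, the points $x_i$ themselves lie in the compact set $\{\mathrm R\le C\rho^{-2}\}\cap\mathcal M_{[\mathfrak t(x)-1,\mathfrak t(x)+1]}$. This removes the discussion of worldline persistence, the issue of keeping $s\ge 0$, and the superfluous $H_3$-center remark.

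\textbf{Step~3.} Lemma~\ref{lem:continuheat} points in the wrong direction, since it assumes $d_Z$-closeness. The clean argument, with $y(t)$ the worldline through $y$ and $t_i=\mathfrak t(x_i)$, is
\[
d^*(x_i,y)\le d^*(x_i,y(t_i))+d^*(y(t_i),y).
\]
For the first term, $d^*(x_i,y(t_i))\le d_{g_{t_i}}(x_i,y(t_i))\to 0$ by Proposition~\ref{prop:basicproperty}(2) and manifold convergence. The second term is at most $C\sqrt{|t_i-\mathfrak t(y)|}$ by Proposition~\ref{HncenterScal}(i) combined with Proposition~\ref{prop:basicproperty}(3).
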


\begin{proof}
By Proposition~\ref{prop:basicproperty}(4), the limit in \eqref{eq:curlimit} exists; denote it by $r_0$. Suppose, for contradiction, that $r_0>0$. Then for all sufficiently large $i$, we have $r_{\Rm}(x_i)\ge r_1:=r_0/2$. By Definition~\ref{def:curvatureradius}, this means that the product domain
\[
P_i:=B_{g_{t_i}}(x_i,r_1)\times [t_i-r_1^2,t_i+r_1^2],
\qquad t_i=\mathfrak t(x_i),
\]
is unscathed and has curvature bounded by $r_1^{-2}$.

For any $s<\mathfrak t(x)$, let $z_s\in \mathcal R_s$ be an $H_3$-center of $x$. By Proposition~\ref{HncenterScal}(i) and Definition~\ref{defndstardistance}, we have
\begin{align*}
\lim_{i\to\infty} d_{g_s}(x_i(s),z_s)=0,
\end{align*}
where $x_i(s)$ denotes the flow of $x_i$ along its worldline to time $s$. It follows that, for large $i$, the point $x_i(s)$ is a $2H_3$-center of $x$. On the other hand, by \cite[Proposition 9.16(b)]{bamler2023compactness}, we have $x\in P_i$ for all sufficiently large $i$. This contradicts the fact that $x\notin \mathcal R$.
\end{proof}

As an immediate corollary of Theorem~\ref{thm:epregularity} and Lemma~\ref{lem:curve}, we obtain the following result, in contrast with Proposition~\ref{prop:nash1}(b).

\begin{lem}
For any $x \in \MS$, we have
\begin{align*}
\NN_x(0):=\lim_{\tau \searrow0} \NN_x(\tau) <0.
\end{align*}
\end{lem}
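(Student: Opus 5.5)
The plan is to argue by contradiction, combining the monotonicity of the Nash entropy with the $\ep$-regularity of Theorem~\ref{thm:epregularity}. Fix $x\in\mathcal S$ with $t_0=\mathfrak t(x)$. By Proposition~\ref{prop:nash1}(c), extended to points of $Z$ by passing to the limit, $\tau\mapsto\mathcal N_x(\tau)$ is nonincreasing. Since it is also bounded below by \eqref{entropybd-Y1}, the limit $\mathcal N_x(0)$ exists in $(-\infty,+\infty]$.

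First I will check that $\mathcal N_x(0)\le 0$. For $x_i\in\mathcal R$ converging to $x$, Proposition~\ref{prop:nash1}(b) gives $\mathcal N_{x_i}(\tau)\le \mathcal N_{x_i}(0)=0$ for every $\tau>0$. For fixed $\tau>0$, the continuity of $\mathcal N^*_{t_0-\tau}$ coming from Theorem~\ref{thm:nash}(ii) together with Lemma~\ref{lem:continuheat} lets one pass to the limit and obtain $\mathcal N_x(\tau)\le 0$. This is essentially how $\mathcal N_z$ is defined for $z\in\mathcal S$ in the paper.

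For the strict inequality, suppose instead that $\mathcal N_x(0)=0$, and let $\ep$ be the universal constant of Theorem~\ref{thm:epregularity}. Choose $r>0$ with $t_0-2r^2>1$ and $\mathcal N_x(2r^2)\ge-\ep/2$. Take $x_i\in\mathcal R$ with $x_i\to x$ in $d_Z$. By Theorem~\ref{thm:nash}(ii), applied with $s=t_0-2r^2$ and comparing $x_i$ and $x$ via the $W_1$ distance at a time $t^*$ slightly below $\min\{\mathfrak t(x_i),t_0\}$, we get
\[
\mathcal N^*_{s}(x_i)\ge \mathcal N^*_s(x)-Cr^{-1}d_{W_1}^{g_{t^*}}(\nu_{x_i;t^*},\nu_{x;t^*})-\tfrac32\log\Bigl(\frac{t_0-s}{t^*-s}\Bigr).
\]
This needs a lower bound $\scal_{\min}$ on $\mathcal M_s$, which holds since $\scal$ is bounded below. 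The $W_1$ term tends to $0$ by the definition of $d^*$, and the log term tends to $0$ as $t^*\to t_0$. Since $\mathfrak t(x_i)\to t_0$, we have $\mathfrak t(x_i)-s\in[r^2,3r^2]$ for large $i$. Using the monotonicity once more,
\[
\mathcal N_{x_i}(r^2)\ge\mathcal N_{x_i}(\mathfrak t(x_i)-s)\ge-\ep
\]
for large $i$. Theorem~\ref{thm:epregularity} then gives $r_{\Rm}(x_i)\ge\ep r$ for all large $i$, which contradicts Lemma~\ref{lem:curve}. Hence $\mathcal N_x(0)<0$.

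The main obstacle is the step transferring near-zero entropy from $x$ to the approximating regular points $x_i$ at a fixed scale. Theorem~\ref{thm:nash}(ii) compares $\mathcal N^*_s$ at different points only through $W_1$ distances at an intermediate time $t^*$, and one must make sure that $d_Z(x_i,x)\to 0$ actually forces $d_{W_1}^{g_{t^*}}(\nu_{x_i;t^*},\nu_{x;t^*})\to0$ for $t^*$ close to $t_0$. I would handle this through the monotonicity in Lemma~\ref{lem:mono1} and the infimum in Definition~\ref{defndstardistance}: given $d^*(x_i,x)<\eta$, there is a time $\tau\ge t_0-\eta^2-o(1)$ at which the $W_1$ distance is less than $\eta$, and one takes $t^*=\tau$. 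The log error is then $O(\eta^2/r^2)$, which is small.
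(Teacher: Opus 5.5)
Your proof is correct and follows the paper's route: the paper records this lemma as an immediate consequence of the $\ep$-regularity Theorem~\ref{thm:epregularity} combined with Lemma~\ref{lem:curve}. Your argument fills in the step the paper omits, namely transferring the near-zero Nash entropy at $x$ to the approximating regular points $x_i$ via Theorem~\ref{thm:nash}(ii) and the infimum characterization of $d_Z$. The one slip is cosmetic: with $x_1=x$ and $x_2=x_i$ in Theorem~\ref{thm:nash}(ii), the logarithmic term is $\tfrac32\log\bigl(\frac{\mathfrak t(x_i)-s}{t^*-s}\bigr)$ rather than $\tfrac32\log\bigl(\frac{t_0-s}{t^*-s}\bigr)$. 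This is harmless, since $\mathfrak t(x_i)\to t_0$ as well.
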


\subsection{Intrinsic metrics on time-slices}

\begin{defn}\label{defntimeslicedist}
For each $t \ge 1$, we define an extended distance on the time-slice $Z_t$ by
\begin{align*}
d_t^Z(x,y):=\lim_{s\nearrow t} d_{W_1}^{g_s}(\nu_{x;s},\nu_{y;s})\in [0,+\infty]
\end{align*}
for any $x,y\in Z_t$, where the limit exists by Lemma~\ref{lem:mono1}. Note that if $d_t^Z(x,y)<\infty$, then the branches associated with $x$ and $y$ coincide.
\end{defn}

By following the same arguments, we can extend all results from \cite[Section 6]{fangli2025structure} to our setting. We record several key properties below; see \cite[Lemmas 6.3 and 6.4, Propositions 6.6 and 6.8]{fangli2025structure}.

\begin{prop}\label{prop:basic2}
For any $t \ge 1$, the following properties hold.
\begin{enumerate}[label=\textnormal{(\arabic*)}]
\item For any $x,y\in Z_t$,
\begin{align*}
d_Z(x,y)\le d_t^Z(x,y).
\end{align*}

\item $(Z_t,d_t^Z)$ is a complete extended metric space.

\item For any $w\in \mathcal R_t$, there exists a sufficiently small constant $r>0$ such that for any $x,y\in B_{g_t}(w,r)$,
\begin{align*}
d_t^Z(x,y)=d_{g_t}(x,y).
\end{align*}
\end{enumerate}
\end{prop}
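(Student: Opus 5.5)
The plan is to transfer the closed-flow arguments of \cite[Section 6]{fangli2025structure} to the singular setting, using the heat kernel estimates of Section~\ref{sec:4} wherever compactness of time-slices was used before. Each of the three items is handled separately.

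For (1), fix $x,y\in Z_t$ and assume $d_t^Z(x,y)<\infty$, since otherwise there is nothing to prove. By Definition~\ref{defndstardistance}, extended to $Z$ by continuity and Lemma~\ref{lem:continuheat}, we have for each $s<t$
\[
d_Z(x,y)\le \max\bigl\{\sqrt{t-s},\,d_{W_1}^{g_s}(\nu_{x;s},\nu_{y;s})\bigr\}.
\]
Let $s\nearrow t$. The first term tends to $0$, and the second increases to $d_t^Z(x,y)$ by Lemma~\ref{lem:mono1}. This gives (1).

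For (2), the triangle inequality and symmetry follow directly from the $W_1$ distances, since limits of pseudometrics are pseudometrics. Positivity follows from (1) and the fact that $d_Z$ is a metric. For completeness, take a $d_t^Z$-Cauchy sequence $x_i$. By (1) it is $d_Z$-Cauchy, so it converges to some $x\in Z$, and $x\in Z_t$ by the $2$-H\"older property of $\mathfrak t$. Next we show $d_t^Z(x_i,x)\to0$. For fixed $s<t$, Lemma~\ref{lem:continuheat} gives that $\nu_{x_i;s}\to\nu_{x;s}$. The Gaussian concentration (Theorem~\ref{thm:Gaussian}) gives uniform tail control, so $W_1$ is lower semicontinuous along this convergence. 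Hence
\[
d_{W_1}^{g_s}(\nu_{x_j;s},\nu_{x;s})\le\liminf_i d_{W_1}^{g_s}(\nu_{x_j;s},\nu_{x_i;s})\le\limsup_i d_t^Z(x_j,x_i).
\]
Taking $s\nearrow t$ and then $j\to\infty$ concludes. Here the branch issue has to be checked: finiteness of $d_t^Z(x_i,x_j)$ forces a common branch, and $W_1$ on the completion $\overline{\mathcal M'_s}$ is still a genuine metric.

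For (3), which is the main step, take $w\in\mathcal R_t$ and $r\ll r_{\Rm}(w)$. The inequality $d_t^Z\le d_{g_t}$ on $B_{g_t}(w,r)$ follows from Lemma~\ref{lem:mono1}. For the reverse, take $x,y\in B_{g_t}(w,r)$. Proposition~\ref{HncenterScal}(i) and the smooth unscathed neighborhood show that $\nu_{x;s}$ concentrates near $x(s)$ as $s\nearrow t$: its variance is $O(t-s)$ and its center lies within $C\sqrt{t-s}$ of $x(s)$. Testing $W_1$ against a $1$-Lipschitz function, one gets
\[
d_{W_1}^{g_s}(\nu_{x;s},\nu_{y;s})\ge d_{g_s}(x(s),y(s))-C\sqrt{t-s}.
\]
Letting $s\nearrow t$ and using smoothness of $g$ near the worldlines then yields $d_t^Z(x,y)\ge d_{g_t}(x,y)$. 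One subtlety is that $d_{g_s}$ is a global distance on a possibly incomplete slice. However, Proposition~\ref{prop:geodesicconvex} guarantees that minimizing geodesics exist, so $d_{g_s}(x(s),y(s))\to d_{g_t}(x,y)$ by continuity of the metric along short paths inside the unscathed region; for small $r$ the minimizing geodesic stays in that region.
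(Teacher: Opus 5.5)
Your proposal is correct and takes the same approach as the paper. The paper only cites the closed-flow arguments of \cite[Lemmas 6.3 and 6.4, Propositions 6.6 and 6.8]{fangli2025structure}, transferred using the heat kernel estimates of Section~\ref{sec:4}. Your three steps make those arguments explicit: monotonicity of $d_{W_1}^{g_s}$ for (1); $d_Z$-Cauchy sequences together with $W_1$-convergence of $\nu_{x_i;s}$ for (2); and, for (3), $H$-center localization near worldlines plus bilipschitz comparison of $g_s$ and $g_t$ in the unscathed region.

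One small case is not literally covered by your proof of (1): $t=1$. There Definition~\ref{defndstardistance} only allows $\tau=1$, so it gives $d_Z=d_{g_1}$ on the smooth compact slice $\mathcal M_1$, and the needed bound $d_1^Z\ge d_{g_1}$ follows from your argument for (3) applied globally on that slice.
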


In general, the distance $d_t^Z$ on a connected component of $\mathcal R_t$ satisfies $d_t^Z\le d_{g_t}$ and need not coincide with $d_{g_t}$. For noncollapsed Ricci flow limit spaces, \cite[Proposition 6.23]{fangli2025structure} proves that equality holds on each connected component for all but countably many times. In the present singular Ricci flow setting, we will prove (see Corollary~\ref{cor:singulartime} and Proposition~\ref{prop:distance}) that equality holds on each connected component when \(t\) lies outside a subset of Hausdorff \(1/2\)-measure zero.

\subsection{Tangent flows at singular points} \label{subsec:tan}

First, we prove the following result.

\begin{lem}\label{lem:curvatureest}
Given $T>1$ and $t_0 \in [1, T]$, for any $x_0\in Z_{t_0}$ and any $r\in (0,1]$ with $t_0-r^2\ge 1$, suppose that $z\in \mathcal R_{t_0-r^2}$ is an $H_3$-center of $x_0$. Then there exists a point $w\in \mathcal R_{t_0-r^2}$ with
\[
d_{g_{t_0-r^2}}(z,w)\le \sqrt{2H_3}\,r
\]
such that
\begin{align*}
\scal(w)\le C(T)r^{-2}.
\end{align*}
\end{lem}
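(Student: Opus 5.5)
The plan is to combine the volume lower bound of Proposition~\ref{prop:nolocal}(b) with the finiteness of the $L^1$-norm of the scalar curvature at comparable scales. Set $s:=t_0-r^2\ge 1$, let $z\in\mathcal R_s$ be an $H_3$-center of $x_0$, and write $B:=B_{g_s}(z,\sqrt{2H_3}\,r)$. Since $\mathcal M$ has normalized initial data, the scalar curvature satisfies $\scal\ge \scal_{\min}$ on $\mathcal M_s$ with $\scal_{\min}\ge -C$, because the minimum of $\scal$ is nondecreasing up to a bounded error. Hence $\scal_{\min}r^2$ is bounded below uniformly for $r\le 1$. By \eqref{entropybd-Y1}, $\mathcal N_{x_0}(r^2)\ge -Y(T)$; for $x_0\in\mathcal S$ this uses the extension of the Nash entropy bounds to $Z$ obtained by passing to the limit. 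Proposition~\ref{prop:nolocal}(b), applied to $x_0$ (again extended to points of $Z$ by approximation), then gives
\[
|B|_s\ge c(T)\,r^3 .
\]

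The next step is an upper bound for the integral of $\scal$ over $B$ at the scale $r$. I would first try
\[
\int_{B}\scal\,\d V_{g_s}\le C(T)\,r .
\]
This should follow from the differential identity $\partial_t\int K(x_0;\cdot)\,\d V=0$ combined with $\scal$-weighted versions of the heat kernel estimates. The most direct route uses the pointed entropy: Proposition~\ref{prop:nash1} together with the $\mathcal W$-entropy formula gives
\[
\int_{\mathcal M_{t_0-\tau}}\tau\,\scal\,\d\nu_{x_0;t_0-\tau}\le \mathcal W_{x_0}(\tau)-\int\bigl(\tau|\nabla f|^2+f-3\bigr)\,\d\nu .
\]
Here $\mathcal W\le 0$, $\int f\,\d\nu=\mathcal N+3/2\ge -Y+3/2$, and $|\nabla f|^2\ge0$, which yields $\int \tau\scal\,\d\nu_{x_0;s}\le C(Y)$ at $\tau=r^2$. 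The heat kernel lower bound on $B$, which I expect to obtain from Theorem~\ref{thm:lower} or from the $H_3$-center concentration together with the gradient estimate \eqref{eq:gradient}, should give $K(x_0;\cdot)\ge c(T)r^{-3}$ on a large portion of $B$. This converts the bound into $\int_{B'}\scal\,\d V_{g_s}\le C(T)r$ for a subset $B'\subset B$ of volume at least $c(T)r^3$.

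Given this, averaging finishes the argument. The average of $\scal$ over $B'$ is at most $C(T)r/(c(T)r^3)=C(T)r^{-2}$, so some $w\in B'\subset B$ satisfies $\scal(w)\le C(T)r^{-2}$. If $\scal$ is negative somewhere, the lower bound $\scal\ge -C$ ensures that the averaging still controls the positive part.

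The main obstacle is the lower bound for the heat kernel on a definite-volume portion of $B$. Gaussian concentration (Theorem~\ref{thm:Gaussian}) gives $\nu_{x_0;s}(B)\ge 1/2$, say. Combined with the upper bound $K\le C(T)r^{-3}$ from Theorem~\ref{thm:ultra}, this shows that the set $B''=\{K\ge c\,r^{-3}\}\cap B$ carries $\nu$-mass at least $1/4$ and therefore has volume at least $c\,r^3$. This is the cleanest route, and it avoids the reduced distance entirely. I would then apply the $\scal$-integral bound on $B''$, which suffices.
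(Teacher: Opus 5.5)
Your argument is correct, but it takes a genuinely different route from the paper. The paper argues by contradiction using the three-dimensional canonical neighborhood structure. Suppose $\scal>Cr^{-2}$ on all of $B=B_{g_{t_0-r^2}}(z,\sqrt{2H_3}\,r)$. Then every point of $B$ is modeled on an $\ep$-neck or an $\ep$-cap; the spherical space form case is excluded by the bound $|B|\ge cr^3$ of Proposition~\ref{prop:nolocal}(b). So $B$ lies in a possibly capped $\ep$-tube of cross-sectional scale $r_0\ll r$. Geodesic convexity (Proposition~\ref{prop:geodesicconvex}) and a covering along geodesics then give $|B|\le Cr_0^2r$, which contradicts noncollapsing.

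You never use the neck/cap geometry. Instead you combine the entropy bound $\int\tau\scal\,\d\nu_{x_0;t_0-\tau}\le C$, coming from the integrated form of $\mathcal W$, with the concentration of $\nu_{x_0;t_0-r^2}$ near $z$. In fact $\mathcal W\le\mathcal N$ (Proposition~\ref{prop:nash1}(c)) yields the universal bound $\int\tau(|\nabla f_{x_0}|^2+\scal)\,\d\nu_{x_0;t_0-\tau}\le 3/2$. Your route is softer and dimension-independent. It only needs the $\mathcal W$-identities to pass to base points in $\mathcal S$, for instance via Lemma~\ref{lem:continuheat} and Fatou applied to $\scal^+$. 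The paper's proof instead reuses the tube-covering argument from Proposition~\ref{prop:ballpacking} and Theorem~\ref{thm:bdddiam}.

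Two attributions in your last step need fixing.
\begin{itemize}
\item The bound $\nu_{x_0;t_0-r^2}(B)\ge 1/2$ comes from Proposition~\ref{existenceHncenter} with $L=2$, which is just Chebyshev applied to the variance. It does not come from Gaussian concentration, whose constant $C(\epsilon)$ is not explicit.
\item The upper bound $K(x_0;\cdot)\le C(T)r^{-3}$ does not show that $B''=B\cap\{K\ge cr^{-3}\}$ carries $\nu$-mass at least $1/4$. For that you need $\nu(B\setminus B'')\le cr^{-3}|B|$ together with the volume bound $|B|\le Cr^3$ from Proposition~\ref{prop:volumenoninflat}, which applies since $t_0-r^2\ge 1\ge r^2$. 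The kernel upper bound is what then converts $\nu(B'')\ge 1/4$ into $|B''|\ge cr^3$.
\end{itemize}

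More to the point, passing to $\d V_{g_{t_0-r^2}}$ is unnecessary. Since $\int\scal^+\,\d\nu_{x_0;t_0-r^2}\le Cr^{-2}$, Chebyshev gives $\nu_{x_0;t_0-r^2}\bigl(B\cap\{\scal\le 4Cr^{-2}\}\bigr)\ge 1/2-1/4>0$. This produces $w$ directly, without Proposition~\ref{prop:nolocal}(b) or any pointwise heat kernel bound.
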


\begin{proof}
Throughout the proof, all constants $c_i$ and $C_i$ depend only on $T$. By Proposition~\ref{prop:nolocal}(b), we have
\begin{align}\label{eq:curvatureest001}
|B_{g_{t_0-r^2}}(z,\sqrt{2H_3}\,r)|_{t_0-r^2}\ge c_1r^3>0.
\end{align}
Set
\[
B:=B_{g_{t_0-r^2}}(z,\sqrt{2H_3}\,r),
\]
and suppose for contradiction that the conclusion fails.

By the canonical neighborhood assumption, for each point $x\in B$, $(\mathcal R_{t_0-r^2},g_{t_0-r^2},x)$ is $\ep$-close to one of the following: (a) an $\ep$-neck; (b) an $\ep$-cap; or (c) a closed manifold of constant positive sectional curvature. Note that case (c) cannot occur, since otherwise we would have
\[
|B|_{t_0-r^2}\le C\scal(x)^{-3/2},
\]
contradicting \eqref{eq:curvatureest001}. Therefore, under our assumption, $B$ is contained in a subset $B'$ of one of the following types:
\begin{enumerate}[label=\textnormal{(\roman*)}]
    \item $B'$ is an $\ep$-tube, diffeomorphic to $S^2\times \R$ or $S^2\times S^1$;
    \item $B'$ is a capped $\ep$-tube, diffeomorphic to $D^3$ or $\RP^3\setminus \overline{D}^3$;
    \item $B'$ is a doubly capped $\ep$-tube, diffeomorphic to $S^3$, $\RP^3$, or $\RP^3\#\RP^3$.
\end{enumerate}

Set
\[
\inf_{x\in B'} \scal(x)=r_0^{-2}.
\]
By assumption, we have $r_0\ll r$. For any two points $x,y\in B$, Proposition~\ref{prop:geodesicconvex} implies that there exists a minimizing geodesic $\gamma$ connecting $x$ and $y$ with
\[
\mathrm{length}(\gamma)=d_{g_{t_0-r^2}}(x,y)\le 2\sqrt{2H_3}\,r.
\]
Using a covering argument as in Case 2 of the proof of Proposition~\ref{prop:ballpacking}, we obtain
\begin{align*}
|B|_{t_0-r^2}\le |B'|_{t_0-r^2}\le C_2 r_0^2 r,
\end{align*}
which contradicts \eqref{eq:curvatureest001}, since $r_0\ll r$. This completes the proof.
\end{proof}

Next, we introduce the following definition.

\begin{defn}[Tangent flow]\label{def:tangentflow}
For any $x_0\in Z_{t_0}$, a \textbf{tangent flow} at $x_0$ is a Ricci flow obtained as the pointed Cheeger--Gromov limit of
\[
(\mathcal R,r_i^{-2}g_{r_i^2t+t_0},z_i)
\]
for some sequence $r_i\searrow 0$, where $z_i\in \mathcal R_{t_0-r_i^2}$ is an $H_3$-center of $x_0$.
\end{defn}

Given a sequence $r_i\searrow 0$, Lemma~\ref{lem:curvatureest} implies that we can find points
\[
w_i\in B_{g_{t_0-r_i^2}}(z_i,\sqrt{2H_3}\,r_i)
\]
such that
\[
\scal(w_i)\le Cr_i^{-2}.
\]
After passing to a subsequence, there are two possibilities.

\textbf{Case 1.} 
\[
\lim_{i\to\infty}\frac{\scal(w_i)}{r_i^{-2}}=0.
\]

In this case, by the canonical neighborhood assumption (see \cite[Lemma 3.1]{kleinerlott2017singular}), the limiting Ricci flow of
\[
(\mathcal R,r_i^{-2}g_{r_i^2t+t_0},w_i)
\]
is the static Euclidean flow $(\R^3,g_E)$.

\textbf{Case 2.} 
\[
\lim_{i\to\infty}\frac{\scal(w_i)}{r_i^{-2}}=L>0.
\]

In this case, the canonical neighborhood assumption again implies that the limit of
\[
(\mathcal R,r_i^{-2}g_{r_i^2t+t_0},z_i)
\]
is the Ricci flow associated with a Ricci shrinker, since the Nash entropy of the limiting Ricci flow at the base point is constant for all scales. Thus, the Ricci shrinker must be one of
\[
S^3/\Gamma,\qquad S^2\times_{\mathbb Z_2}\R,\qquad S^2\times \R,
\]
each equipped with its standard metric.

Thus the tangent flow in Definition~\ref{def:tangentflow} is well-defined. Moreover, by the above classification, the tangent flow at $x_0$ is independent of the choice of the sequence $\{r_i\}$ and of the $H_3$-centers $z_i$.

By the discussion above and the canonical neighborhood assumption, the following result is immediate; cf.\ \cite[Theorem 7.15]{fangli2025structure}.

\begin{lem}
A point \(x_0\in Z\) lies in \(\RR\) if and only if its tangent flow is the static Euclidean flow.
\end{lem}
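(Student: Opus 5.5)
The plan is to prove the two implications separately, working throughout with the Nash entropy at the base point. The key inputs are the $\ep$-regularity Theorem~\ref{thm:epregularity} and the lemma asserting $\NN_x(0)<0$ for $x\in\MS$.

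\emph{($\Rightarrow$)} Suppose $x_0\in\RR$. Then $r_{\Rm}(x_0)=:2\rho>0$, so the parabolic neighborhood $B_{g_{t_0}}(x_0,\rho)\times[t_0-\rho^2,t_0]$ is unscathed with $|\Rm|\le\rho^{-2}$. By Proposition~\ref{HncenterScal}(i), any $H_3$-center $z_i\in\RR_{t_0-r_i^2}$ satisfies $d_{g_{t_0-r_i^2}}(x_0(t_0-r_i^2),z_i)\le Cr_i$. Hence in the rescaled flows $(\RR,r_i^{-2}g_{r_i^2t+t_0},z_i)$ the base points stay at bounded distance from the worldline of $x_0$. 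On balls of radius $\rho r_i^{-1}\to\infty$ the curvature is at most $r_i^2\rho^{-2}\to0$, and the domain is unscathed over times in $[-\rho^2r_i^{-2},0]$. Shi's estimates together with no local collapsing (Proposition~\ref{prop:nolocal}) then give smooth Cheeger--Gromov convergence to a complete flat, noncollapsed ancient flow, which is the static $(\R^3,g_E)$. In the language of the case analysis preceding the statement, this is Case~1.

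\emph{($\Leftarrow$)} I argue by contraposition: assume $x_0\in\MS$ and show the tangent flow cannot be Euclidean. By the preceding lemma, $\NN_{x_0}(\tau)\le -c<0$ for all small $\tau$, by monotonicity (Proposition~\ref{prop:nash1}(c)). The Nash entropy is scale invariant. So in the rescaled flow based at the point $x_0$ at time $0$, the pointed Nash entropy at scale $1$ equals $\NN_{x_0}(r_i^2)\le -c$.

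The main step is passing this bound to the limit. I would show that the rescaled conjugate heat kernels $K(x_0;\cdot)$ converge locally smoothly, on the time slices $t\in[-2,-1/2]$ of the limit, to the conjugate heat kernel of the tangent flow based at the limit point $x_\infty$ at time $0$. Here $x_\infty$ is the point of which the limits of the $z_i$ are $H_3$-centers. The heat kernel upper bound in Theorem~\ref{thm:ultra} and Heat kernel estimate~III, with the gradient bound \eqref{eq:gradient}, give local compactness of these kernels. The Gaussian concentration Theorem~\ref{thm:Gaussian} around the $H_3$-centers $z_i$ gives uniform tightness. Combined with the lower bound Theorem~\ref{thm:lower} and $f\ge -C$, this yields uniform integrability of $f\,\d\nu$. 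Therefore $\NN_{x_0}(r_i^2)\to\NN^\infty_{x_\infty}(1)$.

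Tightness also gives mass $1$ in the limit, so the limit kernel is the actual conjugate heat kernel, by uniqueness on the Euclidean or shrinker models. If the tangent flow were static $\R^3$, its conjugate heat kernel would be the Euclidean Gaussian, with Nash entropy $0>-c$, a contradiction.

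I expect the principal obstacle to be controlling the tails in this entropy convergence, where the $f\log$-type integrand must be dominated uniformly in $i$. I would handle it with the $(4+\ep)$-Gaussian upper bound and concentration, exactly as in the smooth closed-flow argument of \cite{bamler2023compactness}. An alternative route avoids the convergence of entropy: if the tangent flow is Euclidean, pseudolocality-type reasoning via Theorem~\ref{thm:epregularity} should force $\NN_{x_0}(r^2)\ge-\ep$ for small $r$. Then $r_{\Rm}$ stays bounded below near $x_0$, contradicting Lemma~\ref{lem:curve}. That route needs the same continuity of the entropy, however, so I would commit to the convergence argument above.
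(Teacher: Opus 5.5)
Your proposal is correct. The forward implication matches what the paper intends: positive curvature radius at $x_0$, Proposition~\ref{HncenterScal}(i) to keep the $H_3$-centers within $O(r_i)$ of the worldline, hence Case~1 and a flat noncollapsed limit.

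For the converse you take a different, more analytic route. The paper treats the lemma as immediate from the Case~1/Case~2 discussion plus the canonical neighborhood assumption. That argument amounts to the following. A static Euclidean tangent flow can only occur in Case~1, where $\scal(w_i)r_i^2\to 0$. By the canonical neighborhood assumption (as in Case~1, via \cite[Lemma 3.1]{kleinerlott2017singular}), this gives $r_{\Rm}(w_i)/r_i\to\infty$. On the other hand, $d_Z(w_i,x_0)\le Cr_i$ by Lemma~\ref{lem:curvatureest} and Proposition~\ref{prop:basicproperty}(2),(3). The Lipschitz bound in Proposition~\ref{prop:basicproperty}(4) then keeps $r_{\Rm}$ bounded away from zero on $\RR$ near $x_0$, which contradicts Lemma~\ref{lem:curve} if $x_0\in\MS$. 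This needs no convergence of heat kernels or entropies.

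Your route goes through $\NN_{x_0}(0)<0$ and passes the Nash entropy to the blow-up limit. This is the argument behind \cite[Theorem 7.15]{fangli2025structure}, and the paper implicitly uses the same entropy passage in Case~2. It does not rely on canonical neighborhoods, but it costs you the kernel-convergence step.

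Two remarks on that step.
\begin{enumerate}
\item You only need $\liminf_i\NN_{x_0}(r_i^2)\ge \NN^\infty(1)=0$. This follows from three facts: local smooth convergence, tightness from Theorem~\ref{thm:Gaussian}, and the uniform bound $f\ge -C$ coming from Theorem~\ref{thm:ultra} and the entropy lower bound. Together these show that the tail region contributes at least $-C$ times its (small) $\nu$-mass. So the domination of the upper tail of $f$, which you single out as the principal obstacle, is not actually needed.
\item Mass one alone does not identify the limit as the Gaussian based at time $0$: a Gaussian based at a positive time on static $\R^3$ also has mass one. You should pass $\Var_t(\nu_{x_0;t})\le H_3(t_0-t)$ to the limit, so that the limit measures concentrate at a point as $t\nearrow 0$. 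Only then does Widder-type uniqueness apply.
\end{enumerate}
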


For simplicity, as in \cite[Section 5.4]{fangli2025recti}, we write $\mathcal C^0(\Gamma)$, $\mathcal C^1_0(\mathbb Z_2)$, $\mathcal C^1$, and $\mathcal C^3$ for the self-similar ancient Ricci flows whose time-($-1$) slices are the standard $S^3/\Gamma$, $S^2\times_{\mathbb Z_2}\R$, $S^2\times \R$, and $\R^3$, respectively. Note that these spaces are all quotient cylinders in the sense of \cite[Section 5.4]{fangli2025recti}. For each model space, we define the spacetime distance by the variational formula in \cite[Definition 3.2]{fangli2025structure}, and let $\overline{\mathcal C^0(\Gamma)}$, $\overline{\mathcal C^1_0(\mathbb Z_2)}$, $\overline{\mathcal C^1}$, and $\overline{\mathcal C^3}$ denote the corresponding metric completions. Moreover, for each model space, we fix a base point $p^*$ as in \cite[Section 5.4]{fangli2025recti}.

In the following, we use $\mathcal C$ to denote one of $\mathcal C^0(\Gamma)$, $\mathcal C^1_0(\mathbb Z_2)$, $\mathcal C^1$, or $\mathcal C^3$, and use $\overline{\mathcal C}$ to denote its completion. We write $d_{\mathcal C}$ for the corresponding spacetime distance.

\begin{lem}\label{lem:globaltangent}
For any $x_0\in \mathcal S_{t_0}$, suppose that the tangent flow at $x_0$ is given by
\[
\mathcal C\in \{\mathcal C^0(\Gamma),\,\mathcal C^1_0(\mathbb Z_2),\,\mathcal C^1\}.
\]
Then, for any sequence $r_i\searrow 0$, the rescaled spaces
\[
(Z,r_i^{-1}d_Z,x_0,r_i^{-2}(\mathfrak t-t_0))
\]
converge, after passing to a subsequence, in the pointed Gromov--Hausdorff sense to $\overline{\mathcal C}$.
\end{lem}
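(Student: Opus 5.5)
The plan is to follow the structure of the analogous statement for noncollapsed Ricci flow limit spaces (cf.\ \cite[Section 7]{fangli2025structure} and \cite[Section 5.4]{fangli2025recti}). The argument has three steps: extract a subsequential limit, identify it with the completed model flow on the region strictly before time $0$ using Cheeger--Gromov convergence of the regular part, and then show that the completion points at times $\ge 0$ are matched as well.

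\textbf{Step 1 (compactness and a candidate limit).} By Proposition~\ref{prop:volume}, the rescaled balls of $(Z,r_i^{-1}d_Z)$ satisfy uniform volume doubling. The lower volume bound $c(T)r^5$ and upper bound $Cr^5$ pass to the completion because $\mathcal R$ is dense in $Z$. Consequently, each rescaled ball of fixed radius is uniformly totally bounded, and Gromov's compactness gives a pointed Gromov--Hausdorff subsequential limit $(Z_\infty,d_\infty,x_\infty,\mathfrak t_\infty)$. The time functions converge because $r_i^{-2}(\mathfrak t-t_0)$ is $2$-H\"older with constant $1$ for $r_i^{-1}d_Z$.

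\textbf{Step 2 (the past part).} Consider the portion of the rescaled space at rescaled times $\le -\delta$. By Definition~\ref{def:tangentflow}, the flows $(\mathcal R,r_i^{-2}g_{r_i^2t+t_0},z_i)$ converge in the Cheeger--Gromov sense to $\mathcal C$ on compact subsets of its regular spacetime. I will upgrade this to convergence of conjugate heat kernels $K$ with $H_3$-center control. The tools are Lemma~\ref{lem:continuheat}, Theorem~\ref{thm:ultra}, the Gaussian upper bound (heat kernel estimate III), and Theorem~\ref{thm:Gaussian}. These give uniform tail bounds, so the mass of $\nu_{x;s}$ that escapes into high-curvature regions, where the Cheeger--Gromov convergence fails, is negligible. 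The $d^*$-distance is a variational formula built from $W_1$-distances of these measures. Convergence of the measures, together with the tail bounds controlling the Wasserstein contributions, therefore shows that $r_i^{-1}d^*$ converges to $d_{\mathcal C}$ on the regular part. This yields an isometric embedding of $\overline{\mathcal C}\cap\{\mathfrak t<0\}$ into $Z_\infty$. Near singular points of $\mathcal C$, such as the tip of $\mathcal C^1$ at time $0$ or points near the completion, I will instead use Proposition~\ref{prop:basicproperty}(3). It places every point of $Z$ within $d^*$-distance $C\sqrt{\tau}$ of its $H_3$-center at time $\mathfrak t-\tau$. So every point of $Z_\infty$ is a limit of regular points at slightly earlier times, and these lie in the region already matched.

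\textbf{Step 3 (surjectivity and the future part; main obstacle).} The hard part is showing that the limit is exactly $\overline{\mathcal C}$, including points at rescaled times $\ge 0$, and that nothing extra appears, for example a different future region after the singular time. The tangent flow only describes the past of $x_0$. The approach is to represent each point $y\in Z_\infty$ by its conjugate heat kernel measures at negative times, which are limits of $\nu_{y_i;\cdot}$. These are conjugate heat flows on $\mathcal C$ with a uniform variance bound (Corollary~\ref{cor:concern}) and bounded Nash entropy. By the uniqueness argument of \cite[Section 5.4]{fangli2025recti}, such measures on a quotient cylinder are exactly the conjugate heat kernel measures of points of $\overline{\mathcal C}$. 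That argument uses the entropy monotonicity of Proposition~\ref{prop:nash1} and the rigidity of the equality case in Theorem~\ref{thm:logsob}. This gives a distance-preserving bijection $Z_\infty\to\overline{\mathcal C}$, because $d_\infty$ is recovered by the same variational formula from these measures. Independence of the subsequence follows because the limit is always $\overline{\mathcal C}$, which is determined by the unique tangent flow.
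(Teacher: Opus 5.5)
Your Steps 1 and 2 follow the paper's outline. The paper also extracts a limit by ball-packing from Proposition~\ref{prop:volume}, embeds $\mathcal C$ isometrically and time-preservingly onto the negative-time part of the limit, extends to $\overline{\mathcal C}$, and reaches time-$0$ points through $H_3$-centers at rescaled time $-\theta$, letting $\theta\searrow 0$. One ingredient is missing. For the $H_3$-centers $w_i$ of the approximating points $y_i$ to lie in the region where the rescaled flows converge to $\mathcal C$, you need $d_{g_{t_0-\theta r_i^2}}(w_i,z_i)\le Cr_i$. The paper gets this from Bamler's non-expanding estimate \cite[Theorem 9.1]{bamler2020structure}. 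Proposition~\ref{prop:basicproperty}(3) alone only places $y_i$ near its own centers, not near those of $x_0$.

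The real gap is Step 3, ruling out limit points at positive rescaled time. Suppose $y\in Z_\infty$ has $\mathfrak t_\infty(y)=\theta>0$. Then $y$ cannot correspond to any point of $\overline{\mathcal C}$: $\overline{\mathcal C}$ has no points at positive time, and the $2$-H\"older bound gives $d_\infty(y,q)\ge\sqrt{\theta-\mathfrak t(q)}>0$ for every $q\in\overline{\mathcal C}$. So a uniqueness statement of the form ``the past measures of $y$ on $\mathcal C$ are the conjugate heat kernel measures of some $q\in\overline{\mathcal C}$'' would not give a time-preserving isometry, even if true; it would only exhibit $y$ as an extra point. What you must prove is that no such $y$ exists. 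That is not a rigidity statement about conjugate heat flows on a fixed shrinker: entropy monotonicity and the log-Sobolev equality case identify shrinkers, but they do not classify conjugate heat flows on one. It requires information about $\mathcal M$ after time $t_0$.

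The paper argues as follows. Take $y_i\to y$ and $H_3$-centers $w_i\in\mathcal R_{t_0-r_i^2}$ of $y_i$; by non-expanding, $d(w_i,z_i)\le Cr_i$. By Lemma~\ref{lem:curvatureest} and the canonical neighborhood assumption, the flows $(\mathcal R,r_i^{-2}g_{r_i^2t+t_0},w_i)$ converge smoothly to a $\kappa$-solution on $(-\infty,\theta)$. Its restriction to $(-\infty,0)$ would have to be $\mathcal C$, which is impossible because the curvature of $\mathcal C$ blows up as $t\nearrow 0$.

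A quantitative version of your entropy remark would also work. Apply Proposition~\ref{prop:nolocal}(b) with base point $y_i$, $r^2=(\theta+\epsilon)r_i^2$, and $\mathcal N\ge -Y$. This gives volume at least $c(Y)(\theta+\epsilon)^{3/2}r_i^3$ for the ball of radius $\sqrt{2H_3(\theta+\epsilon)}\,r_i$ around an $H_3$-center of $y_i$ at time $t_0-\epsilon r_i^2$. Using non-expanding again to locate that ball, it converges to a ball in $\mathcal C_{-\epsilon}$ of rescaled volume $O(\epsilon\sqrt{\theta+\epsilon})$, which is a contradiction for $\epsilon\ll\theta$. Your proposal contains neither argument.
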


\begin{proof}
By Proposition~\ref{prop:volume} and a standard ball-packing argument, after passing to a subsequence of $\{r_i\}$, we may assume that
\begin{align}\label{eq:globaltangent001}
(Z,r_i^{-1}d_Z,x_0,r_i^{-2}(\mathfrak t-t_0))
\xrightarrow[i\to\infty]{\mathrm{pGH}}
(Z',d_{Z'},z',\mathfrak t'),
\end{align}
where $(Z',d_{Z'},z',\mathfrak t')$ is a parabolic metric space; see \cite[Definition 3.19]{fangli2025structure}.

By Definition~\ref{def:tangentflow}, the smooth convergence of the corresponding conjugate heat kernels, and Definition~\ref{defndstardistance}, we conclude that there exists a time-preserving isometric embedding
\[
\iota:\mathcal C\to Z'
\]
such that
\[
\iota(\mathcal C)=Z'_{(-\infty,0)}.
\]
Since $Z'$ is complete, this embedding extends uniquely to an isometric embedding
\[
\iota:\overline{\mathcal C}\to Z'.
\]

\textbf{Claim 1.} \ $\iota(\overline{\mathcal C})=Z'_{(-\infty,0]}$.

Let $y'\in Z'_0$. Choose points $y_i\in Z$ converging to $y'$ with respect to the pointed Gromov--Hausdorff convergence in \eqref{eq:globaltangent001}. Fix a small constant $\theta\in (0,1)$, and let $w_i\in \mathcal R_{t_0-\theta r_i^2}$ be an $H_3$-center of $y_i$. After passing to a further subsequence, we may assume that $w_i$ converges to some point $w'\in Z'_{-\theta}$ in the pointed Gromov--Hausdorff sense. Then Proposition~\ref{prop:basicproperty}(3) implies that
\begin{align}\label{eq:globaltangent002}
d_{Z'}(y',w')\le \sqrt{H_3}\sqrt{\theta}.
\end{align}

By the non-expanding estimate (see \cite[Theorem 9.1]{bamler2020structure}), it follows that
\[
d_{g_{t_0-\theta r_i^2}}(w_i,z_i)\le Cr_i,
\]
where $z_i \in \RR_{t_0-\theta r_i^2}$ is an $H_3$-center of $x_0$. Hence, by the convergence in Definition~\ref{def:tangentflow}, there exists a point $w_\infty\in \mathcal C_{-\theta}$ such that
\[
\iota(w_\infty)=w'.
\]
Letting $\theta\searrow 0$, we conclude from \eqref{eq:globaltangent002} that $y'\in \iota(\overline{\mathcal C})$. This proves Claim 1.

\textbf{Claim 2.} \ $Z'_{(0,\infty)}=\emptyset$.

Suppose not. Then there exist $\theta>0$ and a point $y'\in Z'_\theta$. Let $y_i\in Z$ be a sequence converging to $y'$ with respect to the pointed Gromov--Hausdorff convergence in \eqref{eq:globaltangent001}. Let $w_i,z_i\in \mathcal R_{t_0-r_i^2}$ be $H_3$-centers of $y_i$ and $x_0$, respectively. By the same argument as above, we have
\begin{align}\label{eq:globaltangent003}
d_{g_{t_0- r_i^2}}(w_i,z_i)\le Cr_i.
\end{align}

By Lemma~\ref{lem:curvatureest} and the canonical neighborhood assumption, the rescaled flows
\[
(\mathcal R,r_i^{-2}g_{r_i^2t+t_0},w_i)
\]
converge smoothly to a $\kappa$-solution
\[
(M_\infty,g_{\infty,t})_{t\in (-\infty,\theta)}.
\]
On the other hand, \eqref{eq:globaltangent003} and the assumption on the tangent flow at $x_0$ imply that
\[
(M_\infty,g_{\infty,t})_{t\in (-\infty,0)}
\]
is isometric to $\mathcal C$, which is impossible. This contradiction proves Claim 2.

Combining Claims 1 and 2, we conclude that $\iota$ is a time-preserving isometry from $\overline{\mathcal C}$ onto $(Z',d_{Z'},\mathfrak t')$.
\end{proof}

\begin{rem}
By Definition~\ref{def:tangentflow} and Lemma~\ref{lem:globaltangent}, our notion of tangent flow agrees with that in \cite[Definition 7.8]{fangli2025structure}.
\end{rem}

Next, we introduce the following definition of quantitative singular strata, analogous to \cite[Definition 8.11]{fangli2025structure}; see also \cite[Definition 2.22]{bamler2020structure}.

\begin{defn}\label{def:quansing}
For $\ep>0$ and $0<r_1<r_2<\infty$, the quantitative singular strata
\[
\mathcal S^{\ep,0}_{r_1,r_2}\subset \mathcal S^{\ep,1}_{r_1,r_2}\subset Z
\]
are defined as follows. A point $x_0\in \mathcal S^{\ep,k}_{r_1,r_2}$ if and only if
\[
\mathfrak t(x_0)-\ep^{-1}r_2^2\ge 1
\]
and, for every $r\in [r_1,r_2]$, the pointed manifold
\[
(\mathcal R_{\mathfrak t(x_0)-r^2},g_{\mathfrak t(x_0)-r^2},z)
\]
is not $\ep$-close, at scale $r$, to $(\R^3,g_E)$ when $k=1$, and is not $\ep$-close, at scale $r$, to $(S^2\times \R,g_{S^2}\times g_E)$ when $k=0$, where $z\in \mathcal R_{\mathfrak t(x_0)-r^2}$ is any $H_3$-center of $x_0$.
\end{defn}

Note that, by the canonical neighborhood assumption and Lemma~\ref{lem:globaltangent}, Definition~\ref{def:quansing} agrees with \cite[Definition 8.11]{fangli2025structure}. The following identity is immediate from the definitions: for any $L>1$ and $k\in \{0,1\}$,
\begin{align*}
\mathcal S^k=\bigcup_{\ep\in (0,L^{-1})}\bigcap_{0<r<\ep L}\mathcal S^{\ep,k}_{r,\ep L}.
\end{align*}

\subsection{Rectifiability and structure of the singular set} 

By the results of the previous subsection, every singular point in $Z$
is quotient cylindrical; see \cite[Definition 5.24]{fangli2025recti}.
Following essentially the same---and in several places simpler---arguments
as in \cite{bamler2020structure,fangli2025structure,fangli2025loja,
fangli2025recti}, the main structure results for noncollapsed Ricci flow
limit spaces extend to the present singular Ricci flow setting. We briefly
explain the ingredients needed for this extension.

\begin{enumerate}[label=\textnormal{(\arabic*)}]
\item The integral estimates for conjugate heat kernel potentials from
\cite[Proposition 6.2]{bamler2020structure} carry over to singular Ricci
flows. Indeed, the argument is the same as in the proof of Lemma~\ref{usefuliden}: one exhausts each time-slice by compact domains whose
boundary components are central spheres of sufficiently small necks, and
then lets the exhaustion tend to the full time-slice. The boundary terms
vanish by the decay estimate in Lemma~\ref{lem:potent1}. Thus all
integration-by-parts identities used in the closed or limit-space setting
remain valid here.

\item The change-of-base estimate for conjugate heat kernel measures from \cite[Proposition 8.1]{bamler2020structure} (see also \cite[Appendix A]{fangli2025structure}) extends verbatim to singular Ricci flows.

\item The construction of sharp splitting maps from
\cite[Section 3.4]{fangli2025recti} also carries over. As in
\cite[Lemma 3.1]{fangli2025recti}, the splitting map is obtained from the
difference of the heat kernel potential functions based at two suitable
points $x_0$ and $x_1$, where $x_1$ is an independent point relative
to $x_0$. The change-of-base estimate and Lemma~\ref{lem:potent1} give
the required integral control with respect to the conjugate heat kernel
measure based at $x_0$. Consequently, the almost-splitting estimates from
\cite[Appendix C]{fangli2025structure} remain valid in the present setting.

\item The cylindrical Lojasiewicz inequality
\cite[Theorem 1.3]{fangli2025loja} and the corresponding summability
result \cite[Corollary 1.4]{fangli2025loja} near a
$(1,\epsilon,r)$-cylindrical point extend to singular Ricci flows.
Although these results were later formulated for noncollapsed Ricci flow
limit spaces in \cite[Proposition 5.1]{fangli2025recti} using the modified
$\widetilde{\mathcal W}$-entropy, in the present setting one may instead
follow the original proof directly and work with the usual
$\mathcal W$-entropy. The analytic input needed in that proof is supplied
by the heat kernel estimates of Section~\ref{sec:4} and by the
integration-by-parts justification above.

\item The strong uniqueness of cylindrical tangent flows, the
Ahlfors regularity of the packing measures, and the quantitative
stratification estimates used in \cite{fangli2025loja,fangli2025recti} hold for the
completion $Z$. In particular, the neck decomposition theorem and the
corresponding content estimate follow by the same argument as in \cite[Theorem 6.9 and Proposition 6.13]{fangli2025recti}. The present
three-dimensional situation is simpler than the general setting considered
there, because the canonical neighborhood theorem and the classification of
three-dimensional $\kappa$-solutions imply that the only noncompact
singular models are quotient cylindrical. Thus there is no need to consider flat neck regions.
\end{enumerate}

Consequently, the horizontal parabolic rectifiability theorem
\cite[Theorem 1.2]{fangli2025recti}, the vanishing of the
\(1/2\)-dimensional Hausdorff measure of the singular time set analogous to
\cite[Corollary 5.23]{fangli2025recti}, the local $\mathscr H^1$-bound
and time-slice high-curvature volume estimate analogous to
\cite[Theorem 6.26 and Corollary 6.28]{fangli2025recti}, and the
$L^1$-curvature estimate analogous to
\cite[Theorem 6.31]{fangli2025recti} all hold in the present singular
Ricci flow setting. We state the consequences needed below.

\begin{thm}[{\cite[Theorem 1.2]{fangli2025recti}}]\label{thm:rec}
For each $k=0,1$, the set $\mathcal S^k$ is horizontally parabolic $k$-rectifiable with respect to the distance $d_Z$. In particular, $\mathcal S$ is horizontally parabolic $1$-rectifiable.
\end{thm}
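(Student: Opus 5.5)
The plan is to transplant the proof of \cite[Theorem 1.2]{fangli2025recti} to the completion $(Z,d_Z,\mathfrak t)$, using the ingredients (1)--(5) listed above. I will not reprove the whole structure theory. First, $\mathcal S^0$ has to be handled. In dimension three its points are exactly those whose tangent flow is $\mathcal C^0(\Gamma)$. Since $\mathcal C^0(\Gamma)$ has $\mathfrak t$-extinction at time $0$, Lemma~\ref{lem:globaltangent} shows that every $x_0\in\mathcal S^0$ is isolated in $\mathcal S$ at small scales. Combined with the volume bounds of Proposition~\ref{prop:volume}, this makes $\mathcal S^0$ countable, which is trivially horizontally parabolic $0$-rectifiable.

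The substantive case is $k=1$. Here $\mathcal S^1\setminus\mathcal S^0$ consists of points with tangent flow $\mathcal C^1$ or $\mathcal C^1_0(\mathbb Z_2)$, and orientability excludes the latter. I would follow the scheme in \cite{fangli2025recti} in the following order.

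\begin{enumerate}[label=\textnormal{(\roman*)}]
\item Apply the cylindrical {\L}ojasiewicz inequality and the summability result, ingredient (4), at every $(1,\ep,r)$-cylindrical point. This gives strong uniqueness of the tangent flow: the cylinder axis drifts by a summable amount across dyadic scales, so the approximating cylinders converge at a polynomial rate.

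\item Use this uniqueness, together with the Ahlfors regularity of the packing measures, to obtain the neck decomposition Theorem~\ref{neckdecomgeneral3dsing} with the content bound $\sum_a r_a+\sum_b r_b+\mathscr H^1(S^{1,\delta,\eta})\le C r_0$ and $\mathscr H^1(\tilde S^{1,\delta,\eta})=0$. These rely on ingredient (5).

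\item On each cylindrical neck region $\mathcal N_a$, realize the center set $\mathcal C_{0,a}$ as the image of a bi-Lipschitz horizontal curve. For this I will use the sharp splitting map of ingredient (3): its single coordinate $u$ is almost a $d_Z$-isometry along $\mathcal C_a$, and its inverse, restricted to $\mathcal C_{0,a}$, parametrizes $\mathcal C_{0,a}$ by a subset of $\mathbb R$. Horizontality, meaning that the time function is constant to first order along this curve, follows from the $2$-H\"older bound $|\mathfrak t(x)-\mathfrak t(y)|\le d_Z(x,y)^2$ together with the cylindrical structure at every scale.

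\item Cover $\mathcal S^1\cap B^*(z_0,r_0)$ by these neck centers plus a remainder. Iterate the decomposition on the $b$-balls, whose radii sum to a geometric series, and keep $\tilde S$ aside as a $\mathscr H^1$-null set. This writes $\mathcal S^1$, up to $\mathscr H^1$-measure zero, as a countable union of bi-Lipschitz horizontal images.
\end{enumerate}

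The step I expect to be hardest is (iii), specifically the bi-Lipschitz and horizontal control of the splitting map on $\mathcal C_{0,a}$. This requires the almost-splitting estimates from \cite[Appendix C]{fangli2025structure} on singular Ricci flow time-slices, which are incomplete. Every integral bound on $|\nabla^2 u|$ and $|\partial_{\mathfrak t}u-\Delta u|$ against $\nu_{x_0;t}$ must be justified by exhaustion with neck-sphere boundaries, as in Lemma~\ref{usefuliden}. My first approach is to check that, because $u$ is a difference of two potentials $f_{x_0}-f_{x_1}$, it satisfies the polynomial growth hypothesis of Lemma~\ref{usefuliden} through Lemma~\ref{lem:potent1}(iii). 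The boundary terms then decay because of the $K\,\scal^m\le C_m$ bound in Theorem~\ref{thm:heatkernel2}.

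Once these estimates are in place, the remaining steps are formal repetitions of the arguments in \cite{fangli2025recti}. The final assertion of the statement follows from $\mathcal S=\mathcal S^1$.
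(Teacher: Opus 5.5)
\textbf{There is a genuine gap in the $k=0$ case.} Your $k=1$ outline follows the same route as the paper, which transplants the argument of \cite{fangli2025recti} using ingredients (1)--(5). Your $k=0$ argument, however, rests on a false classification and does not establish that $\mathcal S^0$ is countable.

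The shrinker $S^2\times_{\mathbb Z_2}\R$ is orientable. The group $\mathbb Z_2$ acts by the antipodal map on $S^2$ and by reflection on $\R$; both are orientation-reversing, so the quotient is $\RP^3$ minus a point. It appears in the paper's own list of possible tangent flows in Subsection~\ref{subsec:tan}; orientability only rules out $\RP^2\times\R$. This model splits off no line, and the $H_3$-centers lie next to its $\RP^2$ core. So the $k=0$ condition of Definition~\ref{def:quansing} holds at all small scales, and points with tangent flow $\mathcal C^1_0(\mathbb Z_2)$ belong to $\mathcal S^0$, not to $\mathcal S^1\setminus\mathcal S^0$.

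For these points your isolation argument fails. The completion $\overline{\mathcal C^1_0(\mathbb Z_2)}$ contains a whole half-line of singular points at time $0$ issuing from the vertex, so Lemma~\ref{lem:globaltangent} gives no isolation. Concretely, take $(S^2\times S^1)/\mathbb Z_2\cong\RP^3\#\RP^3$ with a quotient product metric. When it becomes extinct, $\mathcal S=Z_T$ is a segment whose two endpoints have exactly this tangent flow. So the claim that every $x_0\in\mathcal S^0$ is isolated in $\mathcal S$ is false.

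What you need is an argument that such points are isolated \emph{within} $\mathcal S^0$. One way is a separation argument:
\begin{enumerate}
\item Fix $\ep_1$ small, and let $x,y$ be two such points at which $Z$ is $\ep_1$-close to $\overline{\mathcal C^1_0(\mathbb Z_2)}$ at all scales $\le r$, with $d=d_Z(x,y)\ll r$.
\item The picture at scale $10d$ around $x$ places the singular point $y$ near the singular half-line, at rescaled distance about $1/10$ from the vertex.
\item This forces $y$ to be close to $\overline{\mathcal C^1}$ at some scale $\rho d$ with $\rho=\rho(\ep_1)\to0$. That is incompatible with $y$ being $\ep_1$-close to $\overline{\mathcal C^1_0(\mathbb Z_2)}$ at the same scale.
\end{enumerate}
Alternatively, run the $k=0$ case of the general quantitative theory, which is what the paper's citation covers.

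For $k=1$ the damage is limited. Your conclusion that $\mathcal S^1\setminus\mathcal S^0$ consists of $\mathcal C^1$-points is true, but for the reason above rather than orientability. The content estimate $\HHH^1(\tilde S^{1,\delta,\eta})=0$ from the neck decomposition disposes of the remaining points.

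Two smaller points.
\begin{itemize}
\item In step (iv) you do not need to iterate over the $b$-balls, and the claimed geometric decay is unjustified: the content bound only gives $\sum_b r_b\le C(T)r_0$, with $C(T)$ not small. As in the proof of Lemma~\ref{lem:key}, for $\eta$ small, $(2,\eta,r_b)$-symmetry upgrades to $(3,\sigma,\sigma r_b)$-symmetry on $B^*(x_b,3r_b/2)$. This gives $r_{\Rm}\ge c(T)r_b$ there, so the $b$-balls contain no singular points at all and there is nothing to iterate.
\item Lemma~\ref{lem:potent1}(iii) only converts an \emph{upper} bound on $f_{x_0}$ into derivative bounds, while Theorem~\ref{thm:heatkernel2} bounds $f_{x_0}$ from \emph{below}. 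So the polynomial growth of $u=f_{x_0}-f_{x_1}$ in $\scal$, which you want for Lemma~\ref{usefuliden}, needs an extra input. That could be a polynomial lower bound on the heat kernel near horn ends (e.g.\ via Theorem~\ref{thm:lower}), or the change-of-base estimate, which is what the paper uses in ingredient (3).
\end{itemize}
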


The definition of horizontally parabolic $k$-rectifiable can be found in \cite[Definition 1.1]{fangli2025recti}. As an application of Theorem~\ref{thm:rec}, one obtains the following result, analogous to \cite[Corollary 5.23]{fangli2025recti}, which improves \cite[Theorem 1.4]{kleinerlott2020singular2}.

\begin{cor}\label{cor:singulartime}
We have
\begin{align*}
\mathscr H^{1/2}(\mathfrak t(\mathcal S))=0,
\end{align*}
where $\mathscr H^{1/2}$ denotes the \(1/2\)-dimensional Hausdorff measure on $\R$.
\end{cor}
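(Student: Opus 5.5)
We reduce the statement to the horizontal parabolic rectifiability of $\mathcal S$ (Theorem~\ref{thm:rec}) together with the local $\mathscr H^1$-bound on the singular set. The plan follows the argument of \cite[Corollary 5.23]{fangli2025recti}. Since $\mathfrak t(\mathcal S)\subset[1,\infty)$ and $\mathscr H^{1/2}$ is countably subadditive, it suffices to show $\mathscr H^{1/2}(\mathfrak t(\mathcal S\cap Z_{[1,T]}))=0$ for each $T$. By the local bound $\mathscr H^1(\mathcal S\cap B^*(z_0,r_0))\le C(T)r_0^5$ and the diameter bound on $Z_{[1,T]}$ (with a finite covering by balls), $\mathcal S\cap Z_{[1+\delta,T]}$ has finite $\mathscr H^1$-measure with respect to $d_Z$. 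Points with time in $[1,1+\delta)$ are handled by letting $\delta\to0$ along a countable sequence. Note also that singular points cannot accumulate too close to time $1$, since the flow is smooth near $t=0$ by normalization; this is not needed, however.

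By horizontal parabolic $1$-rectifiability, $\mathcal S=N\cup\bigcup_i\Gamma_i$, where $\mathscr H^1(N)=0$ and each $\Gamma_i$ is a bi-Lipschitz image of a subset of $\R$ under a map $\gamma_i$ that is \emph{horizontal}. Following \cite[Definition 1.1]{fangli2025recti}, horizontal means $\mathfrak t\circ\gamma_i$ has vanishing metric derivative in the sense that $|\mathfrak t(\gamma_i(s))-\mathfrak t(\gamma_i(s'))|=o(|s-s'|^2)$ at almost every point, i.e.\ the curve is tangent to time-slices.

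\textbf{Null set $N$.} For the null part, we use the $2$-Hölder estimate $|\mathfrak t(x)-\mathfrak t(y)|\le d_Z(x,y)^2$. The map $\mathfrak t$ sends a set of diameter $r$ to a set of diameter $\le r^2$, so $\mathscr H^{1/2}(\mathfrak t(A))\le C\mathscr H^1(A)$ for any $A\subset Z$. Hence $\mathscr H^{1/2}(\mathfrak t(N))=0$.

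\textbf{Horizontal pieces $\Gamma_i$.} For each $\Gamma_i$, we use horizontality. The function $h=\mathfrak t\circ\gamma_i$ on $E_i\subset\R$ satisfies $|h(s)-h(s')|\le|s-s'|^2\cdot\omega(s,|s-s'|)$, where $\omega\to0$ for a.e.\ $s$. A standard Vitali/Egorov argument applies: cover the good set by intervals of length $\rho$ on which $\omega<\eta$, with total length $\le\mathscr L^1(E_i)+1$. Their images under $h$ have diameters $\le\eta\rho^2$, so
\[
\mathscr H^{1/2}_{\infty}(h(E_i))\lesssim\sum\sqrt{\eta}\,\rho\lesssim\sqrt{\eta}\,(\mathscr L^1(E_i)+1).
\]
Letting $\eta\to0$ gives $\mathscr H^{1/2}(h(E_i))=0$, after again discarding the exceptional null set via the previous step. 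Countable subadditivity finishes the proof.

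\textbf{Main obstacle.} The main obstacle is precisely extracting the quantitative horizontality $o(|s-s'|^2)$ at almost every point from the definition of horizontal parabolic rectifiability. If the definition only gives that the Lipschitz charts land in time-slices up to a null set, we would instead use the cylindrical strong uniqueness. At a.e.\ singular point, tangent spaces are $\overline{\mathcal C^1}$ with the $\R$-direction horizontal, by Lemma~\ref{lem:globaltangent}. Combined with the Ahlfors regularity of the packing measure, this shows that within $B^*(x,r)$ the singular set lies in an $o(r^2)$-neighborhood of the slice $Z_{\mathfrak t(x)}$. That is exactly the density estimate needed for the covering argument.
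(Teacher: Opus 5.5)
Your argument is correct, and its skeleton matches the paper's. There is an $\mathscr H^1$-null part whose time image is killed by the $2$-H\"older bound $|\t(x)-\t(y)|\le d_Z(x,y)^2$, and a part on which $\t$ oscillates like $o(d_Z^2)$, handled by a covering. You differ in how that second part, and the uniformity of the $o(d_Z^2)$ bound, are obtained.

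The paper does not go through the charts of Theorem~\ref{thm:rec}. It takes the null part to be $\mathcal S^0$ and covers $\mathcal S_{[1,T]}\setminus\mathcal S^0$ by the centers $\CCC_{i,0}\cap B^*(x_i,r_i)$ of countably many cylindrical neck regions. On each compact piece $A_i=\CCC_{i,0}\cap\overline{B^*(x_i,r_i)}$, every point is uniformly $(1,\ep,r_\ep)$-cylindrical. This gives $|\t(x)-\t(y)|\le\Psi(\ep)\,d_Z^2(x,y)$ for all $x,y\in A_i$ with $d_Z(x,y)\le r_\ep$ at once, and Ahlfors regularity gives $\mathscr H^1(A_i)\le C(T)r_i$. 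So the paper gets uniformity from compactness and the neck structure rather than from an Egorov step. It is also insensitive to the precise content of \cite[Definition 1.1]{fangli2025recti}, which is exactly the point you flagged as uncertain. Your main route is more modular: it shows that any set which is horizontally rectifiable in the $o(d_Z^2)$ sense has $\mathscr H^{1/2}$-null time image. But it depends on that reading of the definition.

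Your fallback is sound, though the operative input is not Ahlfors regularity. By Lemma~\ref{lem:globaltangent} (in particular its Claim~2), rescalings at \emph{every} singular point converge to some $\overline{\mathcal C}$ whose singular points all lie in the time-$0$ slice. Singular points of $Z$ can only converge to singular points of $\overline{\mathcal C}$. This is because $r_{\Rm}$ is Lipschitz in $d_Z$ and vanishes on $\mathcal S$ (Proposition~\ref{prop:basicproperty}(4), Lemma~\ref{lem:curve}), while regular points approximating points of $\mathcal C$ have $r_{\Rm}$ comparable to the scale. This gives $\sup_{y\in\mathcal S\cap B^*(x,r)}|\t(y)-\t(x)|=o(r^2)$ at every $x\in\mathcal S$. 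Combined with the local finiteness of $\mathscr H^1(\mathcal S)$ and your Egorov-type covering, that proves the corollary without using rectifiability at all.
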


\begin{proof}
Let $d_P$ denote the parabolic distance on $\R$, namely
\[
d_P(s,t)=\sqrt{|s-t|}
\qquad \text{for all } s,t\in \R.
\]
It suffices to prove that
\begin{align}\label{eq:para0}
\mathscr H_P^1(\mathfrak t(\mathcal S_{[1,T]}))=0,
\end{align}
for any $T>1$, where $\mathscr H_P^1$ denotes the $1$-dimensional Hausdorff measure on $\R$ with respect to the metric $d_P$.

As in the proof of \cite[Corollary 5.23]{fangli2025recti}, for sufficiently small $\delta>0$, we have
\begin{align*}
\mathcal S_{[1,T]}\setminus \mathcal S^0
\subset
\bigcup_i \bigl(\CCC_{i,0}\cap B^*(x_i,r_i)\bigr),
\end{align*}
where
\[
\NNN_i 
=
B^*(x_i,2r_i)\setminus B^*_{r_x}(\CCC_i)
\]
is a $(1,\delta, \cc, r_i)$-neck region with $x_i\in \mathcal S_{[1,T]}$, where $\cc=10^{-100}$; cf.\ \cite[Definition 5.9]{fangli2025recti}. Moreover, by the definition of a cylindrical neck region, we have
\[
\CCC_{i,0}\cap B^*(x_i,2r_i)\subset \mathcal S.
\]

We claim that
\begin{align}\label{eq:connected002}
\mathscr H_P^1\bigl(\mathfrak t(\mathcal S_{[1,T]}\setminus \mathcal S^0)\bigr)=0.
\end{align}
To prove \eqref{eq:connected002}, it suffices to show that for each $i$,
\begin{align}\label{eq:connected003}
\mathscr H_P^1\bigl(\mathfrak t(\CCC_{i,0}\cap B^*(x_i,r_i))\bigr)=0.
\end{align}

Let
\[
A_i:=\CCC_{i,0}\cap \overline{B^*(x_i,r_i)},
\]
which is a closed subset of $\mathcal S$. By Ahlfors regularity (see \cite[Theorem 5.21]{fangli2025recti}), we have
\begin{align*}
\mathscr H^1(A_i)
\le
\mathscr H^1\bigl(\CCC_{i,0}\cap B^*(x_i,3r_i/2)\bigr)
\le
C(T)r_i<\infty.
\end{align*}
Since $A_i$ is compact, for any $\ep>0$ there exists $r_\ep>0$ such that every $x\in A_i$ is uniformly $(1,\ep,r_\ep)$-cylindrical; see \cite[Lemma 5.10]{fangli2025recti}. Hence
\begin{align}\label{eq:connected005}
|\mathfrak t(x)-\mathfrak t(y)|
\le
\Psi(\ep)\,d_Z^2(x,y)
\end{align}
for any $x,y\in A_i$ with $d_Z(x,y)\le r_\ep$.

By the definition of Hausdorff measure, there exists a countable cover $\{B^*(y_j,s_j)\}$ of $A_i$ with $s_j<r_\ep/2$ such that
\begin{align}\label{eq:connected006}
\sum_j s_j \le C(T)r_i.
\end{align}
Then, by \eqref{eq:connected005}, for any $x,y\in B^*(y_j,s_j)$ we have
\begin{align*}
\sqrt{|\mathfrak t(x)-\mathfrak t(y)|}
\le
\Psi(\ep)\,d_Z(x,y)
\le
\Psi(\ep)\,s_j.
\end{align*}
Therefore,
\begin{align*}
\mathscr H_P^1\bigl(\mathfrak t(B^*(y_j,s_j)\cap A_i)\bigr)\le \Psi(\ep)\,s_j.
\end{align*}
Summing over $j$ and using \eqref{eq:connected006}, we obtain
\begin{align*}
\mathscr H_P^1(\mathfrak t(A_i))\le \Psi(\ep)\,r_i.
\end{align*}
Since $\ep>0$ is arbitrary, it follows that
\[
\mathscr H_P^1(\mathfrak t(A_i))=0,
\]
which proves \eqref{eq:connected003}, and hence \eqref{eq:connected002}.

On the other hand, since $\mathscr H^1(\mathcal S^0)=0$ and the time function $\mathfrak t$ is Lipschitz with respect to $d_Z$ and $d_P$, we also have
\[
\mathscr H_P^1(\mathfrak t(\mathcal S^0))=0.
\]
Combining this with \eqref{eq:connected002}, we obtain \eqref{eq:para0}. This completes the proof.
\end{proof}

Next, we state the following neck decomposition theorem, analogous to \cite[Theorem 6.9]{fangli2025recti}, which generalizes Theorem~\ref{neckdecomgeneral3d}.

\begin{thm}[3D neck decomposition theorem II]\label{neckdecomgeneral3dsing}
For any constants $T>1$, $\delta>0$ and $\eta>0$, if $\zeta \le \zeta(T, \delta, \eta)$, then the following holds.

Given $z_0 \in Z_{[1,T]}$ with $\t(z_0)-2 \zeta^{-2} r_0^2 >1$, we have the decomposition\textup{:}
	\begin{align*}
		&B^*(z_0,r_0)\subset \bigcup_a\big(\NNN_a\bigcap B^*(x_a,r_a)\big)\bigcup \bigcup_b B^*(x_b,r_b)\bigcup S^{1,\delta,\eta},\\
		&S^{1,\delta,\eta}\subset \bigcup_a\big(\CCC_{0,a}\bigcap B^*(x_a,r_a)\big)\bigcup\tilde{S}^{1,\delta,\eta},
	\end{align*}
with the following properties\textup{:}
	\begin{enumerate}[label=\textnormal{(\alph{*})}]
		\item For each $a$, $\NNN_a=B^*(x_a,2r_a)\setminus B^*_{r_x}(\CCC_a)$ is a $(1,\delta, \cc, r_a)$-cylindrical neck region, where $\cc=\cc(T)$. 
		
		\item For each $b$, there exists a point in $B^*(x_b,2 r_b)$ which is $(2,\eta,r_b)$-symmetric.
		
		\item The following content estimates hold\textup{:}
		\begin{align*}
\sum_a r_a+\sum_b r_b+\HHH^1(S^{1,\delta,\eta})\leq C(T) r_0 \quad \text{and} \quad \HHH^1(\tilde{S}^{1,\delta,\eta})=0.
	\end{align*}	
	\end{enumerate}
\end{thm}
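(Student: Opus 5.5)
The plan is to follow the proof of \cite[Theorem 6.9]{fangli2025recti} essentially verbatim, with two simplifications. First, the flat-neck regions there are replaced by the single cylindrical type. Second, all analytic ingredients are supplied by the singular-flow versions established above. The first step is to collect these ingredients for the completion $Z$ over $[1,T]$:
\begin{itemize}
\item a uniform entropy lower bound $-Y(T)$, from \eqref{entropybd-Y1};
\item the volume estimates of Proposition~\ref{prop:volume};
\item the $\ep$-regularity Theorem~\ref{thm:epregularity};
\item the classification of tangent flows from Subsection~\ref{subsec:tan}, in which every singular tangent flow is $\mathcal C^0(\Gamma)$, $\mathcal C^1_0(\mathbb Z_2)$ or $\mathcal C^1$, and by orientability only $\mathcal C^1$ survives in noncompact neck models;
\item the cylindrical \L{}ojasiewicz summability and strong uniqueness, items (4)--(5) above;
\item Ahlfors regularity of packing measures of cylindrical neck centers.
\end{itemize}
With these, the constant $\cc$ in the neck regions depends only on $T$, since $Y=Y(T)$.

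The decomposition itself is built by the standard inductive covering. Fix $\delta,\eta$ and take $\zeta$ small. Cover $B^*(z_0,r_0)$ and, for each ball $B^*(x,r)$ in the current collection, test the dichotomy coming from quantitative rigidity by contradiction and compactness. The compactness uses Lemma~\ref{lem:globaltangent} together with the fact that pointed limits of rescalings of $Z$ are quotient cylinders or smooth limits.

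\begin{itemize}
\item If some point of $B^*(x,2r)$ is $(2,\eta,r)$-symmetric, declare it a $b$-ball.
\item If instead the points of $B^*(x,2r)$ that are not $(2,\eta,r)$-symmetric are $(1,\delta',r)$-cylindrical, build a $(1,\delta,\cc,r)$-cylindrical neck region. Do this by iteratively subdividing along scales, using the geometric transformation theorem: once cylindrical at a scale, the point remains cylindrical down to its neck radius, by Lojasiewicz summability and the uniqueness of the $S^2\times\R$ axis. The bad balls in the neck are recycled into the next generation.
\item Otherwise, entropy pinching drops by a definite amount, as in Bamler's quantitative stratification. The drop is definite because the $\mathcal W$/Nash entropy is monotone (Proposition~\ref{prop:nash1}) and bounded below by $-Y$, so this case occurs at most $N(T,\delta,\eta)$ times along any chain of balls.
\end{itemize}
The set $S^{1,\delta,\eta}$ consists of points covered by infinitely many generations. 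These lie in neck centers $\CCC_{0,a}$, except for the exceptional set $\tilde S^{1,\delta,\eta}$ of points that change neck region infinitely often.

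The content estimate (c) comes from the neck-region packing estimate, the analogue of \cite[Proposition 6.13]{fangli2025recti}. Ahlfors regularity gives $\mu_a(B^*(x_a,2r_a))\ge c r_a$ and $\mu_a(B^*(y,s))\le C s$. The ball radii of each generation inside a neck are therefore controlled by $\mu_a$. The bounded number of entropy-drop generations then converts this into $\sum r_a+\sum r_b+\HHH^1(S^{1,\delta,\eta})\le C(T) r_0$, with geometric decay across generations giving $\HHH^1(\tilde S^{1,\delta,\eta})=0$.

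I expect the main obstacle to be the neck-region structure theorem for cylindrical necks: proving that every center point stays $(1,\delta,r)$-cylindrical at all intermediate scales with a consistent axis, and that the packing measure is Ahlfors regular. This requires the cylindrical Lojasiewicz inequality on the singular flow. I would get it by rerunning the \cite{fangli2025loja} argument with the unmodified $\mathcal W$-entropy, justifying each integration by parts through exhaustions by neck-bounded domains and the decay from Lemma~\ref{lem:potent1}, as in Lemma~\ref{usefuliden}. Everything after that is a combinatorial transcription of the limit-space proof.
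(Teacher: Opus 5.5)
Your plan is essentially the paper's own argument: transcribe \cite[Theorem 6.9 and Proposition 6.13]{fangli2025recti} using only cylindrical neck regions and no flat ones, supply the singular-flow heat kernel estimates, and rerun the cylindrical Lojasiewicz argument with the usual $\mathcal W$-entropy, justifying each integration by parts by neck-bounded exhaustions and Lemma~\ref{lem:potent1}. One small correction: $S^2\times_{\mathbb Z_2}\R$ is orientable, so $\mathcal C^1_0(\mathbb Z_2)$ is excluded as a neck model because it is only $0$-symmetric (its tip points fall into $\tilde S^{1,\delta,\eta}$), not because of orientability, which only rules out $\RP^2\times\R$.
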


As an application of Theorem~\ref{neckdecomgeneral3dsing}, we obtain the following results, analogous to \cite[Theorem 6.26 and Corollary 6.28]{fangli2025recti}.

\begin{thm}
For any $T>1$ and $z_0\in Z_{[1,T]}$ with $\mathfrak t(z_0)-2r_0^2>1$, the following statements hold.
\begin{enumerate}[label=\textnormal{(\roman*)}]
\item We have
\begin{align*}
\mathscr H^1\bigl(\mathcal S\cap B^*(z_0,r_0)\bigr)\le C(T)r^5_0.
\end{align*}

\item For any $0<r<1$ and any $t \in [1, T]$, we have
\begin{align*}
\bigl|\{r_{\Rm}<rr_0\}\cap B^*(z_0,r_0)\cap Z_t\bigr|_t \le C(T)r^2r_0^3.
\end{align*}
\end{enumerate}
\end{thm}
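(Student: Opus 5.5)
The plan is to derive both statements from Theorem~\ref{neckdecomgeneral3dsing}, following the proofs of \cite[Theorem 6.26 and Corollary 6.28]{fangli2025recti}. Fix small $\delta,\eta$ depending only on $T$, and let $\zeta=\zeta(T,\delta,\eta)$. Since $\mathfrak t(z_0)-2r_0^2>1$ while the decomposition requires $\mathfrak t(z_0)-2\zeta^{-2}r_0^2>1$, the ball $B^*(z_0,r_0)$ cannot be decomposed directly. First I will cover it by balls $B^*(y_k,\zeta r_0)$ with $y_k\in B^*(z_0,r_0)$. By the volume bounds in Proposition~\ref{prop:volume}(ii) (lower bound $c(T)(\zeta r_0)^5$, upper bound $Cr_0^5$) and a Vitali argument, the number of such balls is at most $C(T)\zeta^{-5}$. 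Each smaller ball satisfies the hypothesis of Theorem~\ref{neckdecomgeneral3dsing} with radius $\zeta r_0$.

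For (i), apply the decomposition on each small ball. Points of $\mathcal S$ cannot lie in the neck regions $\NNN_a\cap B^*(x_a,r_a)$, since these sit at positive distance from the centers $\CCC_a$ and consist of regular points. They also cannot lie in the balls $B^*(x_b,r_b)$: by (b) and \cite[Lemma 6.21(v)]{fangli2025recti} together with \cite[Theorem 8.13]{fangli2025structure}, every point of $B^*(x_b,3r_b/2)$ is $(3,\sigma,\sigma r_b)$-symmetric and so has $r_{\Rm}\ge c(T)r_b>0$, exactly as in the proof of Lemma~\ref{lem:key}. Hence $\mathcal S\cap B^*(y_k,\zeta r_0)\subset S^{1,\delta,\eta}$, and the content estimate (c) gives $\mathscr H^1\le C(T)\zeta r_0$. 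Summing over the $C(T)\zeta^{-5}$ balls yields a bound $C(T)r_0$. Because $r_0$ is bounded above in terms of $T$, via the time constraint $r_0^2<T$, this is certainly compatible with the stated form. I would redo the covering bookkeeping carefully so that the claimed power of $r_0$ comes out, presumably by counting balls against the spacetime volume $r_0^5$ at unit scale.

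For (ii), fix $t$ and split $\{r_{\Rm}<rr_0\}\cap B^*(z_0,r_0)\cap Z_t$ according to the pieces of the decomposition. On the $b$-balls we have $r_{\Rm}\ge c(T)r_b$, so only balls with $r_b\le Crr_0$ contribute. Their time-$t$ slice volume is at most $Cr_b^3\le Cr^2r_0^2\,r_b$ by Proposition~\ref{prop:volume}(i), and summing with $\sum r_b\le C(T)r_0$ gives $C(T)r^2r_0^3$. On a neck region $\NNN_a$, I will use the cylindrical structure. A point of $\NNN_a\cap Z_t$ at distance $d$ from $\CCC_a$ has $r_{\Rm}\approx d$, so the set $\{r_{\Rm}<rr_0\}$ inside it lies in the $Crr_0$-tubular neighborhood of $\CCC_a$. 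The slice of this neighborhood is covered by balls $B^*(x',Crr_0)$ centered on $\CCC_a$. The Ahlfors regularity of the packing measure $\mu_a$ \cite[Theorem 5.21]{fangli2025recti} bounds the number of such balls by $C\,r_a/(rr_0)$, each with slice volume $C(rr_0)^3$. This gives $C(T)r^2r_0^2r_a$, which sums again to $C(T)r^2r_0^3$. The set $S^{1,\delta,\eta}$ lies in $\mathcal S$ and has zero $g_t$-volume.

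The main obstacle is the neck-region step. There I must ensure that the slice $Z_t$ meets the tubular neighborhood of $\CCC_a$ in only about $r_a/(rr_0)$ of these balls, rather than the larger count one would get if $\CCC_a$ ran horizontally within a single time. I would try to obtain this from the almost-horizontality estimate \eqref{eq:connected005} for $\CCC_a$, adapted to small scales via the $(1,\delta)$-cylindrical condition at every scale of the neck. If that fails, I would fall back on the time-slice integral estimate $\int|\scal|$ per unit neck length, used as in the proof of Proposition~\ref{prop:ballpacking}.
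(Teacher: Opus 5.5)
Your strategy matches the paper's, which gives no separate argument and simply derives both parts from Theorem~\ref{neckdecomgeneral3dsing} as in \cite[Theorem 6.26 and Corollary 6.28]{fangli2025recti}. However, your part (i) does not close as written. Your covering argument gives $\mathscr H^1(\mathcal S\cap B^*(z_0,r_0))\le C(T)r_0$. Your claim that this is compatible with $C(T)r_0^5$ because $r_0$ is bounded above runs the wrong way: $r_0\le C(T)$ gives $r_0^5\le C(T)r_0$, so $C(T)r_0^5$ is the stronger bound and does not follow for small $r_0$. No rebookkeeping of the covering can produce it, since the content estimate in Theorem~\ref{neckdecomgeneral3dsing}(c) is linear in the radius. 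Moreover, a bound $Cr_0^5$ at all small scales would give zero upper $\mathscr H^1$-density at every point of $\mathcal S$, hence $\mathscr H^1(\mathcal S)=0$ by the density theorem. That would make Theorem~\ref{thm:rec} vacuous and the proof of Corollary~\ref{cor:singulartime} unnecessary. The exponent $5$ is the spacetime-volume scaling and appears to be a misprint for the scale-invariant bound $C(T)r_0$. That bound is exactly what your argument proves, so state this explicitly rather than asserting compatibility.

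Two smaller corrections. First, the subballs should have radius $\zeta r_0/2$, not $\zeta r_0$. For $y_k\in B^*(z_0,r_0)$ you only know $\mathfrak t(y_k)>1+r_0^2$, so $\mathfrak t(y_k)-2\zeta^{-2}(\zeta r_0)^2$ need not exceed $1$; the number of balls is still $C(T)$. Second, the ``main obstacle'' you identify in (ii) is not one. Take $\rho=Crr_0$ and balls centered at points $x'\in\CCC_a$ with $r_{x'}\lesssim\rho$. Ahlfors regularity of $\mu_a$ with respect to $d_Z$ bounds the total number of such $\rho$-balls by $Cr_a/\rho$, regardless of how $\CCC_a$ sits in time. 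Proposition~\ref{prop:volume}(i) bounds the $g_t$-volume of each ball's slice by $C\rho^3$ for every $t$. So the neck contribution is at most $Cr_a\rho^2$ even if $\CCC_a$ lay in a single slice, and you need neither an almost-horizontality estimate nor the $\int|\scal|$ fallback. With these adjustments your proof of (ii), including the treatment of the $b$-balls, is complete.
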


\subsection{Curvature, volume, and diameter estimates}

As another application of Theorem~\ref{neckdecomgeneral3dsing}, we have the following $L^1$-curvature estimate, analogous to \cite[Theorem 6.31]{fangli2025recti}, which improves \cite[Theorem 1.1]{kleinerlott2020singular2}.

\begin{thm}\label{thm:RmL1y}
For any $T>1$, there exists a constant $C>0$, depending on $T$ and the diameter of $(M,g_0)$, such that for any $t \in [2, T]$, we have
\begin{align*}
\int_{\mathcal R_t} |\Rm|\,\d V_{g_t}
\le
\int_{\mathcal R_t} r_{\Rm}^{-2}\,\d V_{g_t}
\le C.
\end{align*}
\end{thm}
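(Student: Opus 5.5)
The plan is to cover the time-slice $\mathcal R_t$, for fixed $t\in[2,T]$, by a bounded number of spacetime balls of a fixed radius. On each ball we then integrate $r_{\Rm}^{-2}$ using the neck decomposition Theorem~\ref{neckdecomgeneral3dsing}, following \cite[Theorem 6.31]{fangli2025recti}. The first inequality $|\Rm|\le r_{\Rm}^{-2}$ is immediate from Definition~\ref{def:curvatureradius}, so only the second needs proof.

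\textbf{Step 1 (global covering).} By the diameter bound for $d^*$ on $\mathcal M_{[1,T]}$ (depending on $T$ and $\diam_{g_0}(M)$), together with the two-sided volume bounds of Proposition~\ref{prop:volume}(ii), a ball-packing argument in the style of \cite[Propositions 5.34 and 5.35]{fangli2025structure} covers $Z_{[3/2,T]}$ by at most $C_0=C_0(T,\diam_{g_0}(M))$ balls $B^*(z_i,r_0)$. Here $r_0=\zeta\sqrt{1/8}$ is a fixed small radius chosen so that $\mathfrak t(z_i)-2\zeta^{-2}r_0^2>1$. For that we need $\mathfrak t(z_i)\ge 3/2$, which holds since $t\ge 2$.

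\textbf{Step 2 (local estimate).} On each ball $B^*(z_i,r_0)$ we apply Theorem~\ref{neckdecomgeneral3dsing} with $\delta,\eta$ chosen depending only on $T$. Intersecting with the slice $\mathcal R_t$ splits the integral into three pieces.

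\begin{enumerate}[label=\textnormal{(\alph*)}]
\item \emph{Regular balls $B^*(x_b,r_b)$.} By \cite[Lemma 6.21(v)]{fangli2025recti} and the $\ep$-regularity Theorem~\ref{thm:epregularity}, in the form of \cite[Theorem 8.13]{fangli2025structure}, we have $r_{\Rm}\ge c(T)r_b$ there. Proposition~\ref{prop:volume}(i) gives $|B^*(x_b,r_b)\cap\mathcal R_t|_t\le Cr_b^3$. Hence each such ball contributes at most $C(T)r_b$.
\item \emph{Neck regions $\NNN_a\cap B^*(x_a,r_a)$.} A point $x$ with $d_x=d_Z(x,\CCC_a)$ satisfies $r_{\Rm}(x)\ge c\,d_x$, by the cylindrical structure and Proposition~\ref{prop:basicproperty}(4). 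We then cover by a Vitali family $B^*(x^j,d_{x^j})$, each contributing $\lesssim d_{x^j}^{-2}\cdot d_{x^j}^3=d_{x^j}$ to the integral. The Ahlfors regularity of the packing measure $\mu_a$ bounds $\sum_j d_{x^j}\le C\mu_a(\CCC_a\cap B^*(x_a,2r_a))\le C(T)r_a$, exactly as in \cite[(6.58)]{fangli2025recti}. This uses the fact that the sum over one time-slice only counts each center ball once, since the balls have time extent $\sim d_x^2$.
\item \emph{The singular piece $S^{1,\delta,\eta}$.} This set lies in $\mathcal S$ and so does not meet $\mathcal R_t$.
\end{enumerate}

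Summing with the content estimate $\sum_a r_a+\sum_b r_b\le C(T)r_0$ gives $\int_{\mathcal R_t\cap B^*(z_i,r_0)}r_{\Rm}^{-2}\,\d V_{g_t}\le C(T)$. Summing over the $C_0$ balls finishes the proof.

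\textbf{Main obstacle.} The delicate step is (b): restricting the spacetime neck-region packing estimate to a single time-slice. One needs, uniformly, a lower bound $r_{\Rm}\gtrsim d_Z(\cdot,\CCC_a)$ on the neck region, and the slice-wise sum $\sum_j d_{x^j}$ must be controlled by the $1$-dimensional packing measure of the center set. I would follow \cite[Theorem 6.31]{fangli2025recti} verbatim. The inputs it needs here are the heat-kernel and volume estimates of Section~\ref{sec:4} and Proposition~\ref{prop:volume}, which replace the corresponding closed-flow facts. The time-slice volume bound (ii) of the preceding theorem gives the same conclusion through a layer-cake argument, and I would use it as a cross-check: $\int r_{\Rm}^{-2}=\int_0^\infty |\{r_{\Rm}^{-2}>\lambda\}|\,\d\lambda$ and $|\{r_{\Rm}<s\}\cap B^*\cap Z_t|\le C s^2 r_0$. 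This integrates to a logarithmically divergent bound, so the neck argument is genuinely needed rather than the volume estimate alone.
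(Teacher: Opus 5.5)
Your proposal is correct and is essentially the paper's argument. The paper gives Theorem~\ref{thm:RmL1y} as a direct application of the neck decomposition Theorem~\ref{neckdecomgeneral3dsing} on a bounded covering, repeating the slice-wise estimate of \cite[Theorem 6.31]{fangli2025recti}. That is exactly your Steps~1--2, and it is the same computation the paper carries out in Case~2b of the proof of Lemma~\ref{lem:key}.

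The one thing to fix is the wording of your closing cross-check. The bound $|\{r_{\Rm}<s\}\cap Z_t|_t\le Cs^2$ does not give the same conclusion; it only controls $\int r_{\Rm}^{-p}\,\d V_{g_t}$ for $p<2$, as you yourself observe. So the neck-region packing argument is the whole proof, not one of two routes.
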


As an immediate corollary of Theorem~\ref{thm:RmL1y}, together with \cite[Proposition 5.5(3)]{kleinerlott2017singular}, we obtain the following Lipschitz continuity of the total volume, which improves \cite[Theorem 1.2]{kleinerlott2020singular2}.

\begin{cor}
For any $T>1$, there exists a constant $C>0$, depending on $T$ and the diameter of $(M,g_0)$, such that for any \(s,t\in[2,T]\), we have
\begin{align*}
\bigl||\mathcal R_s|_s-|\mathcal R_t|_t\bigr|\le C|s-t|.
\end{align*}
\end{cor}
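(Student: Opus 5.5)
The corollary is a Lipschitz bound for the total volume $V(t):=|\mathcal R_t|_t$ on $[2,T]$, and the plan is to obtain it from the variation formula for the total volume together with the uniform $L^1$ scalar curvature bound of Theorem~\ref{thm:RmL1y}. For a singular Ricci flow, \cite[Proposition 5.5(3)]{kleinerlott2017singular} should be read as the statement that $V$ is absolutely continuous and decreases exactly as the integrated scalar curvature dictates: for $2\le s\le t\le T$,
\begin{align*}
V(t)-V(s)=-\int_s^t\int_{\mathcal R_\tau}\scal\,\d V_{g_\tau}\,\d\tau,
\end{align*}
possibly as an inequality with a one-sided defect. The defect accounts for volume lost at singular times, when regions of high curvature are removed. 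I will first fix which form is available, because the two cases below are handled differently.

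\textbf{Main case: the identity holds.} The argument is then immediate. Since $|\scal|\le C|\Rm|$ pointwise, Theorem~\ref{thm:RmL1y} gives
\begin{align*}
\int_{\mathcal R_\tau}|\scal|\,\d V_{g_\tau}\le C(T,\diam_{g_0}(M))\qquad\text{for every }\tau\in[2,T].
\end{align*}
Integrating over $[s,t]$ yields $|V(t)-V(s)|\le C|s-t|$, which is the corollary.

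\textbf{Fallback: only an inequality with defect.} This is where the work would be. The idea is to recover the identity by truncation. For a small $r$, restrict attention to the regions where $\scal\le r^{-2}$. By the canonical neighborhood assumption, exactly as in the proof of Proposition~\ref{prop:geodesicconvex}, these regions are bounded by central spheres of necks. On them the flow is smooth, and their volume evolves by $-\int\scal$ plus boundary flux terms. The plan is then to show two things:
\begin{itemize}
\item the boundary flux terms are bounded by $C r^{-1}\cdot r^2$, so they vanish as $r\to 0$;
\item the volume of $\{\scal>r^{-2}\}$ tends to zero uniformly in $\tau\in[2,T]$. This uniform control comes from Theorem~\ref{thm:RmL1y}: $\int r_{\Rm}^{-2}\le C$ gives $|\{r_{\Rm}<r\}|\le Cr^2$.
\end{itemize}

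I expect this step to be the main obstacle. The delicate point is showing that the uniform smallness of the high-curvature volume excludes any jump in $V$ at singular times, so that no defect survives in the limit. Once that is established, taking $r\to0$ returns us to the identity of the main case and the Lipschitz bound follows as above.
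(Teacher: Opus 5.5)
Your main case is the paper's argument: the paper deduces the corollary immediately by combining the volume formula of \cite[Proposition 5.5(3)]{kleinerlott2017singular} with the uniform bound $\int_{\mathcal R_\tau}|\scal|\,\d V_{g_\tau}\le C\int_{\mathcal R_\tau}|\Rm|\,\d V_{g_\tau}\le C$ for $\tau\in[2,T]$ from Theorem~\ref{thm:RmL1y}, integrated over $[s,t]$. The truncation fallback is therefore unnecessary; as written, its claim that the boundary flux vanishes is also not justified, because level sets of $\scal$ in nearly cylindrical necks can move arbitrarily fast and there may be on the order of $r^{-1}$ boundary spheres.
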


Next, we prove the following result, which may be regarded as a generalized form of Perelman's bounded diameter conjecture.

\begin{thm}\label{thm:bdddiam}
Let $D_{g_t}$ denote the sum of the diameters, with respect to $d_{g_t}$, of all connected components of $\mathcal R_t$. Then for any $T>2$,
\begin{align*}
D_{g_t}\le C
\end{align*}
for any $t \in [2, T]$, where $C>0$ is a constant depending on $T$ and the diameter of $(M,g_0)$.
\end{thm}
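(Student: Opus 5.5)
We bound $D_{g_t}$ by combining a lower bound on the scalar-curvature mass along long geodesics with the global $L^1$-curvature bound of Theorem~\ref{thm:RmL1y} and a bound on the number of components. Fix $t\in[2,T]$ and set $\mu_t(A):=\int_A(|\scal|+1)\,\d V_{g_t}$. By Theorem~\ref{thm:RmL1y} and the finiteness of the total volume $|\mathcal R_t|_t$ (\cite[Proposition 5.5]{kleinerlott2017singular}, together with its Lipschitz control), we get $\mu_t(\mathcal R_t)\le C_1(T)$. The plan is to show that every connected component $\mathcal R_t'$ with diameter $D'$ satisfies
\[
\mu_t(\mathcal R_t')\ge c(T)\max\{D',\,1\}\cdot c_0,
\]
for a uniform constant $c_0>0$. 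Summing over components then bounds both the number of components and $\sum D'$.

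\textbf{Mass along a geodesic.} Take two points $x,y$ in $\mathcal R_t'$ at distance close to $D'$; we may assume they have $\scal$ bounded, since otherwise we move them slightly. Proposition~\ref{prop:geodesicconvex} gives a minimizing geodesic $\gamma$ joining them inside $\mathcal R_t'$. Fix a scale $r_\delta=r_\delta(T)$ from the canonical neighborhood assumption and split $\gamma$ into unit-length pieces.

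On a piece that meets $\{\scal<r_\delta^{-2}\}$, argue as in Case~1 of Proposition~\ref{prop:ballpacking}: the curvature bound on a ball of radius $\eta r_\delta$ (\cite[Lemma 3.1]{kleinerlott2017singular}) together with $\kappa(T)$-noncollapsing gives volume at least $c r_\delta^3$ in the unit ball around the piece. On a piece where $\scal\ge r_\delta^{-2}$ throughout, argue as in Case~2: there the piece lies in a $\delta$-tube apart from boundedly many caps, because $\gamma$ is minimizing. Summing \eqref{eq:003} over the neck scales then gives $\int|\scal|\ge c$ on the unit neighborhood. The unit balls centered at every other piece are disjoint. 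Hence $\mu_t(\mathcal R_t')\ge c\,D'$ when $D'\ge 2$, while $\mu_t(\mathcal R_t')\ge c$ always.

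\textbf{Expected difficulty.} The main obstacle is this uniform lower mass bound for small components, including components that are entirely of high curvature. Such a component could be a closed round-ish piece, a capped tube, or a doubly capped tube.

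For a round component of scale $\rho$, the curvature term gives $\int\scal\sim\rho$, which can be tiny. So a pure mass lower bound fails for counting components. This does no harm to the diameter sum, however, because such a component has diameter $\le C\rho\le C\int_{\mathcal R_t'}|\scal|$. The same comparison holds for tubes, whose diameter is comparable to the integral of $\scal$ along the necks.

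So the refined claim I will actually prove is
\[
D'\le C\,\mu_t(\mathcal R_t')\quad\text{for every component},
\]
using the geodesic covering above. Here every unit piece, or every neck of scale $r_i$ in a high-curvature piece, contributes mass comparable to its length. Summing over components gives $D_{g_t}\le C\,\mu_t(\mathcal R_t)\le C(T)$, with the dependence on the initial diameter entering through Theorem~\ref{thm:RmL1y}.
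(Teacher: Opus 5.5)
Your proposal is correct and follows essentially the same route as the paper. The shared ingredients are the weighted measure $\mu_t$ and a minimizing geodesic in each component from Proposition~\ref{prop:geodesicconvex}, covered by disjoint balls that carry either definite volume (near a low-curvature point) or scalar-curvature mass comparable to their radius (in the neck/cap region, as in Case~2 of Proposition~\ref{prop:ballpacking}), followed by the global bound on $\mu_t(\mathcal R_t)$ from Theorem~\ref{thm:RmL1y} and the volume bound. You were right to abandon the uniform lower bound $\mu_t(\mathcal R_t')\ge c$ in favor of $D'\le C\,\mu_t(\mathcal R_t')$, and your explicit treatment of small, entirely high-curvature components (round pieces, capped or doubly capped tubes) covers a case that the paper's covering estimate \eqref{eq:bdddiam001} only handles implicitly.
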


\begin{proof}
Fix $t \in [2, T]$. Define a measure $\mu_t$ on $\mathcal R_t$ by
\begin{align*}
\mu_t(A):=\int_A (|\scal|+1)\,\d V_{g_t}
\end{align*}
for any Borel subset $A\subset \mathcal R_t$. Suppose that $\mathcal R_t'$ is a connected component of $\mathcal R_t$ whose diameter with respect to $d_{g_t}$ is $D_{g_t}'$. By Proposition~\ref{prop:geodesicconvex}, there exists a minimizing geodesic $\gamma$ in $\mathcal R_t'$ of length $D_{g_t}'/2$.

For a small universal constant $\delta>0$, the canonical neighborhood assumption implies that there exists $r_\delta>0$ depending on $T$ such that for any $x\in \mathcal R_t$ with $\scal(x)\ge r_\delta^{-2}$, the pointed manifold $(\mathcal R_t,g_t,x)$ is $\delta$-close to a $\kappa$-solution. Set $r_0:=\theta r_\delta$, where $\theta\ll 1$ is a universal constant.

Choose a covering $\{B_{g_t}(x_i,2r_0)\}_{i=1}^N$ of $\gamma$ such that $x_i\in \gamma$, the balls $\{B_{g_t}(x_i,r_0)\}_{i=1}^N$ are pairwise disjoint, and
\begin{align}\label{eq:bdddiam001}
C_0^{-1}Nr_0\le D_{g_t}'\le C_0Nr_0,
\end{align}
where $C_0>1$ is universal. We may assume that there exists $N'\in [0,N]$ such that for each $i\in [1,N']$ ($[1, N']=\emptyset$ if $N'=0$), there is a point $y_i\in B_{g_t}(x_i,r_0/2)$ with $\scal(y_i)<r_\delta^{-2}$, while for each $i\in [N'+1,N]$, we have $\scal\ge r_\delta^{-2}$ on $B_{g_t}(x_i,r_0/2)$. As in the proof of Proposition~\ref{prop:ballpacking}, we have
\begin{align*}
\mu_t(B_{g_t}(x_i,r_0))\ge c_1r_0^3
\end{align*}
for $i\in [1,N']$, and
\begin{align*}
\mu_t(B_{g_t}(x_i,r_0))\ge c_2r_0
\end{align*}
for $i\in [N'+1,N]$, where $c_1,c_2>0$ are universal constants. Hence, by \eqref{eq:bdddiam001},
\begin{align}\label{eq:bdddiam002}
\mu_t(\mathcal R_t')
\ge
\sum_{i=1}^N \mu_t(B_{g_t}(x_i,r_0))
\ge
c_1N'r_0^3+c_2(N-N')r_0
\ge
c_3D_{g_t}',
\end{align}
where $c_3>0$ is a constant depending on $T$.

On the other hand, by Theorem~\ref{thm:RmL1y} and \cite[Proposition 5.5(1)]{kleinerlott2017singular}, the total mass $\mu_t(\mathcal R_t)$ is uniformly bounded by a constant depending only on $T$ and the diameter of $(M,g_0)$. The conclusion now follows from \eqref{eq:bdddiam002}.
\end{proof}

As an application, we prove the following corollary.

\begin{cor}\label{cor:noncompact}
For any $t \ge 1$, if $\mathcal R_t$ has a noncompact component, then $\mathcal S_t\neq \emptyset$.
\end{cor}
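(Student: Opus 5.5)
Suppose $\mathcal R_t'$ is a noncompact connected component of $\mathcal R_t$. The plan is to produce a $d_Z$-Cauchy sequence in $\mathcal R_t'$ with no limit in $\mathcal R$; its limit in the completion $Z$ is then a singular point, and by the $2$-H\"older continuity of $\mathfrak t$ it lies in $Z_t$, giving $\mathcal S_t\neq\emptyset$.

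\textbf{Step 1: $\mathcal R_t'$ has an $\ep$-horn end.} By Theorem~\ref{thm:bdddiam} (for $t\ge 2$; for $t\in[1,2)$ a local version of the same measure argument, using finiteness of the volume and the local $L^1$ bound, should suffice), $\mathcal R_t'$ has finite diameter. Since $\scal$ is proper, as in the proof of Proposition~\ref{prop:geodesicconvex} the complement of a compact sublevel set $\Omega_r(t)\cap\mathcal R_t'$ consists of $\ep$-tubes, caps and horns. Noncompactness forces at least one $\ep$-horn, and finite diameter means this horn has finite length. Along the horn choose points $x_k$ with $\scal(x_k)\to\infty$; for instance, take $x_k$ to be centers of necks, so that $d_{g_t}(x_k,x_l)\to 0$ as $k,l\to\infty$ (the horn has finite length). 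Then $(x_k)$ is Cauchy for $d_{g_t}$.

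\textbf{Step 2: Cauchy in $d_Z$.} By Proposition~\ref{prop:basicproperty}(2), $d^*(x_k,x_l)\le d_{g_t}(x_k,x_l)$, so $(x_k)$ is $d^*$-Cauchy and converges to some $x\in Z$. By the H\"older bound, $\mathfrak t(x)=t$.

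\textbf{Step 3: $x\notin\mathcal R$.} Since $r_{\Rm}(x_k)\le C\scal(x_k)^{-1/2}\to 0$ and $r_{\Rm}$ is Lipschitz with respect to $d^*$ by Proposition~\ref{prop:basicproperty}(4), any point of $\mathcal R$ that is a $d^*$-limit of $(x_k)$ would have $r_{\Rm}=0$. This is impossible, since $r_{\Rm}>0$ on $\mathcal R$ (on the regular part each point has a small unscathed parabolic neighborhood, the time being at least $1$). Hence $x\in\mathcal S_t$.

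The main obstacle is Step 1: showing that the horn has finite length, and in particular handling $t\in[1,2)$, where Theorem~\ref{thm:bdddiam} is not stated. If a direct argument for this range is unavailable, I would first try to avoid diameters altogether: I would pick the $x_k$ along the horn and use $d^*(x_k,x_l)\le\max\{\sqrt{t-\tau},d_{W_1}^{g_\tau}\}$ for $\tau$ slightly less than $t$. Horn points at high curvature come from regions that become singular, but the relevant conjugate heat kernels should concentrate near the bounded-curvature entrance of the horn at earlier times. That approach would need a Cauchy estimate via Lemma~\ref{lem:mono1} applied at times $\tau\nearrow t$.
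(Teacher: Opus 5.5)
Your proposal is correct and follows the paper's proof essentially step by step. The diameter bound of Theorem~\ref{thm:bdddiam} yields a $d_{g_t}$-Cauchy sequence in $\mathcal R_t'$ with $\scal\to\infty$. That sequence is $d_Z$-Cauchy because $d_Z\le d_{g_t}$, and its limit lies in $Z_t\setminus\mathcal R=\mathcal S_t$ by the Lipschitz bound on $r_{\Rm}$. Your finite-length horn argument actually supplies a justification the paper omits: the paper only asserts that a Cauchy subsequence exists because $d_{g_t}$ is bounded, and boundedness alone does not give that.

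The range $t\in[1,2)$, which you flag as the main obstacle, is vacuous. For normalized initial data no singularity forms before time $2$, so $\mathcal R_t=\mathcal M_t$ is a closed manifold there and has no noncompact component. Your fallback heat-kernel argument is therefore unnecessary.
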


\begin{proof}
Let $\mathcal R_t'$ be a noncompact connected component of $\mathcal R_t$. Choose a sequence $x_i\in \mathcal R_t'$ such that $\scal(x_i)\to +\infty$. Since $d_{g_t}$ is bounded on $\mathcal R_t'$ by Theorem~\ref{thm:bdddiam}, after passing to a subsequence, we may assume that $\{x_i\}$ is a Cauchy sequence with respect to $d_{g_t}$.

By Proposition~\ref{prop:basic2}(1), the sequence $\{x_i\}$ is also Cauchy with respect to $d_Z$. Hence, $x_i$ converges to some point $x_\infty\in Z_t$. Since $\scal(x_i)\to +\infty$, Proposition~\ref{prop:basicproperty}(4) implies that $x_\infty\notin \mathcal R_t$. Therefore, $x_\infty\in \mathcal S_t$, and in particular $\mathcal S_t\neq \emptyset$.
\end{proof}

\subsection{Gromov--Hausdorff convergence of time-slices}

For each $t_0 >1$, let $Z_{t_0}'$ be a connected component of $Z_{t_0}$. We first introduce the following definition.

\begin{defn}\label{defn:componentbranch}
A branch $\RR' \subset \RR_{[1, t_0)}$ is \textbf{determined} by $Z_{t_0}'$ if for every $z\in Z_{t_0}'$, the support of $\nu_{z;s}$ is contained in $\RR_s'$ for all $s\in[1,t_0)$.
\end{defn}

It is clear from Definition~\ref{defntimeslicedist} that $Z_{t_0}'$ determines a unique branch $\RR'$. Moreover, $Z_{t_0}'$ also determines a branch $Z'\subset Z_{[1,t_0)}$ such that $Z_s'$ is connected for every $s\in[1,t_0)$ and contains $\RR_s'$.

As before, let $(\overline{\mathcal R_t'},d_{g_t})$ denote the metric completion of $(\mathcal R_t',d_{g_t})$. By the same argument as in the proof of Theorem~\ref{thm:convergence}, we obtain the following Gromov--Hausdorff convergence result.

\begin{prop}\label{prop:ztstructure}
In the above setting, we have the Gromov--Hausdorff convergence
\[
    (\overline{\mathcal R_t'},d_{g_t})
    \xrightarrow[t\nearrow t_0]{\GHconvtext}
    (Z'_{t_0},d^Z_{t_0}).
\]
In particular, \((Z'_{t_0},d^Z_{t_0})\) is a compact geodesic space.
The last conclusion follows from Proposition~\ref{prop:geodesicconvex} and the Gromov--Hausdorff convergence.
\end{prop}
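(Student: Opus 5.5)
We follow the same three-step scheme as in Section~\ref{sec:first}: sequential compactness, an isometric embedding of the limit slice, and surjectivity.

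\textbf{Step 1 (compactness).} We first establish a ball-packing estimate for the completed slices $(\overline{\mathcal R_t'},d_{g_t})$, uniform for $t\in[t_0/2,t_0)$ (taking $t_0/2\ge 1$ if necessary). Use the measure $\mu_t=(|\scal|+1)\,\d V_{g_t}$ on $\mathcal R_t'$. Its total mass is bounded by Theorem~\ref{thm:RmL1y} and the volume bound \cite[Proposition 5.5(1)]{kleinerlott2017singular}. Repeat the two-case argument of Proposition~\ref{prop:ballpacking}, with geodesics supplied by Proposition~\ref{prop:geodesicconvex}. For a ball $B_{g_t}(x,\ep)\neq\mathcal R_t'$, this gives $\mu_t(B_{g_t}(x,\ep))\ge c_\ep$. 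In Case~1, the required noncollapsing comes from Proposition~\ref{prop:nolocal}(a) together with the entropy bound~\eqref{entropybd-Y1}. In Case~2, the $\delta$-tube argument is unchanged. The extra points of $\overline{\mathcal R_t'}$ are finitely many horn ends, each a limit of points of $\mathcal R_t'$, so an $\ep$-separated set in the completion yields a $(\ep/2)$-separated set in $\mathcal R_t'$. Combining this with the diameter bound of Theorem~\ref{thm:bdddiam} and Gromov's compactness theorem, every sequence $t_j\nearrow t_0$ has a subsequence along which $(\overline{\mathcal R_{t_j}'},d_{g_{t_j}})$ converges to a compact geodesic space $X_{\GH}$. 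Geodesicity follows from Proposition~\ref{prop:geodesicconvex}.

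\textbf{Step 2 (embedding).} Exactly as in Proposition~\ref{prop:embh}, send $z\in Z'_{t_0}$ to the limit of its $H_3$-centers at times $t_j$. These centers lie in $\mathcal R'_{t_j}$ by Definition~\ref{defn:componentbranch}. Definition~\ref{defntimeslicedist}, Lemma~\ref{lem:mono1} and Proposition~\ref{existenceHncenter} show that distances between the centers converge to $d^Z_{t_0}$. This first gives total boundedness of $(Z'_{t_0},d^Z_{t_0})$, hence compactness via completeness (Proposition~\ref{prop:basic2}(2)). It then gives an isometric embedding $\iota:Z'_{t_0}\to X_{\GH}$. Here it matters that $d^Z_{t_0}<\infty$ on a connected component, which follows from the definition of the component together with Proposition~\ref{prop:basic2}(3) and a chaining argument.

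\textbf{Step 3 (surjectivity), the main obstacle.} We need an analogue of Lemma~\ref{lem:key}: every ball $B_{g_{t_j}}(x_j,\ep)\subset\mathcal R'_{t_j}$ contains, along a subsequence, an $H$-center of some point of $Z'_{t_0}$. Given this, the argument of Proposition~\ref{prop:surj} goes through verbatim. To prove the analogue, cover $Z_{[t_0-\theta,t_0]}$ near the branch by finitely many balls, using Proposition~\ref{prop:volume} and the diameter bound. Then apply Theorem~\ref{neckdecomgeneral3dsing} to these balls.

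The difficulty is that $\mathcal S$ is no longer contained in the terminal slice. The sets $S^{1,\delta,\eta}$ and the neck centers $\CCC_a$ may contain singular points at times $<t_0$, and these contribute curvature to $B_j$. We would handle this as in \eqref{eq:estima2a}. The packing measure of $\CCC_a\cap Z_{[2t_j-t_0,t_0)}$ still tends to zero as $t_j\to t_0$, and the pieces $S^{1,\delta,\eta}\cap Z_{[t_j-\theta',t_0)}$ have small $\mathscr H^1$-content, since their total content is finite and the time window shrinks. Hence either $\int_{B_j}|\scal|\,\d V_{g_{t_j}}\to 0$, which forces bounded curvature on $B_{g_{t_j}}(x_j,\ep/2)$ and reduces to Case~1, or an $H$-center is found directly as in Case~2b.

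The remaining point is that the point of $Z_{t_0}$ produced this way lies in $Z'_{t_0}$. Its conjugate heat kernel measure at time $t_j$ is concentrated near $x_j\in\mathcal R'_{t_j}$, so by uniqueness of branches it determines the branch $\mathcal R'$. Its $d^Z_{t_0}$-distance to $Z'_{t_0}$ is then finite, by an argument via Lemma~\ref{lem:mono1} along the branch, which places it in the component $Z'_{t_0}$. Since every subsequential limit is isometric to $(Z'_{t_0},d^Z_{t_0})$, the full convergence follows.
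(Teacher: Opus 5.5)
Your three steps are the route the paper intends: its proof is only a pointer to the proof of Theorem~\ref{thm:convergence}, and it does not discuss the point below. Steps~1 and~2 are fine. The gap is in your final paragraph, which is exactly where the singular setting differs from Section~\ref{sec:first}, and neither inference there is justified.

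\textbf{The point $w_j$ need not lie in $F$.} Uniqueness of branches runs backward in time: the slice $\mathcal R'_{t_j}$ determines the branch on $[1,t_j]$, not on $(t_j,t_0)$. The component $\mathcal R'_{t_j}$ can split at a neckpinch before $t_0$. Nothing proved so far rules out such splittings at times accumulating at $t_0$ from below. So the point $w_j\in Z_{t_0}$ supplied by your analogue of Lemma~\ref{lem:key} may have a branch that leaves $\mathcal R'$ before $t_0$, even though $\nu_{w_j;t_j}$ lives on $\mathcal R'_{t_j}$. In that case $d^Z_{t_0}(w_j,Z'_{t_0})=\infty$.

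\textbf{Finite distance does not give the component.} Even a finite distance would not put $w_j$ in $Z'_{t_0}$. Let $F\supset Z'_{t_0}$ be the finite-distance class; by Theorem~\ref{thm:bdddiam} it is the set of points of $Z_{t_0}$ whose branch is $\mathcal R'$. $F$ is clopen, but it could a priori be a union of several components. Either way the contradiction of Proposition~\ref{prop:surj} collapses, since no point $z_{i_0}$ of your dense subset of $Z'_{t_0}$ need satisfy $d^Z_{t_0}(w_j,z_{i_0})\le r_0/2$.

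\textbf{How to repair.} The first problem needs an extra argument that, for $j$ large, the produced point lies in $F$. Consider first the case where that point is the regular point $z_j$ on the worldline of $y_j$ (Case~1, and the bounded-curvature endgame of Case~2). Then $r_{\Rm}(z_j)\ge c(\ep)>0$ uniformly in $j$. By noncollapsing and finiteness of $|\mathcal R_{t_0}|_{t_0}$, only finitely many finite-distance classes of $Z_{t_0}$ contain such points. Each class other than $F$ has a branch that separates from $\mathcal R'$ at some time $<t_0$. Once $t_j$ exceeds all these times, $z_j\in F$. You would need a corresponding argument for the singular neck-center points produced in Case~2b.

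Given $w_j\in F$, the second problem is cheap to fix. Run Steps~2 and~3 with $F$ in place of $Z'_{t_0}$: $F$ is closed, hence complete, and $H_3$-centers of its points lie in $\mathcal R'_{t_j}$. This gives $F\cong X_{\GH}$. Geodesicity of $X_{\GH}$ then makes $F$ connected, i.e.\ $F=Z'_{t_0}$.

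\textbf{On the obstacle you flagged.} The $S^{1,\delta,\eta}$ part is not an issue. Since $B_j\subset\mathcal R_{t_j}$ and $S^{1,\delta,\eta}\subset\mathcal S$, it is simply discarded as in \eqref{eq:estima0a}, and its $\mathscr H^1$-content plays no role. What Case~2b does need is a treatment of nearest center points at times $>t_0$. These can occur because $Z$ continues past $t_0$. One way to avoid them is to apply the decomposition to $Z_{[1,t_0]}$, the $d^*$-completion of $\mathcal M_{[1,t_0)}$, mirroring Section~\ref{sec:first}.
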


As a corollary, we prove the following.

\begin{prop}\label{prop:distance}
For any $t_0 \in [1, \infty) \setminus \t(\MS)$, the distance $d^Z_{t_0}$ agrees with $d_{g_{t_0}}$ on each connected component of $\RR_{t_0}$.
\end{prop}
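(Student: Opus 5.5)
Since $t_0\notin \mathfrak t(\mathcal S)$, we have $\mathcal S_{t_0}=\emptyset$, so $Z_{t_0}=\mathcal R_{t_0}$. We will show that on each connected component $\mathcal R_{t_0}''$ of $\mathcal R_{t_0}$ the two distances coincide. The inequality $d^Z_{t_0}\le d_{g_{t_0}}$ is immediate: by Lemma~\ref{lem:mono1}, $d_{W_1}^{g_s}(\nu_{x;s},\nu_{y;s})\le d_{g_{t_0}}(x,y)$ for all $s<t_0$, and we let $s\nearrow t_0$. The work lies in the reverse inequality.

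First we show that $\mathcal R_{t_0}''$ is compact. If it were noncompact, Corollary~\ref{cor:noncompact} would give $\mathcal S_{t_0}\neq\emptyset$, which is a contradiction. In particular $\scal$ is bounded on $\mathcal R_{t_0}''$. Next we check that $\mathcal R_{t_0}''$ is a union of connected components of $(Z_{t_0},d^Z_{t_0})$, so that it coincides with some $Z_{t_0}'$. Proposition~\ref{prop:basic2}(3) shows that $d^Z_{t_0}=d_{g_{t_0}}$ locally, so $d^Z_{t_0}$-balls are locally $g_{t_0}$-balls. Compactness then makes $\mathcal R''_{t_0}$ both open and closed in $Z_{t_0}$, using Proposition~\ref{prop:basic2}(1) and completeness.

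Now we invoke Proposition~\ref{prop:ztstructure}: $(Z'_{t_0},d^Z_{t_0})$ is a compact geodesic space. Since $d^Z_{t_0}$ is geodesic and agrees locally with $d_{g_{t_0}}$, the two length structures agree. Concretely, take a $d^Z_{t_0}$-geodesic $\gamma$ from $x$ to $y$. By compactness there is a uniform radius $r$ such that Proposition~\ref{prop:basic2}(3) applies on $r$-balls; the uniformity comes from a Lebesgue number argument over a finite cover. Subdivide $\gamma$ into pieces of $d^Z_{t_0}$-length less than $r/2$. On each piece the two distances coincide, so summing gives
\[
d_{g_{t_0}}(x,y)\le\sum d_{g_{t_0}}(p_k,p_{k+1})=\sum d^Z_{t_0}(p_k,p_{k+1})=d^Z_{t_0}(x,y).
\]
One subtlety needs checking: the consecutive points $p_k,p_{k+1}$ must lie in a common ball $B_{g_{t_0}}(w,r)$. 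This follows because, by local agreement, a small $d^Z_{t_0}$-ball around $p_k$ is a $g_{t_0}$-ball.

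The main obstacle is establishing that the radius in Proposition~\ref{prop:basic2}(3) is uniform over the compact component and that $\mathcal R''_{t_0}$ is exactly one $d^Z_{t_0}$-component. I would handle the former by covering the component with finitely many balls $B_{g_{t_0}}(w_i,r_{w_i}/2)$. For the latter, I would use that a finite $d^Z_{t_0}$ forces the points to share a branch (Definition~\ref{defntimeslicedist}) together with the uniqueness of branches.
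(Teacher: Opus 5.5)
Your proposal is correct and follows the same route as the paper: compactness of each component of $\RR_{t_0}$ from Corollary~\ref{cor:noncompact}, identification of that component with a component $Z'_{t_0}$ of $Z_{t_0}$, and a length comparison along the $d^Z_{t_0}$-geodesic given by Proposition~\ref{prop:ztstructure}, using the local agreement in Proposition~\ref{prop:basic2}(3) together with the easy inequality $d^Z_{t_0}\le d_{g_{t_0}}$ (which you get more directly from Lemma~\ref{lem:mono1}). Two of your side justifications need tightening, though neither changes the route: Proposition~\ref{prop:basic2}(3) by itself does not make small $d^Z_{t_0}$-balls small $g_{t_0}$-balls, but compactness does, since the identity $(\RR''_{t_0},d_{g_{t_0}})\to(\RR''_{t_0},d^Z_{t_0})$ is a $1$-Lipschitz bijection from a compact space onto a Hausdorff one and so has a uniformly continuous inverse; and backward uniqueness of branches alone does not separate distinct components of $\RR_{t_0}$ (different final slices can share earlier slices after a neckpinch), so you should add that a compact component, having bounded curvature, persists backward as its own component of $\RR_s$ for $s$ slightly below $t_0$, which forces $d^Z_{t_0}=\infty$ between different components.
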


\begin{proof}
Since $t_0 \notin \t(\MS)$, the slice $Z_{t_0}$ contains no singular points. By Corollary~\ref{cor:noncompact}, every connected component of $\RR_{t_0}$ is compact. Hence each connected component of $\RR_{t_0}$ is also a connected component of $Z_{t_0}$, because by Proposition~\ref{prop:ztstructure} every connected component of $Z_{t_0}$ is a compact geodesic space.

Now let $\RR'_{t_0}$ be a connected component of $\RR_{t_0}$, and let $x,y\in \RR'_{t_0}$. Since $(\RR'_{t_0},d^Z_{t_0})$ is a geodesic space, there exists a minimizing geodesic
\[
\gamma\subset \RR'_{t_0}
\]
with respect to $d^Z_{t_0}$ connecting $x$ and $y$ and hence $d_{g_{t_0}}(x,y) \le d^Z_{t_0}(x,y)$. On the other hand, we always have
\[
d^Z_{t_0}(x,y)\le d_{g_{t_0}}(x,y)
\]
as a consequence of Proposition~\ref{prop:basic2}(3). Therefore,
\[
d^Z_{t_0}(x,y)=d_{g_{t_0}}(x,y).
\]
This proves the proposition.
\end{proof}

Next, we prove the following past-continuity statement for the intrinsic time-slices.

\begin{prop}\label{prop:pastcont}
In the above setting, we have the Gromov--Hausdorff convergence
\[
    (Z'_t,d^Z_t)
    \xrightarrow[t\nearrow t_0]{\GHconvtext}
    (Z'_{t_0},d^Z_{t_0}).
\]
\end{prop}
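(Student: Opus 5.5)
The plan is to compare $(Z'_t,d^Z_t)$ with $(\overline{\mathcal R'_t},d_{g_t})$ and then apply Proposition~\ref{prop:ztstructure}. Fix $t<t_0$ close to $t_0$, and write $Z'_t$ for the slice at time $t$ of the branch $Z'\subset Z_{[1,t_0)}$ determined by $Z'_{t_0}$. The key claim is that for every $t\in[1,t_0)$ there is a surjective isometry
\[
(Z'_t,d^Z_t)\cong(\overline{\mathcal R'_t},d_{g_t}).
\]
Once this holds, the proposition follows directly from Proposition~\ref{prop:ztstructure}.

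To prove the claim, apply Proposition~\ref{prop:ztstructure} at the time $t$ itself, with $t$ in place of $t_0$ and the component $Z'_t$. This gives
\[
(\overline{\mathcal R'_s},d_{g_s})\to(Z'_t,d^Z_t)\quad\text{as } s\nearrow t.
\]
Since $t<t_0$ and $\mathcal R'$ is a branch over $[1,t_0)$, the slice $\mathcal R'_t$ is a smooth, possibly incomplete, time-slice. I first need $d^Z_t=d_{g_t}$ on $\mathcal R'_t$. The inequality $d^Z_t\le d_{g_t}$ comes from Lemma~\ref{lem:mono1}. For the reverse inequality, let $\gamma$ be a $d^Z_t$-geodesic between $x,y\in\mathcal R'_t$; it exists because $(Z'_t,d^Z_t)$ is geodesic by Proposition~\ref{prop:ztstructure}. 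By Proposition~\ref{prop:basic2}(3), along the portions of $\gamma$ lying in $\mathcal R_t$ the length of $\gamma$ equals its $g_t$-length.

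Next I need the singular set of $Z'_t$ to be small. By the previous theorem, $\mathcal S_t$ has Minkowski dimension at most $1$ for the metric $d^Z_t$; I will use this as established earlier via the same argument as Theorem~\ref{thm:singular}(2) together with the neck decomposition. I expect a better fact to hold: each point of $\mathcal S'_t$ is the endpoint of an $\varepsilon$-horn in $\mathcal R'_t$. The canonical neighborhood assumption controls the geometry near high curvature, and the tangent flows at singular points are cylindrical (Lemma~\ref{lem:globaltangent}). Granting this, one can perturb $\gamma$ so that it avoids the singular points except possibly at its endpoints, at arbitrarily small cost in length; shortcuts through the neck spheres work as in Proposition~\ref{prop:geodesicconvex}. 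This gives $d_{g_t}\le d^Z_t$ on $\mathcal R'_t$.

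With equality of the two metrics on $\mathcal R'_t$, the identity map extends to an isometric embedding $\overline{\mathcal R'_t}\to Z'_t$, since $Z'_t$ is complete by Proposition~\ref{prop:basic2}(2). Its image is closed, and it is dense because every singular point is a $d^Z_t$-limit of regular points on the same slice, namely along the horn. Hence the map is onto, which proves the claim.

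For the convergence itself, take $\varepsilon>0$. Proposition~\ref{prop:ztstructure} gives $t_1<t_0$ such that $d_{\GH}\bigl((\overline{\mathcal R'_t},d_{g_t}),(Z'_{t_0},d^Z_{t_0})\bigr)<\varepsilon$ for $t\in(t_1,t_0)$. Transferring this through the isometry of the claim gives the desired convergence.

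The main obstacle is the claim's reverse inequality $d_{g_t}\le d^Z_t$ on $\mathcal R'_t$, equivalently showing that singular points in $Z'_t$ cannot create shortcuts. If the horn description of $\mathcal S_t$ is hard to justify, I would instead use the definition of $d^Z_t$ as a limit of $W_1$-distances. I would compare $\nu_{x;s}$ and $\nu_{y;s}$ for $s\nearrow t$ with Proposition~\ref{HncenterScal}(ii), using that $x$ and $y$ are regular and have bounded curvature along their worldlines near $t$. The remaining difficulty is to exclude shortcuts through high-curvature regions for $s$ slightly below $t$. For this I would use the $L^1$-curvature bound Theorem~\ref{thm:RmL1y} and the neck structure, arguing as in the proof of Lemma~\ref{lem:key}.
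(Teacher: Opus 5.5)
Your central claim, that $(Z'_t,d^Z_t)$ is isometric to $(\overline{\mathcal R'_t},d_{g_t})$ for every $t<t_0$, is false at singular times. This is a genuine gap. The paper remarks after Proposition~\ref{prop:basic2} that $d^Z_t$ need not agree with $d_{g_t}$ on a component of $\mathcal R_t$. Proposition~\ref{prop:distance} gives equality only for $t\notin\mathfrak t(\mathcal S)$.

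The problem is that a singular point of $Z_t$ can be the common tip of two $\varepsilon$-horns, not just one.
\begin{itemize}
\item Nonseparating neckpinch, e.g.\ on $S^2\times S^1$. At the pinch time, $\mathcal R'_t$ is roughly $S^2\times(0,1)$ with a horn at each end. Both tips are the same point $p\in Z_t$, because conjugate heat kernels based near either tip concentrate on the same thin neck just before $t$. For $x,y$ near the two ends, $d^Z_t(x,y)\le d^Z_t(x,p)+d^Z_t(p,y)$ is small, while $d_{g_t}(x,y)$ is comparable to the length of the cylinder. The $d^Z_t$-geodesic runs through $p$, and no perturbation inside $\mathcal R'_t$ can remove this shortcut. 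The rerouting in Proposition~\ref{prop:geodesicconvex} works only because the excursion enters and leaves through the same neck sphere; here the path leaves through two different horns. Also, $\overline{\mathcal R'_t}$ gains two points where $Z'_t$ has one, so your extension map is not injective.
\item Separating neckpinch. The connected component $Z'_t$ of $Z_t$, which is what Proposition~\ref{prop:ztstructure} at time $t$ refers to, contains both horns. But $\mathcal R'_t$ is only one of them, so surjectivity fails as well.
\end{itemize}
Your fallback through $W_1$-distances cannot rescue this, since it is aimed at a false statement.

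The fix is to avoid passing to the endpoint $s=t$. You already invoke Proposition~\ref{prop:ztstructure} at time $t$. The branch determined by $Z'_t$ is $\mathcal R'$ restricted to $[1,t)$, so this gives $(\overline{\mathcal R'_s},d_{g_s})\to(Z'_t,d^Z_t)$ as $s\nearrow t$. Then argue as follows:
\begin{enumerate}
\item Given any sequence $t_j\nearrow t_0$, choose $s_j<t_j$ with $s_j\nearrow t_0$ and $d_{\mathrm{GH}}\bigl((Z'_{t_j},d^Z_{t_j}),(\overline{\mathcal R'_{s_j}},d_{g_{s_j}})\bigr)\le 2^{-j}$.
\item Apply Proposition~\ref{prop:ztstructure} at $t_0$ along $s_j$.
\item Conclude with the triangle inequality for $d_{\mathrm{GH}}$.
\end{enumerate}
This diagonal argument is the paper's proof, and it never compares $Z'_t$ with the slice $\mathcal R'_t$ at the same time. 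Restricting your isometry to $t\notin\mathfrak t(\mathcal S)$, where it does hold, would not suffice by itself. You must control all $t$ near $t_0$, and singular times are not known to stay away from $t_0$.
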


\begin{proof}
Let \(t_j\nearrow t_0\) be an arbitrary sequence. By Proposition~\ref{prop:ztstructure}, for each \(j\) we may choose \(s_j<t_j\) sufficiently close to \(t_j\) such that \(s_j\nearrow t_0\) and
\[
    d_{\mathrm{GH}}\left(
        (Z'_{t_j},d^Z_{t_j}),
        (\overline{\mathcal R'_{s_j}},d_{g_{s_j}})
    \right)
    \le 2^{-j}.
\]
On the other hand, applying Proposition~\ref{prop:ztstructure} again to the time \(t_0\), we have
\[
    (\overline{\mathcal R'_{s_j}},d_{g_{s_j}})
    \xrightarrow[j\to\infty]{\GHconvtext}
    (Z'_{t_0},d^Z_{t_0}).
\]
Combining these two facts, we conclude that
\[
    (Z'_{t_j},d^Z_{t_j})
    \xrightarrow[j\to\infty]{\GHconvtext}
    (Z'_{t_0},d^Z_{t_0}).
\]
Since the sequence \(t_j\nearrow t_0\) was arbitrary, the proof is complete.
\end{proof}

Finally, by the same argument, we generalize Theorem~\ref{thm:singular} to the present setting.

\begin{thm}
For any $T>1$, there exists a constant $C>0$, depending on $T$ and the diameter of $(M,g_0)$, such that the following properties hold for any $t\in [2,T]$:
\begin{enumerate}[label=\textnormal{(\arabic*)}]
\item For any $r>0$,
\begin{equation*}
\bigl|\{y\in Z_t \mid d_t^Z(y,\mathcal S_t)<r\}\bigr|_t\le Cr^2.
\end{equation*}

\item The Minkowski dimension of $\mathcal S_t$ with respect to $d_t^Z$ satisfies
\begin{equation*}
\dim_{\mathscr M}(\mathcal S_t)\le 1.
\end{equation*}
\end{enumerate}
\end{thm}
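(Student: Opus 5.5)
We follow the proof of Theorem~\ref{thm:singular}, with the closed-flow inputs replaced by their singular Ricci flow analogues: the time-slice high-curvature volume estimate from the theorem following Theorem~\ref{neckdecomgeneral3dsing}, the $L^1$-bound of Theorem~\ref{thm:RmL1y}, and the Gromov--Hausdorff convergence of Proposition~\ref{prop:ztstructure}. Throughout we fix $t\in[2,T]$.

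\textbf{Part (1).} We first cover $Z_{[t/2,T]}$, or any range with $t-2r_0^2>1$, by at most $C_0$ balls $B^*(z_i,r_0)$. Here $r_0$ is a small multiple of $1$, and $C_0$ depends on $T$ and $\diam_{g_0}(M)$. The count comes from the $d^*$-diameter bound (the lemma before Proposition~\ref{prop:basicproperty}) together with the volume estimates of Proposition~\ref{prop:volume}, via the usual ball-packing argument. Applying part (ii) of the volume theorem to each ball and summing gives
\[
\bigl|\{r_{\Rm}<r\}\cap Z_t\bigr|_t\le Cr^2 .
\]
Next, if $d_t^Z(y,\mathcal S_t)<r$, then Proposition~\ref{prop:basic2}(1) gives $d_Z(y,\mathcal S)<r$. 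Since $r_{\Rm}=0$ on $\mathcal S$, Proposition~\ref{prop:basicproperty}(4) (extended to $Z$ by continuity) yields $r_{\Rm}(y)<C(T)r$. This proves (1). The volume here is taken on $\mathcal R_t$, since $\mathcal S_t$ is null.

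\textbf{Part (2).} It suffices to cover $\mathcal S_t$ by at most $Cr^{-1}$ balls of $d_t^Z$-radius $Cr$, for each small $r$.

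\emph{Step 1 (high-curvature sets at earlier times).} For $s<t$ close to $t$, set $S_r(s)=\{x\in\mathcal R_s: r_{\Rm}(x)<\delta r\}$. We argue as in Case 2 of Proposition~\ref{prop:ballpacking}, using canonical neighborhoods, the geodesic convexity of Proposition~\ref{prop:geodesicconvex}, and the bound on $D_{g_s}$. This shows that each $x_0\in S_r(s)$ satisfies
\[
\int_{B_{g_s}(x_0,r)}|\scal|\,\d V_{g_s}\ge c_0r ,
\]
unless its component has diameter $\lesssim r$. Components of diameter $\lesssim r$ are absorbed by the measure bound $\mu_s\ge c r^3$ only finitely often. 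Alternatively, they are handled by the observation that each component of $Z_t$ carries positive $\mu$-mass bounded below, so there are boundedly many of them. Theorem~\ref{thm:RmL1y} together with a Vitali covering then covers $S_r(s)$ by at most $C_0r^{-1}$ balls of $g_s$-radius $2r$, with $C_0$ independent of $s$.

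\emph{Step 2 (transfer to $\mathcal S_t$).} By Theorem~\ref{thm:RmL1y} and Theorem~\ref{thm:bdddiam}, only boundedly many components $Z_t'$ can contain singular points. Indeed, any component of $Z_t$ containing a singular point comes from a branch whose slices near $t$ have definite $\mu$-mass, so the bound on $\mu_s(\mathcal R_s)$ limits their number. On each such component we use the convergence $(\overline{\mathcal R_s'},d_{g_s})\to(Z_t',d_t^Z)$ of Proposition~\ref{prop:ztstructure}. For each $p\in\mathcal S_t\cap Z'_t$, take $H_3$-centers $x_s$ at time $s$. By Lemma~\ref{lem:curve} and Proposition~\ref{prop:basicproperty}(4), $r_{\Rm}(x_s)\le C(d^*(x_s,p)+\text{small})\to0$, so $x_s\in S_r(s)$ for $s$ close to $t$. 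The GH approximations realized through $H_3$-centers, as in Proposition~\ref{prop:embh}, then carry the $C_0r^{-1}$ balls covering $S_r(s)$ to balls of $d_t^Z$-radius $3r$ covering $\mathcal S_t\cap Z_t'$.

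\textbf{Main obstacle.} The hardest point is Step 2. One must make sure that a single time $s=s(r)$ works uniformly for all $p\in\mathcal S_t$, and that the GH approximation is compatible with $H_3$-centers. For the first issue we will use compactness of $(Z_t',d_t^Z)$: take a finite $r$-net of $\mathcal S_t$, choose $s$ for the net points only, and absorb the rest into doubled radii. The compatibility with $H_3$-centers comes from the construction of $\iota$ in Proposition~\ref{prop:embh} and the monotonicity of Lemma~\ref{lem:mono1}.
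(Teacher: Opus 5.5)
\textbf{Verdict: genuine gap in part (2).} Your part (1) follows the paper's proof. So does the skeleton of part (2): cover the high-curvature set $S_r(s)$ at a slightly earlier time by $O(r^{-1})$ balls using Theorem~\ref{thm:RmL1y}, then push the cover to $\mathcal S_t$ through $H_3$-centers and Proposition~\ref{prop:ztstructure}. The paper's proof is literally ``the same argument'' as Theorem~\ref{thm:singular}.

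The gap is in the one genuinely new point, which you correctly spotted. Here $d_t^Z$ is an extended metric, so every $d_t^Z$-ball lies in a single component of $Z_t$. Hence every component of $Z_t$ meeting $\mathcal S_t$ costs at least one ball. In Theorem~\ref{thm:singular} this never arises, because $Z_T$ is connected, so the closed-case template gives no guidance here. Your justification that only boundedly many components are involved is wrong.
\begin{itemize}
\item A component of $Z_t$ containing a singular point need not come from a branch with definite $\mu_s$-mass near $t$. Suppose a piece split off at an earlier neckpinch and becomes extinct at time $t$, with tangent flow $\mathcal C^0(\Gamma)$. Its component of $Z_t$ is a single singular point, while $\mu_s(\mathcal R'_s)\approx c\sqrt{t-s}\to 0$.
\item In Step~1, the lower bound $\mu_s\gtrsim r_\delta^3$ from Case~1 of Proposition~\ref{prop:ballpacking} needs a point with $\scal<r_\delta^{-2}$. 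A component on which $\scal\ge r_\delta^{-2}$ everywhere (a near-round space form, or a short capped tube) of size $\rho\ll r$ has $\mu_s$-mass only about $\rho$. So neither the number of such small components nor the claimed independence of $C_0$ from $s$ follows.
\item The finite $r$-net of $\mathcal S_t$ in your last paragraph is circular. Compactness of $(\mathcal S_t,d_t^Z)$ already presupposes finitely many components.
\end{itemize}

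\textbf{What your tools do give.} By Proposition~\ref{prop:ztstructure} and the Case~1/Case~2 argument, each component of $Z_t$ of $d_t^Z$-diameter $\ge r$ has branch slices with $\mu_s$-mass $\gtrsim r$ for $s$ near $t$ (and $r$ small). So there are at most $Cr^{-1}$ such components.

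Components of diameter $<r$ meeting $\mathcal S_t$, notably isolated extinction points, are not counted by Theorem~\ref{thm:RmL1y}, Theorem~\ref{thm:bdddiam}, or the volume bounds. The content estimate of Theorem~\ref{neckdecomgeneral3dsing} does not obviously count them either. Such points cannot lie in neck regions, in the regular $b$-balls, or in $\mathcal C_{0,a}$, so they fall into $\tilde S^{1,\delta,\eta}$, which is only $\mathscr H^1$-null.

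To finish part (2) you need a separate argument that the number of components of $Z_t$ of diameter $<r$ meeting $\mathcal S_t$ is $O(r^{-1})$, and in particular finite.

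\textbf{A smaller point.} On a single compact component, the rest of Step~2 works. However, transferring balls needs uniform convergence of $d_{W_1}^{g_s}(\nu_{\cdot;s},\nu_{\cdot;s})$ upward to $d_t^Z$. This follows from Dini's theorem, since these functions are $1$-Lipschitz in each variable for $d_t^Z$. The monotonicity of Lemma~\ref{lem:mono1} alone only gives $d_{W_1}^{g_s}\le d_t^Z$, which is the wrong direction.
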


\section{Higher-dimensional extensions under positive isotropic curvature}
The main results of this paper can be extended to higher-dimensional Ricci flows under the positive isotropic curvature condition.

Suppose that~$(M^n,g_0)$ is a normalized Riemannian manifold with positive isotropic curvature (PIC) and contains no nontrivial incompressible $(n-1)$-dimensional space form. In addition, we assume that either $n=4$ or $n\ge 12$. We also note the preprint
\cite{chen2024}, which states an improvement for $n \ge 9$.

It follows from \cite{chenzhu2006surgery} and \cite{brendle2019iso} that one can construct a Ricci flow with surgery starting from~$(M^n,g_0)$. As in the three-dimensional case, the canonical neighborhood assumption holds: points of high scalar curvature have local geometry modeled on $\kappa$-solutions, where a $\kappa$-solution is a $\kappa$-noncollapsed complete ancient Ricci flow solution with uniformly positive isotropic curvature. The classification of all such $\kappa$-solutions is now complete; see \cite{brendlenaff2023,brendledaskalopoulosnaffsesum2023,choli2023}.

By taking a limit of Ricci flows with surgery as the surgery parameter tends to zero, one obtains, as in \cite{kleinerlott2017singular}, a singular Ricci flow $\mathcal M$ starting from~$(M^n,g_0)$. As in dimension three, this singular Ricci flow is unique; see \cite{haslhofer2022}.

On this singular Ricci flow, one can follow essentially the same arguments as in the previous sections to obtain analogous results. The main difference is that the only noncompact tangent flow is the standard Ricci flow on $S^{n-1}\times \R$. Accordingly, one needs to work with $(1,\delta,\mathfrak c,r)$-cylindrical neck regions and the corresponding neck decomposition theorem.

We now list some of the main results.

\begin{thm}
Let $\MM$ be the singular Ricci flow starting from $(M^n, g_0)$. Let $(Z, d_Z, \t)$ be the completion of $\MM_{[1,\infty)}$ with respect to the spacetime distance $d^*$. Then the following statements hold.
\begin{enumerate}[label=\textnormal{(\arabic*)}]
\item The singular set $\MS$ is horizontally parabolic $1$-rectifiable with $\mathscr H^{1/2}(\t(\MS))=0$.

\item For any $T>1$ and $z_0\in Z_{[1, T]}$ with $\mathfrak t(z_0)-2r_0^2>1$, we have
\begin{align*}
\mathscr H^1\bigl(\mathcal S\cap B^*(z_0,r_0)\bigr)\le C(n, T)r_0^{n+2}.
\end{align*}

\item For any $T>2$, there exists a constant $C>0$, depending on $n$, $T$ and the diameter of $(M,g_0)$, such that for any $t \in [2, T]$, we have
\begin{align*}
D_{g_t}+\int_{\mathcal R_t} |\Rm|^{\frac{n-1}{2}}\,\d V_{g_t}
\le D_{g_t}+
\int_{\mathcal R_t} r_{\Rm}^{-(n-1)}\,\d V_{g_t}
\le C,
\end{align*}
where $D_{g_t}$ denotes the sum of the diameters, with respect to $d_{g_t}$, of all connected components of $\mathcal R_t$.

\item For each $t_0>1$, we have the Gromov--Hausdorff convergence
\begin{align*}
(\overline{\mathcal R_t'},d_{g_t})
\xrightarrow[t\nearrow t_0]{\GHconvtext}
(Z_{t_0}',d_{t_0}^Z),
\end{align*}
where $Z'_{t_0}$ is a connected component of $Z_{t_0}$ that determines $\RR_t'$ for $t < t_0$. In particular, $(Z_{t_0}',d_{t_0}^Z)$ is a compact geodesic space.

\item For any $t \ge 1$, the Minkowski dimension of $\mathcal S_t$ with respect to $d_t^Z$ satisfies
\begin{equation*}
\dim_{\mathscr M}(\mathcal S_t)\le 1.
\end{equation*}
\end{enumerate}
\end{thm}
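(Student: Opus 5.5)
The plan is to reduce each item to the three-dimensional argument, changing only the model geometry. The PIC setting supplies the same inputs: canonical neighborhoods modeled on PIC $\kappa$-solutions, whose classification is complete; proper scalar curvature; $\kappa$-noncollapsing; and the heat kernel construction of \cite{lai2021} carried over to the limit of surgery flows. With these, the Section~\ref{sec:4} estimates go through verbatim. Near the ends of a time-slice, which are $\ep$-necks modeled on $S^{n-1}\times\R$, the boundary terms vanish because the central spheres have area $O(r_i^{n-1})$ and Lemma~\ref{lem:potent1} holds in every dimension. This yields the maximum principles, variance monotonicity, Poincar\'e and log-Sobolev inequalities, Nash/$\mathcal W$ monotonicity, $\ep$-regularity and heat kernel bounds in dimension $n$. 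The construction of $d^*$, of the completion $Z$, and of $d^Z_t$ in Section~5 then goes through unchanged. Since PIC blow-ups are shrinkers, the tangent flow classification gives $S^n/\Gamma$, the quotient cylinders $S^{n-1}\times\R$ and $S^{n-1}\times_{\mathbb Z_2}\R$, or $\R^n$. So every singular point is quotient cylindrical of the $\mathcal C^1$ type, and the analogue of Lemma~\ref{lem:globaltangent} holds.

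Next come the structure statements (1), (2), and the curvature part of (3). The first step is to establish the $n$-dimensional neck decomposition, the analogue of Theorem~\ref{neckdecomgeneral3dsing}, using only $(1,\delta,\cc,r)$-cylindrical neck regions. This needs the Lojasiewicz inequality and strong uniqueness of the $S^{n-1}\times\R$ tangent flow from \cite{fangli2025loja}, which are dimension-independent, together with Ahlfors regularity of the packing measures. Rectifiability and $\mathscr H^{1/2}(\t(\MS))=0$ then follow exactly as in Theorem~\ref{thm:rec} and Corollary~\ref{cor:singulartime}. The local $\mathscr H^1$ bound follows from the content estimate, with $r_0^{n+2}$ being the parabolic volume scaling replacing $r_0^5$.

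For the $L^1$-type bound, the key scaling fact is that on a neck of scale $r$,
\[
\int r_{\Rm}^{-(n-1)}\,\d V \approx r^{-(n-1)}\cdot r^{n-1}\cdot(\text{length}) .
\]
So, as in \cite[Theorem 6.31]{fangli2025recti}, the integral over the high-curvature set is bounded by $\sum_a r_a+\sum_b r_b$, and H\"older gives the $|\Rm|^{(n-1)/2}$ version. The diameter bound repeats Theorem~\ref{thm:bdddiam} with the measure $\int(r_{\Rm}^{-(n-1)}+1)\,\d V$: a $\delta$-tube of length $\ell$ carries mass $\gtrsim \ell$, and low-curvature balls carry mass $\gtrsim r_0^n$ by noncollapsing.

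For (4) and (5), Proposition~\ref{prop:ballpacking} is replaced by the same ball-packing argument with the measure above. Geodesic convexity (Proposition~\ref{prop:geodesicconvex}) holds since ends are $\ep$-horns modeled on $S^{n-1}\times\R$. The proof of Lemma~\ref{lem:key} uses the new neck decomposition; the step ``small integral forces bounded curvature'' uses that a geodesic segment of length $\ep$ through high curvature forces mass $\gtrsim\ep$. The embedding and surjectivity (Propositions~\ref{prop:embh}, \ref{prop:surj}) are then identical, componentwise as in Proposition~\ref{prop:ztstructure}. For the Minkowski bound, the covering argument of Theorem~\ref{thm:singular}(2) shows that $\{r_{\Rm}<\delta r\}$ is covered by $Cr^{-1}$ balls of radius $r$, since each such ball has mass $\ge c r$. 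Passing to the limit through the Gromov--Hausdorff convergence transfers this bound to $\mathcal S_t$.

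The main obstacle is the neck decomposition in dimension $n$, in particular the cylindrical Lojasiewicz inequality and the packing estimates for $S^{n-1}\times\R$ on a singular flow whose slices are incomplete. The first attempt there would be to run the integration-by-parts justification via neck exhaustions, as in Lemma~\ref{usefuliden}, inside the proofs of \cite{fangli2025loja}, and to check that no flat or higher-symmetry neck regions are needed, since the PIC $\kappa$-solution classification excludes other noncompact models.
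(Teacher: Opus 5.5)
Your outline follows the paper's own route. The paper's last section simply says to rerun the arguments of Sections~3--5 with $(1,\delta,\cc,r)$-cylindrical neck regions modeled on $S^{n-1}\times\R$. Your replacement of $\int(|\scal|+1)\,\d V$ by $\int(r_{\Rm}^{-(n-1)}+1)\,\d V$ in the ball-packing, diameter and Minkowski arguments is exactly the adjustment that makes this rerun work. On an $S^{n-1}$-neck of scale $r$ and length $\ell$ one has $\int|\scal|\,\d V\sim r^{n-3}\ell$, which degenerates for $n\ge 4$, whereas $\int r_{\Rm}^{-(n-1)}\,\d V\sim \ell$. The passage to $|\Rm|^{\frac{n-1}{2}}$ is just the pointwise bound $|\Rm|\le r_{\Rm}^{-2}$, not H\"older.

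The one step that does not follow is the exponent in (2). The content estimate of the neck decomposition gives $\mathscr H^1(\mathcal S\cap B^*(z_0,r_0))\le C(n,T)\,r_0$, and this is the scale-invariant form. Under $d_Z\mapsto\lambda^{-1}d_Z$ the measure $\mathscr H^1$ scales like $\lambda^{-1}$, not like a parabolic volume, so no scaling consideration turns this into $C r_0^{n+2}$. Since $r_0^2<(T-1)/2$, the bound $Cr_0^{n+2}$ is strictly stronger for small $r_0$.

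In fact the stronger bound would force $\mathcal S$ to be $\mathscr H^1$-null. For any set $A$ with $0<\mathscr H^1(A)<\infty$, the standard lower density estimate gives $\limsup_{r\to 0} r^{-1}\mathscr H^1(A\cap B^*(z,r))>0$ at $\mathscr H^1$-a.e.\ $z\in A$. Nothing in the argument provides this nullity, and it would make the rectifiability in (1) vacuous.

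The paper states the same exponent (and $r_0^5$ in dimension three) without further justification. Yet its proof of Corollary~\ref{cor:singulartime} uses only the linear bound $\mathscr H^1(A_i)\le C(T)r_i$. So what your method, and the paper's, actually proves is $\mathscr H^1(\mathcal S\cap B^*(z_0,r_0))\le C(n,T)\,r_0$. You should state (2) in that form rather than appeal to volume scaling.
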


\begingroup
\small
\setlength{\labelsep}{0.35em}
\bibliographystyle{alpha_nobreak}
\bibliography{bibfile}
\endgroup

\vspace{10pt}

Yu Li, Institute of Geometry and Physics, University of Science and Technology of China, No. 96 Jinzhai Road, Hefei, Anhui Province, 230026, China; Hefei National Laboratory, No. 5099 West Wangjiang Road, Hefei, Anhui Province, 230088, China; Email: yuli21@ustc.edu.cn.

%\end{CJK}
\end{document}